\documentclass[11pt,a4paper,reqno]{amsart}

\usepackage{a4wide}
\usepackage{graphicx,units}
\usepackage{latexsym}
\usepackage{epsfig,esint}
\usepackage{amssymb,amsmath,mathtools}
\usepackage{amstext}
\usepackage{amsgen}
\usepackage{amsxtra}
\usepackage{amsgen}
\usepackage{amsthm}
\usepackage{tcolorbox}
\usepackage{mathrsfs,dsfont}
\usepackage[T1]{fontenc}

\usepackage[top=3cm, bottom=3cm,left=3cm, right=3cm]{geometry}

\usepackage{color}
\usepackage[colorlinks=true, linkcolor=blue, citecolor=red]{hyperref}

\usepackage{comment}

\usepackage[pagewise]{lineno}

\newtheorem{thm}{Theorem}
\newtheorem*{thm*}{Theorem}
\newtheorem{prop}{Proposition}[section]
\newtheorem{lemma}[prop]{Lemma}
\newtheorem{cor}[prop]{Corollary}

\theoremstyle{remark}
\newtheorem{rem}[prop]{Remark}

\theoremstyle{definition}

\newtheorem{definition}[prop]{Definition}

\numberwithin{equation}{section}

\newcommand{\RR}{\mathbb{R}}

\DeclareMathOperator{\dv}{div}
\DeclareMathOperator*{\esssup}{ess \, sup}

\DeclarePairedDelimiter\floor{\lfloor}{\rfloor}

\DeclareMathOperator*{\essinf}{ess \, inf}

\renewcommand{\P}{{\bf P}}
\newcommand{\p}{{\bf p}}

\newcommand{\e}{{\bf e}}

\newcommand{\bxi}{\xi}

\renewcommand{\d}{\, \mathrm{d} }
\newcommand{\der}{\mathrm{d}}

\newcommand{\loc}{\mathrm{loc}}
\usepackage{xcolor}

\newcommand{\T}{\mathbb{T}}

\DeclareMathOperator{\supp}{\mathrm{supp}}

\begin{document}
\title[Instantaneous Regularisation for a degenerate non-local equation]{Instantaneous Regularisation via Harnack-type Methods for a Degenerate Non-local Equation}

\author[S.~M.~Schulz]{Simon M.~Schulz}
\address[S.~M.~Schulz]{Laboratoire de Math\'ematiques de Versailles, UMR 8100 CNRS, UVSQ, Universit\'e Paris-Saclay, 45 Av.~des \'Etats-Unis, 78000 Versailles Cedex, France}\email{simon.schulz@uvsq.fr}

\keywords{Degenerate non-local equations, Harnack inequality, infinite speed of propagation, Bombier--Giusti Lemma, active particle models}

\subjclass[2020]{35B65, 35K65, 35K55, 35Q92, 35A15}

\begin{abstract}
  We prove an instantaneous smoothing effect for a degenerate non-local evolution equation (originally derived as the many-particle limit of a system of active Brownian particles). The underlying mechanism is that, by deriving a Harnack-type inequality for an averaged quantity, we are able to show that the equation becomes strongly parabolic for positive times, and thus self-regularising. This paper is a significant improvement of a previous work, where additional (weak) non-degeneracy assumptions were imposed on the initial data. Herein, we completely remove these assumptions, and prove---in addition to the regularity bootstrap---quantitative estimates on the aforementioned averaged quantity by means of an approach \emph{\`a la} Bombieri--Giusti. 
\end{abstract}

\maketitle

\vspace*{-1cm}

\setcounter{tocdepth}{1}
\begin{scriptsize}
    \tableofcontents
\end{scriptsize}

\vspace*{-1.4cm}

\section{Introduction}

The purpose of this paper is to present a refined regularity theory for weak solutions $f(t,x,\theta)$ of the degenerate non-local evolution equation 
\begin{equation}\label{eq:main eqn}
    \partial_t f +   \dv \!\big(  (1- \rho) f \e(\theta)\big)
= \dv \!\big( (1-\rho)\nabla f + f \nabla \rho \big) + \partial_{\theta}^2 f, 
\end{equation}
where $\e(\theta) = (\cos \theta, \sin \theta)$, and the (non-local) averaged quantity $\rho$ is defined by 
\begin{equation}\label{eq:rho def}
    \rho(t,x) := \int_\T   f(t,x,\theta) \d \theta, 
\end{equation}
where $\T \cong [0,2\pi)$ is the torus. The notations $\dv$ and $\nabla$ represent the divergence and gradient operators with respect to the $x$ variable only. The initial value problem for \eqref{eq:main eqn} is posed with periodic boundary conditions both in the space variable $x \in \T^2$ and the angle variable $\theta \in \T$. In the paper, we denote the \emph{space-angle coordinate} by $\bxi = (x,\theta) \in \T^3$.

Equation \eqref{eq:main eqn} was derived in \cite{bbes model}, and corresponds to the many-particle limit of a system of active Brownian particles (\textit{cf.~}\cite{Cates:2013ia,Romanczuk:2012iz})---these models appear naturally when studying collective behaviour in the biological and social sciences (\textit{cf.~}\cite{collective,Yeomans:2015dt}). A global existence result for \eqref{eq:main eqn} was obtained in \cite{Martin}, by means of a non-local \emph{boundedness-by-entropy method}, where one rewrites \eqref{eq:main eqn} as an evolution on the first variation of a suitable non-local functional (which itself can be interpreted as a perturbation of a Wasserstein-type gradient flow). The weak solutions obtained therein are dual to test functions belonging to $L^6_t W^{1,6}_{x,\theta}$ (thus, in principle, they are endowed with rather little regularity), and satisfy $0 \leq \rho \leq 1$ a.e.; hence \eqref{eq:main eqn} is of \emph{degenerate parabolic} type, with degeneracy at $\rho=1$.

Recently, a rather general regularity theory was established in \cite{reg2}; following ideas presented for a simpler active model in \cite{reg1}. Therein, the authors proved that weak solutions of \eqref{eq:main eqn} are in fact smooth for all positive times, provided certain (weak) non-degeneracy assumptions are imposed on the initial data. More precisely, provided one has suitable control on the degenerate zone $\{\rho=1\}$ (where \eqref{eq:main eqn} loses strict parabolicity) at the initial time, then one can show that \eqref{eq:main eqn} becomes self-regularising. In this work, we completely remove these non-degeneracy assumptions on the initial data, thereby showing that---surprisingly, given the degenerate/non-local appearance of the PDE---all weak solutions (as defined in \cite{Martin}) instantaneously become smooth. The present paper is therefore a significant improvement of the previous contribution \cite{reg2}, and (in some sense) it concludes the regularity study of equation \eqref{eq:main eqn}; all weak solutions are classical solutions for positive times. The questions of uniqueness/non-uniqueness and convergence to stationary states are still largely open and will be the focus of future investigations.

While \eqref{eq:main eqn} appears to be fraught with degeneracies and non-local effects, it is actually very simple to (heuristically) explain its underlying self-regularising mechanism. Indeed, the main obstacle for regularity is that the \emph{self-diffusive} term $\dv((1\!-\!\rho)\nabla f)$ is degenerate when $\rho=1$. Consequently, in the vicinity of $\{\rho=1\}$, the smoothing effects provided by this term cannot compensate for the losses of regularity produced by the non-local drift and the \emph{infinite cross-diffusion} term $\dv(f \nabla \rho)$; this latter term formally comprises $f \Delta \rho$, which includes competing diffusive effects from all angles $\theta \in [0,2\pi)$. However, upon integrating \eqref{eq:main eqn} in the angle variable $\theta$, one observes that the equation for $\rho$ is strongly parabolic: 
\begin{equation}\label{eq:rho eq}
    \partial_t \rho + \dv\big( (1-\rho) \p \big) = \Delta \rho, 
\end{equation}
where the \emph{polarisation} $\p$ is a non-local ``moment'' quantity determined from $f$   by 
\begin{equation}\label{eq:polarisation def}
    \p(t,x) := \int_\T f(t,x,\theta) \e(\theta) \d \theta. 
\end{equation}
In turn, this latter equation is endowed with an \emph{instantaneous separation from $\{\rho=1\}$} property, meaning that $0 < \rho < 1$ for all positive times. A quantitative description of this \emph{infinite speed of propagation} effect is embodied by a Harnack-type inequality for $1-\rho$ (which we prove in \S \ref{sec:quantitative} of this manuscript). Once this quantitative estimate is established, one deduces that \eqref{eq:main eqn} is strongly parabolic for all positive times, and the strategy of \cite[\S 6-7]{reg2} (see also \cite[\S 4]{reg1}) can be adapted to show smoothness of the weak solution. While a lower bound on $1-\rho$ had been obtained in \cite{reg2} by an ad-hoc method, the authors had not clearly identified that it could be derived from a Harnack inequality, which   led them to impose the aforementioned unnecessary non-degeneracy conditions on the initial data. 

Herein, we prove the desired Harnack-type inequality directly, instead of invoking an abstract result from the literature---we explain our reason for doing so in the paragraph that follows. Before jumping to this, we remind the reader that our aim, ultimately, is to obtain a \emph{quantitative} form of the instantaneous separation of $\rho$ from the degenerate state $\{\rho=1\}$; \textit{cf.~}Theorem \ref{thm:reg}. This phenomenon is \emph{expected} from the strong parabolicity of \eqref{eq:rho eq}, but obtaining an explicit (uniform-in-$x$) lower bound on $1\!-\!\rho$ near the initial time is \emph{a priori} not obvious.

Although \eqref{eq:rho eq} may be interpreted as a linear drift-diffusion equation for $\rho$---and is thus endowed with a Harnack inequality---the constants appearing in its corresponding Harnack inequality depend quantitatively on the drift $\p$. In particular, when applying the estimate on cylinders approaching the initial time, one must carefully track the dependence of the Harnack constant on the rescaled drift coefficient---\emph{rescaled} because the equation \eqref{eq:rho eq} is not exactly invariant under the usual parabolic rescaling $(t,x) \mapsto (t_0 + \lambda^2 t, x_0 + \lambda x)$. Since this dependence is seldom made explicit in Harnack inequalities found in reference texts (\textit{cf.~e.g.~}\cite[Theorem 6.27]{lieberman}), we adopt a more hands-on approach and prove the required estimate (\textit{cf.~}Lemma \ref{prop:lower bound rho improved small times only}) by means of an argument in the spirit of Bombieri--Giusti \cite{BG,MoserBG}.

Once the desired uniform lower bound on $1\!-\!\rho$ is established (and hence the strong parabolicity of \eqref{eq:main eqn}) for positive times, the regularity bootstrap essentially begins with one key observation on the structure of \eqref{eq:main eqn}: provided one justifies \emph{passing from divergence to non-divergence form} in \eqref{eq:main eqn}, a cancellation in the mixed terms $\nabla f \!\cdot\! \nabla \rho$ occurs, yielding 
\begin{equation*}
    \partial_t f + \dv((1-\rho)f\e(\theta)) = (1-\rho)\Delta f + f \Delta \rho + \partial^2_\theta f. 
\end{equation*}
By formally testing the above with $\Delta f$ (and then $\partial^2_\theta f$), one is able to close the usual $H^2$-type estimate. Then, through repeated applications of difference quotients in time and Schauder-type results, we are able to successively take additional time and space derivatives in \eqref{eq:main eqn} and show that all derivatives bounded, thereby establishing smoothness. The rigourous passage from the divergence to non-divergence form had already been written in \cite{reg2}, and relies on a Galerkin scheme (see Lemma \ref{lem:H2 f} herein). We briefly highlight that the use of Galerkin's method as a tool for regularity (while classical) has recently proven to be very useful in settings where the method of difference quotients (in space) is not justifiable, \textit{e.g.~}in the context of elliptic equations in metric measure spaces (\textit{cf.~}\cite{Ivan}).

The rest of the paper is organised as follows. In \S \ref{sec:def and background} we recall the definition of weak solution as well as the global existence result obtained in \cite{Martin}. We also recall some basic regularity results that are used throughout the paper. In \S \ref{sec:main theorem} we state our main theorem. \S \ref{sec:bombieri giusti} is concerned with a classical result known as the \emph{Bombieri--Giusti Lemma}, which is the cornerstone of the approach ``without the John--Nirenberg Lemma'' (\textit{cf.~}\cite{MoserBG}) to obtaining Harnack inequalities; to make the paper self-contained, we write a statement and proof which is tailored to our setting. In \S \ref{sec:moser}--\ref{sec:log} (which are respectively concerned with Moser-type $L^\infty$ bounds and estimates on the sub-/super-level sets of the logarithm) we verify that $1\!-\!\rho$ and its reciprocal satisfy the assumptions of the Bombieri--Giusti Lemma of \S \ref{sec:bombieri giusti}. Having shown that this is the case, we obtain the sought quantitative lower bound on $1\!-\!\rho$ in \S \ref{sec:quantitative}. \S \ref{sec:smooth} is then concerned with the regularity bootstrap for \eqref{eq:main eqn}, which (although entirely self-contained herein) relies heavily on the ideas already introduced in \cite[\S 6-7]{reg2}.

\section{Definitions and Background}\label{sec:def and background}

In this part of the manuscript, we recall the definition of weak solution of \eqref{eq:main eqn} introduced in the prior works \cite{Martin,reg2}, and summarise the main existence result  as well as some background notions that will appear in later sections.

\subsection{Definitions}

We define the following function space. Note that, therein, the space containing $\partial_t f$ is precisely the dual of $L^6_\loc([0,\infty);W^{1,6}(\T^3))$. 

\begin{definition}[Function space $X$]\label{def:function space}
The function space $X$ is defined to be the family of functions $f \in L^3_\loc([0,\infty);L^3(\T^3))$ which satisfy, with $\rho := \int_\T f \d \theta$, the following: 
    \begin{itemize}
        \item[(i)] $f \geq 0$ a.e.~$(0,\infty)\times\T^3$; 
        \item[(ii)] $\partial_\theta \sqrt{f} \in L^2_\loc([0,\infty);L^2(\T^3))$ and $\partial_t f \in L^{\nicefrac{6}{5}}_\loc([0,\infty);W^{-1,\nicefrac{6}{5}}(\T^3))$; 
        \item[(iii)] $0 \leq \rho \leq 1$ a.e.~$(0,\!\infty)\times\T^2$, $\partial_t \rho \in L^2_\loc(0,\!\infty;H^{-1}(\T^2))$, $\nabla\sqrt{1\!-\!\rho} \in L^2_\loc(0,\!\infty;L^2(\T^2))$; 
        \item[(iv)] $\sqrt{1\!-\!\rho}\nabla\sqrt{f} \in L^2_\loc([0,\infty);L^2(\T^3))$. 
    \end{itemize}
\end{definition}

We recall the definition of weak solution (\textit{cf.}~\cite[Definition 2.1]{reg2}). For succinctness, we define the test functions $\mathscr{A} := \{ \varphi \in L^6_\loc([0,\!\infty);W^{1,6}(\T^3)) :  \partial_t \varphi \in L^{\nicefrac{3}{2}}_\loc([0,\!\infty);L^{\nicefrac{3}{2}}(\T^3)) \}$. 

    \begin{definition}[Weak solution]\label{def:weak sol}
      We say that $f \in X $ is a \emph{weak solution} of \eqref{eq:main eqn} if, for all $\varphi \in \mathscr{A}$, it holds: for a.e.~$t_1,t_2 \in (0,\infty)$ 
  \begin{equation}\label{eq:weak sol}
    \begin{aligned}
     \int_{t_1}^{t_2}\! & \int_{\T^3} f \partial_t \varphi \d \bxi \d t + \int_{t_1}^{t_2} \int_{\T^3} (1-\rho) f \e(\theta) \cdot \nabla \varphi \d \bxi \d t \\ 
      =&\int_{t_1}^{t_2} \!\int_{\T^3} \!\Big( \big((1-\rho)\nabla f \!+\! f \nabla \rho \big) \!\cdot\! \nabla \varphi \!+\! \partial_\theta f\partial_\theta \varphi \Big) \d \bxi \d t \!+\! \int_{\T^3} f \varphi \big|_{t_2} \d\bxi \!\!-\!\! \int_{\T^3} f \varphi \big|_{t_1} \d\bxi. 
    \end{aligned}
\end{equation}
    Furthermore, we say that $f$ is a weak solution \emph{with initial data $f_0$} if it is a weak solution, and \eqref{eq:weak sol} holds with $t_1=0$ and the final integral replaced by $\int_{\T^3} f_0(\xi) \varphi(0,\xi) \d \xi$. 
    \end{definition}

    For the convenience of the reader, we recall the notation $\xi = (x,\theta)$ in the integrals above. It is also clear that, by considering the periodic extension of $f$ to all of $\mathbb{R}^3$, the definition of weak solution given above also implies that \eqref{eq:main eqn} holds in the sense of distributions dual to the class of test functions $C^\infty_c([0,\infty)\!\times\!\mathbb{R}^3)$; \textit{cf.}~\cite[Lemma 4.2]{Martin}.

    \subsection{Background notions}

    \subsubsection{Global existence and attainment of initial data}

    The next remark summarises the main existence result obtained in \cite{Martin}. 

\begin{rem}[Global existence]\label{rem:global exist}
Recall that, given initial data $f_0$ which satisfies 
\begin{equation}\label{eq:initial data conditions}
   0 \leq f_0 \in L^q(\T^3) \quad \text{for some } q \in (1,\infty], \qquad 0 \leq \rho_0 \leq 1 \quad \text{a.e.~in } \T^2, 
\end{equation}
then there exists a  weak solution of \eqref{eq:main eqn} with initial data $f_0$ (in the precise sense of Definition \ref{def:weak sol}), which satisfies 
\begin{equation}\label{eq:convergence to initial data old}
    \lim_{t\to 0^+} \Vert f(t) - f_0 \Vert_{W^{-1,\nicefrac{6}{5}}(\T^3)} = 0; 
\end{equation}
this was proved in \cite[Theorem 1]{Martin}. Throughout the manuscript, we say that $f_0$ is \emph{admissible initial data} if it satisfies \eqref{eq:initial data conditions}, and whenever initial data $f_0$ is mentioned, the letter $\rho_0$ always denotes $\rho_0 = \int_\T f_0 \d \theta$, \textit{i.e.}~the initial data for $\rho$. 
\end{rem}

For the convenience of the reader, we explain in detail the distributional convergence to the initial data \eqref{eq:convergence to initial data old} in the remark that follows.

\begin{rem}[Attainment of initial data]\label{rem:attainment old}
Let $f$ be a weak solution with initial data $f_0$ as per Definition \ref{def:weak sol}. Then, for all $\varphi \in W^{1,6}(\T^3)$ depending only on $\xi$, it holds a.e.~$t>0$
  \begin{equation}\label{eq:to see clearly conservation of initial mass}
    \begin{aligned}
    \int_{\T^3} \! \big( f(t,\xi) - f_0(\xi) \big) \varphi(\xi)   \d\bxi =&\, \int_{0}^{t} \int_{\T^3} (1-\rho) f \e(\theta) \cdot \nabla \varphi \d \bxi \d s \\ 
    &- \!\int_{0}^{t} \!\!\int_{\T^3} \!\!\Big( \big((1-\rho)\nabla f \!+\! f \nabla \rho \big) \!\cdot\! \nabla \varphi \!+\! \partial_\theta f\partial_\theta \varphi \Big) \d \bxi \d s. 
    \end{aligned}
\end{equation}
The boundedness of the relevant terms from Definition \ref{def:weak sol} (\textit{cf.~}\cite[Eq.~(1.5)]{Martin}) yields \eqref{eq:convergence to initial data old}: 
\begin{equation}\label{eq:as per remark attainment old}
\lim_{t \to 0^+} \Vert f(t) - f_0 \Vert_{(W^{1,6}(\T^3))'} = \lim_{t \to 0^+} \sup_{ \Vert \varphi \Vert_{W^{1,6}} \leq 1} \bigg| \int_{\T^3} \big(f(t)-f_0 \big) \varphi  \d \xi \bigg| = 0 .
\end{equation}
\end{rem}

 Next is a remark about the attainment of $\rho$-initial data, which was not proved before.

\begin{rem}[Convergence to initial data for $\rho$]\label{rem:convergence initial data rho}
  Suppose that $f$ is a weak solution of \eqref{eq:main eqn} with admissible initial data $f_0$. Then, it holds 
  \begin{equation}\label{eq:conv rho to initial data}
\lim_{t\to 0^+} \Vert \rho(t) - \rho_0 \Vert_{L^2(\T^2)}=0. 
\end{equation}
Note that $\rho \in C([0,\infty);L^2(\T^2))$ is immediate from $\partial_t \rho \in L^2_\loc([0,\infty);H^{-1}(\T^2)) , \rho \in L^2_\loc([0,\infty);H^1(\T^2))$ and the Lions--Magenes Lemma \cite[Ch.3, \S 1.4
Lem.1.2]{Temam}, but this \emph{does not directly imply} \eqref{eq:conv rho to initial data}. Nevertheless, this convergence is true, \textit{cf.}~Appendix \ref{sec:appendix rho initial data}. 
\end{rem}

\subsubsection{Background regularity notions}

We recall some elementary boudedness results, which hold for any weak solution of \eqref{eq:main eqn} produced by admissible initial data \eqref{eq:initial data conditions}. 

Recall the equation \eqref{eq:rho eq} for $\rho$, where the drift term depends on the polarisation $\p$ given by \eqref{eq:polarisation def}, $\p = \int_\T f \e(\theta) \d \theta$. Since $f \geq 0$ and $0\leq \rho \leq 1$ a.e.~from Definition \ref{def:function space}--\ref{def:weak sol}, it holds 
\begin{equation}\label{eq:polarisation bound}
    | \p(t,x) | \leq \int_\T|f(t,x,\theta)| \d \theta = \rho(t,x) \leq 1 \implies \Vert \p \Vert_{L^\infty((0,\infty)\times\T^2)} \leq 1; 
\end{equation}
this boundedness holds for all \emph{angular moments} of $f$, \textit{e.g.~}$\P = \int_\T f \e(\theta)\otimes\e(\theta) \d \theta$ satisfies $\Vert \P \Vert_{L^\infty} < \infty$. Furthermore, simply by multiplying the equation \eqref{eq:main eqn} for $f$ with $\e(\theta)$ and integrating in $\theta$, we uncover the equation for $\p$: 
\begin{equation}\label{eq:p eq}
    \partial_t \p + \dv((1-\rho)\P) = \dv((1-\rho)\nabla \p + \p \otimes \nabla \rho) - \p, 
\end{equation}
which, in coordinate form with summation convention, reads as $\partial_t p_i + \partial_j((1-\rho)P_{ij}) = \partial_j((1-\rho)\partial_j p_i + p_i \partial_j \rho) - p_i$. We also recall that the drift in \eqref{eq:rho eq} satisfies 
\begin{equation}\label{eq:grad p integ first steps}
    \dv((1-\rho)\p) \in L^{\nicefrac{3}{2}}_\loc([0,\infty);L^{\nicefrac{3}{2}}(\T^2)); 
\end{equation}
indeed, $\dv((1-\rho)\p) = \!-{\nabla \rho \!\cdot \p} + (1-\rho) \dv \p$, with $\nabla \rho \cdot \p \in L^2_\loc((0,\infty);L^2(\T^2))$ from Definition \ref{def:function space} and \eqref{eq:polarisation bound}, and (for all $T>0$) 
\begin{equation}\label{eq:the horrible one}
   \begin{aligned} \int_0^T \int_{\T^2} |(1-\rho) \nabla \p |^{\nicefrac{3}{2}} \d x \d t &= \int_0^T \int_{\T^2} (1-\rho)^{\nicefrac{3}{2}} \bigg| \int_\T \nabla f \otimes \e(\theta) \d \theta \bigg|^{\nicefrac{3}{2}} \d x \d t \\ 
   &\leq \sqrt{2\pi}\int_0^T \int_{\T^3} \Big(\sqrt{1-\rho} \frac{|\nabla f |}{\sqrt{f}}\Big)^{\nicefrac{3}{2}} (\sqrt{f})^{\nicefrac{3}{2}} \d \bxi \d t \\ 
   &\leq 4\sqrt{\pi} \Vert f \Vert_{L^{3}([0,T]\times\T^3)}^{\nicefrac{3}{4}} \Vert \sqrt{1-\rho}\nabla \sqrt{f} \Vert_{L^2([0,T]\times\T^3)}^{\nicefrac{3}{2}}, 
   \end{aligned}
\end{equation}
where we used Jensen's inequality and $0\leq 1-\rho \leq 1$ to get the second line and H\"older's inequality with the exponent $\nicefrac{4}{3}$ to get the final line. 
By \eqref{eq:rho eq} and \eqref{eq:grad p integ first steps}, we get 
\begin{equation*}
    \partial_t \rho - \Delta \rho =  -\dv((1-\rho)\p) \in L^{\nicefrac{3}{2}}_\loc([0,\infty);L^{\nicefrac{3}{2}}(\T^2)), 
\end{equation*}
whence, by the Calder\'on--Zygmund Theorem for the heat equation (\textit{cf.~}\cite[Theorem 3.1]{RosHeat}), 
\begin{equation}\label{eq:initial upgrade dt rho not the real one}
    \partial_t \rho, \nabla^2 \rho \in L^{\nicefrac{3}{2}}((t,T) \! \times \! \T^2) \qquad \forall 0<t<T; 
\end{equation}
we will use \eqref{eq:initial upgrade dt rho not the real one} as a starting point for our analysis, in \S \ref{sec:moser} of the present work.

We finish this subsection with four well-known standard results which will be used repeatedly in \S \ref{sec:smooth}. The first is an interpolation lemma that follows from the Gagliardo--Nirenberg inequality, the second and third are Schauder-type results for non-divergence form parabolic equations (in Sobolev spaces, and H\"older spaces, respectively), and the fourth is a Gehring Lemma for divergence form parabolic equations.

\begin{lemma}[\S 1 Proposition 3.2, \cite{DiBenedetto}]\label{lem:dibenedetto classic}
    Let $d \in \mathbb{N}$, $\Omega \subset \mathbb{R}^d$ have piecewise smooth boundary, $I \subseteq [0,\infty)$ any bounded interval, and $p,m \geq 1$. Then, there exists a positive constant $C=C(d,p,m,\partial\Omega,\Omega)$, independent of $I$, such that, for all $v \in L^\infty(I;L^m(\Omega)) \cap L^p(I;W^{1,p}(\Omega))$, 
    \begin{equation*}
        \Vert v \Vert_{L^q(I\times\Omega)} \leq C(1+|I|)^{\frac{1}{q}} \Big( \Vert v \Vert_{L^\infty(I;L^m(\Omega))} + \Vert v \Vert_{L^p(I;W^{1,p}(\Omega))} \Big) \qquad \text{with } q = p(1+\frac{m}{d}). 
    \end{equation*}
    Consequently, we have, for any bounded interval $I\subseteq [0,\infty)$, the continuous embeddings 
    \begin{equation*}
       \begin{aligned} 
       &L^\infty(I; L^2(\T^2)) \cap L^2(I;H^1(\T^2)) \hookrightarrow L^4(I\times\T^2) , \\
       &L^\infty(I;L^2(\T^3)) \cap L^2(I;H^1(\T^3)) \hookrightarrow L^{\nicefrac{10}{3}}(I\times\T^3). 
       \end{aligned}
    \end{equation*}
\end{lemma}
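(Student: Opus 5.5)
The plan is to prove the interpolation inequality first for a fixed time slice via Gagliardo--Nirenberg, and then integrate in time. Fix $t\in I$ and write $v=v(t)$. The Gagliardo--Nirenberg inequality on a bounded Lipschitz (piecewise smooth) domain, in its inhomogeneous form, gives
\begin{equation*}
\Vert v \Vert_{L^q(\Omega)} \leq C \Vert v \Vert_{L^m(\Omega)}^{1-\alpha} \Vert v \Vert_{W^{1,p}(\Omega)}^{\alpha}
\end{equation*}
for a suitable $\alpha\in(0,1]$. We choose $\alpha=p/q$ with $q=p(1+\nicefrac{m}{d})$, and check the scaling relation
\begin{equation*}
\frac1q = \alpha\Big(\frac1p-\frac1d\Big)+\frac{1-\alpha}{m}.
\end{equation*}
Multiplying through by $q$ gives $1 = 1-\nicefrac{p}{d} + (q-p)/m$, which holds exactly because $q-p=pm/d$. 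The constant $C$ depends only on $d,p,m$ and on $\Omega$ and its boundary, through an extension operator.

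Next we raise this to the power $q$ and integrate over $I$:
\begin{equation*}
\int_I \Vert v\Vert_{L^q}^q \d t \leq C^q \Vert v\Vert_{L^\infty(I;L^m)}^{q(1-\alpha)} \int_I \Vert v(t)\Vert_{W^{1,p}}^{p}\d t,
\end{equation*}
using $q\alpha=p$. By Young's inequality with exponents $q/(q-p)$ and $q/p$, the product $a^{q-p}b^{p}$ is bounded by $(a+b)^q$. Taking $q$-th roots then yields the claim with a constant independent of $I$. The factor $(1+|I|)^{1/q}$ is harmless here, since the bound even holds without it. The factor is needed only if one instead uses the homogeneous Gagliardo--Nirenberg form with $\Vert\nabla v\Vert_{L^p}$ together with a lower-order $\Vert v\Vert_{L^m}$ term. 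In that version the lower-order term, integrated in time, contributes $|I|\,\Vert v\Vert_{L^\infty L^m}^q$, which explains the factor. Either route is acceptable.

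The embeddings follow by specialising. Taking $d=2$, $p=m=2$ gives $q=2(1+1)=4$. Taking $d=3$, $p=m=2$ gives $q=2(1+\nicefrac{2}{3})=\nicefrac{10}{3}$. Here $\Omega=\T^d$ is treated as a fundamental cell of a periodic domain, which has no boundary issues.

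The only delicate point is ensuring that the Gagliardo--Nirenberg inequality applies with the low exponent $m$ when $m<p$ or when $q$ is large. The admissible range requires $\alpha\in(0,1]$ and $q<\infty$; when $p\ge d$ this constraint still holds, since $q$ is finite. So I expect no real obstacle beyond citing the bounded-domain version of Gagliardo--Nirenberg.
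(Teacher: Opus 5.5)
Your argument is correct and follows the standard proof: the paper gives no proof of its own and simply cites DiBenedetto, whose argument is the same slice-wise Gagliardo--Nirenberg inequality with $\alpha=p/q=d/(d+m)<1$, raised to the power $q$ and integrated in time. There the factor $(1+|I|)^{1/q}$ arises exactly as you describe, because only $\Vert\nabla v\Vert_{L^p}$ together with a lower-order term is used. Since the statement here has the full $L^p(I;W^{1,p})$-norm on the right, you are also right that this factor is superfluous. The multiplicative form with $\Vert v\Vert_{W^{1,p}(\Omega)}$ follows via an extension operator that is bounded simultaneously on $L^m$ and $W^{1,p}$, and for $\T^d$ you can simply restrict to the open cube $(0,2\pi)^d$, which preserves all the norms involved.
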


The next two results are well-known. The first is \cite[Theorem 1.9]{Wang}  (\textit{cf.~}\cite[Lemma 5.14]{Dong} and \cite[Theorem 2.4]{Kim}), and the second is \cite[Theorem 1.19]{Wang} (\textit{cf.~}\cite[Theorem 8.11.1]{Krylov}). We recall the H\"older spaces and their parabolic analogues: for $\alpha \in (0,1)$ and $Q \subseteq \mathbb{R}^{1+d}$,
\begin{equation*}
    C^\alpha(Q) = \Big\{ v \in C(\overline{Q}) : \Vert v \Vert_{C^\alpha(Q)} \!<\! \infty  \Big\}, \quad \Vert v \Vert_{C^\alpha(Q)} = \Vert v \Vert_{L^\infty(Q)} + [v]_{\alpha;Q}, 
\end{equation*}
where $[v]_{\alpha;Q} = \sup_{z_1,z_2 \in Q, z_1 \neq z_2} \frac{|v(z_1)-v(z_2)|}{|z_1 - z_2 |^\alpha}$; recall that $(C^\alpha,\Vert \cdot \Vert_{C^\alpha})$ is a Banach space, and 
\begin{equation*}
    C^{\nicefrac{\alpha}{2},\alpha}(Q) = \Big\{ v \in C(\overline{Q}) : \Vert v \Vert_{C^{\nicefrac{\alpha}{2},\alpha}(Q)} < \infty  \Big\}, \quad \Vert v \Vert_{C^{\nicefrac{\alpha}{2},\alpha}(Q)} = \Vert v \Vert_{L^\infty(Q)} + [v]_{\nicefrac{\alpha}{2},\alpha;Q}, 
\end{equation*}
where $[v]_{\nicefrac{\alpha}{2},\alpha;Q} = \sup_{(t_1,x_1) \neq (t_2,x_2)} \frac{|v(t_1,x_1)-v(t_2,x_2)|}{|t_1 - t_2 |^{\nicefrac{\alpha}{2}} + |x_1 -x_2|^\alpha}$. Also define 
$$C^{1+\nicefrac{\alpha}{2},2+\alpha}(Q) = \Big\{ \partial_t v , \nabla^2 v \in C^{\nicefrac{\alpha}{2},\alpha}(Q) \Big\}, $$
with the norm $\Vert v \Vert_{C^{1+\nicefrac{\alpha}{2},2+\alpha}(Q)} = \Vert v \Vert_{L^\infty(Q)} + \Vert \nabla v \Vert_{L^\infty(Q)} + \Vert \partial_t v\Vert_{C^{\nicefrac{\alpha}{2},\alpha}(Q)} + \Vert \nabla^2 v\Vert_{C^{\nicefrac{\alpha}{2},\alpha}(Q)}.$ 

\begin{lemma}[Schauder estimate: Sobolev spaces]\label{lem:Lp schauder}
    Let $I \subseteq [0,\infty)$ be any compact interval, $p \in [2,\infty)$, $\alpha \in (0,1)$, $c > 1$, $d\in\mathbb{N}$. Assume $A \in [C^{\alpha}(I\times\T^d)]^{d \times d}$ is symmetric and \begin{equation*}
    c^{-1} |w|^2 \leq A(t,x)w \cdot w \leq c|w|^2 \qquad \forall w \in \mathbb{R}^d, \quad \forall (t,x) \in I\times\T^d.
    \end{equation*}
    Suppose that $v \in L^2(I;H^2(\T^d))$, with $\partial_t v \in L^2(I\!\times\! \T^d)$, satisfies, for $F \in L^p(I\times\T^d)$,  
    \begin{equation*}
        \partial_t v - A:\nabla^2 v = F. 
    \end{equation*}
Then, for any $I' \subset\subset I$ compactly contained in $I$, it holds $\partial_t v , \nabla^2 v \in L^p(I' \times \T^d)$, and 
\begin{equation}\label{eq:schauder Lp}
    \Vert \partial_t v \Vert_{L^p(I'\times\T^d)} + \Vert \nabla^2_\xi v \Vert_{L^p(I'\times\T^d)} \leq C \Big(  \Vert F \Vert_{L^p(I\times\T^d)} + \Vert v \Vert_{L^2(I\times\T^d)} \Big), 
\end{equation}
where $C$ depends only on $|I\setminus I'|,I,c,\Vert A \Vert_{C^{\alpha}},\alpha,d$. 
\end{lemma}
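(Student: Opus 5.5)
The estimate is interior in time, so I would prove it by localising in time and perturbing to the constant-coefficient heat-type operator. Throughout, $v$ is taken periodic in $x$, so spatial cut-offs are not needed.

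\emph{Step 1: cut-off in time.} Write $I=[a,b]$ and $I'=[a',b']$, and let $\delta:=\min(a'-a,b-b')>0$, which is controlled by $|I\setminus I'|$. Choose $\eta\in C^\infty_c$ with $\eta\equiv1$ on $I'$, support in $(a+\delta/2,\,b)$, and $|\eta'|\lesssim\delta^{-1}$. Then $w:=\eta v$ vanishes near $t=a$ and satisfies
\begin{equation*}
\partial_t w - A:\nabla^2 w = \eta F + \eta' v .
\end{equation*}
This is a Cauchy problem on $I\times\T^d$ with zero initial data. Since $v\in L^2(I;H^2)$ and $\partial_t v\in L^2$, the function $w$ is a strong solution, so it is the unique solution in the natural $W^{1,2}_2$ class.

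\emph{Step 2: $L^p$ theory for H\"older-continuous coefficients.} By uniform ellipticity and continuity of $A$ (its modulus of continuity is controlled by $\Vert A\Vert_{C^\alpha}$), we use the standard freezing/partition-of-unity argument. Around any point, on a small cylinder of size $r$ with $r^\alpha\Vert A\Vert_{C^\alpha}\ll1$, we compare with the constant-coefficient operator $\partial_t-A(z_0):\nabla^2$. For this operator, Calder\'on--Zygmund estimates give $\Vert\partial_t u\Vert_{L^p}+\Vert\nabla^2u\Vert_{L^p}\le C(c,d,p)\Vert g\Vert_{L^p}$, after a linear change of variables reducing to the heat equation (\textit{cf.}~the Calder\'on--Zygmund result for the heat equation already invoked in \eqref{eq:initial upgrade dt rho not the real one}). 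The perturbation error $(A-A(z_0)):\nabla^2 u$ is absorbed by smallness. Summing over a finite cover of $I\times\T^d$ with time-and-space cut-offs yields lower-order terms involving $\nabla w$ and $w$. Together this gives the a priori estimate
\begin{equation*}
\Vert\partial_t w\Vert_{L^p}+\Vert\nabla^2 w\Vert_{L^p}\le C\big(\Vert \eta F+\eta' v\Vert_{L^p}+\Vert w\Vert_{L^p}+\Vert\nabla w\Vert_{L^p}\big).
\end{equation*}
The gradient term is handled by interpolation, $\Vert\nabla w\Vert_{L^p}\le\epsilon\Vert\nabla^2w\Vert_{L^p}+C_\epsilon\Vert w\Vert_{L^p}$, and then absorbed. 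Existence of a $W^{1,2}_p$ solution with zero data follows by the method of continuity from the heat equation. Uniqueness in $W^{1,2}_2$ identifies this solution with $w$, which gives the qualitative regularity $\partial_t v,\nabla^2 v\in L^p(I'\times\T^d)$.

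\emph{Step 3: removing $\Vert v\Vert_{L^p}$ in favour of $\Vert v\Vert_{L^2}$.} This is the main obstacle. The right-hand side above contains $\Vert\eta' v\Vert_{L^p}$ and $\Vert w\Vert_{L^p}$, but the claimed bound \eqref{eq:schauder Lp} only allows $\Vert v\Vert_{L^2(I\times\T^d)}$. I would bootstrap along a finite chain of nested intervals $I'=I_N\subset\subset\cdots\subset\subset I_0=I$, each gap of size about $\delta/N$, together with exponents $2=p_0<p_1<\cdots<p_N=p$.

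\begin{itemize}
\item At level $k$, the estimate gives $\partial_t v,\nabla^2v\in L^{p_k}(I_{k+1}\times\T^d)$, bounded by $\Vert F\Vert_{L^{p}}+\Vert v\Vert_{L^{p_k}(I_k\times\T^d)}$.
\item The parabolic Sobolev embedding $W^{1,2}_{p_k}\hookrightarrow L^{p_{k+1}}$ then controls $\Vert v\Vert_{L^{p_{k+1}}(I_{k+1}\times\T^d)}$, with $\frac1{p_{k+1}}=\frac1{p_k}-\frac2{d+2}$ (or any finite exponent once $p_k\ge\frac{d+2}{2}$).
\end{itemize}

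The number of steps $N$ depends only on $p$ and $d$. The base level $p_0=2$ uses only $\Vert v\Vert_{L^2(I\times\T^d)}$, and the $L^p$ norm of $F$ dominates its lower $L^{p_k}$ norms on the bounded domain. Chaining these estimates yields \eqref{eq:schauder Lp}, with $C$ depending only on $|I\setminus I'|$, $|I|$, $c$, $\Vert A\Vert_{C^\alpha}$, $\alpha$, $d$, and also on $p$.
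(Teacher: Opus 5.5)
Your proposal is correct, but the paper has no proof to compare it with line by line. It quotes this lemma as a known result from the references listed just before the statement. In effect it invokes the standard $W^{1,2}_p$ theory for non-divergence parabolic operators with continuous coefficients, and your Step 2 is the classical coefficient-freezing proof of that theory.

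What you add is Step 3, which the citation leaves implicit. Local $L^p$ estimates naturally carry $\Vert v\Vert_{L^p}$ on the right-hand side. More importantly, the $L^p$ solvability in Step 2 needs the forcing $\eta F+\eta' v$ to lie in $L^p$, which is not known a priori: only $v\in L^2(I;H^2(\T^d))$ and $\partial_t v\in L^2$ are assumed. Your chain of nested intervals resolves both the qualitative and the quantitative issue. The exponents are raised by the parabolic embedding $W^{1,2}_{p_k}\hookrightarrow L^{p_{k+1}}$, with $\frac{1}{p_{k+1}}=\frac{1}{p_k}-\frac{2}{d+2}$ capped at $p$. This yields exactly $\Vert v\Vert_{L^2(I\times\T^d)}$ in \eqref{eq:schauder Lp}.

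You are also right that $C$ must depend on $p$, through the Calder\'on--Zygmund constants and the number of bootstrap steps. The paper's list of dependencies omits this, which is harmless there since the lemma is only applied with fixed exponents.

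One step you pass over quickly is removing the zeroth-order term, so that the method of continuity and uniqueness in $W^{1,2}_2$ with zero initial data go through. This is standard, not a gap. Work with $e^{-\lambda t}w$ and use the constant-coefficient bound $\lambda\Vert u\Vert_{L^p}+\Vert\partial_t u\Vert_{L^p}+\Vert\nabla^2u\Vert_{L^p}\le C\Vert\partial_t u+\lambda u-A_0:\nabla^2u\Vert_{L^p}$ for functions vanishing at the initial time. Then absorb the lower-order terms from the freezing and the cut-offs by taking $\lambda$ large.
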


\begin{lemma}[Schauder estimate: H\"older spaces]\label{lem:Schauder Holder}
    Let $I \subseteq [0,\infty)$ be any compact interval, $\alpha \in (0,1)$, $c > 1$, and $d\in\mathbb{N}$. Assume the matrix $A \in [C^{\alpha}(I\times\T^d)]^{d \times d}$ is symmetric and  \begin{equation*}
    c^{-1} |w|^2 \leq A(t,x)w \cdot w \leq c|w|^2 \qquad \forall w \in \mathbb{R}^d, \quad \forall (t,x) \in I\times\T^d.
    \end{equation*}
    Suppose that $v \in L^2(I;H^2(\T^d))$, with $\partial_t v \in L^2(I\!\times\! \T^d)$, satisfies, for $F \in C^{\nicefrac{\alpha}{2},\alpha}(I\times\T^d)$, 
    \begin{equation*}
        \partial_t v - A:\nabla^2 v = F. 
    \end{equation*}
Then, for any $I' \subset\subset I$ compactly contained in $I$, it holds $v \in C^{1+\nicefrac{\alpha}{2},2+\alpha}(I' \times \T^d)$, and 
\begin{equation}\label{eq:est schauder holder}
    \Vert v \Vert_{C^{1+\nicefrac{\alpha}{2},2+\alpha}(I'\times\T^d)} \leq C \Big( \Vert F \Vert_{C^{\nicefrac{\alpha}{2},\alpha}(I\times\T^d)} + \Vert v \Vert_{L^2(I\times\T^d)} \Big), 
\end{equation} 
where $C$ depends only on $|I\setminus I'|,I,c,\Vert A \Vert_{C^{\alpha}},\alpha,d$. 
\end{lemma}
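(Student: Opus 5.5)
The plan is to reduce the statement to the classical global Schauder theory for the Cauchy problem. The steps are: first upgrade $v$ to a Hölder function using Lemma \ref{lem:Lp schauder}, then cut off in time, then solve the cut-off problem in Hölder spaces, and finally identify that solution with $v$ by uniqueness.

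\textbf{Step 1 (Hölder regularity of $v$).} Fix intermediate intervals $I' \subset\subset I'' \subset\subset I''' \subset\subset I$. Since $C^{\nicefrac{\alpha}{2},\alpha}(I\times\T^d) \subset L^p(I\times\T^d)$ for every $p<\infty$, Lemma \ref{lem:Lp schauder} applies with a fixed $p > d+2$. It gives $\partial_t v, \nabla^2 v \in L^p(I'''\times\T^d)$, with norm controlled by $\Vert F\Vert_{L^p} + \Vert v\Vert_{L^2}$. Combining this with the $L^2$ bound on $v$ and Gagliardo--Nirenberg interpolation, we get $v,\nabla v \in L^p$ as well, so $v \in W^{1,2}_p(I'''\times\T^d)$. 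The parabolic Sobolev embedding for $p>d+2$ then yields $v,\nabla v \in C^{\beta/2,\beta}(I'''\times\T^d)$ for some $\beta>0$, which we may take to be at least $\alpha$ after choosing $p$ large. All of these norms are bounded by the right-hand side of \eqref{eq:est schauder holder}.

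\textbf{Step 2 (time localisation).} Let $\eta \in C^\infty_c$ be a time cutoff with $\eta \equiv 1$ on $I'$, supported in $I''$, and vanishing near the left endpoint of $I''$. Then $w := \eta v$ satisfies
\begin{equation*}
\partial_t w - A:\nabla^2 w = \eta F + \eta' v =: G, \qquad w|_{t=\inf I''} = 0 .
\end{equation*}
By Step 1, $G \in C^{\nicefrac{\alpha}{2},\alpha}(I''\times\T^d)$, with norm bounded by $\Vert F\Vert_{C^{\nicefrac{\alpha}{2},\alpha}} + \Vert v\Vert_{L^2}$.

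\textbf{Step 3 (global Schauder theory).} Extend $A$ and $G$ periodically in $x$, and consider the Cauchy problem on $I''\times\mathbb{R}^d$ with zero initial data. The classical Hölder theory for this problem (Krylov, Lieberman) provides a solution $\tilde w \in C^{1+\nicefrac{\alpha}{2},2+\alpha}$ with $\Vert\tilde w\Vert \le C\Vert G\Vert_{C^{\nicefrac{\alpha}{2},\alpha}}$. That theory is built as follows:
\begin{itemize}
\item freeze the coefficients at a point;
\item use the explicit heat-kernel Schauder estimate for constant, uniformly elliptic $A_0$, where ellipticity with constant $c$ makes all constants uniform;
\item absorb the perturbation $(A-A_0):\nabla^2 w$ on small cylinders, using that its size is controlled by $[A]_\alpha r^\alpha$;
\item interpolate to handle the lower-order norms.
\end{itemize}
Uniqueness of the periodic solution shows that $\tilde w$ is periodic. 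Uniqueness in the energy class $L^2H^2\cap H^1L^2$, obtained by testing the difference with $-\Delta$ of itself (an energy estimate that uses only the boundedness and ellipticity of $A$ and a Grönwall argument), gives $\tilde w = w$. Restricting to $I'$, where $w=v$, yields \eqref{eq:est schauder holder}. The constants depend only on $|I\setminus I'|$ (through $\eta'$ and the nesting of the intervals), $c$, $\Vert A\Vert_{C^\alpha}$, $\alpha$ and $d$.

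The main obstacle is the frozen-coefficient perturbation argument in Step 3. Because the paper's norm controls $A$ only in the isotropic class $C^\alpha$, and not in the parabolic class $C^{\nicefrac{\alpha}{2},\alpha}$, one must check that the relevant oscillation still scales like $r^\alpha$ on parabolic cylinders of size $r\le1$. It does, since $r^2\le r$ there. Given this, the rest is the standard covering and interpolation argument, so in practice I would cite \cite[Theorem 8.11.1]{Krylov} for this step rather than reproduce it.
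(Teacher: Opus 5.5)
Your reduction (upgrade $v$ via Lemma~\ref{lem:Lp schauder}, cut off in time, solve the Cauchy problem in H\"older spaces, then identify the two solutions) is a reasonable way to derive a statement that the paper does not prove at all. The paper simply quotes it from \cite[Theorem 1.19]{Wang} and \cite[Theorem 8.11.1]{Krylov}. The following parts of your argument are fine:
\begin{itemize}
\item Steps 1--2;
\item the existence and estimate part of Step 3;
\item your remark that isotropic $C^\alpha$ control of $A$ gives parabolic $C^{\nicefrac{\alpha}{2},\alpha}$ control on bounded time intervals.
\end{itemize}

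The step that fails as written is the identification $\tilde w=w$. Set $z:=\tilde w-w$ and test $\partial_t z=A:\nabla^2 z$ with $-\Delta z$. This gives
\[
\frac{1}{2}\frac{\der}{\der t}\Vert \nabla z\Vert_{L^2(\T^d)}^2+\int_{\T^d}(A:\nabla^2 z)\,\Delta z\d x=0 .
\]
For non-constant $A$, the integral admits no lower bound of the form $-C\Vert z\Vert^2_{H^1}$ with $C$ depending only on $c$ and $\Vert A\Vert_{L^\infty}$, for three reasons:
\begin{itemize}
\item Pointwise, $(A:M)\,\mathrm{tr}M$ can be negative for symmetric $M$.
\item The torus identity $\Vert\Delta z\Vert_{L^2}=\Vert\nabla^2 z\Vert_{L^2}$ only rescues this under a Cordes-type condition. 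That condition is automatic for $d=2$ but false in general for $d\geq 3$, and the paper applies the lemma with $d=3$ to $f$.
\item The usual route to coercivity, integrating by parts twice, puts derivatives on $A\in C^\alpha$, which are not available.
\end{itemize}
This is a real obstruction, not a technicality. Take $d=3$ and the bounded, uniformly elliptic coefficients $A=\mathrm{Id}+b\,\frac{x\otimes x}{|x|^2}$ with $b>3$. Then there is a radial $\phi\in C^\infty_c$ with $\int(A:\nabla^2\phi)\Delta\phi\d x<0$; take a near-extremiser of Hardy's inequality for $\phi'$. By scale invariance of $A$, the rescalings $\phi(N\cdot)$ violate any such bound as $N\to\infty$, and smoothing $A$ does not change this. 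These are also exactly the Gilbarg--Serrin coefficients for which uniqueness in $W^{2,2}$ fails.

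The uniqueness you need is true, but its proof must use the continuity of $A$. One way is to localise your energy estimate:
\begin{itemize}
\item take a partition of unity $\sum_k\chi_k^2=1$ subordinate to balls on which the oscillation of $A$ is at most $\varepsilon$;
\item freeze $A$ at the centres and use the constant-coefficient identity for $\chi_k z$;
\item absorb the commutator terms, which are of lower order.
\end{itemize}
For $\varepsilon$ small this gives
\[
\int_{\T^d}(A:\nabla^2 z)\,\Delta z\d x\geq\frac{1}{4c}\Vert\nabla^2 z\Vert^2_{L^2(\T^d)}-C\big(\Vert\nabla z\Vert^2_{L^2(\T^d)}+\Vert z\Vert^2_{L^2(\T^d)}\big),
\]
where $C$ depends on the modulus of continuity of $A$. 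Adding a small multiple of $\frac{\der}{\der t}\Vert z\Vert_{L^2}^2=2\int z\,A:\nabla^2 z\d x$ then closes a Gr\"onwall argument from $z(\inf I'')=0$.

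Two alternatives also work. Step 1 already gives $\partial_t w,\nabla^2 w\in L^p(I''\times\T^d)$ for every $p<\infty$, so $z$ is a strong solution with $\partial_t z,\nabla^2 z\in L^{d+1}$. A Krylov--Tso (parabolic ABP) maximum principle applied to $\pm z$ then yields $z\equiv 0$. Or you can cite unique solvability in $L^2(H^2)\cap H^1(L^2)$ for continuous leading coefficients. With either repair your argument is complete.
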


The final result of this subsection is a consequence of (and a non-quantitative version of) \cite[Theorem 8.2]{AuscherGehring} and is a classical Gehring-type ``improvement of integrability'' result for solutions of strongly parabolic \emph{systems} in divergence form with \emph{measurable} coefficients---we emphasise that, unlike in Lemmas \ref{lem:Lp schauder}--\ref{lem:Schauder Holder}, the entries of the matrix $A$ are \emph{not} required to be uniformly continuous, and $A$ need not be symmetric. In what follows, for $p \geq 1$, 
\begin{equation}
    p_* := \frac{(d + 2)p}{d + 2 + p}, 
\end{equation}
where $d$ is the spatial dimension of the domain. 

\begin{lemma}[Gehring-type Lemma]\label{lem:Gehring}
    Let $I\subseteq [0,\infty)$ be any compact interval, $c>1$, $N,d \in \mathbb{N}$, and suppose $g,G$ are vector fields satisfying 
    \begin{equation*}
        g \in [L^{p_*}(I\times\T^d)]^{N}, \qquad G \in [L^p(I\times\T^d)]^{N\times d} 
    \end{equation*}
    for some $p>2$. Assume the tensor $A \in [L^\infty(I\times\T^d)]^{N^2 d^2}$ satisfies the ellipticity condition 
    \begin{equation*}
         c^{-1} |w|^2 \leq \!\!\! \underbrace{A(t,x)w \cdot w}_{ \sum_{\alpha,\beta,i,j} A(t,x)^{\alpha\beta}_{ij} w_i^\alpha w_j^\beta } \!\!\! \leq c|w|^2 \qquad \forall w \in \mathbb{R}^{d \times N}, \quad \forall (t,x) \in I\times\T^d,
    \end{equation*}
    and suppose that $v \in [L^\infty(I\times\T^d) \cap L^2(I;H^1(\T^d))]^N$, with $\partial_t v \in [L^2(I;H^{-1}(\T^d))]^N$, satisfies the linear \emph{divergence form} equation 
    \begin{equation*}
        \partial_t v - \dv(A \nabla v) = g + \dv G. 
    \end{equation*}
Then, there exists $\delta>0$ depending only on $c,I,p,d,\Vert A \Vert_{L^\infty},$ such that, for any $I' \subset\subset I$ compactly contained in $I$, it holds $ \nabla v \in L^{2+\delta}(I' \times \T^d)$. 
\end{lemma}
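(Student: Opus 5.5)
The plan is to deduce the statement from the local, quantitative higher-integrability estimate of \cite[Theorem 8.2]{AuscherGehring}. To do so we first transfer the problem from $I\times\T^d$ to $\mathbb{R}^{1+d}$ and then cover $I'\times\T^d$ by finitely many parabolic cylinders. Concretely:
\begin{itemize}
\item[(i)] Extend $v,g,G,A$ periodically in $x$ to all of $\mathbb{R}^d$. The extended $v$ is still a weak solution of $\partial_t v-\dv(A\nabla v)=g+\dv G$ on $I\times\mathbb{R}^d$, with the same ellipticity constant $c$ and the same bound $\Vert A\Vert_{L^\infty}$. Moreover $g\in L^{p_*}_{\loc}$ and $G\in L^p_{\loc}$ on $I\times\mathbb{R}^d$.
\item[(ii)] Fix $r>0$ smaller than half of $\mathrm{dist}(I',\partial I)^{1/2}$. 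Cover $I'\times[0,2\pi]^d$ by finitely many cylinders $Q_r(z_0)=(t_0-r^2,t_0)\times B_r(x_0)$ whose doubles $Q_{2r}(z_0)$ lie in $I\times\mathbb{R}^d$.
\item[(iii)] Apply on each $Q_{2r}$ a local estimate of the form
\begin{equation*}
\Big(\fint_{Q_r}|\nabla v|^{2+\delta}\Big)^{\frac{1}{2+\delta}}\le C\Big(\fint_{Q_{2r}}|\nabla v|^2\Big)^{\frac12}+C\Big(\fint_{Q_{2r}}|G|^{2+\delta}\Big)^{\frac1{2+\delta}}+C\Big(\fint_{Q_{2r}}|g|^{(2+\delta)_*}\Big)^{\frac1{(2+\delta)_*}}.
\end{equation*}
For $\delta\le p-2$ the right-hand side is finite, because $\nabla v\in L^2$, $G\in L^p$, and $g\in L^{p_*}\subseteq L^{(2+\delta)_*}$ locally. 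Summing over the finite cover gives $\nabla v\in L^{2+\delta}(I'\times\T^d)$, and $\delta$ then depends only on $c,p,d,\Vert A\Vert_{L^\infty}$ (and on $I$ through the admissible scales).
\end{itemize}

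Should one wish to verify the cited local estimate directly rather than quote it, I would follow the classical Giaquinta--Struwe route. First I prove a Caccioppoli inequality on nested cylinders: test with $\eta^2(v-\bar v_{Q_{2r}})$, where $\eta$ is a space-time cutoff. The term involving $\dv G$ is handled by Young's inequality. The term involving $g$ is handled by H\"older's inequality together with the parabolic Sobolev embedding of Lemma \ref{lem:dibenedetto classic}; the exponent $p_*$ is exactly dual to that embedding, which is why it appears in the hypothesis. The time derivative is absorbed in the standard way via a Steklov average, which is legitimate since $\partial_t v\in L^2(I;H^{-1})$.

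The main obstacle will be controlling $\fint_{Q_{2r}}|v-\bar v_{Q_{2r}}|^2$ by a power of the gradient strictly smaller than $2$. Only a \emph{spatial} Sobolev--Poincar\'e inequality is available; the oscillation in time of the spatial weighted means $\int v(t)\eta^2\d x$ must be estimated through the equation itself, testing with $\eta^2$ in $x$ only. This yields the bound $r^{-1}\fint(|A\nabla v|+|G|)+r\fint|g|$. Combining it with the Sobolev--Poincar\'e inequality at exponent $2d/(d+2)<2$ gives a reverse H\"older inequality
\begin{equation*}
\fint_{Q_r}|\nabla v|^2\le \varepsilon\fint_{Q_{2r}}|\nabla v|^2+C\Big(\fint_{Q_{2r}}|\nabla v|^{q}\Big)^{\frac2q}+(\text{data terms}),\qquad q<2.
\end{equation*}
Here $L^\infty$ bounds on $v$ are not needed quantitatively; they merely guarantee that all quantities are finite. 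Gehring's lemma with small error term, in its parabolic-metric version (Giaquinta--Modica), then upgrades this reverse H\"older inequality to $L^{2+\delta}_{\loc}$ integrability. Since $A$ enters only through $c$ and $\Vert A\Vert_{L^\infty}$, neither symmetry nor continuity of $A$ is required, consistent with the statement.
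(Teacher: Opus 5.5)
Your proposal is correct and follows the paper's route. The paper gives no independent proof and obtains the lemma simply as a non-quantitative consequence of \cite[Theorem 8.2]{AuscherGehring}, and your periodic extension to $I\times\mathbb{R}^d$, finite covering of $I'\times\T^d$ by interior cylinders, and choice $\delta\le p-2$ (so that $L^{p_*}\subseteq L^{(2+\delta)_*}$ locally) are exactly the routine localisation steps left implicit there, with the Giaquinta--Struwe sketch being an optional self-contained substitute for the citation that is not needed.
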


\subsection{Notations} Throughout the paper, for $r>0$ and $d=2,3$, we have the following notation for the parabolic cylinders: 
\begin{equation}\label{eq:parabolic cylinder}
    Q_r(t) = (t-r,t+r)\times \T^d, \qquad Q_r^+(t) = (t,t+r) \times \T^d, \qquad Q_r^-(t) = (t-r,t)\times\T^d; 
\end{equation}
and we simply write $Q_r$ to denote $Q_r(0)$ and analogously for $Q_r^\pm=Q_r^\pm(0)$. We also use the notations $a \vee b = \max\{a,b\}$ and $a \wedge b = \min\{a,b\}$ for real numbers $a,b$. We write $L^p_{t,\loc} W^{k,q}_x$ (resp.~$L^p_{t,\loc} W^{k,q}_\xi)$ as shorthand for $L^p((t,T);W^{k,q}(\T^d))$  ($d=2,3$, resp.) for all $(t,T) \subset \subset (0,\infty)$, where $\subset\subset$ means \emph{compactly contained}. In the bootstrap argument of \S \ref{sec:bootstrap}, we use the notation $D^m v$ with $m\in\mathbb{N}$ to denote the tensor of all derivatives of order $m$ in space-angle (\textit{i.e.}~excluding the time variable). Throughout \S \ref{sec:moser}--\ref{sec:quantitative}, we write $u=1-\rho$.

\section{Main Theorem}\label{sec:main theorem}

Our main theorem is the following. It may be interpreted as a significant improvement of \cite[Theorem 1]{reg2}, where one no longer requires stronger assumptions on the initial data (\textit{cf.}~\cite[Definition 3.1]{reg2}) to deduce the smoothness of weak solutions. The result shows that all weak solutions are classical solutions, provided we exclude ``saturated'' initial data satisfying $\rho_0 \equiv 1$ a.e. The underlying mechanism (as previously mentioned) is that a strictly positive lower bound on $1-\rho$ makes \eqref{eq:main eqn} strongly parabolic, and thus self-regularising. 

\begin{thm}[Interior non-degeneracy and smoothness of weak solutions]\label{thm:reg}
Let $f$ be a weak solution of \eqref{eq:main eqn} with admissible initial data $f_0$ satisfying \eqref{eq:initial data conditions}. Then, there exists a constant $C > 1$ $($independent of $t,T$, the initial data, and the solution$)$ such that 
   \begin{equation}\label{eq:infinite speed of propagation in thm statement}
       \essinf\limits_{(t,T)\times\T^2}(1\!-\!\rho) \geq  C^{-1} \Vert 1\!-\!\rho_0 \Vert_{L^1(\T^2)} e^{-C( T + t^{-10} )} 
   \end{equation}
for all $0<t<T<\infty$. Moreover, if $\Vert 1-\rho_0\Vert_{L^1(\T^2)}>0$, then $f \in C^\infty((0,\infty)\!\times\!\T^3)$.  
\end{thm}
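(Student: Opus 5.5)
The proof has two parts. First we prove the quantitative lower bound \eqref{eq:infinite speed of propagation in thm statement} for $u=1-\rho$ by a Moser/Bombieri--Giusti argument. Then, if $\Vert 1-\rho_0\Vert_{L^1}>0$, the bound gives uniform parabolicity of \eqref{eq:main eqn} on $(t,T)\times\T^3$, and we bootstrap regularity.

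\textbf{The equation for $u$.} From \eqref{eq:rho eq}, $u$ satisfies
\[
\partial_t u - \Delta u = \dv(u\p), \qquad |\p|\le 1 .
\]
This is a linear equation with bounded drift. The regularity \eqref{eq:initial upgrade dt rho not the real one} justifies testing it with powers $u^{q}$ for $q\in\mathbb{R}$, after truncating $u$ away from $0$ by $u+\varepsilon$.

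\textbf{Moser bounds.} Standard Moser iteration on nested time intervals of the torus (no spatial cut-off is needed, by periodicity) gives:
\begin{itemize}
\item an $L^\infty$ bound for $u$ in terms of $L^q$ averages, for small $q>0$;
\item an $L^\infty$ bound for $u^{-1}$ in terms of $L^q$ averages of $u^{-1}$.
\end{itemize}
The drift $\dv(u\p)$ is absorbed by Young's inequality, at the cost of a factor $e^{C\tau}$ on time intervals of length $\tau$. We will track constants explicitly: they are polynomial in $\tau^{-1}$ and $\tau\|\p\|_\infty^2$.

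\textbf{Logarithmic estimate.} Testing with $u^{-1}$ (more precisely $(u+\varepsilon)^{-1}$) shows that $\log u$ satisfies
\[
\partial_t \log u - \Delta \log u - |\nabla\log u|^2 = \dv \p + \p\cdot\nabla\log u .
\]
Since $|\p|\le 1$, the terms $\dv\p$ and $\p\cdot\nabla\log u$ are controlled: the first after integration by parts against the test function, the second by $\tfrac12|\nabla \log u|^2+\tfrac12$. This gives the BG weak-type estimate on the super- and sub-level sets of $\log u - a(t)$, where $a(t)$ is the spatial average of $\log u(t)$; on the torus a weighted Poincaré inequality in $x$ suffices. The Bombieri--Giusti Lemma of \S\ref{sec:bombieri giusti} then yields a Harnack inequality
\[
\sup_{Q^-} u \le \mathcal{C}\,\inf_{Q^+} u,
\]
on a forward time cylinder $Q^+$ lying after a backward cylinder $Q^-$. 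The constant $\mathcal{C}$ is explicit in the time scale $\tau$, roughly $\mathcal{C}\lesssim \exp(C\tau^{-k})$.

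\textbf{From the Harnack inequality to \eqref{eq:infinite speed of propagation in thm statement}.}
\begin{enumerate}
\item Mass control. Integrating \eqref{eq:rho eq} shows $\int_{\T^2} u\d x$ is conserved. So for a.e.\ $s>0$, $\int_{\T^2} u(s)\d x=\|1-\rho_0\|_{L^1}$ (using Remark~\ref{rem:convergence initial data rho}).
\item Lower bound at positive times. Since $\sup u\ge |\T^2|^{-1}\int_{\T^2} u$, applying the Harnack inequality between time $t/2$ and time $t$ gives
\[
\inf_{(t,2t)} u \ge C^{-1} e^{-Ct^{-10}}\|1-\rho_0\|_{L^1} .
\]
The exponent $10$ comes from the polynomial growth of the Moser/BG constants in $\tau^{-1}$.
\item Propagation to time $T$. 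Chaining Harnack inequalities on unit time steps, or alternatively comparing $\inf u$ via the minimum principle with drift, gives the factor $e^{-CT}$.
\end{enumerate}

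\textbf{Smoothness.} On $(t,T)$ we now have $1-\rho\ge c>0$, so \eqref{eq:main eqn} is uniformly parabolic. The bootstrap proceeds as follows.
\begin{enumerate}
\item Apply the Gehring Lemma~\ref{lem:Gehring} to \eqref{eq:rho eq} and \eqref{eq:p eq} to obtain higher integrability of $\nabla\rho$ and $\nabla\p$.
\item Use Calderón--Zygmund estimates and Lemma~\ref{lem:dibenedetto classic} to upgrade $\rho$ to $W^{1,p}$ for large $p$, and hence to Hölder continuity.
\item Pass to non-divergence form via the Galerkin scheme (Lemma~\ref{lem:H2 f}). The cancellation of the mixed $\nabla f\cdot\nabla\rho$ terms gives the $H^2$ estimate.
\item Alternate the Schauder estimates (Lemmas~\ref{lem:Lp schauder} and~\ref{lem:Schauder Holder}) with difference quotients in time, following \cite[\S6--7]{reg2}, to obtain all derivatives.
\end{enumerate}

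\textbf{Main obstacle.} The hard step is the Harnack inequality with explicit dependence of the constants on the time scale. The rescaled drift is not scale invariant, and $\log u$ must be handled without John--Nirenberg, which is exactly what the Bombieri--Giusti route avoids. I would first verify the level-set estimate for $\log u$ with constants polynomial in $\tau^{-1}$, since everything else follows from it.
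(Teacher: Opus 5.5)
Your plan for the lower bound follows the paper: Moser bounds for $u+\varepsilon$ and $(u+\varepsilon)^{-1}$ with time-only cut-offs, a weak-type level-set estimate for $\log(u+\varepsilon)$ from the differential inequality for its spatial mean, Bombieri--Giusti to get the Harnack inequality of Lemma~\ref{prop:lower bound rho improved small times only}, and mass conservation as in Corollary~\ref{cor:lower bound depending on L1 norm}. The bootstrap also has the paper's architecture.

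The genuinely different step is the propagation to large times. The paper does not reuse the Harnack inequality there. Instead, Lemma~\ref{lem:propogation big times} propagates a pointwise lower bound given at one time $t_0$ by an $L^{2^k}$ iteration on $v=1/u$, closed with the recurrence Lemma~\ref{lem:tech iteration lemma}; this is the source of the factor $e^{-CT}$.

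Your chaining route is valid and shorter, since the Harnack constants depend only on $\Vert \p\Vert_{L^\infty}\le 1$ and $\T^2$, so they are invariant under time translation. In fact you need not chain at all. On every window, mass conservation gives $\esssup u \ge \Vert 1-\rho_0\Vert_{L^1(\T^2)}/|\T^2|$ on the earlier cylinder. So the Harnack inequality at the fixed scale $t=\tfrac12$, translated by arbitrary $s_0\ge 0$, gives a lower bound on $(\tfrac58,\infty)\times\T^2$ independent of $T$. That is stronger than \eqref{eq:infinite speed of propagation in thm statement}. What the paper's lemma offers in exchange is a stand-alone propagation estimate from an arbitrary lower bound at a single time.

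Your second option, the minimum principle, does not work as stated. In non-divergence form the equation for $u$ contains the zeroth-order term $(\dv \p)\,u$, and no lower bound on $\dv\p$ is available, so $c\,e^{-\lambda t}$ is not a subsolution. This is exactly why the paper's iteration uses only $|\p|\le 1$ in divergence form.

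Two further points need tightening.

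\emph{Normalisation in the logarithmic step.} It must be a single constant, the spatial mean of $\log(u+\varepsilon)$ at the intermediate time $t_0$, not a time-dependent $a(t)$. Bombieri--Giusti is applied to $e^{\mp a}(u+\varepsilon)^{\pm 1}$, which must still satisfy the Moser bounds, and the two halves of the Harnack inequality combine only if they share the same $a$. The one-sided weak-type bounds on the later and earlier cylinders then come from the monotonicity of $\bar w_\varepsilon(t)-\bar w_\varepsilon(t_0)-C_*(t-t_0)$, as in Lemma~\ref{lem:log level sets}; the plain Poincar\'e--Wirtinger inequality on $\T^2$ suffices.

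\emph{Prerequisites in the bootstrap.} Lemma~\ref{lem:Gehring} cannot be applied to \eqref{eq:p eq} straight away. A priori one only knows $(1-\rho)\nabla\p\in L^{3/2}$, and $\p$ is not an admissible test function. So you first need the $H^1$ estimate for $\p$ via mollification with commutator control, and from it $\nabla\rho\in L^4$, so that the forcing lies in $L^p$ with $p>2$ (Lemma~\ref{lem:H1 p}). Likewise, the Galerkin $H^2$ estimate for $f$ requires $f\in L^2_{t,\loc}H^1_\xi$ beforehand (Lemma~\ref{lem:f parabolic space}). The paper also proves local boundedness of $f$ (Proposition~\ref{prop:f bounded moser}) to handle $f\Delta\rho$ in the $L^p$-Schauder loop.
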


Of course, we do not claim optimality in the exponents on the right-hand side of \eqref{eq:infinite speed of propagation in thm statement}. Note that Theorem \ref{thm:reg} allows for degenerate choices of initial data with $\rho_0 = 1$ on a Borel set of positive measure (provided it is not of full measure), and we remind the reader that the terminology ``\emph{$f$ is a weak solution with initial data $f_0$}'' was explained in Definition \ref{def:weak sol}. 

\begin{rem}[Limiting behaviour as $T\to\infty$]
    Note that the lower bound provided by \eqref{eq:infinite speed of propagation in thm statement} vanishes in the limit as $T \to \infty$, thus leading to (potentially) degenerate diffusion at infinite times. This observation is compatible with the segregated (and hence non-smooth) stationary states that were numerically obtained in \cite{bbes model}; \textit{cf.~}Figures 4 and 5 therein. 
\end{rem}

\section{The Bombieri--Giusti Lemma}\label{sec:bombieri giusti}

The following lemma is fundamental when proving Harnack inequalities (it will be used subsequently to prove the sought lower bound on $1-\rho$ in \S \ref{sec:quantitative}, which then implies that \eqref{eq:main eqn} is strongly parabolic for positive times). It applies to any non-negative measurable function, not necessarily the solution of a particular equation. The result is classical and the underlying argument is well-known to specialists (\textit{cf.}~\cite{BG,MoserBG}); of course we do not claim novelty, but we include a proof tailored to our setting (with slightly different statement and proof to---but closely following---the one in \cite[\S 3.2]{BonforteDolbeaultNazaretSimonov}) to make the paper self-contained.

\begin{lemma}[Bombieri--Giusti Lemma]\label{lem:bombieri giusti}
 Let $\beta>0$, $\tilde r \in (0,1]$, and $\{\mathcal{Q}_r\}_{r \in (0,\tilde r]}$ be a family of nested subsets of $\mathbb{R}^{d+1}$ with $\mathcal{Q}_{r} \subset \mathcal{Q}_R$ for all $0<r<R\leq \tilde r$ and $|\mathcal{Q}_1|<\infty$. Assume that $\varphi \in L^1(\mathcal{Q}_1)$ is non-negative  and satisfies$:$
    \begin{enumerate}
        \item[(i)] there exists a positive constant $C_0$ independent of $\beta,r,R,\tilde r,p$ such that 
    \begin{equation}\label{eq:first est adapt bombieri giusti}
       \Vert \varphi \Vert_{L^\infty(\mathcal{Q}_r)} \leq \left(\frac{C_0 }{(R-r)^\beta}\right)^{\frac{1}{p}} \Vert \varphi \Vert_{L^p(\mathcal{Q}_R)} \quad \text{for all } 0 < r < R \leq \tilde r \text{ and all } p \in (0,1); 
    \end{equation}
        \item[(ii)] there exists a positive constant $C_1 >  e^{-1}$ such that, for all $r \in (0,\tilde r)$, 
    \begin{equation}\label{eq:assump bombieri giusti on log set}
        \big| \{ (t,y) \in \mathcal{Q}_r : \, \log \varphi(t,y) > s \} \big| \leq \frac{C_1}{s} \qquad \text{for all } s > 0. 
    \end{equation}
    \end{enumerate}
      Then, there exists a positive constant $C_2 = C_2(C_0,C_1,\beta)$, independent of $r,\tilde r$, such that 
    \begin{equation}\label{eq:bombieri giusti final bound i}
        \Vert \varphi \Vert_{L^\infty(\mathcal{Q}_r)} \leq \exp \big( C_2 (\tilde r -r)^{-2\beta} \big) \qquad \text{for all } r \in (0,\tilde r). 
    \end{equation}
\end{lemma}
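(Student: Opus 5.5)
We follow the classical Bombieri--Giusti iteration in the form of \cite[\S 3.2]{BonforteDolbeaultNazaretSimonov}. Fix $r\in(0,\tilde r)$ and, for $\sigma\in[r,\tilde r)$, set $\Psi(\sigma):=\log^+\Vert\varphi\Vert_{L^\infty(\mathcal Q_\sigma)}$, which is non-decreasing in $\sigma$. If $\Psi(\sigma)$ is small (say $\le$ a fixed constant), there is nothing to prove, so the work is to show $\Psi(r)\le C_2(\tilde r-r)^{-2\beta}$.

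\textbf{Key one-step inequality.} Take $r\le\rho<\sigma<\tilde r$ and $p\in(0,1)$. Assumption (i), with $(r,R)=(\rho,\sigma)$, gives
\[
\Vert\varphi\Vert_{L^\infty(\mathcal Q_\rho)}^p\le \frac{C_0}{(\sigma-\rho)^\beta}\int_{\mathcal Q_\sigma}\varphi^p .
\]
We split $\mathcal Q_\sigma$ into $\{\log\varphi\le \tfrac12\Psi(\sigma)\}$ and its complement. On the first set we have $\varphi^p\le e^{p\Psi(\sigma)/2}$, and this piece contributes at most $|\mathcal Q_1|e^{p\Psi(\sigma)/2}$. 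On the second set we use $\varphi^p\le e^{p\Psi(\sigma)}$ together with assumption (ii) at level $s=\Psi(\sigma)/2$ (with (ii) applied at radius $\sigma<\tilde r$); this piece contributes at most $\frac{2C_1}{\Psi(\sigma)}e^{p\Psi(\sigma)}$. Taking logarithms,
\[
\Psi(\rho)\le \frac1p\log\frac{C_0}{(\sigma-\rho)^\beta}+\frac1p\log\Big(|\mathcal Q_1|e^{p\Psi(\sigma)/2}+\tfrac{2C_1}{\Psi(\sigma)}e^{p\Psi(\sigma)}\Big).
\]
Now choose $p=\frac{2}{\Psi(\sigma)}\log\big(\frac{\Psi(\sigma)}{C'}\big)$, where $C'$ is chosen so that the two terms in the bracket balance, as in the standard proof. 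If $\Psi(\sigma)$ is large, this $p$ lies in $(0,1)$. The estimate then becomes
\[
\Psi(\rho)\le \tfrac34\Psi(\sigma)+\frac{C\,\Psi(\sigma)}{\log\Psi(\sigma)}\log\frac{C}{(\sigma-\rho)}.
\]
By Young's inequality, and using $\log x\le x^{\varepsilon}$ type bounds, this yields
\[
\Psi(\rho)\le \tfrac78\Psi(\sigma)+\frac{C}{(\sigma-\rho)^{2\beta}},
\]
where $C=C(C_0,C_1,\beta,|\mathcal Q_1|)$. The exponent $2\beta$ is generous here. In the complementary case, when $\Psi(\sigma)$ is below a threshold, the inequality holds trivially.

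\textbf{Conclusion by iteration.} The bound $\Psi(\rho)\le\theta\Psi(\sigma)+A(\sigma-\rho)^{-2\beta}$ with $\theta<1$ holds for all $r\le\rho<\sigma<\tilde r$, and $\Psi$ is bounded on $[r,\sigma']$ for each $\sigma'<\tilde r$ by (i) with $p\to$ any value and $\varphi\in L^1$. The standard iteration lemma (Giaquinta's lemma, applied along $\rho_k=r+(1-2^{-k})(\tilde r-r)$, where the sum $\sum\theta^k2^{2\beta k}$ converges after shrinking $\theta$ if necessary) then gives $\Psi(r)\le C_2(\tilde r-r)^{-2\beta}$, which is the desired bound. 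Boundedness of $\Psi$ at intermediate radii follows from (i) with $p=1/2$ and $\varphi\in L^1$ via H\"older on $\mathcal Q_1$. Alternatively, one can work directly with $\Psi(\sigma)$ for $\sigma$ near $\tilde r$ and let $\sigma\uparrow\tilde r$ with care.

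\textbf{Main obstacle.} The main difficulty is the choice of $p$ and the precise bookkeeping. One must ensure $p<1$, and must absorb the $\frac{\Psi}{\log\Psi}\log(\sigma-\rho)^{-1}$ cross term into a fraction of $\Psi$ plus a power of $(\sigma-\rho)^{-1}$. Since the constant is only required to depend on $C_0$, $C_1$ and $\beta$, the dependence on $|\mathcal Q_1|$ must be tracked as well. Presumably it is absorbed using $|\mathcal Q_1|$ bounded in the application, or via the assumption $C_1>e^{-1}$.
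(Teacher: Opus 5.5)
Your proof is correct in substance, and its core is the paper's Step 2. You use the same split of $\mathcal Q_\sigma$ at level $\tfrac12\Psi(\sigma)$ and the same balancing exponent $p=\frac{2}{\Psi(\sigma)}\log\frac{\Psi(\sigma)}{2C_1}$, so $C'=2C_1$. The hypothesis $C_1>e^{-1}$ is there only to make this $p<1$ once $\Psi(\sigma)>2C_1$; it plays no role in absorbing $|\mathcal Q_1|$. The paper's constant $2C_1C_0^2(1+|\mathcal Q_1|)^2$ depends on $|\mathcal Q_1|$ as well, which is harmless because that quantity is fixed in the application.

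The differences are in the bookkeeping. To absorb the cross term, the paper keeps $\tfrac12\Phi(R)$ and $\beta$ explicit and uses a two-case argument rather than Young's inequality. If $\log\big(C_0C_3(R-r)^{-\beta}\big)\le\tfrac12\log\big(\Phi(R)/2C_1\big)$, then $\Phi(r)\le\tfrac34\Phi(R)$. Otherwise $\Phi(r)\le\Phi(R)<2C_1C_0^2C_3^2(R-r)^{-2\beta}$. Keeping $\beta$ out of the constant is what fixes the power at exactly $2\beta$; with your lumped form $C\log\frac{C}{\sigma-\rho}$ the resulting power is not controlled.

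For the iteration, the paper uses radii $r_n=\tilde r-\frac{\tilde r-r}{n+1}$, whose gaps decay only like $n^{-2}$. Then $\sum_j(\tfrac34)^j[(j+1)(j+2)]^{2\beta}$ converges for every $\beta>0$. The remainder $(\tfrac34)^n\Phi(r_n)$ tends to $0$ because (i) with $R=\tilde r$ shows $\Phi(r_n)=O(\log n)$. So any contraction factor $\theta<1$ works directly, with no separate limit $\sigma'\uparrow\tilde r$.

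Your dyadic radii need $\theta\,2^{2\beta}<1$, for example $\theta<2^{-10}$ when $\beta=5$ as in the application. This cannot come from re-tuning your one-step estimate, which always contains the term $\tfrac12\Psi(\sigma)$. The step is nevertheless fine if you justify ``shrinking $\theta$'' in one of two ways:
\begin{itemize}
\item compose the one-step inequality over $m$ equally spaced intermediate radii, which gives $\Psi(\rho)\le\theta^m\Psi(\sigma)+\frac{A\,m^{2\beta}}{1-\theta}(\sigma-\rho)^{-2\beta}$;
\item or use Giaquinta's lemma in its actual form, with gaps proportional to $\tau^k$ for some $\tau\in(\theta^{1/(2\beta)},1)$ instead of $2^{-k}$.
\end{itemize}
As written, this is the one step you leave unjustified.
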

We emphasise that $C_0,C_1$ are both independent of $p,r,\tilde r,R$ in the assumptions \eqref{eq:first est adapt bombieri giusti}--\eqref{eq:assump bombieri giusti on log set}; and hence $C_2$ also is. Furthermore, the sets $\mathcal{Q}_r$ may be more general subsets than the parabolic cylinders of \eqref{eq:parabolic cylinder}. We highlight that the content of the result is not only the local boundedness of $\varphi$ \emph{per se}, but the precise dependence on $r,\tilde r$ given by \eqref{eq:bombieri giusti final bound i}.

\begin{proof}
    Define the increasing function $     r \mapsto   \Phi(r) := \esssup_{\mathcal{Q}_r} \log \varphi$. This function is well-defined since $\varphi$ is non-negative; however, \emph{a priori} it may be infinite. We will prove that the assumptions of the lemma imply that, for any $r \in (0,\tilde r)$, either: 
    \begin{itemize}
        \item[(a)] $\Phi(r) \leq 2 C_1$, in which case there is nothing to prove and we set $C_2 = 2C_1$; 
        \item[(b)] or, $\Phi(r) > 2 C_1$, in which case it holds 
        \begin{equation}\label{eq:iterate bombieri giusti}
            \Phi(r) \leq \frac{3}{4} \Phi(R) + \frac{C}{(R-r)^{2\beta}} \qquad \forall R \in (r,\tilde r], 
        \end{equation}
        where the positive constant $C=C(C_0,C_1)$ is independent of $r,R,\tilde{r}$. 
    \end{itemize}
    The rest of the proof is divided into two steps.

    \smallskip 

    \noindent 1. \textit{Proof of \eqref{eq:bombieri giusti final bound i} in case (b)}. We prove the result \eqref{eq:bombieri giusti final bound i} in the case where (b) holds. Suppose that case (b) holds at a given $r \in (0,\tilde r)$. Define the increasing sequence of radii $\{r_n\}_{n \geq 0}$:
\begin{equation*}
    r_n = \tilde r - \frac{\tilde r-r}{1+n} \qquad \forall n \in \mathbb{N} \cup \{0\} \implies r_{n+1} - r_n = \frac{\tilde r-r}{(n+2)(n+1)}, 
\end{equation*}
whence $r = r_0 < r_1 < \dots < r_n \nearrow \tilde r$. Note that, if there exists $n\in\mathbb{N}$ such that $\Phi(r_n) \leq 2C_1$, then since $\Phi$ is increasing, we get $\Phi(r)=\Phi(r_0) \leq \Phi(r_n) \leq 2C_1$, and we are back to case (a), where there is nothing to prove. Thus, henceforth we assume $\Phi(r_n) > 2C_1$ for all $n\in\mathbb{N}$, whence \eqref{eq:iterate bombieri giusti} holds with $r=r_n$, $R=r_{n+1}$ ($n\in\mathbb{N}_{\geq 0}$). 
By induction on \eqref{eq:iterate bombieri giusti}, 
    \begin{equation}\label{eq:induction on the bound bombieri giusti}
    \begin{aligned}
        \Phi(r) = \Phi(r_0) &\leq (\frac{3}{4})^n \Phi(r_n) + C \sum_{j=0}^{n-1} (\frac{3}{4})^{j} \frac{1}{(r_{j+1} - r_j)^{2\beta}}, 
    \end{aligned}
    \end{equation}
    and, using the assumption \eqref{eq:first est adapt bombieri giusti} with $R=\tilde r$, the first term on the right-hand side decays: 
      \begin{equation*}
  \begin{aligned}
      (\frac{3}{4})^n  \Phi(r_n) =  (\frac{3}{4})^n \log\Vert \varphi \Vert_{L^\infty(\mathcal{Q}_{r_n})} &\leq (\frac{3}{4})^n \log\left( \left(\frac{C_0 }{(\tilde r-r_n)^\beta}\right)^{\frac{1}{p}} \Vert \varphi \Vert_{L^p(\mathcal{Q}_1)} \right) \\ 
      &=  (\frac{3}{4})^n \big( \frac{1}{p} \log C_0 +  \log\Vert \varphi \Vert_{L^p(\mathcal{Q}_1)}  \big) + \frac{\beta}{p}(\frac{3}{4})^n \log (\frac{1+n}{\tilde r-r}) , 
  \end{aligned}
    \end{equation*}
so $ (\frac{3}{4})^n  \Phi(r_n) \to 0$,  for any fixed $p \in (0,1)$; note that $\Vert \varphi \Vert_{L^p(\mathcal{Q}_1)} \leq |\mathcal{Q}_1|^{\nicefrac{1}{p}-1}\Vert \varphi \Vert_{L^1(\mathcal{Q}_1)} < \infty$ for $p \in (0,1)$ by H\"older's inequality. By letting $n\to \infty$ in \eqref{eq:induction on the bound bombieri giusti}, we recover 
       \begin{equation*}
     \Phi(r) \leq   C \sum_{j=0}^{\infty} (\frac{3}{4})^j \frac{1}{(r_{j+1} - r_j)^{2\beta}} = \underbrace{C\bigg(   \sum_{j=0}^{\infty} (\frac{3}{4})^{j} (j+2)^{2\beta}(j+1)^{2\beta} \bigg)}_{=:C'} (\tilde r-r)^{-2\beta}, 
    \end{equation*}
where $C'$ is independent of $r,\tilde{r}$; note that the infinite sum manifestly converges for all $\beta > 0$ by the d'Alembert ratio test. It then follows (recalling the non-negativity of $\varphi$) that 
\begin{equation*}
    \Vert \varphi \Vert_{L^\infty(\mathcal{Q}_r)} = \esssup_{\mathcal{Q}_r} \exp \log \varphi = e^{\Phi(r)} \leq e^{C' (\tilde r-r)^{-2\beta}}. 
\end{equation*}
We set $C_2 = C'$, which depends only on $C_0,C_1,\beta$, and the proof of \eqref{eq:bombieri giusti final bound i} is complete.

\smallskip 

\noindent 2.i. \textit{Proof of \eqref{eq:iterate bombieri giusti}$:$ $L^p$ estimate for the logarithm}. Our goal in this step is to obtain (for $\varphi$ the function in the statement of Lemma \ref{lem:bombieri giusti} with $r \in (0,\tilde r)$ and $\Phi(r)$ satisfying case (b) at the beginning of the proof) the following estimate: 
 \begin{equation}\label{eq:tricky bombieri giusti ii}
        \begin{aligned}
           \Vert \varphi \Vert^{p_R}_{L^{p_R}(\mathcal{Q}_R)} \leq C_3 e^{\frac{1}{2}p_R\Phi(R)} \qquad \forall R \in (r,\tilde r], 
        \end{aligned}
    \end{equation}
where the positive constant $C_3$ is independent of $p_R,\beta,r,\tilde r,R$, and $p_R $ is prescribed by 
\begin{equation}\label{eq:choice of p bombieri giusti}
    p_R = \frac{2}{\Phi(R)}\log \frac{\Phi(R)}{2C_1}. 
\end{equation}
Note that $p_R \in (0,1)$ since the assumption $\Phi(r) > 2C_1$ with $C_1 > e^{-1}$ implies $\Phi(R) \geq \Phi(r) > 2C_1$ since $\Phi$ is increasing, and (since $\sup_{s \in (1,\infty)} s^{-1}\log s = e^{-1}$) 
\begin{equation}\label{eq:p of r is less than one}
   0 < \frac{2}{\Phi(R)}\log \frac{\Phi(R)}{2C_1} = \frac{1}{C_1} \frac{2C_1}{\Phi(R)}\log \frac{\Phi(R)}{2C_1} < e \sup_{s \in (1,\infty)} s^{-1}\log s = 1, 
\end{equation}
Define $
    A_R := \{(t,y) \in \mathcal{Q}_R : \, \log \varphi(t,y) > \frac{1}{2}\Phi(R)\} $ and $ B_R := \{(t,y) \in \mathcal{Q}_R : \, \log \varphi(t,y) \leq \frac{1}{2}\Phi(R) \}$. For all $p > 0$, it holds $\esssup_{\mathcal{Q}_R} \varphi^p =  \esssup_{\mathcal{Q}_R} \exp(p \log \varphi )  = e^{p \Phi(R)}$, whence 
    \begin{equation}\label{eq:tricky bombieri giusti}
        \begin{aligned}
           \Vert \varphi \Vert^p_{L^p(\mathcal{Q}_R)} \!=\!  \iint_{A_R}\!\!\!\varphi^p  \!+\! \iint_{B_R} \!\!\!\varphi^p   \!\leq\! |A_R| e^{p \Phi(R)} \!+\! |B_R| e^{ \frac{p}{2}\Phi(R)} \!\leq\! \frac{2 C_1}{\Phi(R)}e^{p \Phi(R)} \!+\! |\mathcal{Q}_1| e^{\frac{p}{2}\Phi(R)}, 
        \end{aligned}
    \end{equation}
where we used \eqref{eq:assump bombieri giusti on log set} for the first term, and bluntly bounded $|B_R| \leq |\mathcal{Q}_R| \leq |\mathcal{Q}_1|$. We emphasise that the above estimate holds for all possible choices of $p > 0$ and $R \in (r,\tilde r]$; including if we choose $p$ to depend on $R$ as per \eqref{eq:choice of p bombieri giusti}. Next, we estimate the first term on the right-hand side of \eqref{eq:tricky bombieri giusti} by the final term. Setting $p=p_R$ as per \eqref{eq:choice of p bombieri giusti}, we get 
\begin{equation*}
    \frac{2 C_1}{\Phi(R)}e^{p_R \Phi(R)} = e^{\frac{p_R}{2} \Phi(R)}, 
\end{equation*}
and we deduce from \eqref{eq:tricky bombieri giusti} that, defining the positive constant $C_3=1+|\mathcal{Q}_1|$ independent of $p,d,\beta,r,R,\tilde r$, there holds \eqref{eq:tricky bombieri giusti ii} with $p_R$ prescribed by \eqref{eq:choice of p bombieri giusti}.

\smallskip 

\noindent 2.ii. \textit{Proof of \eqref{eq:iterate bombieri giusti}$:$ estimate on $\Phi(r)$}. Using \eqref{eq:first est adapt bombieri giusti} with the choice $p=p_R \in (0,1)$ and 
\eqref{eq:tricky bombieri giusti ii}, the monotonicity of $\log$ implies, since $ \Phi(r) = \frac{1}{p_R} \log (\Vert \varphi \Vert_{L^\infty(\mathcal{Q}_r)}^{p_R} )$, 
\begin{equation}\label{eq:almost there bombieri giusti}
    \begin{aligned}
        \Phi(r) \leq \frac{1}{p_R}\log \bigg( \frac{C_0}{(R-r)^\beta} C_3 e^{\frac{1}{2}p_R\Phi(R)} \bigg) &\leq \frac{1}{2}\Phi(R) + \frac{1}{p_R} \log \Big( \frac{C_0 C_3}{(R-r)^\beta} \Big) \\ 
        &= \frac{1}{2}\Phi(R) \bigg( 1 + \frac{\log \big( \frac{C_0 C_3}{(R-r)^\beta} \big)}{\log \big(\frac{\Phi(R)}{2C_1}\big)}\bigg), 
    \end{aligned}
\end{equation}
where we used the explicit representation \eqref{eq:choice of p bombieri giusti} for $p_R$ to obtain the final line. We divide the analysis into two cases. The first case is when it holds: ${\log ( \frac{C_0 C_3}{(R-r)^\beta} )}/{\log (\frac{\Phi(R)}{2C_1})} \leq \frac{1}{2}$, 
in which case we deduce from \eqref{eq:almost there bombieri giusti} that it holds $\Phi(r) \leq \frac{3}{4}\Phi(R)$, and hence \eqref{eq:iterate bombieri giusti} is satisfied. On the other hand, if instead it holds ${\log ( \frac{C_0 C_3}{(R-r)^\beta} )}/{\log (\frac{\Phi(R)}{2C_1})} > \frac{1}{2}$, then (using again $\Phi(R) \geq \Phi(r) > 2C_1$) we deduce $\Phi(r) \leq \Phi(R) < \frac{2 C_1 C_0^2 C_3^2}{(R-r)^{2\beta}}$, where we used that $\Phi$ is an increasing function of the radius to write the first inequality. In either case, \eqref{eq:iterate bombieri giusti} holds with $C(C_0,C_1)=2C_1 C_0^2 (1+|\mathcal{Q}_1|)^2$ using the definition of $C_3$. The proof is complete.     
\end{proof}

\section{Moser-type Local $L^\infty$ Estimates}\label{sec:moser}

In this section, we derive the usual Moser-type local $L^\infty$ estimates on the quantity $(1-\rho)$ for which we wish to show infinite speed of propagation; here, \emph{local} means local-in-time, \textit{i.e.~}away from $t=0$. This will consist in proving Caccioppoli-type inequalities for this quantity (and its reciprocal $(1-\rho)^{-1}$) and estimates on the measure of the super-/sub-level sets of its logarithm; in essence, we wish to show that $(1-\rho)$ satisfies the assumptions of the Bombieri--Giusti Lemma \ref{lem:bombieri giusti}. We follow the presentation in \cite[\S 3.2]{BonforteDolbeaultNazaretSimonov}, which is based on Moser's iteration, as opposed to the De Giorgi method used in \textit{e.g.~}\cite[Proposition 5.2]{reg2}. 

The main focus of the subsections that follow is the quantity $u:= 1-\rho$, for which we wish to show an instantaneous positivity result. We rewrite \eqref{eq:rho eq} in terms of $u$: 
\begin{equation}\label{eq:u eq}
    \partial_t u - \dv(\p u) = \Delta u. 
\end{equation}
Since $0 \leq \rho \leq 1$ a.e., we have $0 \leq u \leq 1$ a.e. Moreover, the regularity \eqref{eq:initial upgrade dt rho not the real one} implies 
\begin{equation}\label{eq:initial upgrade dt rho u}
\partial_t u , \nabla^2 u \in L^{\nicefrac{3}{2}}((t,T) \!\times\! \T^2) \qquad \forall 0<t<T. 
\end{equation}

\subsection{Moser lower bound }

The main estimate of this subsection is the following (Lemma \ref{lem:moser 1/u}). For clarity of presentation, the analysis is performed for weak solutions of \eqref{eq:u eq} studied on the parabolic half-cylinder $Q_r^-$ centered at $t=0$ (\textit{cf.~}\eqref{eq:parabolic cylinder}); however, by translation these estimates hold on any cylinder of the form $Q^-_r(t)=(t-r,t)\times\T^2$.

As per \eqref{eq:initial upgrade dt rho u}, we assume that the weak solutions $u$ of \eqref{eq:u eq} satisfy: 
\begin{equation}\label{eq:initial upgrade dt rho}
   0 \leq u \leq 1 \quad \text{a.e.}, \qquad \partial_t u, \nabla^2 u \in L^{\nicefrac{3}{2}}_\loc(Q), 
\end{equation}
where $Q$ is any open cylinder which is compactly contained within the domain of definition of the solution (which we do not specify explicitly, for succinctness). 

\begin{lemma}[$L^\infty$ bound $(u+\varepsilon)^{-1}$]\label{lem:moser 1/u}
    Let $u$ be a weak solution of \eqref{eq:u eq} satisfying \eqref{eq:initial upgrade dt rho}, $\varepsilon,p \in (0,1)$, $0 < r < R \leq 1$. There exists a constant $C$ independent of $p,r,R,\varepsilon$ such that 
    \begin{equation}\label{eq:est on 1/u for future times}
   \begin{aligned}
     \Vert (u+\varepsilon)^{-1} \Vert_{L^{\infty}(Q_r^-)} \leq \left(\frac{C }{(R-r)^2} \right)^{\frac{1}{p}}\Vert (u+\varepsilon)^{-1} \Vert_{L^{p}(Q_R^-)}.
   \end{aligned} 
\end{equation}
\end{lemma}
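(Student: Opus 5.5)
Set $w := u+\varepsilon \in [\varepsilon, 1+\varepsilon]$. Since $w$ is bounded away from zero, every negative power $w^{-q}$, $q>0$, is a legitimate test function. The regularity \eqref{eq:initial upgrade dt rho} lets us test \eqref{eq:u eq} (with $u$ replaced by $w$, since $\partial_t w=\partial_t u$, $\nabla w = \nabla u$) against $\eta^2 w^{-q-1}$, where $\eta=\eta(t)$ is a cutoff in time only. No spatial cutoff is needed because we are on $\T^2$ with periodic conditions, and the cylinders $Q_r^-=(-r,0)\times\T^2$ are full in space. One point needs care: the drift is $\dv(\p u)=\dv(\p w)-\varepsilon\dv \p$. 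I would avoid this splitting by keeping $\p u$ and using $|u|\le w$. This gives the bound $|\p u|\le |\p| w \le w$, by \eqref{eq:polarisation bound}, and that is all we need.

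\textbf{Step 1 (energy/Caccioppoli inequality).} Write $v := w^{-q/2}$ with $q>0$. Testing with $-\eta^2 w^{-q-1}$, integrating by parts in $x$ and integrating over $(-R,\tau)$ for $\tau\in(-r,0)$, I expect
\[
\sup_{\tau}\int_{\T^2}\eta^2 v^2\,\mathrm{d}x + \iint \eta^2 |\nabla v|^2 \le C(1+q)^2 \iint \big(\eta^2+\eta|\eta'|\big) v^2 .
\]
The diffusion produces $(q+1)\iint \eta^2 w^{-q-2}|\nabla w|^2 = \tfrac{4(q+1)}{q^2}\iint\eta^2|\nabla v|^2$. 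The time derivative produces $\tfrac1q\frac{\mathrm d}{\mathrm dt}\int \eta^2 w^{-q}$ with the right sign; this is the backward half-cylinder, so the cutoff vanishes at $t=-R$. The drift term is $(q+1)\iint \eta^2 w^{-q-2}\,\p u\cdot\nabla w$. Using $|\p u|\le w$, it is bounded by $(q+1)\iint\eta^2 w^{-q-1}|\nabla w|$, and Young's inequality absorbs it into the diffusion term with a remainder $C(q+1)\iint \eta^2 w^{-q}$. After multiplying through by $q$ the constants are polynomial in $(1+q)$. The cutoff satisfies $|\eta'|\le C/(R'-r')$, and $R-r\le1$, so the $\eta^2$ term is dominated by $(R'-r')^{-2}$ (in fact $(R'-r')^{-1}$ suffices). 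Hence the right-hand side is $C(1+q)^2 (R'-r')^{-2}\iint_{Q_{R'}^-} v^2$.

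\textbf{Step 2 (Moser iteration).} The interpolation Lemma \ref{lem:dibenedetto classic} with $d=2$, $p=m=2$ gives $\|v\|_{L^4(Q_{r'}^-)}^2 \le C(\text{energy})$; the constant is uniform since $|I|\le1$. Hence
\[
\|w^{-1}\|_{L^{2q}(Q_{r'}^-)} \le \big(C(1+q)^2 (R'-r')^{-2}\big)^{1/q}\|w^{-1}\|_{L^{q}(Q_{R'}^-)} .
\]
Iterate with $q_k = p\,2^k$ and radii $r_k = r + (R-r)2^{-k}$. The product of the constants converges, because $\sum 2^{-k}\log(C4^k(1+p2^k)^2)<\infty$. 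This gives $\|w^{-1}\|_{L^\infty(Q_r^-)}\le (C(R-r)^{-2})^{c/p}\|w^{-1}\|_{L^p(Q_R^-)}$ with $C$ independent of $p\in(0,1)$. The sum $\sum 1/q_k = 2/p$ only enlarges the constant inside the parentheses.

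\textbf{Step 3 (main obstacle: normalising the exponent to exactly $1/p$).} The iteration yields an exponent of the form $\gamma/p$ on $(R-r)^{-2}$, with $\gamma = \sum_k 2^{-k} = 2$, rather than $1/p$. I would first try to absorb this using the form in which \eqref{eq:first est adapt bombieri giusti} is used later, where only some power $\beta$ matters. The statement, however, asks for the power $2$. To get it, I would run the standard interpolation trick: combine the bound for $p\ge$ some fixed level with Young's inequality, $\|w^{-1}\|_{L^q}\le \|w^{-1}\|_{L^\infty}^{1-p/q}\|w^{-1}\|_{L^p}^{p/q}$, on intermediate radii, and then apply the iteration lemma for $\sup$ over nested radii. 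This yields $\|w^{-1}\|_{L^\infty(Q_r^-)}\le (C(R-r)^{-\beta'})^{1/p}\|w^{-1}\|_{L^p(Q_R^-)}$. The delicate bookkeeping is making $\beta'=2$. I would first try to achieve this by iterating with $q_k$ growing geometrically but with radii gaps chosen proportional to the $q_k$-weights, so that $\sum_k (1/q_k)\cdot 2 = 2/p$ becomes exactly $1/p$ times $2$ after renormalising. If only a larger power comes out, I would note that $(R-r)\le1$ lets us state the lemma with any larger power, which is all Lemma \ref{lem:bombieri giusti} requires.
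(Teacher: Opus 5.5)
Your Steps 1--2 follow the paper's proof almost verbatim: a time-only cutoff, the test function $-\eta^2(u+\varepsilon)^{-q-1}$, Lemma~\ref{lem:dibenedetto classic} to reach $L^4$, and iteration with $q_k=p2^k$ and gaps $(R-r)2^{-k-1}$. Bounding the drift via $|\p u|\le u+\varepsilon$ is a slightly cleaner substitute for the paper's splitting $u=(u+\varepsilon)-\varepsilon$. The gap is in the power of $R-r$. In Step~1 you deliberately weaken the Caccioppoli constant to $C(1+q)^2(R'-r')^{-2}$. Step~2 then produces $\prod_k \big((R-r)^{-2}\big)^{1/q_k}=(R-r)^{-4/p}$, so you obtain $(C(R-r)^{-4})^{1/p}$, not \eqref{eq:est on 1/u for future times}. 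Your sentence that $\sum_k q_k^{-1}=2/p$ ``only enlarges the constant'' is wrong: that sum multiplies the exponent of $(R-r)^{-1}$, and it is exactly where the square in \eqref{eq:est on 1/u for future times} comes from.

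None of the remedies in your Step~3 repairs this. Redistributing the gaps as $\lambda_k(R-r)$ with $\sum_k\lambda_k=1$ leaves the total power $\beta_{\mathrm C}\sum_k q_k^{-1}=2\beta_{\mathrm C}/p$ unchanged, where $\beta_{\mathrm C}$ is the power in your Caccioppoli inequality; only the constant changes. The interpolation/absorption trick merely transports the power of the fixed-$q$ estimate, which is still $4$. It also loses $p$-uniformity: iterating $\phi(r)\le\theta\phi(R)+A(R-r)^{-\beta/p}$ over nested radii gives constants that cannot be bounded by $C^{1/p}$ uniformly as $p\to0$. For $\theta=\frac12$ the constant is of order $(\beta/p)^{\beta/p}$, and making $\theta$ smaller only moves the blow-up into Young's constant. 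That uniformity is part of the statement and is essential in Lemma~\ref{lem:bombieri giusti}, where $p=p_R\to0$. Finally, passing to a larger power proves a weaker inequality, since $R-r\le1$.

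The fix is the remark you made in passing. With no spatial cutoff, the only cutoff contribution is $\eta|\eta'|\le C(R'-r')^{-1}$. The remainder $C(1+q)^2\eta^2$ from absorbing the drift is bounded by $C(1+q)^2(R'-r')^{-1}$ because $R'-r'\le 1$. Keeping this gives $\Vert w^{-1}\Vert_{L^{2q}(Q^-_{r'})}\le \big(C(1+q)^2/(R'-r')\big)^{1/q}\Vert w^{-1}\Vert_{L^{q}(Q^-_{R'})}$. Your Step~2 iteration then yields exactly $(R-r)^{-\sum_k q_k^{-1}}=\big((R-r)^{-2}\big)^{1/p}$ with a $p$-independent constant. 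This is precisely the paper's argument, and Step~3 becomes unnecessary.
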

\begin{proof}

The proof is divided into three steps. 

\smallskip 

\noindent 1. \textit{Caccioppoli-type inequality}. Let $q > 0$ and, for all $\varepsilon \in (0,1)$, define $   w_\varepsilon := (u+\varepsilon)^{-\nicefrac{q}{2}}. $ Let $0<r < R \leq 1$. Our objective in this step is to show  (for $C$ independent of $q,r,R,\varepsilon$): 
\begin{equation}\label{eq:first checkpoint moser}
   \begin{aligned}
     \esssup_{s \in (-r,0)}\Vert w_\varepsilon(s) \Vert_{L^2(\T^2)}^2 \!+\! \int^{0}_{-r}  \Vert \nabla w_\varepsilon(s)\Vert_{L^2(\T^2)}^2 \d s \leq \frac{C(1+q^2)}{R\!-\!r}   \int_{-R}^0 \Vert w_\varepsilon(s ) \Vert_{L^2(\T^2)}^2 \d s.
   \end{aligned} 
\end{equation}
To this end, let $\psi \in C^\infty_c((-R,0])$   be such that 
\begin{equation}\label{eq:test function time psi}
    0 \leq \psi \leq 1 \text{ on } [-1,0], \quad \psi|_{[-1,-R]} = 0 , \quad \psi|_{[-r,0]} = 1,  \quad \Vert \partial_t \psi \Vert_{L^\infty((0,1))} \leq \frac{C}{R-r},
    \end{equation}
    for some $C$ independent of $r,R$; $\psi$ is only a function of $t$. Recalling the discussion that follows \eqref{eq:u eq} and \eqref{eq:initial upgrade dt rho}, we have $w_\varepsilon \in L^\infty((-1,1)\times\T^2) \cap L^2((-1,1);H^1(\T^2))$; furthermore, $$\phi := -q (u+\varepsilon)^{-q-1} \psi^2 = -q w_\varepsilon^{2+\frac{2}{q}} \psi^2$$ also belongs to this space, whence it is an admissible test function. We therefore insert $\phi$ as a test function into the weak formulation of \eqref{eq:u eq} and get 
\begin{equation*}
   \begin{aligned}
       \langle \partial_t u , \phi \rangle = -q\langle \partial_t (w_\varepsilon^{-\frac{2}{q}}) , w^{2+\frac{2}{q}}_\varepsilon \psi^2 \rangle = 2 \langle w_\varepsilon \partial_t w_\varepsilon , \psi^2 \rangle = \frac{\der}{\der t} \int_{\T^2} |\psi w_\varepsilon|^2 \d x - 2 \int_{\T^2} \psi \partial_t \psi w^2_\varepsilon \d x, 
   \end{aligned} 
\end{equation*}
where we used \eqref{eq:initial upgrade dt rho} to justify the Sobolev chain rule (\textit{cf.}~\cite[\S 5.10]{Evans}); we emphasise that we are \emph{not} using the Lions--Magenes Lemma here. Similarly, it holds 
\begin{equation*}
   \begin{aligned}
        - \langle \dv(\p u) , \phi \rangle &= -2(q\!+\!1)\! \int_{\T^2} \psi^2 \p (w_\varepsilon^{-\frac{2}{q}} \!-\! \varepsilon) \! \cdot\! w^{1+\frac{2}{q}}_\varepsilon \nabla w_\varepsilon \d x \\ &= -2(q\!+\!1) \! \int_{\T^2} \psi^2 w_\varepsilon \p  \! \cdot\! \nabla w_\varepsilon \d x + 2(q\!+\!1) \varepsilon \! \int_{\T^2} \psi^2 w^{1+\frac{2}{q}}_\varepsilon \p \cdot\! \nabla w_\varepsilon \d x, 
   \end{aligned}
\end{equation*}
and $\langle \Delta u , \phi \rangle = q\int_{\T^2} \psi^2 \nabla (w_\varepsilon^{-\frac{2}{q}}) \!\cdot\! \nabla (w^{2+\frac{2}{q}}_\varepsilon) \d x = -\frac{4}{q}(q+1) \int_{\T^2} |\nabla(\psi w_\varepsilon)|^2 \d x.$ Observe that 
\begin{equation*}
    \bigg|   \varepsilon\! \int_{\T^2}  \psi^2 w^{1+\frac{2}{q}}_\varepsilon   \p \cdot\! \nabla w_\varepsilon \d x \bigg| \leq   \varepsilon\! \int_{\T^2}  \underbrace{w^{\frac{2}{q}}_\varepsilon}_{\leq \varepsilon^{-1}}  \psi w_\varepsilon |\nabla (\psi w_\varepsilon) | \d x \leq   \int_{\T^2}  \psi w_\varepsilon |\nabla (\psi w_\varepsilon) | \d x,
\end{equation*}
where we also used $|\p|\leq 1$ by \eqref{eq:polarisation bound}, hence, using also Young's inequality, we obtain 
\begin{equation*}
   \begin{aligned}
      \frac{\der}{\der t} \Vert \psi  w_\varepsilon & \Vert_{L^2(\T^2)}^2 \!+\! 4(1\!+\! q^{-1}) \Vert\nabla(\psi w_\varepsilon)\Vert_{L^2(\T^2)}^2 \leq 4(q\!+\!1) \!\int \!\! \psi w_\varepsilon |\nabla (\psi w_\varepsilon)| \d x \!+\! 2\! \int\!\! \psi |\partial_t \psi| w^2_\varepsilon \d x \\ 
      &\leq 2q^2 (1+q^{-1})\Vert \psi w_\varepsilon\Vert^2_{L^2(\T^2)} + 2 (1+q^{-1})\Vert\nabla(\psi w_\varepsilon)\Vert_{L^2(\T^2)}^2 + \frac{2C}{R-r} \Vert w_\varepsilon \Vert^2_{L^2(\T^2)}. 
   \end{aligned} 
\end{equation*}
By bounding the first term on the right-hand side bluntly using $0\leq \psi \leq 1$, it follows that 
\begin{equation*}
   \begin{aligned}
      \frac{\der}{\der t} \Vert \psi w_\varepsilon \Vert_{L^2(\T^2)}^2 + (1\!+\! q^{-1}) \Vert\nabla(\psi w_\varepsilon)\Vert_{L^2(\T^2)}^2 \leq C \Big( q^2(1+q^{-1}) +\frac{1}{R-r} \Big) \Vert w_\varepsilon \Vert^2_{L^2(\T^2)}, 
   \end{aligned} 
\end{equation*}
where $C$ is independent of $q,r,R,\varepsilon$.  Integrating in time over $[-R,t]$, for any $t \in (-r,0)$, 
\begin{equation*}
   \begin{aligned}
     \Vert w_\varepsilon(t) \Vert_{L^2(\T^2)}^2 + \int_{-R}^t \int_{\T^2} |\nabla(\psi w_\varepsilon)|^2 \d x \d s \leq  \frac{C(1+q^2)}{R-r}  \! \int_{-R}^t \Vert w_\varepsilon(s) \Vert_{L^2(\T^2)}^2 \d s, 
   \end{aligned} 
\end{equation*}
where we used that $\psi(t) = 1$ for such $t$ and $\psi(-R)=0$ from \eqref{eq:test function time psi}, $1 \leq 1 +q^{-1}$, and Young's inequality to write $2q \leq 1 + q^2$. Using also the fact that $\psi$ does not depend on $x$ and $\psi \equiv 1$ in $[-r,0]$ to control the second term on the left-hand side of the previous inequality, as well as $0<R-r<1$, we deduce \eqref{eq:first checkpoint moser} as desired.

\smallskip 

\noindent 2. \textit{Recursive estimate}. Our goal in this step is to obtain (for $C$ independent of $q,r,R,\varepsilon$): 
\begin{equation}\label{eq:second checkpoint moser}
   \begin{aligned}
     \Vert (u+\varepsilon)^{-1} \Vert_{L^{2q}(Q_r^-)} \leq \Big( \frac{C   (1+q^2)}{R\!-\!r} \Big)^{\frac{1}{q}} \Vert (u+\varepsilon)^{-1} \Vert_{L^q(Q_R^-)} , 
   \end{aligned} 
\end{equation}
for all choices of $0<r<R \leq 1$ with $R-r < 1$. 

The standard interpolation inequality of Lemma \ref{lem:dibenedetto classic} implies 
\begin{equation*}
    \begin{aligned}
        \Vert w_\varepsilon \Vert_{L^4(Q_r^-)} \leq C  \Big( \Vert w_\varepsilon \Vert_{L^\infty(-r,0;L^2(\T^2))} + \Vert \nabla w_\varepsilon \Vert_{L^2((-r,0)\times\T^2)}  \Big), 
    \end{aligned}
\end{equation*}
where the positive constant $C$ is independent of $r,R,q,\varepsilon$. By inserting  into \eqref{eq:first checkpoint moser}, we get 
\begin{equation*}
   \begin{aligned}
     \Vert w_\varepsilon \Vert_{L^4(Q_r^-)} \leq C  \Big(\frac{1+q^2}{R\!-\!r} \Big)^{\frac{1}{2}} \Vert w_\varepsilon \Vert_{L^2(Q_R^-)}, 
   \end{aligned} 
\end{equation*}
which is a \emph{reverse H\"older inequality}, and we obtain \eqref{eq:second checkpoint moser} by rewriting the above in terms of $u+\varepsilon$, where the constant $C$ has changed but is still independent of $q,r,R,\varepsilon$.

\smallskip 

\noindent 3. \textit{Moser's iterations}. As per the statement, let $0 < r < R \leq 1$ be fixed. Define the radii 
\begin{equation}\label{eq:sequence of radii}
r_0 = R, \qquad r_n := R - (R-r) \sum_{k=1}^n 2^{-k}  \qquad \forall n\in\mathbb{N}; 
\end{equation}
note that the sequence is monotonically decreasing, and $r_n \to r$ as $n\to\infty$. Furthermore, 
\begin{equation}\label{eq:spacing between radii}
    r_{n} - r_{n+1} = (R-r) 2^{-n-1}   \qquad \forall n \in \mathbb{N}; 
\end{equation}
note $0< r_{n+1} < r_n \leq 1$ and $0<r_n - r_{n+1} < 1$ for all $n$. Also define the powers $q_n := p 2^n$ for all $n $, where we note $q_0 = p$ and $q_n \to \infty$; where $p \in (0,1)$ is given in the statement.

By applying the result of \eqref{eq:second checkpoint moser} with $R=r_n$, $r=r_{n+1}$, $q=q_n$, we deduce 
\begin{equation}\label{eq:the first kappa n}
   \begin{aligned}
     \Vert (u\!+\!\varepsilon)^{-1} \Vert_{L^{p 2^{n+1}}(Q_{r_{n+1}}^-)} \leq \underbrace{\Big( \frac{C (1\!+\! p^2 2^{2n}) }{r_{n}\!-\!r_{n+1}} \Big)^{\frac{1}{p 2^{n}}}}_{=:\kappa_n} \Vert (u\!+\!\varepsilon)^{-1} \Vert_{L^{p 2^n}(Q_{r_n}^-)} \quad \forall n \in \mathbb{N}. 
   \end{aligned} 
\end{equation}
Note that, for all $n$ sufficiently small such that $p 2^n \leq 1$, it holds $\kappa_n \leq ( \frac{C 2^{n+2} }{R-r} )^{\frac{1}{p 2^{n}}} = ( \frac{4C }{R-r} )^{\frac{1}{p 2^{n}}}2^{\frac{n}{p 2^n}}, $ while for all $n$ sufficiently large such that $p 2^n \geq 1$, we obtain $\kappa_n \leq ( \frac{C p^2  2^{3n+2} }{R-r} )^{\frac{1}{p 2^{n}}} = ( \frac{4C p^2 }{R-r} )^{\frac{1}{p 2^{n}}}2^{\frac{3n}{p 2^n}}, $ whence, since $p \in (0,1)$, we deduce in either case that 
\begin{equation*}
    \kappa_n \leq \Big( \frac{C}{R-r} \Big)^{\frac{1}{p 2^{n}}}2^{\frac{3n}{p 2^n}} \qquad \forall n \in \mathbb{N}, 
\end{equation*}
where $C$ has changed, but remains independent of $p,r,R,\varepsilon,n$. By iteration on \eqref{eq:the first kappa n}, 
\begin{equation}\label{eq:just before moser}
   \begin{aligned}
     \Vert (u\!+\!\varepsilon)^{-1} \Vert_{L^{p 2^{n}}(Q_{r_n}^-)} \! \leq \! \bigg( \! \underbrace{\prod_{k=0}^{n-1} \Big(\frac{C}{R \! -\! r}\Big)^{\frac{1}{p 2^k}} 2^{\frac{3k}{p 2^k}} }_{=: A_n} \! \bigg) \Vert (u \! + \! \varepsilon)^{-1} \Vert_{L^{p}(Q_R^-)} \qquad \forall n \geq 1. 
   \end{aligned} 
\end{equation}
We study the convergence of the coefficients $A_n$. We have 
\begin{equation*}
   \begin{aligned} \log A_n = \sum_{k=0}^{n-1} \log \bigg[ \Big(\frac{C }{R-r}\Big)^{\frac{1}{p 2^k}}  2^{\frac{3k}{p2^k}} \bigg]  &= \frac{1}{p}\log \Big(\frac{C  }{R-r}\Big) \sum_{k=0}^{n-1} {2^{-k}}  + \frac{3}{p} \log 2 \sum_{k=0}^{n-1} k {2^{-k}} \\ 
   &\longrightarrow \frac{2}{p} \log \Big( \frac{C  }{R-r} \Big) + \frac{1}{p} \log \bar{C}, 
   \end{aligned}
\end{equation*}
where $\bar{C} = 2^{3 \sum_{n \geq 0} n 2^{-n}}$ is independent of $p,r,R,\varepsilon$. It follows that $  A_n \nearrow A = \left(\frac{C }{(R-r)^2}  \right)^{\frac{1}{p}} , $ for some $C$ independent of $p,r,R,\varepsilon$. Passing to the limit $n\to\infty$ in \eqref{eq:just before moser} yields   \eqref{eq:est on 1/u for future times}. 
\end{proof}

\subsection{Moser upper bound}

The following result is not sharp; especially in terms of the quintic power of $R-r$ in the denominator, however it is sufficient for our purposes. 
Again, for clarity of exposition, the estimates are written for weak solutions of \eqref{eq:u eq} studied on the parabolic cylinders $Q_r$ centered at $t=0$ (\textit{cf.~}\eqref{eq:parabolic cylinder}); however, by translation they hold on any cylinder of the form $Q_r(t)=(t-r,t+r)\times\T^2$. 

\begin{lemma}[$L^\infty$ bound $u+\varepsilon$]\label{lem:moser u upper}
    Let $u$ be a weak solution of \eqref{eq:u eq} satisfying \eqref{eq:initial upgrade dt rho}, $\varepsilon ,p \in (0,1)$,  $0 < r < R \leq 1$. There exists a  constant $C$ independent of $p,r,R,\varepsilon$ such that 
    \begin{equation}\label{eq:Linfty e on cylinder by moser}
   \begin{aligned}
     \Vert u+\varepsilon \Vert_{L^{\infty}(Q_r)} \leq \left(\frac{C}{(R\!-\!r)^5} \right)^{\frac{1}{p}}\Vert u+\varepsilon \Vert_{L^{p}(Q_R)}. 
   \end{aligned} 
\end{equation}
\end{lemma}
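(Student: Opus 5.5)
The plan mirrors the proof of Lemma~\ref{lem:moser 1/u}, with one extra step at the end. A Moser iteration from $q\ge 2$ upwards gives an $L^\infty$--$L^2$ bound. An interpolation and absorption argument then lowers the exponent to $p\in(0,1)$. For powers $q<1$ the function $(u+\varepsilon)^{q/2}$ is concave, so the direct Caccioppoli approach would fight the sign of the energy term; the two-stage route avoids working with such powers at all.

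\emph{Step 1: Caccioppoli inequality.} For $q\ge 2$ set $w_\varepsilon:=(u+\varepsilon)^{q/2}$. Let $\psi(t)$ be a time cutoff with $\psi=0$ on $[-1,-R]$, $\psi=1$ on $[-r,1]$ and $|\partial_t\psi|\le C/(R-r)$; it vanishes only at the bottom of the cylinder. Test \eqref{eq:u eq} with $\phi=q(u+\varepsilon)^{q-1}\psi^2$, which is admissible by \eqref{eq:initial upgrade dt rho}, and use the Sobolev chain rule for the time derivative exactly as before.
\begin{itemize}
\item The diffusion term gives $\tfrac{4(q-1)}{q}\int\psi^2|\nabla w_\varepsilon|^2$, which is at least $2\int\psi^2|\nabla w_\varepsilon|^2$ since $q\ge 2$.
\item The drift term equals $q(q-1)\int\psi^2(u+\varepsilon)^{q-2}u\,\p\cdot\nabla u$. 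Using $|\p|\le 1$ and $u\le u+\varepsilon$, it is bounded by $Cq\int\psi^2 w_\varepsilon|\nabla w_\varepsilon|$.
\end{itemize}
After Young's inequality and integration over $[-R,t]$ for $t\in(-r,r)$, this yields
\[
\esssup_{(-r,r)}\Vert w_\varepsilon\Vert_{L^2}^2+\int_{-r}^{r}\Vert\nabla w_\varepsilon\Vert_{L^2}^2\le \frac{C(1+q^2)}{R-r}\int_{-R}^{R}\Vert w_\varepsilon\Vert_{L^2}^2 .
\]

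\emph{Step 2: iteration from $L^2$ to $L^\infty$.} Lemma~\ref{lem:dibenedetto classic} turns Step~1 into the reverse H\"older inequality
\[
\Vert u+\varepsilon\Vert_{L^{2q}(Q_r)}\le \Big(\frac{C(1+q^2)}{R-r}\Big)^{1/q}\Vert u+\varepsilon\Vert_{L^q(Q_R)} .
\]
Iterate with $q_n=2^{n+1}$ and radii $r_n$ as in \eqref{eq:sequence of radii}. The product of the constants converges, as in Step~3 of Lemma~\ref{lem:moser 1/u}, and gives
\[
\Vert u+\varepsilon\Vert_{L^\infty(Q_r)}\le \Big(\frac{C}{(R-r)^{a}}\Big)^{1/2}\Vert u+\varepsilon\Vert_{L^2(Q_R)}
\]
for some fixed power $a$; $a=2$ or $3$ should suffice.

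\emph{Step 3: descent to $p\in(0,1)$.} I expect this to be the main obstacle, because of the bookkeeping of the $p$-dependence. With $v=u+\varepsilon$, write $\Vert v\Vert_{L^2(Q_R)}\le\Vert v\Vert_{L^\infty(Q_R)}^{1-p/2}\Vert v\Vert_{L^p(Q_R)}^{p/2}$. Young's inequality with exponents $\tfrac{2}{2-p}$ and $\tfrac{2}{p}$ then gives
\[
\Vert v\Vert_{L^\infty(Q_r)}\le \tfrac12\Vert v\Vert_{L^\infty(Q_R)}+\Big(\frac{C}{(R-r)^{a}}\Big)^{1/p}\Vert v\Vert_{L^p(Q_R)} ,
\]
up to a factor $2^{(2-p)/p}$, which has the allowed form $C^{1/p}$. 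Since $v\le 2$, the function $f(\varrho):=\Vert v\Vert_{L^\infty(Q_\varrho)}$ is bounded. I then apply the standard absorption lemma: if $f(r)\le\theta f(R)+A(R-r)^{-\alpha}$ for all $r<R$ with $\theta<1$, then $f(r)\le C(\alpha,\theta)A(R-r)^{-\alpha}$. The proof of that lemma iterates on radii $\varrho_{k+1}=\varrho_k+(1-\lambda)\lambda^k(R-r)$ with $\theta\lambda^{-\alpha}<1$.

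The delicate point is that here $\alpha=a/p$ blows up as $p\to0$, so $C(\alpha,\theta)$ must be tracked. Choosing $\lambda=2^{-p/(2a)}$ makes $\theta\lambda^{-\alpha}=2^{-1/2}$. With this choice $(1-\lambda)^{-\alpha}\lesssim(Ca/p)^{a/p}$, and this $p^{-a/p}$ loss has to be absorbed into the final power of $(R-r)$. If that fails, I would replace the fixed $\theta=\tfrac12$ by $\theta=\tfrac12$ applied only at a $p$-independent number of scales, and let the extra powers of $(R-r)$ absorb the loss. This is where I expect the non-sharp exponent $5$ to come from: $a$ from Step~2 plus the powers lost in the absorption. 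Since all constants are uniform in $\varepsilon$, the resulting estimate is \eqref{eq:Linfty e on cylinder by moser}.
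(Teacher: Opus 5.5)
\paragraph{There is a genuine gap in Step 3, at exactly the point you flagged.}

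Your Steps 1--2 are fine: they give $\Vert v\Vert_{L^\infty(Q_r)}\le C(R-r)^{-1}\Vert v\Vert_{L^2(Q_R)}$, i.e.\ $a=2$. The absorption in Step 3, however, yields
$\Vert v\Vert_{L^\infty(Q_r)}\le \big(C p^{-a}(R-r)^{-a}\big)^{1/p}\Vert v\Vert_{L^p(Q_R)}$.
The factor $p^{-a/p}$ is not of the form $C^{1/p}$ with $C$ independent of $p$, so this is not the statement.

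\begin{itemize}
\item It cannot be absorbed into a power of $R-r$, because $p$ and $R-r$ are independent: fix $R-r=\tfrac12$ and let $p\to0$.
\item It is not an artifact of your choice of $\lambda$. The interpolation only gives the recursion $F(\varrho)\le K(\sigma-\varrho)^{-a/2}F(\sigma)^{1-p/2}$. Iterating it along any radii $\{d_i\}$ leads to minimising $\sum_i\mu^i\log d_i^{-1}$ with $\mu=1-\tfrac p2$. The optimum is $d_i=(R-r)(1-\mu)\mu^i$, and it still produces $\big(C/(p(R-r))\big)^{a/p}$.
\item Your fallback of using only a $p$-independent number of scales leaves a term $\theta^N\Vert v\Vert_{L^\infty(Q_R)}$. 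You have no way to bound that term by $\Vert v\Vert_{L^p(Q_R)}$.
\end{itemize}

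\paragraph{Why uniformity in $p$ matters.}
In Lemma~\ref{lem:bombieri giusti} one takes $p=p_R\approx 2\Phi(R)^{-1}\log\Phi(R)\to 0$. An extra factor $p_R^{-a/p_R}$ adds $\frac{a}{p_R}\log\frac{1}{p_R}\approx \frac a2\Phi(R)$ to the right-hand side of \eqref{eq:almost there bombieri giusti}. Since $a\ge 1$, this destroys the contraction \eqref{eq:iterate bombieri giusti}.

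\paragraph{The missing idea: powers $q\in(0,1)$ are not an obstruction.}
Since $u$ solves, and does not merely subsolve, \eqref{eq:u eq}, and $Q_r=(-r,r)\times\T^2$ is two-sided, you can take a cutoff $\psi\in C^\infty_c((-R,R))$ vanishing at both ends.

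\begin{itemize}
\item For $q<1$ the energy identity reads $-\frac{\der}{\der t}\Vert\psi w_\varepsilon\Vert^2_{L^2}+4(q^{-1}-1)\Vert\nabla(\psi w_\varepsilon)\Vert^2_{L^2}=\dots$. The gradient term has the good sign, so you integrate backward over $[t,R]$ rather than forward. The concavity issue you anticipated is resolved by this time reversal.
\item This gives the reverse H\"older inequality $\Vert v\Vert_{L^{2q}(Q_r)}\le\big(C(1+q^2+\delta^{-1})/(R-r)\big)^{1/q}\Vert v\Vert_{L^q(Q_R)}$ for every $q\neq 1$, where $\delta=|1-q^{-1}|$.
\item The paper then iterates directly from $q_0\in[p/\gamma,p)$ along $q_n=q_0\gamma^n$ with $\gamma=\tfrac53$. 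The point $q_0$ is chosen so that $|q_n-1|\ge\tfrac15$ for all $n$, which keeps $\delta_n^{-1}\le 5q_n$.
\item The product of constants is then $\big(C(R-r)^{-\gamma/(\gamma-1)}\big)^{1/q_0}$, with a base independent of $p$. Using $q_0\ge p/\gamma$ gives $\big(C(R-r)^{-25/6}\big)^{1/p}\le\big(C(R-r)^{-5}\big)^{1/p}$, with no $p^{-1/p}$ loss.
\end{itemize}
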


\begin{proof}
The proof is divided into several steps. 

\smallskip 

\noindent 1. \textit{Caccioppoli-type inequality}. Define $w_\varepsilon := (u+\varepsilon)^{\nicefrac{q}{2}}$ for $q>0$ such that $q \neq 1$, and 
\begin{equation}\label{eq:delta def in proof of moser upper}
    \delta := \big| 1 - q^{-1} \big|. 
\end{equation}
Our objective in this step is to prove 
\begin{equation}\label{eq:first checkpoint moser bis}
   \begin{aligned}
     \esssup_{s \in (-r,r)}\Vert w_\varepsilon(s) \Vert_{L^2(\T^2)}^2 \!+\!  \int_{-r}^{r} \!\! \! \Vert \nabla w_\varepsilon(s)\Vert_{L^2(\T^2)}^2 \d s \!\leq\! C \Big(  \frac{1\!+\!q^2\!+\!\delta^{-1} }{R\!-\!r} \Big) \! \int_{-R}^{R} \!\!\! \Vert w_\varepsilon(s) \Vert_{L^2(\T^2)}^2 \d s. 
   \end{aligned} 
\end{equation}

Let $\psi \in C^\infty_c((-R,R))$ be such that 
\begin{equation}\label{eq:test function time psi for moser upper}
    0 \leq \psi \leq 1 , \quad \psi|_{[-r,r]}=1, \quad \psi|_{[-1,1]\setminus[-R,R]}=0, \quad \Vert \partial_t \psi \Vert_{L^\infty((-1,1))} \leq \frac{C}{R-r},
    \end{equation}
    for some constant $C$ independent of $r,R$. As before, $\psi$ is only a function of $t$, and again recalling the discussion that follows \eqref{eq:u eq} and \eqref{eq:initial upgrade dt rho}, 
    $\phi := q (u+\varepsilon)^{q-1} \psi^2 = q w_\varepsilon^{2-\frac{2}{q}} \psi^2$ 
    is an admissible test function. We  insert $\phi$ into the weak formulation of \eqref{eq:u eq} and get 
\begin{equation*}
   \begin{aligned}
       \langle \partial_t u , \phi \rangle = q\langle \partial_t (w_\varepsilon^{\frac{2}{q}}) , w^{2-\frac{2}{q}}_\varepsilon \psi^2 \rangle = 2 \langle w_\varepsilon \partial_t w_\varepsilon , \psi^2 \rangle = \frac{\der}{\der t} \int_{\T^2} |\psi w_\varepsilon|^2 \d x - 2 \int_{\T^2} \psi \partial_t \psi w^2_\varepsilon \d x, 
   \end{aligned} 
\end{equation*}
where we use \eqref{eq:initial upgrade dt rho}, as well as $ - \langle \dv(\p u) , \phi \rangle = 2(q\!-\!1)\! \int_{\T^2} \psi^2 \p (w_\varepsilon^{\frac{2}{q}} \!-\! \varepsilon) \! \cdot\! w^{1-\frac{2}{q}}_\varepsilon \nabla w_\varepsilon \d x ,$
and $\langle \Delta u , \phi \rangle = - q\int_{\T^2} \psi^2 \nabla (w_\varepsilon^{\frac{2}{q}}) \!\cdot\! \nabla (w^{2-\frac{2}{q}}_\varepsilon) \d x = 4(q^{-1}-1) \int_{\T^2} |\nabla(\psi w_\varepsilon)|^2 \d x. $ In turn, we get 
\begin{equation}\label{eq:equality}
   \begin{aligned}
      -\frac{\der}{\der t} & \int_{\T^2} |\psi w_\varepsilon|^2 \d x + 4(q^{-1}-1) \int_{\T^2} |\nabla(\psi w_\varepsilon)|^2 \d x \\ 
      =& 2(q\!-\!1) \!\int_{\T^2}\!\! \psi^2 w_\varepsilon \p  \! \cdot\! \nabla w_\varepsilon \d x \!-\! 2(q\!-\!1) \varepsilon\! \int_{\T^2}\!\! \psi^2 w^{1-\frac{2}{q}}_\varepsilon   \p \cdot\! \nabla w_\varepsilon \d x \!-\! 2 \! \int_{\T^2}\!\! \psi \partial_t \psi w^2_\varepsilon \d x; 
   \end{aligned} 
\end{equation}
the second term on the right-hand side is bounded by (using also \eqref{eq:polarisation bound} to control $\p$)
\begin{equation*}
    \bigg| \varepsilon \int_{\T^2}\! \psi^2 w^{1-\frac{2}{q}}_\varepsilon   \p \cdot\! \nabla w_\varepsilon \d x \bigg| \leq \varepsilon \int_{\T^2}\! \underbrace{w^{-\frac{2}{q}}_\varepsilon}_{\leq \varepsilon^{-1}} \psi w_\varepsilon  |\nabla (\psi w_\varepsilon)| \d x \leq \int_{\T^2}\! \psi w_\varepsilon |\nabla (\psi w_\varepsilon)| \d x. 
\end{equation*}
 The rest of the proof of \eqref{eq:first checkpoint moser bis} is divided into two substeps (depending on the value of $q$).

\smallskip 

\noindent 1.i. \textit{Parabolic estimate: case $q \in (0,1)$}. In this case $q^{-1}-1>0$, and \eqref{eq:equality} yields 
\begin{equation*}
   \begin{aligned}
      -\frac{\der}{\der t} \Vert \psi  w_\varepsilon & \Vert_{L^2(\T^2)}^2 \!+\! 4(q^{-1}\!-\!1) \Vert\nabla(\psi w_\varepsilon)\Vert_{L^2(\T^2)}^2 \leq 4(1\!-\!q) \!\int \!\! \psi w_\varepsilon |\nabla (\psi w_\varepsilon)| \d x   \!+\! 2\! \int\!\! \psi |\partial_t \psi| w^2_\varepsilon \d x \\ 
      &\leq 2q^2(q^{-1}-1)\Vert \psi w_\varepsilon\Vert^2_{L^2(\T^2)} + 2(q^{-1}-1)\Vert\nabla(\psi w_\varepsilon)\Vert_{L^2(\T^2)}^2 + \frac{2C}{R-r} \Vert w_\varepsilon \Vert^2_{L^2(\T^2)}, 
   \end{aligned} 
\end{equation*}
where we used Young's inequality in the final line. Using $0 \leq \psi \leq 1$, it follows that 
\begin{equation*}
   \begin{aligned}
     - \frac{\der}{\der t} \Vert \psi w_\varepsilon \Vert_{L^2(\T^2)}^2 \!+\! (q^{-1}\!-\!1) \Vert\nabla(\psi w_\varepsilon)\Vert_{L^2(\T^2)}^2 \leq C\Big(q^2 (q^{-1}\!-\!1)\!+\!\frac{1}{R\!-\!r} \Big) \Vert w_\varepsilon \Vert^2_{L^2(\T^2)}, 
   \end{aligned} 
\end{equation*}
where the positive constant $C$ is independent of $q,r,R,\varepsilon$. Note that since $q \in (0,1)$ we have $q^2(q^{-1}-1) \leq q^2 q^{-1} = q < 1$. By rewriting the left-hand side in terms of $\delta$ and integrating in time over the interval $[t,R]$, for any $t \in (-r,r)$, we get 
\begin{equation*}
   \begin{aligned}
     \Vert w_\varepsilon(t) \Vert_{L^2(\T^2)}^2 + \delta \int_{t}^{R} \! \int_{\T^2} \! |\nabla(\psi w_\varepsilon)|^2 \d x \d s \leq \frac{C}{R\!-\!r}  \! \int_{t}^{R} \! \Vert w_\varepsilon(s) \Vert_{L^2(\T^2)}^2 \d s, 
   \end{aligned} 
\end{equation*}
where we used that $\psi(t) = 1$ for such $t$, $\psi(R)=0$ from \eqref{eq:test function time psi for moser upper}, and $0<R-r<1$. Thus, by considering each term on the left-hand side separately, we deduce for $q \in (0,1)$: 
\begin{equation*}
   \begin{aligned}
     \Vert w_\varepsilon(t) \Vert_{L^2(\T^2)}^2 + \int_{t}^{R} \! \int_{\T^2} \! |\nabla(\psi w_\varepsilon)|^2 \d x \d s \leq C \Big( \frac{1+\delta^{-1}}{R\!-\!r}  \Big) \int_{t}^{R} \! \Vert w_\varepsilon(s) \Vert_{L^2(\T^2)}^2 \d s. 
   \end{aligned} 
\end{equation*}
One deduces \eqref{eq:first checkpoint moser bis} easily for this range of $q$. 

\smallskip 

\noindent 1.ii. \textit{Parabolic estimate: case $q>1$}. For $q>1$, applying Young's inequality to \eqref{eq:equality} gives 
\begin{equation*}
   \begin{aligned}
      \frac{\der}{\der t} & \int_{\T^2} |\psi w_\varepsilon|^2 \d x + 4(1-q^{-1}) \int_{\T^2} |\nabla(\psi w_\varepsilon)|^2 \d x \\ 
      &\leq 2q^2(1-q^{-1})\Vert \psi w_\varepsilon\Vert^2_{L^2(\T^2)} + 2(1-q^{-1})\Vert\nabla(\psi w_\varepsilon)\Vert_{L^2(\T^2)}^2 + \frac{2C}{R-r} \Vert w_\varepsilon \Vert^2_{L^2(\T^2)}, 
   \end{aligned} 
\end{equation*}
and thus, rewriting in terms of $\delta$, noting that $\delta \in (0,1)$ in this case, and $q^2(1-q^{-1})=q^2 \delta$, 
\begin{equation*}
   \begin{aligned}
    \delta  \frac{\der}{\der t} \Vert \psi w_\varepsilon \Vert_{L^2(\T^2)}^2 + \delta \Vert\nabla(\psi w_\varepsilon)\Vert_{L^2(\T^2)}^2 \leq C \big(q^2\delta +\frac{1}{R\!-\!r} \big) \Vert w_\varepsilon \Vert^2_{L^2(\T^2)}, 
   \end{aligned} 
\end{equation*}
where the constant $C$ is independent of $q,r,R,\varepsilon$. Dividing by $\delta$ and integrating in time over $[-R,t]$ for $t \in (-r,r)$, arguing as per the end of Step 1.i, we again obtain   \eqref{eq:first checkpoint moser bis}.

\smallskip 

\noindent 2. \textit{Recursive estimate}. Using $\Vert w_\varepsilon \Vert_{L^4(Q_r)} \leq C    ( \Vert w_\varepsilon \Vert_{L^\infty(-r,r;L^2(\T^2))} + \Vert \nabla w_\varepsilon \Vert_{L^2(Q_r)}  )$ from Lemma \ref{lem:dibenedetto classic} as per Step 2 of the proof of Lemma \ref{lem:moser 1/u} and arguing identically, we get 
\begin{equation}\label{eq:second checkpoint moser upper}
   \begin{aligned}
     \Vert u+\varepsilon \Vert_{L^{2q}(Q_r)} \leq \Big( \frac{C(1+ q^2 + \delta^{-1} )}{R\!-\!r} \Big)^{\frac{1}{q}} \Vert u+\varepsilon \Vert_{L^q(Q_R)}, 
   \end{aligned} 
\end{equation}
where $C$ is independent of $q,r,R,\varepsilon$. Next, as per Step 2 of the proof of \cite[Lemma 3.9]{BonforteDolbeaultNazaretSimonov}, set $$\gamma :=5/3,$$ and using the H\"older inequality, we get $
    \Vert u+ \varepsilon \Vert_{L^{\gamma q}(Q_r)} \leq |Q_r|^{(1-\frac{\gamma}{2})\frac{1}{\gamma q}} \Vert u + \varepsilon \Vert_{L^{2q}(Q_r)} \leq |Q_1|^{\frac{1}{10q}}\Vert u + \varepsilon \Vert_{L^{2q}(Q_r)}, $ which, using \eqref{eq:second checkpoint moser upper}, yields the \emph{reverse H\"older inequality}
\begin{equation}\label{eq:second checkpoint moser upper gamma}
   \begin{aligned}
     \Vert u+\varepsilon \Vert_{L^{\gamma q}(Q_r)} \leq \Big( \frac{C (1+q^2+\delta^{-1} ) }{R\!-\!r} \Big)^{\frac{1}{q}} \Vert u+\varepsilon \Vert_{L^q(Q_R)}, 
   \end{aligned} 
\end{equation}
where $C$ is independent of $q,r,R,\varepsilon$. The reason for introducing this exponent $\gamma$ is to avoid $q_n = 1$ for some $n$ for the sequence of powers in the iteration (\textit{cf.~}next step of the proof).

\smallskip 

\noindent 3. \textit{Moser's iterations}. Let $0 < r < R \leq 1$ be fixed as in the statement, and recall from \eqref{eq:sequence of radii} the sequence of radii $r_0 = R$, $r_n = R - (R-r) \sum_{k=1}^n 2^{-k} $, which satisfies $r_{n} -r_{n+1} = (R-r) 2^{-n-1}$ as per \eqref{eq:spacing between radii}. For the sequence of powers, we set 
\begin{equation*}
    n_0 := 1+ \floor{\frac{\log(\frac{1+\gamma}{2p})}{\log \gamma}}, \qquad q_0 := \frac{1+\gamma}{2 \gamma^{n_0}}, 
\end{equation*}
where $p$ is given in the statement, and $q_n := q_0 \gamma^n$ for all $n$. The definition of $n_0$ implies 
\begin{equation*}
    \frac{\log(\frac{1+\gamma}{2p})}{\log \gamma} < n_0 \leq 1 + \frac{\log(\frac{1+\gamma}{2p})}{\log \gamma} \implies    \frac{1+\gamma}{2p} <    \gamma^{n_0} \leq \gamma \frac{1+\gamma}{2p}, 
\end{equation*}
whence $q_0 < p \leq \gamma q_0$ and 
\begin{equation*}
    \gamma^{n_0} q_0 = \frac{1+\gamma}{2}=\frac{4}{3} > 1 > \frac{4}{5} = \frac{1+\gamma}{2\gamma} = \gamma^{n_0-1} q_0  , 
\end{equation*}
whence $|q_n \!-\! 1| \geq \min\{q_{n_0}\!-\!1,1\!-\!q_{n_0-1}\} = \frac{1}{5}$. Defining $\delta_n := |1-q_n^{-1}|$ as per \eqref{eq:delta def in proof of moser upper}, it holds 
\begin{equation*}
    \delta_n^{-1} = \frac{q_n}{|q_n-1|} \leq 5 q_n \leq \frac{5}{2}(1+q_n^2) \qquad \forall n. 
\end{equation*}
As such, by iterating \eqref{eq:second checkpoint moser upper gamma} along the sequence of radii $\{r_n\}$ and powers $\{q_n\}$, we get 
\begin{equation}\label{eq:kappa n round 2}
   \begin{aligned}
     \Vert u+\varepsilon \Vert_{L^{q_0 \gamma^{n+1}}(Q_{r_{n+1}})} \leq \underbrace{\Big( \frac{C (1+q_0^2 \gamma^{2n})}{r_{n}\!-\!r_{n+1}} \Big)^{\frac{1}{q_0 \gamma^{n}}} }_{=:\kappa_n}\Vert u+\varepsilon \Vert_{L^{q_0 \gamma^n}(Q_{r_n})} \qquad \forall n \in \mathbb{N}. 
   \end{aligned} 
\end{equation}
As per Step 3 of the proof of Lemma \ref{lem:moser 1/u}, we study the boundedness of $\kappa_n$: for all $n$ sufficiently small such that $q_0 \gamma^n \leq 1$, it holds $\kappa_n \leq ( \frac{C 2^{n+2}}{R-r} )^{\frac{1}{q_0 \gamma^{n}}} \leq ( \frac{4C}{R-r} )^{\frac{1}{q_0 \gamma^{n}}} 2^{\frac{n}{q_0 \gamma^n}},$
while for all $n$ sufficiently large such that $q_0 \gamma^n \geq 1$, we have $\kappa_n \leq ( \frac{C q_0^2 \gamma^{2n} 2^{n+2}}{R-r} )^{\frac{1}{q_0 \gamma^{n}}} \leq ( \frac{4Cq_0^2}{R-r} )^{\frac{1}{q_0 \gamma^{n}}} 2^{\frac{3n}{q_0 \gamma^n}} \leq ( \frac{4C}{R-r} )^{\frac{1}{q_0 \gamma^{n}}} 2^{\frac{3n}{q_0 \gamma^n}}, $ where we also used $0 < q_0 < p < 1$ and $\gamma < 2$. In either case, we obtain 
\begin{equation*}
    \kappa_n \leq \Big( \frac{C}{R-r} \Big)^{\frac{1}{q_0 \gamma^{n}}} 2^{\frac{3n}{q_0 \gamma^n}} \qquad \forall n \in\mathbb{N}, 
\end{equation*}
where $C$ is independent of $q_0,p,r,R,\varepsilon,n$. Returning to \eqref{eq:kappa n round 2} and iterating,  
\begin{equation*}
   \begin{aligned}
     \Vert u+\varepsilon \Vert_{L^{q_0 \gamma^{n+1}}(Q_{r_{n+1}})} \leq \underbrace{\bigg( \prod_{k=0}^n \Big( \frac{C }{R-r} \Big)^{\frac{1}{q_0 \gamma^{k}}} 2^{\frac{3k}{q_0 \gamma^k}} \bigg)}_{=: B_n} \Vert u+\varepsilon \Vert_{L^{q_0}(Q_{R})}, 
   \end{aligned} 
\end{equation*}
\textit{i.e.}, using H\"older's inequality and $|Q_1|^{\frac{1}{q_0}-\frac{1}{p}} \leq (1+|Q_1|)^{\frac{1}{q_0}-\frac{1}{p}}$, we get 
\begin{equation}\label{eq:still with Bn}
    \begin{aligned}
        \Vert u\!+\!\varepsilon \Vert_{L^{q_0 \gamma^{n+1}}(Q_{r_{n+1}})} \leq B_n |Q_1|^{\frac{1}{q_0}-\frac{1}{p}}  \Vert u \!+\! \varepsilon \Vert_{L^p(Q_R)} \leq B_n (1+|Q_1|)^{\frac{1}{q_0}} \Vert u \!+\! \varepsilon \Vert_{L^p(Q_R)}. 
    \end{aligned}
\end{equation}
Analogously to Step 3 of the proof of Lemma \ref{lem:moser 1/u}, we study the convergence of $\{B_n\}_n$:
\begin{equation*}
   \begin{aligned}
      \log B_n = \frac{1}{q_0} \log  \Big( \frac{C }{R-r} \Big) \sum_{k=0}^n  \gamma^{-k} + \frac{3}{q_0}\log 2 \sum_{k=0}^n k \gamma^{-k} \nearrow \frac{1}{q_0} \frac{\gamma}{\gamma-1} \log  \Big( \frac{C }{R-r} \Big) + \frac{1}{q_0}\log \bar{C}, 
   \end{aligned} 
\end{equation*}
with $\log \bar{C} = 3 \log 2 \sum_{n\geq 0} n \gamma^{-n}$, and thus using the continuity of the logarithm, it holds 
\begin{equation*}
    B_n \nearrow B = \Big( \frac{C }{(R-r)^{\frac{\gamma}{\gamma-1}}} \Big)^{\frac{1}{q_0}}, 
\end{equation*}
for some $C$ independent of $q_0,R,r$ and can be assumed greater than $1$. Returning to \eqref{eq:still with Bn}, 
\begin{equation*}
    \begin{aligned}
        \Vert u+\varepsilon \Vert_{L^{\infty}(Q_{r})} \leq \Big( \frac{C }{(R-r)^{\frac{\gamma}{\gamma-1}}} \Big)^{\frac{1}{q_0}}  \Vert u + \varepsilon \Vert_{L^p(Q_R)} \leq \Big( \frac{C }{(R-r)^{\frac{\gamma^2}{\gamma-1}}} \Big)^{\frac{1}{p}} \Vert u + \varepsilon \Vert_{L^p(Q_R)}
    \end{aligned}
\end{equation*}
using that $p \leq \gamma q_0$ and $C \geq 1 > R-r$, where $C$ is independent of $p,r,R,\varepsilon$ but is allowed to depend on $\gamma$. The result follows by bluntly bounding $(R-r)^{-\frac{\gamma^2}{\gamma-1}} = (R-r)^{-25/6} \leq (R-r)^{-5}$, using again that $R-r < 1$. 
\end{proof}

\section{Logarithmic Estimates}\label{sec:log}

The main result of this subsection is the following estimate on the measure of the super-/sub-level sets of the logarithm of weak solutions of \eqref{eq:u eq}. Our strategy closely follows the exposition in \cite[Lemma 3.11]{BonforteDolbeaultNazaretSimonov}, though our proof is quite different on account of being tailored to the space-periodic setting (as such, we do not need to localise in the $x$ variable).

\begin{lemma}[Super-/sub-level set estimate of the logarithm]\label{lem:log level sets}
Let $u$ be a weak solution of \eqref{eq:u eq} satisfying \eqref{eq:initial upgrade dt rho}, $\varepsilon,r \in (0,1)$. Define 
\begin{equation}\label{eq:aeps def}
    w_\varepsilon := -\log(u+\varepsilon), \qquad a_\varepsilon :=  \int_{\T^2}   \log\big(u(0,x)+\varepsilon\big) \d x 
\end{equation}
There exists a positive constant $C$ independent of $\varepsilon,r$ such that, for all $s > 0$, it holds 
    \begin{equation}\label{eq:sublevel log est}
       \begin{aligned}
           & \bigg| \Big\{ (t,x) \! \in Q_r^+ \!:  w_\varepsilon(t,x) \!+\! a_\varepsilon  \! > \! s   \Big\} \bigg| \leq \frac{C}{s}, \quad \bigg| \Big\{ (t,x) \! \in Q_r^-\! :  w_\varepsilon(t,x) \!+\! a_\varepsilon \! <\!- s   \Big\} \bigg| \leq \frac{C}{s}. 
       \end{aligned}
    \end{equation}
\end{lemma}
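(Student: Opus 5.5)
The plan is the classical Moser logarithmic argument, simplified because we work on the whole torus $\T^2$ and need no spatial cut-off. We first derive a differential inequality for the spatial mean of $w_\varepsilon=-\log(u+\varepsilon)$. We then use the Poincaré inequality on $\T^2$ to turn the dissipation into a lower bound on the oscillation of $w_\varepsilon$ around that mean. Finally, integrating in time yields the weak-$L^1$ (i.e.\ $C/s$) bounds.

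\emph{Step 1: energy identity for $w_\varepsilon$.} Test \eqref{eq:u eq} with $\phi=-(u+\varepsilon)^{-1}$. This is admissible because $\varepsilon\le u+\varepsilon\le 2$, so $\phi\in L^\infty\cap L^2H^1$, and the chain rule is justified by \eqref{eq:initial upgrade dt rho} exactly as in the proof of Lemma \ref{lem:moser 1/u}. Three terms arise:
\begin{itemize}
\item the time derivative gives $\frac{\der}{\der t}\int_{\T^2}w_\varepsilon\d x$;
\item the diffusion gives $-\int_{\T^2}|\nabla u|^2(u+\varepsilon)^{-2}\d x=-\int_{\T^2}|\nabla w_\varepsilon|^2\d x$;
\item the drift gives $\int_{\T^2}\frac{u}{u+\varepsilon}\,\p\cdot\nabla w_\varepsilon\d x$.
\end{itemize}
Since $|\p|\le1$ by \eqref{eq:polarisation bound} and $u/(u+\varepsilon)\le1$, the drift term is at most $\frac12\|\nabla w_\varepsilon\|_{L^2}^2+C$. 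Applying the Poincaré inequality on $\T^2$ to $W(t):=\fint_{\T^2}w_\varepsilon(t)\d x$ then gives
\begin{equation*}
W'(t)+c\int_{\T^2}|w_\varepsilon(t)-W(t)|^2\d x\le C_\ast .
\end{equation*}
Here $c,C_\ast$ are independent of $\varepsilon$, and $a_\varepsilon$ equals $-W(0)$ up to the fixed normalisation $|\T^2|$, which we absorb into constants. We interpret $a_\varepsilon$ via $u\in C([0,\infty);L^2)$ (Remark \ref{rem:convergence initial data rho}) together with the Lipschitz continuity of $\log(\cdot+\varepsilon)$ on $[0,1]$. Set $\widetilde W(t):=W(t)-C_\ast t$. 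Then $\widetilde W$ is nonincreasing, and $-\widetilde W'\ge c\int|w_\varepsilon-W|^2$.

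\emph{Step 2: forward level sets on $Q_r^+$.} For $t\in(0,r)$ set $h(t):=\widetilde W(0)-\widetilde W(t)\ge0$, which is nondecreasing. On $E_s(t):=\{x: w_\varepsilon(t,x)+a_\varepsilon>s\}$ we have $w_\varepsilon-\widetilde W(t)>s+h(t)$, hence
\begin{equation*}
w_\varepsilon-W(t)=w_\varepsilon-\widetilde W(t)-C_\ast t> s+h(t)-C_\ast,
\end{equation*}
using $t<r<1$. For $s\ge 2C_\ast$ this is at least $\frac12(s+h(t))$. Therefore
\begin{equation*}
h'(t)\ge \tfrac c4|E_s(t)|\,(s+h(t))^2, \qquad\text{i.e.}\qquad |E_s(t)|\le \tfrac4c\,\frac{h'(t)}{(s+h(t))^2}.
\end{equation*}
Integrating over $(0,r)$ gives $|\{Q_r^+: w_\varepsilon+a_\varepsilon>s\}|\le \frac4c\big(\frac1s-\frac1{s+h(r)}\big)\le \frac{C}{s}$. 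For $s<2C_\ast$ the trivial bound $|Q_1^+|\le 2C_\ast|Q_1^+|/s$ suffices.

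\emph{Step 3: backward level sets on $Q_r^-$.} The argument is symmetric. For $t\in(-r,0)$ put $h(t):=\widetilde W(t)-\widetilde W(0)\ge0$. On $\{w_\varepsilon+a_\varepsilon<-s\}$ we get $W(t)-w_\varepsilon>s+h(t)-C_\ast|t|\ge\frac12(s+h)$ for $s\ge2C_\ast$. Since $-h'=\widetilde W'\le -c\int|w_\varepsilon-W|^2$, integrating backwards in time gives the same $C/s$ bound.

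The main obstacle is Step 1, and in particular keeping the constants uniform in $\varepsilon$. This is precisely why the drift must be written as $\frac{u}{u+\varepsilon}\p\cdot\nabla w_\varepsilon$ and absorbed by Young's inequality, and why the time-derivative term must be justified without Lions--Magenes by using $\partial_tu\in L^{3/2}$. The bookkeeping of the linear-in-time correction $C_\ast t$, which needs $r<1$, is the only other delicate point.
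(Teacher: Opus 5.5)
Your proposal is correct and is essentially the paper's proof. Both arguments test with $-(u+\varepsilon)^{-1}$, use Poincar\'e--Wirtinger on $\T^2$ to get a differential inequality for the spatial mean, and subtract $C_\ast t$ to obtain a monotone quantity (your $h$ is, up to sign, the paper's $W$). Both then integrate $\frac{\der}{\der t}(s+h)^{-1}$ over the level sets, forward on $Q_r^+$ and backward on $Q_r^-$. Your direct absorption of $C_\ast t$ for $s\geq 2C_\ast$ simply replaces the paper's inclusions $\Sigma_s(t)\subset\sigma_s(t)$, $\sigma_{2s}(t)\subset\Sigma_s(t)$.

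One correction, which applies equally to the paper's final ``rewriting in terms of $a_\varepsilon$'': the factor $|\T^2|$ between $a_\varepsilon$ and $-\bar w_\varepsilon(0)$ cannot be absorbed into constants, because $a_\varepsilon$ is unbounded as $\varepsilon\to0^+$. For instance, $u\equiv0$ gives $w_\varepsilon+a_\varepsilon=(|\T^2|-1)\log\varepsilon\to-\infty$, which violates the second bound. The lemma should therefore be read with $a_\varepsilon=\fint_{\T^2}\log(u(0,x)+\varepsilon)\d x$; this is what both arguments actually prove, and it is all the Harnack application needs.
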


As per the previous subsections, for clarity of presentation our analysis is performed for weak solutions of \eqref{eq:u eq} studied on the parabolic half-cylinder $Q_r^\pm$ centered at $t=0$ (\textit{cf.~}\eqref{eq:parabolic cylinder}); however, by translation these estimates hold on any cylinder of the form $Q_r^\pm(t_0)$, with the quantity $a_\varepsilon$ in \eqref{eq:aeps def} modified to $\int_{\T^2} \log(u(t_0,x)+\varepsilon) \d x $. 

\begin{proof}
The proof is divided into three steps. 

\smallskip 

\noindent 1. \textit{A differential inequality for the mean}. Recall  \eqref{eq:initial upgrade dt rho}: this regularity implies that (in view of the chain rule for Sobolev functions, \textit{cf.}~\textit{e.g.}~\cite[\S 5.10]{Evans}) the formula 
\begin{equation*}
    \frac{\der}{\der t} \int_{\T^2} -\log(u+\varepsilon)   \d x = \langle \partial_t u , -(u+\varepsilon)^{-1}  \rangle 
\end{equation*}
is justified a.e.~$t$. By testing \eqref{eq:u eq} against $-(u+\varepsilon)^{-1} $, we get 
\begin{equation*}
    \begin{aligned}
        \int_{\T^2}   w_\varepsilon(t_2,x) \d x \!-\! \int_{\T^2}   w_\varepsilon(t_1,x) \d x \!+\! \int_{t_1}^{t_2} \!\! \int_{\T^2}  | \nabla w_\varepsilon |^2 \d x \d \tau = \!\int_{t_1}^{t_2}\!\!\! \int_{\T^2} \! \frac{u}{(u+\varepsilon)} \p \!\cdot\! \nabla w_\varepsilon \d x \d \tau. 
    \end{aligned}
\end{equation*}
for a.e.~$-1 < t_1<t_2 < 1$. Thus, by applying Young's inequality and using $\Vert \p \Vert_{L^\infty} \leq 1$, 
\begin{equation}\label{eq:first log est}
    \begin{aligned}
        \int_{\T^2}  w_\varepsilon(t_2,x) \d x - \int_{\T^2}   w_\varepsilon(t_1,x) \d x + \frac{1}{2} \int_{t_1}^{t_2} \int_{\T^2}  | \nabla w_\varepsilon |^2 \d x \d t \leq C_*(t_2-t_1), 
    \end{aligned}
\end{equation}
where $C_*=\frac{1}{2}|\T^2|$ is independent of $t_1,t_2,\varepsilon$. Poincar\'e--Wirtinger's inequality in $\T^2$ implies 
\begin{equation*}
    \int_{t_1}^{t_2} \int_{\T^2}   |w_\varepsilon(\tau,x) - \bar{w}_\varepsilon(\tau)|^2   \d x \d \tau \leq C_P^2 \int_{t_1}^{t_2} \int_{\T^2}    | \nabla w_\varepsilon |^2 \d x \d \tau, 
\end{equation*}
where the Poincar\'e constant $C_P>0$ is independent of $t_1,t_2,\varepsilon$, and $\bar{w}_\varepsilon(\tau) := \fint_{\T^2}   w_\varepsilon(\tau,x) \d x$ is the spatial mean. 
By dividing \eqref{eq:first log est} by $|\T^2|$, the previous inequality implies 
\begin{equation*}
    \begin{aligned}
        \frac{\bar{w}^\varepsilon(t_2) - \bar{w}^\varepsilon(t_1)}{t_2-t_1} +  \frac{(1/2C_P^2)}{t_2-t_1}\int_{t_1}^{t_2} \fint_{\T^2}   |w_\varepsilon(\tau,x) - \bar{w}_\varepsilon(\tau)|^2 \d x \d \tau \leq C_*, 
    \end{aligned}
\end{equation*}
where the positive constant $C_*$ is independent of $t_1,t_2,\varepsilon$. By letting $t_2 \to t_1$ and using the Lebesgue Differentiation Theorem, we recover the \emph{differential inequality for the mean}: 
\begin{equation}\label{eq:first log est mod}
    \begin{aligned}
        \frac{\der}{\der t}\bar{w}_\varepsilon(t) + \frac{1}{2C_P^2 } \fint_{\T^2}   |w_\varepsilon(t,x) - \bar{w}_\varepsilon(t)|^2 \d x \leq C_* \qquad \text{a.e.~}t \in (-1,1). 
    \end{aligned}
\end{equation}

\smallskip

\noindent 2. \textit{Super-level set estimates}. Our aim in this step is to adapt the strategy of Step 1 to recover a \emph{monotonically decreasing} quantity from the differential inequality \eqref{eq:first log est mod}; \textit{i.e.}~replacing the right-hand side $C_*$ with $0$ instead. To this end, we consider the function: 
\begin{equation}\label{eq:underline subsol log est i}
    \underline{w}(t,x) := w_\varepsilon(t,x) 
    - \bar{w}_\varepsilon(0) - C_* t, 
\end{equation}
where $C_*$ is the positive constant from \eqref{eq:first log est mod}. We also set 
\begin{equation}\label{eq:big W def in superlevel est}
    W(t) := \bar{w}_\varepsilon(t) - \bar{w}_\varepsilon(0) - C_* t; 
\end{equation}
note that $W(0) = 0$, $|\underline{w}(t,x) - W(t)| = |w_\varepsilon(t,x) - \bar{w}_\varepsilon(t)|$ and $\frac{\der}{\der t}W(t) = \frac{\der}{\der t}\bar{w}_\varepsilon(t) - C_*$. Substituting these computations in \eqref{eq:first log est mod}, we deduce 
\begin{equation}\label{eq:first log est mod W}
    \begin{aligned}
      \frac{\der}{\der t}W(t) \!+\!  \frac{1}{2C_P^2}\fint_{\T^2} |\underline{w}(t,x) \!-\! W(t)|^2 \d x \leq 0 \implies W(t) \leq W(0) = 0 \quad \text{a.e.~} t \in (0,1). 
    \end{aligned}
\end{equation}

Next, for all $s>0,t \in (0,1)$, define the super-level set $\Sigma_s(t)$ and $\sigma_s(t)$ by 
\begin{equation}\label{eq:define sets Sigma and sigma}
\Sigma_s(t) := \{x \in \T^2 : \underline{w}(t,x) > s \}, \qquad \sigma_s(t) := \{x \in \T^2 : w_\varepsilon(t,x) - \bar{w}_\varepsilon(0) > s \}; 
\end{equation}
our strategy will be to use estimates on $\Sigma_s(t)$ to control the measure of $\sigma_s(t)$. Indeed, for all $t \in (0,1)$, we verify directly the inclusions 
\begin{equation}\label{eq:relationship between Sigma and sigma set}
    \Sigma_s(t) \subset \sigma_s(t) \quad \forall s>0, \qquad \sigma_{2s}(t) \subset \Sigma_s(t) \quad \forall s \geq C_*. 
\end{equation}
Using \eqref{eq:first log est mod W}, direct computation (using that $W(t) \leq 0$) yields 
\begin{equation}
    \underline{w}(t,x) - W(t) > s - W(t) \geq s > 0 \quad \text{for all } x \in \Sigma_s(t). 
\end{equation}
Furthermore, using also $\frac{\der}{\der t}W(t) = - \frac{\der}{\der t}(s-W(t))$ and \eqref{eq:first log est mod W}, we get 
\begin{equation*}
   \frac{1}{2C_P^2|\T^2|} \int_{\Sigma_s(t)} \!\!\!\!\!\! (s-W(t))^2 \d x \leq \frac{\der}{\der t}(s- W(t)) \implies  \frac{\der}{\der t}(s- W(t)) \geq \frac{1}{2C_P^2|\T^2|} |\Sigma_s(t)| (s-W(t))^2. 
\end{equation*}
We integrate the final inequality in $t$ over the interval $[0,r]$ and get 
\begin{equation*}
    |\{(t,x) \in Q_r^+ : \underline{w}(t,x) > s \}| = \int_0^r |\Sigma_s(t)| \d t \leq  \frac{2C_P^2|\T^2|}{s- W(0)}  - \frac{2C_P^2|\T^2|}{s-W(r)} \leq \frac{2C_P^2|\T^2|}{s}, 
\end{equation*}
where we used $W(0)=0$, $s-W(r)>0$. By the above and \eqref{eq:relationship between Sigma and sigma set}, we get 
\begin{equation}\label{eq:suplevel est i}
   |\{(t,x) \in Q_r^+: w_\varepsilon(t,x) \!-\! \bar{w}_\varepsilon(0) > s \}| \leq |\{(t,x) : \underline{w}(\tau,x) > \frac{s}{2} \}| \leq \frac{4C_P^2|\T^2|}{s} \quad \forall s \geq C_*, 
\end{equation}
while, for $0 < s < C_*$, we estimate bluntly by the measure of the whole space 
\begin{equation}\label{eq:suplevel est ii}
   |\{(t,x) \in Q_r^+:w_\varepsilon(t,x) - \bar{w}_\varepsilon(0) > s \}| \leq |Q_1^+| \frac{s}{s} < \frac{|Q_1| C_*}{s} \quad \forall s \in (0,C_*). 
\end{equation}
By putting together the bounds of \eqref{eq:suplevel est i}--\eqref{eq:suplevel est ii}, we obtain the super-level set estimate 
\begin{equation*}
   |\{(t,x)\in Q_r^+ : w_\varepsilon(t,x) - \bar{w}_\varepsilon(0) > s \}| \leq \frac{C}{s} \qquad \forall s>0, 
\end{equation*}
where the positive constant $C=\max\{|Q_1|C_*,4C_P^2|\T^2|\}$ is independent of $\varepsilon,r$. Rewriting in terms of $a_\varepsilon$ as per \eqref{lem:log level sets} yields the first part of \eqref{eq:sublevel log est}. 

\smallskip 

\noindent 3. \textit{Sub-level set estimates}. As per the final step of \cite[Lemma 3.11]{BonforteDolbeaultNazaretSimonov}, rewriting only Step 2 of the proof with $-t$ instead of $t$ yields the second part of \eqref{eq:sublevel log est}. We write out several details for the benefit of the reader. In this latter case, it holds $W(t) \geq 0$ for all $t \in (-1,0)$ with $W$ as per \eqref{eq:big W def in superlevel est}, and the inclusions of \eqref{eq:relationship between Sigma and sigma set} are again satisfied, but with re-defining the sets in \eqref{eq:define sets Sigma and sigma} as $\Sigma_s(t) = \{x: \underline{w}(t,x) < -s\}$ and $\sigma_s(t) = \{x : w_\varepsilon(t,x) - \bar{w}_\varepsilon(0) < -s \}$, and $\underline{w}$ still defined as per \eqref{eq:underline subsol log est i}. Then, we notice that for all $x \in \Sigma_s(t)$, it holds 
\begin{equation*}
    W(t) - \underline{w}(t,x) > W(t) + s \geq s > 0, 
\end{equation*}
and thus, arguing as before, we obtain from the differential inequality of \eqref{eq:first log est mod W} that 
\begin{equation*}
    \frac{\der }{\der t} W(t) + \frac{1}{2C_P^2|\T^2|} \int_{\Sigma_s(t)} (W(t)+s)^2 \d x \leq 0 \implies \frac{1}{2C_P^2|\T^2|} |\Sigma_s(t)| \leq \frac{\der}{\der t} (W(t)+s)^{-1}, 
\end{equation*}
and thus $|\{(t,x) \in Q_r^- : \underline{w}(t,x) < -s \}| = \int^0_{-r} |\Sigma_s(t)| \d t \leq \frac{2C_P^2|\T^2|}{W(0)+s} - \frac{2C_P^2|\T^2|}{W(-r)+s} \leq \frac{2C_P^2|\T^2|}{s}$, and the required conclusion follows as per the end of Step 2 of the proof. 
\end{proof}

\section{Quantitative Instantaneous Separation from $\{\rho=1\}$}\label{sec:quantitative}

The main result of this section is the following; it is concerned with the non-degeneracy of $1-\rho$, which is at the heart of the regularity argument \S \ref{sec:smooth}. 

\begin{prop}[Quantitative instantaneous non-degeneracy]\label{prop:lower bound rho improved}
   Let $f$ be a weak solution of \eqref{eq:main eqn} with admissible initial data $f_0$, $\rho = \int_\T f \d \theta$. Then, there exists a positive constant $C>1$ $($independent of $t,T$, the initial data, and the solution$)$ such that 
   \begin{equation}\label{eq:infinite speed of prop}
       \essinf\limits_{(t,T)\times\T^2}(1\!-\!\rho) \geq  C^{-1} \Vert 1\!-\!\rho_0 \Vert_{L^1(\T^2)} e^{-C\big( T +  (1 \wedge t)^{-10} \big)},  \qquad \forall 0 < t < T < \infty.
   \end{equation}

\end{prop}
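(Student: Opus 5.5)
The plan is to combine the two Moser bounds (Lemmas \ref{lem:moser 1/u} and \ref{lem:moser u upper}) with the logarithmic estimates (Lemma \ref{lem:log level sets}) through the Bombieri--Giusti Lemma \ref{lem:bombieri giusti}. This yields a Harnack-type inequality for $u=1-\rho$ on cylinders of size comparable to $1\wedge t$. An $L^1$ lower bound near the initial time, coming from \eqref{eq:conv rho to initial data} and the structure of \eqref{eq:u eq}, then seeds the argument. Long times are handled by a simple exponential-in-$T$ propagation.

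\emph{Step 1: the Harnack inequality.} Fix $t_0\in(0,1]$, set $r\sim t_0/4$, and work with $u_\varepsilon=u+\varepsilon$ and the normalising constant $e^{a_\varepsilon}$, where $a_\varepsilon=\int_{\T^2}\log(u(t_0)+\varepsilon)\,\d x$ is taken at the intermediate time $t_0$.
\begin{itemize}
\item Apply Lemma \ref{lem:bombieri giusti} to $\varphi=e^{-a_\varepsilon}u_\varepsilon$ on the future half-cylinders $\mathcal{Q}_\varrho=(t_0+\varrho',t_0+\varrho)\times\T^2$, suitably nested. Hypothesis (i) follows from Lemma \ref{lem:moser u upper} after translation, since the normalising constant scales out, with $\beta=5$. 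Hypothesis (ii) is the sub-level estimate of Lemma \ref{lem:log level sets}, because $\log\varphi>s$ is equivalent to $w_\varepsilon+a_\varepsilon<-s$ on the past side. This requires applying the lemma with $\mathcal{Q}^-$ at time $t_0$, so I will place the $u$-cylinder \emph{before} $t_0$ and the $u^{-1}$-cylinder \emph{after} $t_0$. The result is $\sup_{Q^-}u_\varepsilon\le e^{a_\varepsilon}\exp(C r^{-10})$.
\item Symmetrically, apply the lemma to $\varphi=e^{a_\varepsilon}u_\varepsilon^{-1}$ on $Q^+$, using Lemma \ref{lem:moser 1/u} (with $\beta=2$, adapted to forward cylinders by the same proof with time reversed in the cutoff) and the super-level estimate. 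This gives $\sup_{Q^+}u_\varepsilon^{-1}\le e^{-a_\varepsilon}\exp(Cr^{-4})$.
\item Combining, $\inf_{(t_0,t_0+r)}u_\varepsilon\ge e^{-Cr^{-10}}\sup_{(t_0-r,t_0)}u_\varepsilon$.
\item Let $\varepsilon\to0$; all constants are independent of $\varepsilon$.
\end{itemize}
The exponent $2\beta=10$ explains the $t^{-10}$ in \eqref{eq:infinite speed of prop}. One must check that Lemma \ref{lem:bombieri giusti} only needs the nested sets, not symmetric cylinders, which it states explicitly.

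\emph{Step 2: the lower bound on the past sup.} Bound $\sup_{(t_0-r,t_0)}u$ from below by $\frac{1}{|\T^2|}\fint_{t_0-r}^{t_0}\int_{\T^2}u\,\d x\,\d s$. Integrating \eqref{eq:u eq} over $\T^2$ shows that $\int_{\T^2}u(s)\,\d x$ is constant in time, since both the divergence and the Laplacian integrate to zero. By Remark \ref{rem:convergence initial data rho} this constant equals $\Vert 1-\rho_0\Vert_{L^1}$. This step is pleasantly trivial. Hence
\[
\inf_{(t_0,t_0+r)\times\T^2}u\ \ge\ C^{-1}\Vert1-\rho_0\Vert_{L^1}\,e^{-Ct_0^{-10}}.
\]

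\emph{Step 3: covering $(t,T)$.} For $t\le1$, take $t_0\simeq t$ to cover $(t,\tfrac54 t)$. For later times, re-apply the same estimate with $t_0$ ranging over a sequence of unit-scale times, again seeding with the conserved mass. Each application on cylinders of fixed size $\sim1/4$ gives the bound $C^{-1}\Vert1-\rho_0\Vert_{L^1}e^{-C}$ uniformly. Using conservation at every step, the bound is in fact uniform without any loss in $T$, and $e^{-CT}$ is a safe weakening. For $t>1$ we use $(1\wedge t)=1$.

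\emph{Main obstacle.} The main difficulty is Step 1: the Moser lemmas are stated on $Q_r$ with $R\le1$, and translating and rescaling the radii to size $\sim t_0$ must keep $C_0,C_1$ independent of $r$. I would handle this by using them directly with $R=\tilde r\sim t_0$ rather than rescaling, which is precisely why $(\tilde r-r)^{-2\beta}$ appears. I would also verify that the time-oriented versions of the Caccioppoli inequalities (past versus future) match the orientations required in Lemma \ref{lem:log level sets}.
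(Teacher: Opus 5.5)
Your Steps 1--2 reproduce Lemma \ref{prop:lower bound rho improved small times only} and Corollary \ref{cor:lower bound depending on L1 norm}, but two points in Step 1 need correcting.

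First, do not try to adapt Lemma \ref{lem:moser 1/u} to forward cylinders. The Caccioppoli step for $w_\varepsilon=(u+\varepsilon)^{-q/2}$ closes only when the cutoff vanishes at the \emph{earliest} time. If the cutoff vanishes at the final time instead, $\Vert\psi w_\varepsilon(t)\Vert_{L^2}^2$ ends up on the wrong side. This is to be expected, since $(u+\varepsilon)^{-1}$ behaves like a subsolution. Instead, apply the lemma as stated to backward cylinders $Q^-_\varrho(t_0+\tilde r)$, $\varrho<\tilde r$. These share their top time and all lie in $Q^+_{\tilde r}(t_0)$, where the super-level estimate of Lemma \ref{lem:log level sets} is available; the paper takes $\tilde r=t_0/2$. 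Likewise, Lemma \ref{lem:moser u upper} needs two-sided cylinders, because its iteration passes through both $q<1$ and $q>1$. So the family for $e^{-a_\varepsilon}u_\varepsilon$ should be $Q_\varrho(t_0/2)$, $\varrho<t_0/2$, lying inside $Q^-_{t_0}(t_0)$, not half-cylinders.

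Second, Lemma \ref{lem:bombieri giusti} only controls $\mathcal{Q}_r$ for $r<\tilde r$, with a bound that blows up as $r\to\tilde r$. So the method gives a Harnack inequality with a time gap, $\essinf_{(\frac54t_0,\frac32t_0)\times\T^2}u\ge e^{-Ct_0^{-10}}\esssup_{(\frac14t_0,\frac34t_0)\times\T^2}u$, not your gap-free version. This is harmless. Your Step 2 bounds the supremum over \emph{any} time interval from below by $\Vert1-\rho_0\Vert_{L^1(\T^2)}/|\T^2|$. The intervals $(\frac54t',\frac32t')$ with $\frac23t<t'<1$ still cover $(t,\frac32)$, at the cost of a factor $(\frac32)^{10}$ in $C$.

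Step 3 is a genuinely different route from the paper. The paper goes beyond the unit time scale via Lemma \ref{lem:propogation big times}. It shows $1/u$ is a subsolution, controls $\Vert(u+\varepsilon)^{-1}\Vert_{L^2}$ by Gr\"onwall (costing $e^{Ct}$), and upgrades this to $L^\infty$ with the iteration of Lemma \ref{lem:tech iteration lemma}. That argument is seeded by the pointwise bound at a single Lebesgue time $t_0<1$ supplied by the Corollary.

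You instead rerun the Harnack argument at a fixed scale $\sigma\le1$ around every $\tau\ge\sigma$, reseeding with the conserved mass each time. This is legitimate, for three reasons. The constants in Lemmas \ref{lem:moser 1/u}, \ref{lem:moser u upper}, \ref{lem:log level sets} and \ref{lem:bombieri giusti} depend only on $\Vert\p\Vert_{L^\infty}\le1$ and $\T^2$, so they are invariant under time translations. The regularity \eqref{eq:initial upgrade dt rho} is used only qualitatively and holds on every compact subinterval of $(0,\infty)$. And $\int_{\T^2}u(s)\d x=\Vert1-\rho_0\Vert_{L^1(\T^2)}$ for a.e.~$s$.

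You obtain $\essinf_{(\tau+\sigma/4,\tau+\sigma/2)\times\T^2}u\ge c\,\Vert1-\rho_0\Vert_{L^1(\T^2)}$ with $c$ universal. Your route therefore dispenses with Lemmas \ref{lem:propogation big times} and \ref{lem:tech iteration lemma} entirely. It also gives a bound on $(t,T)$ independent of $T$, which implies \eqref{eq:infinite speed of prop} and is stronger. The factor $e^{-CT}$, and the degeneration as $T\to\infty$ it permits, is an artefact of the Gr\"onwall step. What the paper's route offers in exchange is that it propagates a pointwise lower bound forward from a single time using only energy estimates, without re-invoking the logarithmic estimates and the Bombieri--Giusti machinery.
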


We highlight that the lower bound \eqref{eq:infinite speed of propagation in thm statement} in Theorem \ref{eq:infinite speed of propagation in thm statement} follows immediately from \eqref{eq:infinite speed of prop}, after enlarging the constant $C$ appropriately since $1 \geq e^{-Ct^{-10}}$; changing the factor $C^{-1}$ in \eqref{eq:infinite speed of prop} to $C^{-1}e^{-C}$ suffices. Hence, once Proposition \ref{prop:lower bound rho improved} is established, so is \eqref{eq:infinite speed of propagation in thm statement}.

\subsection{Harnack estimate for small times}\label{sec:quantitative harnack}

The following is a precursor to Proposition \ref{prop:lower bound rho improved} concerned only with small times. Note the cylinders in \eqref{eq:infinite speed of prop small times} are separated by a  \emph{gap}. 

\begin{lemma}[Harnack estimate for small times]\label{prop:lower bound rho improved small times only}
   Let $u$ be a weak solution of \eqref{eq:u eq} on $(0,\infty)\times\T^2$ satisfying \eqref{eq:initial upgrade dt rho}. Then, there exists $C>0$, independent of $t$ and $u$, such that 
   \begin{equation}\label{eq:infinite speed of prop small times}
       \essinf\limits_{(\frac{5}{4}t,\frac{3}{2}t)\times\T^2} u \geq e^{-Ct^{-10}}  \esssup\limits_{(\frac{1}{4}t,\frac{3}{4}t)\times\T^2} u  \qquad \forall t \in (0,1). 
   \end{equation}
\end{lemma}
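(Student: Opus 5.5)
The plan is to combine the Moser bounds (Lemmas \ref{lem:moser 1/u}--\ref{lem:moser u upper}) with the logarithmic level-set estimates (Lemma \ref{lem:log level sets}) through the Bombieri--Giusti Lemma \ref{lem:bombieri giusti}, applied twice around a well-chosen intermediate time. The two applications go in opposite time directions: one for $\varphi_1 = e^{-a_\varepsilon}(u+\varepsilon)^{-1}$ on future cylinders, and one for $\varphi_2 = e^{a_\varepsilon}(u+\varepsilon)$ on past cylinders.

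First I fix $t\in(0,1)$ and an intermediate time $t_0$ in the gap, say $t_0 = t$. I translate time so that $t_0=0$, and set $a_\varepsilon = \int_{\T^2}\log(u(t_0,x)+\varepsilon)\,\mathrm{d}x$; this is valid for a.e.\ $t_0$, so if needed I wiggle $t_0$ slightly. Since $\log\varphi_1 = w_\varepsilon - a_\varepsilon$, the level-set hypothesis (ii) of Lemma \ref{lem:bombieri giusti} is, up to the sign convention in \eqref{eq:aeps def} (which I will normalise so the means match), exactly the super-level estimate of Lemma \ref{lem:log level sets} on $Q_r^+$. Likewise $\log\varphi_2 = -(w_\varepsilon - a_\varepsilon)$, and its super-level sets are the sub-level sets on $Q_r^-$. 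Multiplying by the constant $e^{\mp a_\varepsilon}$ leaves the homogeneous Moser inequalities unchanged, so hypothesis (i) holds as well: with $\beta=2$ for $\varphi_1$, and with $\beta=5$ for $\varphi_2$.

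The main obstacle is geometry. Lemma \ref{lem:moser 1/u} is stated on backward half-cylinders $Q^-_r(\tau)$, whereas the log estimate for $\varphi_1$ lives on the forward region $(t_0,t_0+r)$. I will therefore define the nested family $\mathcal{Q}_r$ for $\varphi_1$ as $(t_0+\tfrac{t}{4}-\tfrac{r t}{4},\,t_0+\tfrac{t}{2})\times\T^2$, or a similar family shrinking towards the later end. The Moser bound then compares $\mathcal{Q}_r$ with $\mathcal{Q}_R$ after rescaling the time unit, and all these sets lie in the region $\{\tau>t_0\}$ controlled by Lemma \ref{lem:log level sets}.

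The rescaling costs factors of $t$. The Caccioppoli constants scale like $(R-r)^{-1}$ in absolute time, with separations of order $t$. The equation is not scale invariant, but because the domain is periodic and I do not rescale space, I simply keep the cylinders at width of order $t$ and carry powers of $t^{-1}$ in $C_0$. This gives $C_0 \sim t^{-5}$ or so. Then \eqref{eq:bombieri giusti final bound i} with $(\tilde r - r)^{-2\beta}$ and $\beta=5$, with radii of size $t$, yields a bound of the form $\exp(Ct^{-10})$; this is the source of the exponent. The level-set constant $C_1$ is uniform, since these sets have measure at most $|\T^2|\,t$. This gives
\[
\operatorname*{ess\,sup}_{(\frac54 t,\frac32 t)} (u+\varepsilon)^{-1} \le e^{a_\varepsilon} e^{Ct^{-10}} .
\]
The symmetric step with $\varphi_2$ and the family for Lemma \ref{lem:moser u upper}, using full cylinders $Q_r(\tau)$ placed inside $(\tfrac14 t,t_0)$ so that they sit in the past region, gives
\[
\operatorname*{ess\,sup}_{(\frac14 t,\frac34 t)} (u+\varepsilon) \le e^{-a_\varepsilon} e^{Ct^{-10}} .
\]

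Multiplying the two bounds cancels $a_\varepsilon$ and gives
\[
\operatorname*{ess\,sup}_{(\frac14 t,\frac34 t)}(u+\varepsilon)\;\le\; e^{2Ct^{-10}} \operatorname*{ess\,inf}_{(\frac54 t,\frac32 t)}(u+\varepsilon).
\]
Letting $\varepsilon\to0$ then yields \eqref{eq:infinite speed of prop small times}; the constants are independent of $\varepsilon$, as Lemmas \ref{lem:moser 1/u}--\ref{lem:log level sets} guarantee.

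The delicate points are the following:
\begin{itemize}
\item choosing the cylinder families so that they are nested, lie on the correct side of $t_0$, and that $\tilde r\le1$ after normalisation;
\item the fact that the Moser lemmas' constants are stated for $R\le1$, which requires me to track the $t$-dependence when the time scale is $t$ rather than $1$ (I would simply use absolute-time radii $r\in(0,t/4]$ and accept constants polynomial in $t^{-1}$, absorbed into $e^{Ct^{-10}}$);
\item the fact that Lemma \ref{lem:bombieri giusti} needs $\varphi\in L^1$, which holds since $u+\varepsilon\ge\varepsilon$.
\end{itemize}
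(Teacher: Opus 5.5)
Your proposal is essentially the paper's proof: with $t_0=t$, the paper applies the Bombieri--Giusti Lemma to $e^{-a_\varepsilon}(u+\varepsilon)$ on the full cylinders $Q_r(\frac t2)\subset(0,t)\times\T^2$ (Lemma \ref{lem:moser u upper} with $\beta=5$, plus the sub-level estimate of Lemma \ref{lem:log level sets}) and to $e^{a_\varepsilon}(u+\varepsilon)^{-1}$ on $Q^-_r(\frac32 t)\subset(t,\frac32 t)\times\T^2$ (Lemma \ref{lem:moser 1/u} with $\beta=2$, plus the super-level estimate), takes $\tilde r=\frac t2$ and $r=\frac t4$, multiplies the two bounds to cancel $a_\varepsilon$, and lets $\varepsilon\to0$. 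What remains is bookkeeping: the signs of $a_\varepsilon$ in your $\varphi_1,\varphi_2$ should be as just written, so that $\log\varphi$ is exactly the quantity Lemma \ref{lem:log level sets} controls; the full cylinders for the upper bound cannot stay inside $(\frac t4,t)$ and must extend below $\frac t4$; and with absolute radii the Moser constants are independent of $t$, so the whole factor $e^{Ct^{-10}}$ comes from $(\tilde r-r)^{-2\beta}=(\frac t4)^{-10}$ --- also putting powers of $t^{-1}$ into $C_0$ would double-count, since $C_2$ grows like $C_0^2$.
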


\begin{proof} 
    The proof is divided into several steps. We begin by fixing $t_0 \in (0,1)$, setting $a_\varepsilon =  \int_{\T^2}   \log(u(t_0,x) +\varepsilon) \d x $ from \eqref{eq:aeps def} and defining the non-negative measurable functions 
    \begin{equation*}
        v^-_\varepsilon(t,x) := e^{a_\varepsilon} \big(u(t,x)+\varepsilon\big)^{-1} , \qquad v^+_\varepsilon(t,x) := e^{-a_\varepsilon} \big(u(t,x)+\varepsilon\big).
    \end{equation*}

    \smallskip 

    \noindent 1. \textit{Upper bound on the supremum}. By translating the relevant cylinders and multiplying by $e^{-a_\varepsilon}$ (which is justified since $e^{-a_\varepsilon} u(\frac{t_0}{2}+t,x)$ is also a solution of \eqref{eq:u eq} when $u$ is), the conclusion of Lemma \ref{lem:moser u upper}
    holds in the form:  for all $\varepsilon , p\in (0,1)$, and $0 < r < R \leq \frac{t_0}{2}$, there exists a positive constant $C$ independent of $p,r,R,\varepsilon,t_0$ such that 
    \begin{equation*}
   \begin{aligned}
     \Vert v^+_\varepsilon \Vert_{L^{\infty}(Q_r(\frac{t_0}{2}))} \leq \left(\frac{C }{(R-r)^5} \right)^{\frac{1}{p}}\Vert v^+_\varepsilon \Vert_{L^{p}(Q_R(\frac{t_0}{2}))}. 
   \end{aligned} 
\end{equation*}
It therefore follows that $v_\varepsilon^+$ satisfies the first assumption of the Bombieri--Giusti Lemma \ref{lem:bombieri giusti}. Furthermore, for all $s>0$ and all $r<t_0/2$, using the notation $w_\varepsilon(t,x) = -\log(u(t,x)+\varepsilon)$, 
\begin{equation*}
   \begin{aligned}
       \big| \{ (t,x) \in Q_r(\frac{t_0}{2}) : \log v^+_\varepsilon (t,x) > s \} \big|        &=  \big| \{ (t,x) \in Q_{r}(\frac{t_0}{2}) : w_\varepsilon(t,x) \!+\! a_\varepsilon < -s \} \big| \\ 
       &\leq  \big| \{ (t,x) \in Q^-_{t_0}(t_0) : w_\varepsilon(t,x) \!+\! a_\varepsilon < -s \} \big|    \leq \frac{C}{s} , 
   \end{aligned} 
\end{equation*}
by applying Lemma \ref{lem:log level sets} with the choice $r=t_0$ on the translated cylinder at time $t=t_0$ (note that $a_\varepsilon$ corresponds precisely to $\int_{\T^2}\log(u(t_0,x)+\varepsilon) \d x$ so that Lemma \ref{lem:log level sets} applies with the cylinder centered at $t_0$), 
where the positive constant $C$ is independent of $\varepsilon,s,r,t_0$, and where we used the identification of the cylinders $Q_r(\frac{t_0}{2}) = (\frac{t_0}{2}-r,\frac{t_0}{2}+r)\times\T^2 \subset (0,t_0) \times \mathbb{T}^2 = Q^-_{t_0}(t_0)$ for all $r < t_0/2$. The second condition of the Bombieri--Giusti Lemma \ref{lem:bombieri giusti} is therefore also verified for $v_\varepsilon^+$ for the family of nested sets $\{\mathcal{Q}_r = Q_r(\frac{t_0}{2})\}_{r<t_0/2}$, whence an application of this result (taking $\beta =5$ and $\tilde r = t_0/2$) implies $\Vert v^+_\varepsilon \Vert_{L^\infty(Q_r(t_0))} \leq e^{\bar{C}_1 (\frac{t_0}{2}-r)^{-10}}$ for all $0<r<\frac{t_0}{2}$, where the positive constant $\bar{C}_1$ is independent of $\varepsilon,r,t_0$. Thus, 
\begin{equation}\label{eq:first half of harnack}
   \begin{aligned}
      \varepsilon + \esssup_{Q_r(\frac{t_0}{2})} u \leq e^{a_\varepsilon} e^{\bar{C}_1 (\frac{t_0}{2}-r)^{-10}} \qquad \forall 0 < r < \frac{t_0}{2}. 
   \end{aligned}
\end{equation}

\smallskip 

\noindent 2. \textit{Lower bound on the infimum}. Similarly, by multiplying the conclusion of Lemma \ref{lem:moser 1/u} with $e^{a_\varepsilon}$ and centering the cylinders at $t= 3t_0/2$ by translation, we get 
    \begin{equation*}
   \begin{aligned}
    \Vert v^-_\varepsilon \Vert_{L^{\infty}(Q_r^-(\frac{3}{2}t_0))} \leq \left(\frac{C }{(R-r)^2} \right)^{\frac{1}{p}}\Vert v^-_\varepsilon \Vert_{L^{p}(Q_R^-(\frac{3}{2}t_0))} \qquad \forall 0 < r < R \leq \frac{3}{2}t_0, 
   \end{aligned} 
\end{equation*}
for all $p \in (0,1)$, with $C$ again independent of $\varepsilon,r,R,p,t_0$. Meanwhile, another application of Lemma \ref{lem:log level sets} (this time with the choice $r=t_0/2$ and having translated the cylinder to $t=t_0$) implies for all $s>0$ and all $r < t_0/2$, with the notation $w_\varepsilon(t,x) = -\log(u(t,x)+\varepsilon)$, 
\begin{equation*}
   \begin{aligned}
      \big| \{ (t,x) \in Q_r^-(\frac{3}{2}t_0) : \log v^-_\varepsilon (t,x) > s \} \big|             &=  \big| \{ (t,x) \in Q_r^-(\frac{3}{2}t_0) : w_\varepsilon(t,x) \!+\! a_\varepsilon > s \} \big| \\ 
      &\leq  \big| \{ (t,x) \in Q_{\frac{t_0}{2}}^+( t_0) : w_\varepsilon(t,x) \!+\! a_\varepsilon > s \} \big|   \leq \frac{C}{s} , 
   \end{aligned} 
\end{equation*}
where the positive constant $C$ is independent of $\varepsilon,s,r,t_0$, where we used the identification of the cylinders $Q_r^-(\frac{3}{2}t_0) = (\frac{3}{2}t_0-r,\frac{3}{2}t_0)\times\T^2 \subset (t_0,\frac{3}{2}t_0) = Q_{t_0/2}^+(t_0)$ for all $ r < t_0/2$. The second condition of the Bombieri--Giusti Lemma \ref{lem:bombieri giusti} is thus also fulfilled by $v_\varepsilon^-$ for the family of nested sets $\{\mathcal{Q}_r = Q_r^-(\frac{3}{2}t_0)\}_{r<t_0/2}$. Applying this result with the choices $\beta =2$ and $\tilde r = t_0/2$ implies $\Vert v^-_\varepsilon \Vert_{L^\infty(Q_r^-(\frac{3}{2}t_0))} \leq e^{\bar{C}_2 (\frac{t_0}{2}-r)^{-4}}$ for all $0<r<\frac{t_0}{2}$, where the positive constant $\bar{C}_2$ is independent of $\varepsilon,r,t_0$, from which we deduce 
\begin{equation}\label{eq:second half of harnack}
   \begin{aligned}
      \varepsilon + \essinf_{Q_r^-(\frac{3}{2}t_0)} u \geq e^{a_\varepsilon} e^{-\bar{C}_2 (\frac{t_0}{2}-r)^{-4}} \qquad \forall 0 < r < \frac{t_0}{2}. 
   \end{aligned}
\end{equation}

\smallskip 

\noindent 3. \textit{Harnack inequality}. Choosing $r=t_0/4$ in \eqref{eq:first half of harnack} and \eqref{eq:second half of harnack}, we get (with $C_1' = 4^{10} \bar{C}_1$, $C_2' = 4^4 \bar{C}_2$), the bound $( \varepsilon + \essinf_{Q_{t_0/4}^-(\frac{3}{2}t_0)} u ) e^{C_2' t_0^{-4}} \geq e^{a_\varepsilon} \geq ( \varepsilon + \esssup_{Q_{t_0/4}(\frac{t_0}{2})} u ) e^{-C_1' t_0^{-10}}$. Setting $C=C_1'+C_2'$, which is independent of $\varepsilon,t_0$, we deduce by letting $\varepsilon \to 0^+$ 
\begin{equation*}
    \essinf_{Q_{\frac{t_0}{4}}^-(\frac{3}{2}t_0)} u  \geq e^{- C t_0^{-10}} \esssup_{Q_{\frac{t_0}{4}}(\frac{t_0}{2})} u , 
\end{equation*}
where we also used $0<t_0<1$. Rewriting the cylinders and setting $t=t_0$ yields \eqref{eq:infinite speed of prop small times}. 
\end{proof}

We readily deduce the following corollary, where the previous lower bound (in terms of the supremum) in the Harnack inequality is replaced by the $L^1$ norm of the initial data. 

\begin{cor}\label{cor:lower bound depending on L1 norm}
    Let $f$ be a weak solution of \eqref{eq:main eqn} with admissible initial data $f_0$, $\rho = \int_\T f \d \theta$, and $u=1-\rho$. Then, there exists a positive constant $C$ $($independent of $t$, the initial data, and the solution$)$, such that 
   \begin{equation}\label{eq:infinite speed of prop small times in terms of L1}
       \essinf\limits_{(\frac{5}{4}t,\frac{3}{2}t)\times\T^2} u \geq  C^{-1} \Vert 1-\rho_0 \Vert_{L^1(\T^2)}  e^{-Ct^{-10}} \qquad \forall t \in (0,1). 
       \end{equation}
\end{cor}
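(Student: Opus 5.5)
The idea is to combine the Harnack estimate of Lemma \ref{prop:lower bound rho improved small times only} with a lower bound on $\esssup_{(\frac14 t,\frac34 t)\times\T^2} u$ in terms of $\Vert 1-\rho_0\Vert_{L^1(\T^2)}$. Since $0\le u\le 1$, we have $\esssup_{(\frac14 t,\frac34 t)\times\T^2} u \geq |\T^2|^{-1}\int_{\T^2} u(s,x)\d x$ for a.e.~$s\in(\frac14 t,\frac34 t)$. It therefore suffices to show that the spatial mass $\int_{\T^2} u(s)\d x$ is bounded below by a fixed multiple of $\Vert 1-\rho_0\Vert_{L^1(\T^2)}=\int_{\T^2}(1-\rho_0)\d x$. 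Here we use $0\le\rho_0\le1$, so that $1-\rho_0\ge0$.

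The key observation is that the mass of $u$ is exactly conserved. Integrate \eqref{eq:u eq} over $\T^2$. By periodicity both the divergence term $\dv(\p u)$ and the Laplacian vanish, so $\frac{\der}{\der t}\int_{\T^2}u\d x=0$. To do this rigorously, I test the weak formulation \eqref{eq:to see clearly conservation of initial mass} with $\varphi\equiv1$, which lies in $W^{1,6}(\T^3)$. Every gradient term vanishes, giving $\int_{\T^3}f(s)\d\xi=\int_{\T^3}f_0\d\xi$ for a.e.~$s>0$. Hence $\int_{\T^2}\rho(s)\d x=\int_{\T^2}\rho_0\d x$, and so $\int_{\T^2}u(s)\d x=\Vert 1-\rho_0\Vert_{L^1(\T^2)}$ for a.e.~$s>0$. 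Alternatively, one could obtain this from Remark \ref{rem:convergence initial data rho}: the $L^2$ convergence $\rho(s)\to\rho_0$ together with mass conservation for $s>0$ (from \eqref{eq:rho eq} and the regularity \eqref{eq:initial upgrade dt rho u}) gives the same identity. The direct test-function argument is cleaner.

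Combining these steps, for every $t\in(0,1)$ we get
\begin{equation*}
\esssup_{(\frac14 t,\frac34 t)\times\T^2}u\geq |\T^2|^{-1}\Vert 1-\rho_0\Vert_{L^1(\T^2)}.
\end{equation*}
Inserting this into \eqref{eq:infinite speed of prop small times} yields \eqref{eq:infinite speed of prop small times in terms of L1}, with $C$ enlarged so that $C\ge|\T^2|$; this choice is independent of $t$, the data and the solution. Before applying Lemma \ref{prop:lower bound rho improved small times only}, I need to check that $u=1-\rho$ satisfies its hypotheses, namely that it is a weak solution of \eqref{eq:u eq} with the regularity \eqref{eq:initial upgrade dt rho}. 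This follows from \eqref{eq:rho eq}, \eqref{eq:initial upgrade dt rho u} and $0\le\rho\le1$.

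The only delicate point is the justification of the a.e.~mass identity down to $t=0$, i.e.\ linking $\int_{\T^2} u(s)\d x$ to the initial data rather than merely to some positive time. Testing \eqref{eq:to see clearly conservation of initial mass} with the constant function handles this directly.
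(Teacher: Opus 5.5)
Your proposal is correct and follows essentially the same route as the paper: conservation of $\int_{\T^2}u\d x$ by testing the weak formulation with $\varphi\equiv1$, the bound $\int_{\T^2}u(s)\d x\le|\T^2|\esssup_{(\frac14 t,\frac34 t)\times\T^2}u$ for a.e.\ $s$ in that interval (the paper takes $s=t/2$), and then Lemma \ref{prop:lower bound rho improved small times only}. Finishing by enlarging $C$ so that it dominates both the Harnack constant and $|\T^2|$ is also exactly what the paper does.
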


\begin{proof}
    Recall that \eqref{eq:u eq} conserves its initial mass, \textit{i.e.~}directly from \eqref{eq:for rho initial data} with $\varphi = 1$, we have $ \Vert 1- \rho \Vert_{L^1(\T^2)} = \int_{\T^2} u \d x = \int_{\T^2} u_0 \d x =  \Vert 1- \rho_0 \Vert_{L^1(\T^2)}$ a.e.~$t>0$. It therefore follows, using also Lemma \ref{prop:lower bound rho improved small times only}, that for a.e.~$t \in (0,1)$ it holds $\Vert 1- \rho_0 \Vert_{L^1(\T^2)}  =  \int_{\T^2} u(\frac{t}{2},x) \d x \leq |\T|^2 \esssup\limits_{(\frac{1}{4}t,\frac{3}{4}t)\times\T^2} u \leq |\T|^2 e^{Ct^{-10}} \essinf\limits_{(\frac{5}{4}t,\frac{3}{2}t)\times\T^2} u$, and the result follows. 
\end{proof}

\subsection{Propagation of non-degeneracy}\label{sec:quantitative extend times}

In this portion of the manuscript, we prove Proposition \ref{prop:lower bound rho improved}. We record firstly the following propagation (in time) of lower-boundedness for $1-\rho$, which closely follows \cite[Lemmas 5.3--5.4]{reg2}.

\begin{lemma}[Propagation of boundedness away from $\{\rho=1\}$]\label{lem:propogation big times}
        Let $f$ be a weak solution of \eqref{eq:main eqn} with admissible initial data $f_0$, $\rho = \int_\T f \d \theta$, and $u=1-\rho$. Fix any Lebesgue point $t_0 \in (0,1)$ of $t \mapsto \essinf_{x\in \T^2} u(t,x)$. Then, there exists a strictly positive constant $C$ $($independent of $t_0,t$, the initial data, and the solution$)$ such that 
        \begin{equation*}
            \essinf_{(t_0,t)\times\T^2} u \geq C^{-1} e^{-Ct} \essinf_{x\in\T^2}u(t_0,x) \qquad \forall t > t_0. 
        \end{equation*}
\end{lemma}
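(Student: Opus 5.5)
We prove a lower bound for $u=1-\rho$ propagating forward in time, using a comparison (minimum-principle) argument for the equation \eqref{eq:u eq}, $\partial_t u - \dv(\p u) = \Delta u$, with $|\p|\le 1$ by \eqref{eq:polarisation bound}. Expanding the drift gives $\partial_t u - \Delta u - \p\cdot\nabla u = u\,\dv\p$. The difficulty is that $\dv \p$ is not bounded a priori: by \eqref{eq:the horrible one}, only $(1-\rho)\nabla\p$ is controlled. This is exactly why the zeroth-order term must be treated in the combination $u\,\dv\p = \dv((1-\rho)\p) + \p\cdot\nabla\rho$.

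We therefore avoid pointwise comparison and run a Stampacchia/De Giorgi-type truncation argument instead, working on a time-weighted quantity. Set $m:=\essinf_{x}u(t_0,x)>0$; if $m=0$ there is nothing to prove. For $\lambda>0$ to be chosen, consider $v(t,x):=e^{\lambda(t-t_0)}u(t,x)$. It satisfies
\[
\partial_t v - \Delta v - \dv(\p v) = \lambda v .
\]
We test with the negative part $(k-v)_+$, where $k:=m/2$ (or $k=m$ with a limiting argument). Testing is justified by the regularity \eqref{eq:initial upgrade dt rho} and the Sobolev chain rule, exactly as in \S\ref{sec:moser}. This gives
\[
\frac12\frac{\der}{\der t}\|(k-v)_+\|_{L^2}^2+\|\nabla(k-v)_+\|_{L^2}^2
= -\!\int \p v\cdot\nabla(k-v)_+\d x - \lambda\!\int v(k-v)_+\d x .
\]
On the support of $(k-v)_+$ we have $v\le k$. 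Hence the drift term is bounded by $k\|\nabla(k-v)_+\|_{L^2}\,|\{v<k\}|^{1/2}$, while the last term is $\le 0$. This yields
\[
\frac{\der}{\der t}\|(k-v)_+\|^2\lesssim k^2|\{v<k\}| .
\]
The right-hand side does not vanish, so plain Gronwall fails. This is the main obstacle.

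To overcome it we return to the genuine structure of the problem rather than this crude estimate, following \cite[Lemmas 5.3--5.4]{reg2}. The right tool is the \emph{logarithmic} estimate of Lemma \ref{lem:log level sets}, Step 1, now with initial time $t_0$. Writing $w=-\log(u+\varepsilon)$, the computation there gives the pointwise-in-space equation
\[
\partial_t w - \Delta w + |\nabla w|^2 = -\frac{u}{u+\varepsilon}\p\cdot\nabla w - \dv\Big(\frac{u}{u+\varepsilon}\p\Big)\cdot(\ldots).
\]
More simply, we apply the comparison principle to $w$, using Young's inequality $|\p\cdot\nabla w|\le \tfrac12|\nabla w|^2+\tfrac12$. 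This should produce a supersolution structure: the function $W(t):=-\log(m+\varepsilon)+C(t-t_0)$ dominates $w$ at $t_0$, and for $C$ large it satisfies $\partial_t W \ge$ the right-hand side. Testing the equation for $(w-W)_+$, the quadratic gradient term $|\nabla w|^2$ absorbs the drift; the divergence term is handled by writing it as $\dv\!\big(\tfrac{u}{u+\varepsilon}\p\, (w-W)_+\big)$-type integrations by parts, with the bound $|\p|\le1$. This gives $\frac{\der}{\der t}\int (w-W)_+^2\le 0$, so $(w-W)_+\equiv0$.

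Consequently $u+\varepsilon\ge (m+\varepsilon)e^{-C(t-t_0)}$ for a.e.\ times after $t_0$. Letting $\varepsilon\to0$ gives the claim. The initial inequality at $t_0$ is justified because $t_0$ is a Lebesgue point of $t\mapsto\essinf_x u$ and $u\in C([0,\infty);L^2)$. The hardest step is making the truncated logarithmic test function rigorous, in particular handling the term generated by $\dv\p$ without any bound on $\nabla\p$. There I would first try integrating by parts so that only $\p$ (never $\nabla\p$) appears, at the cost of the factor $|\nabla (w-W)_+|$, which the dissipation then absorbs.
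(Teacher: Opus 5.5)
Your proposal has a genuine gap, and it sits in the comparison step for $w=-\log(u+\varepsilon)$. Suppose you test with $(w-W)_+$ and integrate the $\dv\p$-term by parts, as you suggest. You are left with $\int_{\T^2}\frac{u}{u+\varepsilon}\,\p\cdot\nabla(w-W)_+\d x$. The dissipation absorbs the gradient, but Young's inequality then leaves $\frac12|\{x: w(t,x)>W(t)\}|$, which carries no factor $(w-W)_+$. This is exactly the inhomogeneous term $k^2|\{v<k\}|$ that stopped your truncation attempt, now in logarithmic variables. The terms $|\nabla w|^2(w-W)_+$ and $-C\int(w-W)_+$ only control contributions weighted by $(w-W)_+$, so $\frac{\der}{\der t}\int(w-W)_+^2\le 0$ does not follow.

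No rearrangement can fix this. Your argument only uses $\partial_t u=\Delta u+\dv(\p u)$, $|\p|\le 1$ and $0\le u\le 1$, and under these hypotheses the target $u\ge m\,e^{-C(t-t_0)}$, with prefactor $1$, is false. Take $u(t_0)\equiv m\in(0,1)$ and a smooth field with $|\p|\le1$ that equals $-(x-x_0)/\delta$ near $x_0$. Then $\partial_t u(t_0,x_0)=m\dv\p(x_0)=-2m/\delta$, which is incompatible with $u\ge m e^{-C(t-t_0)}$ once $\delta<2/C$. A very negative $\dv\p$ lets $u$ drop by a fixed factor almost instantly. So the factor $C^{-1}$ in the statement is genuinely needed, and it must come from an integral argument that tolerates a multiplicative loss, not from a pointwise comparison.

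The paper gets this loss from a Moser-type iteration on $v_\varepsilon=(u+\varepsilon)^{-1}$, whose equation contains the good term $-2|\nabla v_\varepsilon|^2/v_\varepsilon$.
\begin{itemize}
\item Testing with $v_\varepsilon^{n-1}$, the drift enters only as $2(n+1)\int_{\T^2}(1-\varepsilon v_\varepsilon)v_\varepsilon^{n/2}\nabla(v_\varepsilon^{n/2})\cdot\p\d x$. No $\dv\p$ appears, and this term is homogeneous, so it is absorbed into $2n^2\|v_\varepsilon^{n/2}\|_{L^2}^2$.
\item Gagliardo--Nirenberg then gives $A_k'+A_k\le C_*^kA_{k-1}^2$ for $A_k(s)=\|v_\varepsilon^{2^k}(t_0+s)\|_{L^1(\T^2)}$.
\item A Gr\"onwall bound gives $\|A_0\|_{L^\infty}\lesssim\|v(t_0)\|_{L^\infty}e^{Ct}$.
\item Lemma~\ref{lem:tech iteration lemma} yields $\|v\|_{L^\infty((t_0,t)\times\T^2)}\le C\|v(t_0)\|_{L^\infty}e^{Ct}$, and the constant $C=C_*^\beta>1$ is the unavoidable loss.
\end{itemize}

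Alternatively, your first attempt can be completed with a De Giorgi iteration, which is designed to absorb exactly the term $k^2|\{u<k\}|$.
\begin{itemize}
\item Work on $Q=(t_0,t_0+\tau)\times\T^2$ with levels $k_j=\frac m2(1+2^{-j})\le m$. Then $(k_j-u(t_0))_+=0$.
\item The energy inequality and the $L^4$ embedding of Lemma~\ref{lem:dibenedetto classic} give $Y_{j+1}\le C\,4^jY_j^{3/2}$ for $Y_j=|\{u<k_j\}\cap Q|\le\tau|\T^2|$.
\item For a universal small $\tau$ this forces $u\ge m/2$ a.e.\ in $Q$.
\item Restarting at $t_1\in(t_0+\tau/2,t_0+\tau)$ and chaining gives $u\ge\frac m2e^{-C(t-t_0)}$.
\end{itemize}
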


This result relies on an iteration procedure which is different from the (Moser-style) ones presented in \S \ref{sec:moser}. In \cite[Lemma 5.4]{reg2}, the final estimate was closed using a technical lemma borrowed from \cite{KZ}. In order to make the paper self-contained, and because we require a more accurate quantitative estimate than that of \cite[Lemma 3.2]{KZ}, we record the following iterative lemma (the proof is postponed to Appendix \ref{sec:appendix}). 

\begin{lemma}[Quantitative iteration lemma]\label{lem:tech iteration lemma}
Let $\{A_k\}_{k\geq 0}$ be a sequence of non-negative differentiable functions satisfying the differential recurrence inequality on $[0,t]$$:$ 
      \begin{equation}\label{eq:for tech iter lemma}
        \frac{\der A_k}{\der t} + A_k \leq C_*^k (A_{k-1})^2 \qquad \forall k \geq 1, 
    \end{equation}
    where $C_* \geq 2$ is independent of $k$. Let $n_k = 2^k$, and suppose there exists $M_1,M_2>0$ such that $\sup_{k \geq 0} A_k(0)^{1/n_k} \leq M_1$ and $\Vert A_0 \Vert_{L^\infty([0,t])} \leq M_2$. Then, with $\beta = 1 + \sum_{j=1}^\infty (j+1)2^{-j}$, 
    \begin{equation}\label{eq:KimZhangPrecise}
       \sup_{k \geq 0} \Vert A_k \Vert_{L^\infty([0,t])}^{1/n_k} \leq C_*^\beta (M_1+M_2). 
    \end{equation}
\end{lemma}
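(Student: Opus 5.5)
The plan is to turn the differential recurrence \eqref{eq:for tech iter lemma} into an algebraic recurrence for the sup-norms $B_k := \Vert A_k \Vert_{L^\infty([0,t])}$, and then close it by an induction on $k$ whose exponents are tracked explicitly.

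\emph{Step 1 (Duhamel bound).} Each $A_k$ is differentiable, so I multiply \eqref{eq:for tech iter lemma} by $e^{s}$. This gives $\frac{\der}{\der s}(e^s A_k(s)) \leq e^s C_*^k A_{k-1}(s)^2$. Integrating over $[0,s]$ for $s\in[0,t]$, and using $A_{k-1}\ge 0$, yields
\begin{equation*}
A_k(s) \leq e^{-s}A_k(0) + C_*^k \int_0^s e^{-(s-\tau)} A_{k-1}(\tau)^2 \d\tau \leq A_k(0) + C_*^k B_{k-1}^2 ,
\end{equation*}
since $\int_0^s e^{-(s-\tau)}\d\tau \leq 1$. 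The hypothesis $A_k(0)\leq M_1^{n_k}$ then gives
\begin{equation*}
B_k \leq M_1^{2^k} + C_*^k B_{k-1}^2 \qquad \forall k\ge 1 .
\end{equation*}
The important feature is that the time length $t$ drops out entirely, thanks to the damping term $+A_k$.

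\emph{Step 2 (induction).} Set $M := M_1+M_2$, $\gamma_0:=0$ and $\gamma_k := \sum_{j=1}^k (j+1)2^{-j}$. I will prove $B_k \leq \big(C_*^{\gamma_k} M\big)^{2^k}$ for all $k\ge 0$.

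The base case is $B_0\leq M_2\leq M$. For the inductive step, insert the hypothesis for $k-1$ into Step 1:
\begin{equation*}
B_k \leq M^{2^k} + C_*^{k+2^k\gamma_{k-1}} M^{2^k} \leq 2\, C_*^{k+2^k\gamma_{k-1}} M^{2^k} \leq C_*^{k+1+2^k\gamma_{k-1}} M^{2^k}.
\end{equation*}
Here I used $M_1\le M$, $C_*\ge 1$ (so that $1\le C_*^{k+2^k\gamma_{k-1}}$) and $2\le C_*$. Since $k+1+2^k\gamma_{k-1} = 2^k\big(\gamma_{k-1}+(k+1)2^{-k}\big) = 2^k\gamma_k$, the induction closes.

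\emph{Step 3 (conclusion).} Taking $2^k$-th roots gives $B_k^{1/n_k}\leq C_*^{\gamma_k}M$. Moreover $\gamma_k \le \sum_{j\ge1}(j+1)2^{-j} = \beta-1 < \beta$ and $C_*\ge 1$, so taking the supremum over $k$ yields \eqref{eq:KimZhangPrecise}.

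The only delicate point is choosing the exponent bookkeeping so that the additive initial-data term $M_1^{2^k}$ and the factor $2$ are both absorbed at each step. I handle this by spending one extra power of $C_*$ per step, which is exactly the "$+1$" in $(j+1)2^{-j}$ and is where the hypothesis $C_*\ge2$ is used. The summability of $\gamma_k$ is then immediate.
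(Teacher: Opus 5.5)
Your proof is correct and follows the paper's argument essentially verbatim. It uses the same integrating-factor bound $\Vert A_k\Vert_{L^\infty([0,t])} \leq M_1^{n_k} + C_*^k \Vert A_{k-1}\Vert_{L^\infty([0,t])}^2$, followed by the same induction, in which one extra power of $C_*$ absorbs the factor $2$. The only difference is bookkeeping: your exponent $2^k\gamma_k$ satisfies the paper's recurrence $\alpha_k = 2\alpha_{k-1}+k+1$ with initial value $0$ rather than $\alpha_0=1$, which gives the slightly sharper constant $C_*^{\beta-1}$.
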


We highlight that the constant $C_*$ in the previous lemma has no relation to the constant bearing the same name in the proof of Lemma \ref{lem:log level sets}. We are ready to prove Lemma \ref{lem:propogation big times}. 

\begin{proof}[Proof of Lemma \ref{lem:propogation big times}]
    Fix $t_0 \in (0,1)$ as in the statement. By Corollary \ref{cor:lower bound depending on L1 norm}, it holds  $\essinf_{x\in\T^2} u(t_0,x) > 0$ provided $\Vert 1 -\rho_0 \Vert_{L^1(\T^2)}>0$, which we assume. In other words, defining $v:=1/u$ and $v_0 = v(t_0,\cdot)$, 
    \begin{equation}\label{eq:lower bound in proof of propagation}
      \Vert v_0 \Vert_{L^\infty(\T^2)} = \big( \essinf_{x\in\T^2} u(t_0,x) \big)^{-1} < \infty. 
    \end{equation}
    We will now rewrite the equation \eqref{eq:u eq} as an evolution on $v$, and then show by an iteration procedure that $v$ is bounded for all subsequent times by a quantity depending on $\Vert v_0 \Vert_{L^\infty}$. 

    \smallskip 

    \noindent 1. \textit{Equation for $v$}. By formal manipulations, we expect the equation for $v$ to be $\partial_t v + \dv(\p v) - 2 \p \! \cdot \! \nabla v = \Delta v - 2\frac{|\nabla v|^2}{v}$, \textit{i.e.}, we expect $v$ to be a weak \emph{subsolution} of 
    \begin{equation}\label{eq:v eqn for propagation}
        \partial_t v + \dv(\p v) - 2 \p \! \cdot \! \nabla v = \Delta v.
    \end{equation}
    Note however that \emph{a priori} it is not even clear that $v \in L^2_\loc(0,\infty;H^1(\T^2))$ for $t \geq t_0$; the purpose of this step of the proof is to verify this rigorously.

    Equation \eqref{eq:u eq} is only satisfied in the weak sense dual to $L^2_\loc(0,\infty;H^1(\T^2))$. To this end, we cannot test \eqref{eq:u eq} directly with functions of $v$ itself, but instead we consider the regularised version $v_\varepsilon := 1/(u+\varepsilon) \in L^2_\loc(0,\infty;H^1(\T^2)) \cap L^\infty((0,\infty)\!\times\!\T^2)$ by Definition \ref{def:weak sol} (\textit{cf.~}also \eqref{eq:initial upgrade dt rho}). We test \eqref{eq:u eq} against the admissible test function $-v_\varepsilon^3$ (since, formally, $\frac{1}{2} \partial_t (v_\varepsilon^2) = v_\varepsilon \partial_t v_\varepsilon = -(u+\varepsilon)^3 \partial_t u = - v_\varepsilon^3 \partial_t u$), and get 
    \begin{equation*}
    \langle \partial_t u ,-v_\varepsilon^3 \rangle -  3\int_{\T^2} v_\varepsilon^2 \nabla v_\varepsilon \cdot  \p u \d x =  3\int_{\T^2} v_\varepsilon^2 \nabla v_\varepsilon \cdot \nabla u \d x, 
\end{equation*}
using \eqref{eq:initial upgrade dt rho u} and the chain/product rules for Sobolev functions belonging to $H^1(\T^2)$ for a.e.~$t $, \textit{cf.}~\textit{e.g.}~\cite[\S 5.10]{Evans}. Then, writing $u = v_\varepsilon^{-1} - \varepsilon$ and again using the chain rule for Sobolev functions to write $\nabla u = -v_\varepsilon^{-2}\nabla v_\varepsilon$ (here we also use that $v_\varepsilon^{-2} = (u+\varepsilon)^2 \leq (1+\varepsilon)^2 \in L^\infty$), 
\begin{equation*}
    \langle \partial_t u ,-v_\varepsilon^3 \rangle -  3\int_{\T^2} v_\varepsilon \nabla v_\varepsilon \cdot  \p  \d x +  3 \varepsilon \int_{\T^2} v_\varepsilon^2 \nabla v_\varepsilon \cdot  \p  \d x = -3\int_{\T^2} |\nabla v_\varepsilon|^2 \d x. 
\end{equation*}
Using that $(1+\varepsilon)^{-1} \leq v_\varepsilon \leq \varepsilon^{-1} \in L^\infty$, $v_\varepsilon \in L^2_\loc(0,\infty;H^1(\T^2))$, and $\partial_t u \in L^{\nicefrac{3}{2}}_\loc([t_0,\infty)\times\T^2)$ from \eqref{eq:initial upgrade dt rho u}, the chain rule for Sobolev functions and the Lions--Magenes Lemma \cite[Ch.3, \S 1.4
Lem.1.2]{Temam} yield $\frac{1}{2}\frac{\der}{\der t} \Vert v_\varepsilon \Vert_{L^2(\T^2)}^2 =  \langle \partial_t v_\varepsilon , v_\varepsilon \rangle = -  \langle v_\varepsilon^2 \partial_t u , v_\varepsilon \rangle =  \langle \partial_t u , - v_\varepsilon^3 \rangle $, whence (using the boundedness of $\p$ \eqref{eq:polarisation bound}, $\varepsilon v_\varepsilon^2 \leq v_\varepsilon$, and H\"older's inequality) 
\begin{equation*}
    \frac{1}{2}\frac{\der}{\der t} \Vert v_\varepsilon \Vert_{L^2(\T^2)}^2 + 3 \Vert \nabla v_\varepsilon \Vert^2_{L^2(\T^2)} \leq 6 \Vert v_\varepsilon \Vert_{L^2(\T^2)} \Vert \nabla v_\varepsilon \Vert_{L^2(\T^2)} . 
\end{equation*}
Applying Young's inequality and then Gr\"onwall's Lemma (integrating over $[t_0,t]$) implies 
\begin{equation}\label{eq:gronwall quantitative large times}
    \esssup_{s \in [t_0,t]}\Vert v_\varepsilon(s,\cdot) \Vert^2_{L^2(\T^2)} + \Vert \nabla v_\varepsilon \Vert^2_{L^2((t_0,t)\times\T^2)} \leq  C \Vert v_\varepsilon(t_0,\cdot) \Vert_{L^2(\Omega)}^2 e^{ Ct} \leq C\Vert v_0 \Vert^2_{L^\infty} e^{Ct}, 
\end{equation}
for a.e.~$t>t_0$, where we used \eqref{eq:lower bound in proof of propagation} and $v_\varepsilon \leq v$, and the constant $C$ changed in the final inequality (but is independent of $t_0,t$ and the initial data). Letting $\varepsilon \to 0^+$ and using the lower semicontinuity of the norm, we get $v \in L^2_\loc([t_0,\infty);H^1(\T^2)) \cap L^\infty_\loc([t_0,\infty);L^2(\T^2))$.

It remains to write down the equation for $v$. By the aforementioned applications of the chain rule, it holds $\partial_t v_\varepsilon = -v_\varepsilon^2 \partial_t u$ with $\partial_t u$ integrable \eqref{eq:initial upgrade dt rho} and $\nabla v_\varepsilon = -v_\varepsilon^2 \nabla u$. It follows (using the previous computations) that, in the weak sense dual to $L^2_\loc([t_0,\infty);H^1(\T^2))$, 
\begin{equation}\label{eq:eqn for v eps}
    \begin{aligned}\partial_t v_\varepsilon + \dv\big( \p v_\varepsilon(1-\varepsilon v_\varepsilon) \big) - 2 \p \!\cdot \! \nabla v_\varepsilon (1-\varepsilon v_\varepsilon) = \Delta v_\varepsilon - 2\frac{|\nabla v_\varepsilon|^2}{v_\varepsilon}. 
    \end{aligned}
\end{equation}
Letting $\varepsilon \to 0^+$, using the Dominated Convergence Theorem with $v_\varepsilon = \frac{v}{1+\varepsilon v} \to v$ a.e.~and the regularity $v \in L^2_\loc([t_0,\infty);H^1(\T^2))$, we get that $v$ is a weak subsolution of \eqref{eq:v eqn for propagation}.

    \smallskip 

    \noindent 2. \textit{Boundedness of $v$}. We argue very closely to \cite[Lemma 5.4]{reg2} using the equation \eqref{eq:eqn for v eps}. We test the equation \eqref{eq:eqn for v eps} with $v_\varepsilon^{n-1}$ for $n \geq 2$ and obtain 
  \begin{equation*}
        \begin{aligned} 
        \frac{\der}{\der t}\int_{\T^2} |v_\varepsilon^{\frac{n}{2}}|^2 \d x + 4(1+\frac{1}{n}) \int_{\T^2} |\nabla (v_\varepsilon^{\frac{n}{2}}) |^2 \d x = & 2(n+1) \int_{\T^2} (1-\varepsilon v_\varepsilon) v_\varepsilon^{\frac{n}{2}} \nabla(v_\varepsilon^{\frac{n}{2}}) \! \cdot \! \p \d x , 
       \end{aligned}
    \end{equation*}
whence applying Young's inequality (noting also that $0 < v_\varepsilon \leq \varepsilon^{-1}$ so that $|1-\varepsilon v_\varepsilon| \leq 1$), 
\begin{equation}\label{eq:before GN}
        \begin{aligned} 
        \frac{\der}{\der t} \Vert v_\varepsilon^{\frac{n}{2}} \Vert^2_{L^2(\T^2)} + 3\Vert \nabla (v_\varepsilon^{\frac{n}{2}}) \Vert^2_{L^2(\T^2)} \leq 2n^2 \Vert   v_\varepsilon^{\frac{n}{2}} \Vert_{L^2(\T^2)}^2. 
       \end{aligned}
    \end{equation}
Next, an application of the Gagliardo--Nirenberg inequality implies 
\begin{equation*}
    \begin{aligned}
        \Vert v_\varepsilon^{\frac{n}{2}} \Vert_{L^2(\T^2)} \leq C_{GN} \Big( \Vert \nabla (v_\varepsilon^{\frac{n}{2}}) \Vert_{L^2(\T^2)}^{\nicefrac{1}{2}} \Vert v_\varepsilon^{\frac{n}{2}} \Vert_{L^1(\T^2)}^{\nicefrac{1}{2}} + \Vert v_\varepsilon^{\frac{n}{2}} \Vert_{L^1(\T^2)}  \Big); 
    \end{aligned}
\end{equation*}
note that, by squaring both sides and using Young's inequality, the above also gives  $\Vert \nabla v^{\frac{n}{2}} \Vert_{L^2(\T^2)}^{2} \geq C_1 \Vert v^{\frac{n}{2}} \Vert_{L^2(\T^2)}^2 - C_2\Vert v^{\frac{n}{2}} \Vert_{L^1(\T^2)}^2  )$ for some positive constants $C_1,C_2$ depending only on $C_{GN}$. In turn, substituting into the right-hand side of \eqref{eq:before GN} and using Young's inequality again yields $ \frac{\der}{\der t} \Vert v_\varepsilon^{\frac{n}{2}} \Vert^2_{L^2(\T^2)} +  2 \Vert \nabla (v_\varepsilon^{\frac{n}{2}}) \Vert^2_{L^2(\T^2)} \leq ( 4 n^4 C_{GN}^4 + 4n^2 C_{GN}^2 ) \Vert v_\varepsilon^{\frac{n}{2}} \Vert_{L^1(\T^2)}^2  $, \textit{i.e.}, using also $\Vert v_\varepsilon^{\frac{n}{2}} \Vert^2_{L^2(\T^2)} = \Vert v_\varepsilon^{n} \Vert_{L^1(\T^2)}$, 
\begin{equation*}
        \begin{aligned} 
        \frac{\der}{\der t} \Vert v_\varepsilon^{n} \Vert_{L^1(\T^2)} +   \Vert v_\varepsilon^{n} \Vert_{L^1(\T^2)} \leq Cn^4 \Vert v_\varepsilon^{\frac{n}{2}} \Vert_{L^1(\T^2)}^2  , 
       \end{aligned}
    \end{equation*}
where the positive constant $C$ depends only on $C_{GN}$, is independent of $\varepsilon,n,t_0$, and can be assumed greater than 1. By setting $n_k := 2^k$ and $A_k(t) := \Vert v^{n_k}_\varepsilon(t+t_0,\cdot) \Vert_{L^1(\T^2)}, $ we obtain, with $C_* := 16 C$ which can be assumed to verify $C_* \geq 2$, the recurrence inequality 
    \begin{equation*}
        \frac{\der A_k}{\der t} + A_k \leq C_*^k (A_{k-1})^2, 
    \end{equation*}
  which is precisely \eqref{eq:for tech iter lemma}. 
Using the estimate \eqref{eq:gronwall quantitative large times} and 
$$
\begin{aligned} &\sup_k A_k(0)^{1/n_k} = \sup_k \Vert v_\varepsilon(t_0) \Vert_{L^{n_k}(\T^2)} = \Vert v_\varepsilon(t_0) \Vert_{L^\infty(\T^2)} \leq \Vert v_0 \Vert_{L^\infty(\T^2)} =: M_1 , \\ 
&\Vert A_0 \Vert_{L^\infty([t_0,t])} \!=\! \esssup_{s \in [t_0,t]} \Vert v_\varepsilon(s) \Vert_{L^1(\T^2)} \!\leq \! |\T^2|^{\frac{1}{2}}  \esssup_{s \in [t_0,t]}\Vert v_\varepsilon(s) \Vert_{L^2(\T^2)} \!\leq \! |\T^2|^{\frac{1}{2}} \Vert v_0 \Vert_{L^\infty} e^{Ct} \! =: \! M_2, 
\end{aligned}$$ 
where we also used H\"older's inequality in the second line (and where the constant $C$ has changed but remains independent of $\varepsilon,t_0,t,$ and the initial data), Lemma \ref{lem:tech iteration lemma} yields 
\begin{equation*}
   \begin{aligned} \Vert v_\varepsilon \Vert_{L^\infty([t_0,t]\times\T^2)} \leq \sup_k \Vert A_k \Vert^{1/n_k}_{L^\infty([0,t])} \leq C(M_1+M_2) \leq C\Vert v_0 \Vert_{L^\infty} e^{Ct}, 
   \end{aligned}
\end{equation*}
for some positive constant $C$ independent of $\varepsilon,t_0,t$, the initial data, and the solution. As a result, by letting $\varepsilon \to 0^+$ and using that $v_\varepsilon = \frac{v}{1+\varepsilon v}$ so that $v_\varepsilon \to v$ a.e., we get 
\begin{equation*}
    \essinf_{(t_0,t)} u \geq C^{-1} e^{-Ct} \Vert v_0 \Vert_{L^\infty}^{-1} \qquad \forall t > t_0, 
\end{equation*}
which, by rewriting in terms of $u(t_0,\cdot)$ as per \eqref{eq:lower bound in proof of propagation}, yields the required result.     
\end{proof}

Proposition \ref{prop:lower bound rho improved} now follows immediately from Corollary \ref{cor:lower bound depending on L1 norm} and Lemma \ref{lem:propogation big times}, up to modifying the constant $C$; we omit the details for clarity of presentation.

\section{Instantaneous Smoothing}\label{sec:smooth}

Having obtained the strictly positive lower bound \eqref{eq:infinite speed of propagation in thm statement} on $1-\rho$ (in Proposition \ref{prop:lower bound rho improved}), the proof of the instantaneous smoothing effect encapsulated in Theorem \ref{thm:reg} now follows by the same strategy as \S 6-7 of \cite{reg2}. For completeness, and to make the present work self-contained, we write the arguments in detail. We also incorporate the corrections to \cite{reg2} provided in \cite{reg2Err}, and provide a refined treatment with clearer proofs. Finally, we remark that the higher regularity bootstrap (\S \ref{sec:bootstrap}) is written differently to the corresponding section in \cite{reg2} (\S 7 therein); indeed, here we appeal to Schauder theory in H\"older continuous spaces, while in \cite[\S 7]{reg2} this bootstrap was written in terms of Sobolev spaces. 

Throughout this section, $f$ will always denote a weak solution of \eqref{eq:main eqn} with admissible initial data, $\rho = \int_\T f \d \theta$, $\p = \int_{\T} f \e(\theta) \d \theta$. Furthermore, given $0<t<T<\infty$, the letter $c_{t,T}$ will always be used to denote positive lower bound of \eqref{eq:infinite speed of propagation in thm statement}, given explicitly by 
\begin{equation}\label{eq:ctT def}
0 < c_{t,T} := C^{-1} \Vert 1\!-\!\rho_0 \Vert_{L^1(\T^2)} e^{-C( T + t^{-10} )}  , 
\end{equation}
where we recall that $C$ above is independent of $t,T$, the initial data, and the solution. 

\subsection{Local boundedness of $f$}

The goal of this section is to prove the following result. 

\begin{prop}[$f$ is locally bounded]\label{prop:f bounded moser}
    For all $0\!<\!t\!<\!T\!<\!\infty$, it holds $f \in L^\infty((t,T)\!\times\!\T^3)$. 
\end{prop}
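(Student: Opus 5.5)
The plan is a Moser iteration for $f$ on $(t,T)\times\T^3$, exploiting the uniform non-degeneracy $1-\rho\geq c_{t,T}>0$ from Proposition \ref{prop:lower bound rho improved}. We may assume $\Vert 1-\rho_0\Vert_{L^1}>0$; otherwise $\rho\equiv1$, $u\equiv 0$, and there is nothing regularising to exploit. The statement only concerns the non-saturated case relevant to Theorem \ref{thm:reg}. On any $(t',T)$ with $0<t'<t$, equation \eqref{eq:main eqn} is then a linear equation for $f$:
\begin{equation*}
    \partial_t f - \dv\big(a\nabla f\big) - \partial_\theta^2 f = \dv\big(f(\nabla\rho - a\,\e(\theta))\big),
\end{equation*}
with $a=1-\rho\in[c_{t',T},1]$ uniformly elliptic in $x$ (and diffusion $1$ in $\theta$). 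The difficulty is that the drift coefficient $b:=\nabla\rho - a\e(\theta)$ is not bounded. We only know $\nabla\rho\in L^2$, and, via \eqref{eq:initial upgrade dt rho not the real one} and Lemma \ref{lem:dibenedetto classic}, $\nabla\rho\in L^{q}$ for some $q$.

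\textbf{Step 1: improve the integrability of $\nabla\rho$.} Using $a\geq c_{t',T}$, equation \eqref{eq:p eq} for $\p$ is a strongly parabolic divergence-form system with bounded data $(1-\rho)\P$ and $\p\otimes\nabla\rho$. Testing gives $\p\in L^2_tH^1_x$ locally. Then \eqref{eq:rho eq} reads $\partial_t\rho-\Delta\rho=-\dv((1-\rho)\p)$ with right-hand side in $L^2$, so the Calder\'on--Zygmund estimate gives $\nabla^2\rho\in L^2$ and hence $\nabla\rho\in L^\infty_tL^2_x\cap L^2_tH^1_x\subset L^4$. I would iterate: Gehring (Lemma \ref{lem:Gehring}) applied to the $\p$-system improves $\nabla\p$ to $L^{2+\delta}$, which improves $\nabla^2\rho$ and then $\nabla\rho$. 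I expect a few rounds to give $\nabla\rho\in L^{r}((t',T)\times\T^2)$ for some $r>5$, i.e.\ above the parabolic scaling threshold $d+2=5$ in the three space-angle variables plus time. An alternative shortcut is to test \eqref{eq:rho eq} against $\Delta\rho$-type quantities and use $|\p|\leq1$. Either way, the aim of this step is a bound on $\nabla\rho$ in $L^r$ with $r>5$, or at least $\nabla\rho\in L^\infty_tL^{r'}_x$ with $r'>2$ in 2D together with $L^2_t$ control adequate for the drift.

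\textbf{Step 2: Caccioppoli for powers of $f$.} Test with $f^{q-1}\psi^2$, where $\psi$ is a time cutoff; no spatial cutoff is needed by periodicity. This should be justified via the regularised $(f+\varepsilon)$, or through the Galerkin/weak formulation, using $f\in L^3$ and the class $\mathscr{A}$, possibly with truncations $\min(f,M)$. Writing $w=f^{q/2}$, one obtains
\begin{equation*}
\frac{\der}{\der t}\!\int\!\psi^2w^2 + c\!\int\!\psi^2\big(a|\nabla w|^2+|\partial_\theta w|^2\big) \leq Cq^2\!\int\! \psi^2 |b|^2 w^2 + C\!\int\!|\partial_t\psi| w^2 .
\end{equation*}
The term $\int|b|^2w^2$ is handled by H\"older with $|b|^2\in L^{r/2}$, $r/2>5/2$, together with the parabolic embedding of Lemma \ref{lem:dibenedetto classic}, giving $w\in L^{10/3}$ in $\T^3$. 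Interpolating, $\Vert |b|w^2\Vert_{L^1}\leq \eta\Vert w\Vert_{V}^2+C_\eta q^{\kappa}\Vert w\Vert_{L^2}^2$, where $V$ is the energy space. This yields a reverse H\"older inequality $\Vert f\Vert_{L^{\gamma q}(Q_r)}\leq (Cq^\kappa/(R-r))^{1/q}\Vert f\Vert_{L^q(Q_R)}$ with $\gamma>1$.

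\textbf{Step 3: iterate and identify the main obstacle.} Iterating as in Step 3 of Lemma \ref{lem:moser u upper}, starting from $q_0=3$ since $f\in L^3_{\loc}$, gives $f\in L^\infty((t,T)\times\T^3)$. The main obstacle is Step 1: obtaining enough integrability of $\nabla\rho$ so that the cross-diffusion drift $f\nabla\rho$ is subcritical. If the Gehring route stalls below the threshold, I would instead exploit the structure of the equation. After the cancellation giving the non-divergence form, the drift term reduces to $f\Delta\rho$, a zero-order potential with $\Delta\rho\in L^{r/2}$. The Moser estimate for such a potential requires only $\Delta\rho\in L^{s}$ with $s>5/2$, which may be easier to reach from the $\rho$-equation via Lemma \ref{lem:Lp schauder}.
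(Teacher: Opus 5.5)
Your overall architecture matches the paper's: non-degeneracy of $1-\rho$, an integrability upgrade for $\rho$, then a Moser iteration for $f$. The gap is Step~1, which you flag as the main obstacle but leave unresolved. None of your three mechanisms yields $\nabla\rho\in L^r$ with $r>5$ or $\Delta\rho\in L^s$ with $s>5/2$.
\begin{itemize}
\item Gehring's lemma cannot be iterated. For merely measurable coefficients its gain $\delta$ is a small number governed by the ellipticity ratio (here of size $c_{t,T}^{-1}$), and better data do not increase it. So the chain Gehring $\to$ Calder\'on--Zygmund stalls at $\nabla\p,\nabla^2\rho\in L^{2+\delta}$, from which it extracts only $\nabla\rho\in L^{4+O(\delta)}$.
\item Testing \eqref{eq:rho eq} with $\Delta\rho$ produces $\int\dv((1-\rho)\p)\,\Delta\rho$. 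This involves $\dv\p$, about which $|\p|\le 1$ says nothing.
\item The fallback, $\Delta\rho\in L^s$ via Lemma~\ref{lem:Lp schauder}, requires $\dv((1-\rho)\p)\in L^s$, i.e.\ $\nabla\p\in L^s$ with $s>5/2$. That is exactly the missing information.
\end{itemize}

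The paper fills this with an $H^2$-type estimate for $\p$. In non-divergence form the mixed terms cancel: $\dv((1-\rho)\nabla\p+\p\otimes\nabla\rho)=(1-\rho)\Delta\p+\p\,\Delta\rho$. Testing \eqref{eq:p eq} with $\Delta\p$ then gives $\nabla\p\in L^\infty_tL^2_x\cap L^2_tH^1_x\subset L^4$. This is made rigorous by a Galerkin scheme in the eigenbasis of $\Delta_x$, plus a uniqueness argument identifying the limit. The Gehring output is what closes the Gr\"onwall arguments there, through $\Delta\rho\in L^{2+\delta}$ and $\nabla\rho\in L^{2+\delta}_tL^\infty_x$ (the latter from Morrey in $\T^2$), together with $\nabla\P\in L^2$. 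The result is $\Delta\rho\in L^4$ and $\nabla\rho\in L^8$, after which your potential-form iteration is the paper's. That step needs only an integration by parts of the cross term in the Caccioppoli inequality, turning it into $(f\wedge M)^q\Delta\rho$. It does not need the non-divergence form of the $f$-equation, which would require $\nabla^2_\xi f\in L^2$ and is not yet available.

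Alternatively, for this proposition alone your instinct about $|\p|\le1$ can be made to work. It has to go through Calder\'on--Zygmund theory for the heat equation in divergence form, not an energy estimate. Writing $\partial_t\rho-\Delta\rho=-\dv F$ with $F=(1-\rho)\p\in L^\infty$ gives, after a time cut-off, $\nabla\rho\in L^r((t,T)\times\T^2)$ for every $r<\infty$, and then your drift-form Step~2 closes.

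In either case Step~2 also presupposes that $f$ lies in the energy space: $f\in L^2_tH^1_\xi$ and $\partial_tf\in L^2_tH^{-1}_\xi$. A priori one only has $\sqrt{1-\rho}\,\nabla\sqrt f\in L^2$, and $f^{q-1}\psi^2$ is not known to lie in $\mathscr A$. The paper handles this by a mollification/commutator argument (using $\nabla\rho\in L^6$), followed by Steklov averages and truncation at level $M$.
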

We divide the analysis leading up to the proof of Proposition \ref{prop:f bounded moser} into several subsections.

\subsubsection{Initial upgrade of regularity for $\p$}

Our first result is concerned with the $H^1$-type estimate for $\p$. We remind the reader that \eqref{eq:p eq} holds in duality with $L^3_\loc([0,\infty);W^{1,3}(\T^2))$; this can be seen easily from the boundedness $(1-\rho)\nabla \p \in L^{\nicefrac{3}{2}}_\loc([0,\infty);L^{\nicefrac{3}{2}}(\T^2))$ from \eqref{eq:the horrible one}, which yields $\partial_t \p \in L^{\nicefrac{3}{2}}_\loc([0,\infty);W^{-1,\nicefrac{3}{2}}(\T^2))$ from reading \eqref{eq:p eq} in the sense of distributions.

\begin{lemma}[$H^1$ estimate for $\p$, $L^4$ bound on $\nabla \rho$]\label{lem:H1 p}
 For all $0<t<T<\infty$, it holds $$\nabla \p \in L^2((t,T)\!\times\!\T^2), \qquad \partial_t \p \in L^2((t,T);H^{-1}(\T^2)), \qquad \nabla \rho \in L^4((t,T)\times\T^2).$$ 
\end{lemma}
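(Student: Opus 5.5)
The strategy is to combine the lower bound $1-\rho\geq c_{t,T}$ from Proposition \ref{prop:lower bound rho improved} with an energy estimate for the $\p$-equation \eqref{eq:p eq}, once $\nabla\rho$ has some integrability. I would first settle $\nabla\rho$. Since $u=1-\rho$ solves \eqref{eq:u eq} with bounded drift $|\p|\le 1$, the standard $L^2$ energy estimate (testing with $u$) gives $\nabla\rho\in L^2$. In fact Definition \ref{def:function space} already gives $\nabla\sqrt{1-\rho}\in L^2_\loc$, and with $1-\rho\geq c_{t,T}$ this directly yields $\nabla\rho\in L^2((t,T)\times\T^2)$. For the $L^4$ bound I would read \eqref{eq:rho eq} as a heat equation, $\partial_t\rho-\Delta\rho=-\dv((1-\rho)\p)$, whose right-hand side is in divergence form with bounded flux $(1-\rho)\p\in L^\infty$. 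I would localise in time with a cutoff $\eta(s)$ vanishing near $t/2$, apply the $L^p$ Calder\'on--Zygmund theory for the heat equation with divergence-form data (\cite{RosHeat}), and obtain $\nabla(\eta\rho)\in L^q$ for every $q<\infty$, in particular $L^4$. The commutator term $\rho\,\eta'$ is bounded, so it causes no trouble. As an alternative route, Lemma \ref{lem:Gehring} gives some $L^{2+\delta}$, but $L^4$ really needs the Calder\'on--Zygmund estimate with flux $G=(1-\rho)\p\in L^\infty$.

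Next I would obtain the $H^1$ bound for $\p$ by testing \eqref{eq:p eq} with $\eta^2\p$. Formally this gives
\begin{equation*}
\frac12\frac{\der}{\der s}\int \eta^2|\p|^2 + \int \eta^2(1-\rho)|\nabla\p|^2 + \int\eta^2|\p|^2 = \int \eta^2 (1-\rho)\P:\nabla\p - \int \eta^2 (\p\otimes\nabla\rho):\nabla\p + \int \eta\eta'|\p|^2 .
\end{equation*}
The boundedness of $\P$ and $\p$ together with Young's inequality absorbs the first two right-hand terms into $\tfrac12 c_{t,T}\int\eta^2|\nabla\p|^2$. The remainders are $C\int\eta^2(1+|\nabla\rho|^2)$, which is finite by the previous step. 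This yields $\nabla\p\in L^2((t,T)\times\T^2)$. Reading \eqref{eq:p eq} distributionally then gives $\partial_t\p\in L^2((t,T);H^{-1})$, because every flux $(1-\rho)\P$, $(1-\rho)\nabla\p$, $\p\otimes\nabla\rho$ lies in $L^2$ and $\p\in L^\infty$.

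The main obstacle is justifying the test with $\eta^2\p$, since a priori $\p$ is only paired with $L^3W^{1,3}$ test functions and $\nabla\p$ is not known to be in $L^2$. I would first try mollification in $x$: take $\p^\delta=\p*\zeta_\delta$, test with $\eta^2\p^\delta$ (admissible because it is smooth in $x$ and bounded), and estimate uniformly in $\delta$. The difficulty is the commutator $[(1-\rho)\nabla\cdot,\ *\zeta_\delta]$. To handle it I would use the a priori bound $(1-\rho)\nabla\p\in L^{3/2}$ from \eqref{eq:the horrible one} together with $\nabla\rho\in L^4$, in Friedrichs-lemma fashion, so that the commutator tends to $0$ in $L^1$ when paired with $\nabla\p^\delta$. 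If that is too delicate, my fallback is difference quotients, or working at the level of $f$ using \eqref{eq:the horrible one} and Jensen: $(1-\rho)|\nabla\p|^2\le C\int (1-\rho)|\nabla\sqrt f|^2\,\d\theta\cdot\rho$. Combined with $1-\rho\ge c_{t,T}$ and Definition \ref{def:function space}(iv), this gives $\nabla\p\in L^2$ directly, which avoids the justification issue entirely. I would in fact try this fallback first.
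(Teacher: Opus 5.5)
Your proposal is correct, and the route you say you would try first is genuinely different from the paper's.

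\textbf{The paper's route.} The paper mollifies \eqref{eq:p eq} in $x$. It then proves uniform $L^2$ bounds on the commutators $E^\varepsilon,F^\varepsilon$ using only $\nabla\rho\in L^2$ and $|\p|\le 1$. For $E^\varepsilon$ this is done by integrating by parts and using $|\rho(x)-\rho(y)|\le\varepsilon\int_0^1|\nabla\rho|\,\d\sigma$ on the support of the mollifier. It tests with $\p^\varepsilon$ and passes to the limit. Only afterwards does it obtain $\nabla\rho\in L^4$, via three steps:
\begin{itemize}
\item non-divergence Calder\'on--Zygmund, giving $\partial_t\rho,\nabla^2\rho\in L^2$;
\item an $H^2$ energy estimate;
\item Lemma~\ref{lem:dibenedetto classic}.
\end{itemize}

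\textbf{Your route.} You read $\nabla\p\in L^2$ directly off Definition~\ref{def:function space}(iv). Cauchy--Schwarz in $\theta$, together with $\int_\T f\,\d\theta=\rho\le 1$, gives the pointwise bound $(1-\rho)|\nabla\p|^2\le 4\int_\T|\sqrt{1-\rho}\,\nabla\sqrt f|^2\,\d\theta$.
\begin{itemize}
\item This already gives $\sqrt{1-\rho}\,\nabla\p\in L^2_\loc([0,\infty);L^2(\T^2))$ with no lower bound at all, which is sharper than \eqref{eq:the horrible one} (that estimate only uses $f\in L^3$ and yields $L^{3/2}$).
\item The lower bound $1-\rho\ge c_{t,T}$ then finishes the claim.
\item Your divergence-form Calder\'on--Zygmund estimate with the bounded flux $(1-\rho)\p$ gives $\nabla\rho\in L^q$ for every $q<\infty$. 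This holds independently of $\p$ and of $c_{t,T}$. It is the divergence-form version of the theorem, a slightly different (but standard) tool from the one the paper quotes.
\end{itemize}
This makes the $\eta^2\p$ energy computation and the whole commutator discussion unnecessary. The bound $\partial_t\p\in L^2((t,T);H^{-1}(\T^2))$ then follows from \eqref{eq:p eq} exactly as in the paper.

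\textbf{What each buys.} The paper's mollification/energy scheme uses only the PDE. That is genuinely needed for $f$ itself in Lemma~\ref{lem:f parabolic space}: there $|\nabla f|=2\sqrt f\,|\nabla\sqrt f|$ only gives $L^{3/2}$ before $f\in L^\infty$ is known, so no $\theta$-averaging trick is available. The paper's proof also produces the byproducts \eqref{eq:dt rho and laplace rho in L2loc}--\eqref{eq:grad rho in Linft L2}, which are used later. You would recover these immediately from your $\nabla\p\in L^2$ in the same way.
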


We note in passing that a version of this result was obtained in \cite[Lemmas 4.2-4.3]{reg2}, and that a minor correction should be made therein: one should assume the lower bound $\essinf_{(t,T)\times\T^2}(1-\rho) \geq c_{t,T}>0$ in the statement.

\begin{proof}
    We cannot test \eqref{eq:p eq} with $\p$, since  it is not known to be an admissible test function. Instead, we consider smoothed approximations of $\p$ and argue similarly to \cite[Lemma 4.2]{reg2}.

    \smallskip 

    \noindent 1. \textit{Approximation}. Let $\eta$ be the usual Friedrichs bump function, and $\{\eta^\varepsilon\}_{\varepsilon>0}$ the usual sequence of Friedrichs mollifiers in $\mathbb{R}^2$; we only smooth in the $x$ variable. Let $\p^\varepsilon = \eta^\varepsilon*\p$, and define $D = (1-\rho)\P$ and its approximation $D^\varepsilon = \eta^\varepsilon*D$. We insert $\eta^\varepsilon(s,x-y)$ as test function in \eqref{eq:p eq} and integrate in $y$ over $\mathbb{R}^2$---this is the same as mollifying \eqref{eq:p eq}---and get 
    \begin{equation}\label{eq:mollified p eqn}
        \partial_t \p^\varepsilon + \dv D^\varepsilon = \dv((1-\rho)\nabla\p^\varepsilon + \p^\varepsilon \otimes \nabla \rho) - \p^\varepsilon + \dv (E^\varepsilon + F^\varepsilon), 
    \end{equation}
    where $E^\varepsilon = \eta^\varepsilon*((1-\rho)\nabla \p) - (1-\rho)\eta^\varepsilon*\nabla \p$ and $F^\varepsilon = \eta^\varepsilon*(\p \otimes \nabla \rho) - (\eta^\varepsilon*\p )\otimes \nabla \rho. $

    \smallskip 

    \noindent 2. \textit{Error control}. We show that for some $C_T$ independent of $\varepsilon,t$ (but which depends on $T$), 
    \begin{equation}\label{eq:unif bounds for H1 for p with mollifications}
        \Vert D^\varepsilon \Vert_{L^\infty([0,T]\times\T^2)} + \Vert E^\varepsilon \Vert_{L^2([0,T]\times\T^2)} + \Vert F^\varepsilon \Vert_{L^2([0,T]\times\T^2)} \leq C_T. 
    \end{equation}
   To begin with, for a.e.~$s\in (t,T)$ it holds $\Vert D^\varepsilon (s) \Vert_{L^\infty(\T^2)} \leq  \Vert D(s) \Vert_{L^\infty(\T^2)} \leq  \Vert \P \Vert_{L^\infty} \leq C,$ for some positive constant $C$ independent of $\varepsilon,t,T$, using standard properties of the mollifier in the first inequality. The first part of \eqref{eq:unif bounds for H1 for p with mollifications} follows. Next, we estimate $F^\varepsilon$: directly from basic properties of the mollifier sequence, we have for a.e.~$s \in (t,T)$
\begin{equation*}
    \Vert \eta^\varepsilon*(\p \otimes \nabla \rho)(s) \Vert_{L^2(\T^2)} \leq C \Vert (\p \otimes \nabla \rho) (s) \Vert_{L^2(\T^2)} \leq C \Vert \p \Vert_{L^\infty } \Vert \nabla \rho(s) \Vert_{L^2(\Omega)}, 
\end{equation*}
for some positive constant $C$ independent of $\varepsilon,t,T$, and analogously 
\begin{equation*}
    \Vert (\eta^\varepsilon*\p )\otimes \nabla \rho (s) \Vert_{L^2(\T^2)} \!\leq\! \Vert (\eta^\varepsilon*\p )(s) \Vert_{L^\infty(\T^2)}\Vert \nabla \rho(s) \Vert_{L^2(\T^2)} \!\leq\! \Vert \p \Vert_{L^\infty} \Vert \nabla \rho(s) \Vert_{L^2(\T^2)}, 
\end{equation*}
thus, by integrating in time, there is a positive $C_T=C\Vert \nabla \rho\Vert_{L^2([0,T]\times\T^2)}$ independent of $\varepsilon,t$ such that $\Vert F^\varepsilon \Vert_{L^2([0,T]\times\T^2)} \leq C_T$, whence the final term in \eqref{eq:unif bounds for H1 for p with mollifications} is bounded.

The final error term $E^\varepsilon$ is written $E^\varepsilon(s,x) = \int_{\mathbb{R}^2} \eta^\varepsilon(x-y)\big[ \rho(s,x)-\rho(s,y) \big]\nabla\p(t,y) \d y$, and---by integrating by parts---admits the two-term decomposition 
\begin{equation*}
   \begin{aligned}
       E^\varepsilon(s,x) =\!\!  \underbrace{\int_{\mathbb{R}^2} \!\!\p(s,y) \otimes \nabla \eta^\varepsilon(x\!-\!y) \big[ \rho(s,x)\!-\!\rho(s,y) \big] \d y}_{=: E^\varepsilon_1(s,x)} + \!\underbrace{\int_{\mathbb{R}^2} \!\!\eta^\varepsilon(x\!-\!y) \p(s,y) \otimes \nabla \rho(s,y) \d y}_{=: E^\varepsilon_2(s,x)} . 
   \end{aligned} 
\end{equation*}
Using standard properties of mollifiers, $\Vert E^\varepsilon_2(s) \Vert_{L^2(\T^2)} \leq C \Vert \nabla \rho(s) \Vert_{L^2(\T^2)} \Vert \p \Vert_{L^\infty},$ thus 
\begin{equation}\label{eq:Eeps2 bound unif}
    \Vert E^\varepsilon_2 \Vert_{L^2([0,T]\times\T^2)} \leq C \Vert \nabla \rho \Vert_{L^2([0,T]\times\T^2)}  , 
\end{equation}
for some  $C$ independent of $\varepsilon,t,T$. Meanwhile, for a.e.~$x\in \RR^2$ and $y \in \supp \eta^\varepsilon(x-\cdot)$,
\begin{equation*}
   \begin{aligned} 
 | \rho(s,x)-\rho(s,y) |= \bigg|\int_0^1 \frac{\der}{\der \sigma}\rho(s,\sigma x \!+\!(1\!-\!\sigma)y)  \d \sigma\bigg| &= \bigg|\int_0^1 \nabla \rho(s,\sigma x \!+\!(1\!-\!\sigma)y) \!\cdot\! (x-y)  \d \sigma \bigg| \\ 
 &\leq \varepsilon \int_0^1 |\nabla \rho(s,\sigma x +(1-\sigma)y)| \d \sigma,  
   \end{aligned}
\end{equation*}
using $\supp \eta^\varepsilon = B(0,\varepsilon)$ to bound $|x-y|\leq \varepsilon$. The boundedness of $\p$ then implies 
\begin{equation*}
    \begin{aligned}
       | E^\varepsilon_1(s,x) | &\leq \varepsilon^{-2} \int_{B(x,\varepsilon)} \big|\nabla \eta(\frac{x-y}{\varepsilon}) \big| \bigg(  \int_0^1 |\nabla \rho(s,\sigma x +(1-\sigma)y)| \d \sigma\bigg) \d y \\ 
        &= \int_{B(0,1)} |\nabla \eta (w)| \bigg(  \int_0^1 |\nabla \rho \big(s, x - (1-\sigma)\varepsilon w \big)| \d \sigma\bigg) \d w, 
    \end{aligned}
\end{equation*}
and, by Jensen's inequality, we get $|E^\varepsilon_1(s,x)|^2 \leq C \int_{B(0,1)} \int_0^1 |\nabla \rho (s, x - (1-\sigma)\varepsilon w )|^2 \d \sigma \d w$, for some positive constant $C$ independent of $\varepsilon,t,T$. The Fubini--Tonelli Theorem then implies, using the periodicity of $\nabla \rho$ and translation invariance of Lebesgue measure, 
\begin{equation*}
    \Vert E^\varepsilon_1(s) \Vert^2_{L^2(\T^2)} \leq C \int_{B(0,1)} \int_0^1 \int_{\T^2} |\nabla \rho(s,x)|^2 \d x \d \sigma \d w \leq C \Vert \nabla \rho(s) \Vert^2_{L^2(\T^2)}. 
\end{equation*}
By integrating in time, and combining with \eqref{eq:Eeps2 bound unif}, we deduce $\Vert E^\varepsilon \Vert_{L^2([0,T]\times\T^2)} \leq C_T$. 

\smallskip 

    \noindent 3. \textit{$H^1$ bound on $\p$}. $\p^\varepsilon \in L^\infty(t,T;C^\infty(\T^2))$ is an admissible test function for \eqref{eq:mollified p eqn}. Furthermore, it is immediate from \eqref{eq:mollified p eqn} that $\partial_t \p^\varepsilon \in L^2((t,T);H^{-1}(\T^2))$ whence the Lions--Magenes Lemma applies and $\langle \partial_t \p^\varepsilon,\p^\varepsilon\rangle = \frac{1}{2}\frac{\der}{\der t} \Vert \p^\varepsilon\Vert^2_{L^2}$. By \eqref{eq:unif bounds for H1 for p with mollifications} and Young's inequality, 
    \begin{equation*}
        \begin{aligned}
             \Vert \p^\varepsilon(s)\Vert^2_{L^2(\T^2)} \!+\!  c_{t,T} \int_t^{s} \int_{\T^2} |\nabla \p^\varepsilon|^2 \d x \d s' \leq \Vert \p^\varepsilon(t)\Vert^2_{L^2(\T^2)} \!+\! \frac{C_T^2}{c_{t,T}} \!+\!  2 c_{t,T}^{-1} \Vert \nabla \rho \Vert^2_{L^2([0,s]\times\T^2)}, 
        \end{aligned}
    \end{equation*}
for all $t<s\leq T$, using also $\Vert \p^\varepsilon(t)\Vert_{L^\infty(\T^2)} \leq C \Vert \p(t)\Vert_{L^\infty(\T^2)} \leq C$. We deduce 
    \begin{equation}\label{eq:just before H1 final bound for p}
        \begin{aligned}
            \Vert \p^\varepsilon\Vert_{L^\infty((t,T);L^2(\T^2))}^2 +  \Vert \nabla \p^\varepsilon\Vert_{L^2((t,T)\times\T^2)}^2 &\leq C_{t,T}', 
        \end{aligned}
    \end{equation}
    for some $C_{t,T}'$ independent of $\varepsilon$. We let $\varepsilon \to 0$, and the required boundedness of $\nabla \p$ follows from the lower semicontinuity of the norms. Finally, $\partial_t \p \in L^2((t,T);H^{-1}(\T^2))$ from \eqref{eq:p eq}. 

    \smallskip 

    \noindent 4. \textit{$L^4$ bound on $\nabla\rho$}. 
It now follows from equation \eqref{eq:rho eq} that $\partial_t \rho - \Delta \rho = - \dv((1-\rho)\p) \in L^2((t',T')\!\times\!\T^2)$ for all $0\!<\!t'\!<\!T'$, thus the Calder\'on--Zygmund Theorem yields 
\begin{equation}\label{eq:dt rho and laplace rho in L2loc}
    \partial_t \rho , \nabla^2 \rho \in L^2((t,T) \! \times \! \T^2) \qquad \forall 0<t<T<\infty. 
\end{equation}
Thus, we may rigorously perform the standard $H^2$ estimate on \eqref{eq:rho eq} by testing against $\Delta \rho$, which yields $\frac{1}{2}\frac{\der}{\der t}\Vert \nabla \rho\Vert_{L^2(\T^2)}^2  + \frac{1}{2}\Vert \nabla^2 \rho\Vert_{L^2(\T^2)}^2 \leq \frac{1}{2} \Vert \dv((1-\rho)\p) \Vert_{L^2(\T^2)}^2$, \textit{i.e.}, 
    \begin{equation}\label{eq:grad rho in Linft L2}
        \nabla \rho \in L^\infty((t,T);L^2(\T^2)) \qquad \forall 0 < t < T<\infty. 
    \end{equation}
    It follows from \eqref{eq:grad rho in Linft L2} and \eqref{eq:dt rho and laplace rho in L2loc} that $\nabla \rho$ satisfies the assumptions of the interpolation Lemma \ref{lem:dibenedetto classic},  which yields $\nabla \rho \in L^4((t,T)\!\times\!\T^2)$ for all $0<t<T<\infty$. 
\end{proof}

Note furthermore that, by integrating \eqref{eq:main eqn} against $\e(\theta)\!\otimes\! \e(\theta)$ in $\theta$, we also get the governing equation for $\P$ the drift coefficient in \eqref{eq:p eq}: 
\begin{equation}\label{eq:P eqn with Pi}
    \partial_t \P + \dv((1-\rho) \Pi) = \dv((1-\rho)\nabla \P + \P \otimes \nabla \rho ) + \tilde{\Pi}, 
\end{equation}
where $\Pi = \int_\T f (\e(\theta)\!\otimes\! \e(\theta)\! \otimes\! \e(\theta)) \d \theta$ is a tensor which again satisfies 
\begin{equation*}
    |\Pi(t,x)| \leq 2^3 \int_{\T} |f(t,x,\theta)| \d \theta = 2^3 \rho(t,x) \leq 2^3 \implies \Pi \in L^\infty((0,\infty) \! \times \! \T^2), 
\end{equation*}
and the same holds for $\tilde{\Pi} = \int_\T f \partial^2_\theta (\e(\theta) \otimes \e(\theta)) \d \theta$. In turn, by applying the same strategy as in Lemma \ref{lem:H1 p} to the equation \eqref{eq:P eqn with Pi}, we have proved the following. 

\begin{lemma}[$H^1$ estimate for $\P$]\label{lem:H1 for P}
    For all $0<t<T<\infty$, it holds $\nabla \P \in L^2((t,T)\!\times\!\T^2)$. 
\end{lemma}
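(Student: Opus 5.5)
The plan is to repeat the argument of Lemma \ref{lem:H1 p} verbatim, now for the equation \eqref{eq:P eqn with Pi} satisfied by $\P$. The only structural differences are that the bounded drift $(1-\rho)\P$ is replaced by $(1-\rho)\Pi$, and the damping term $-\p$ by the bounded source $\tilde\Pi$. Both $\Pi$ and $\tilde\Pi$ lie in $L^\infty$, since $|\Pi|,|\tilde\Pi|\le C\rho\le C$. As a preliminary step I would check that \eqref{eq:P eqn with Pi} holds in duality with $L^3_\loc([0,\infty);W^{1,3}(\T^2))$. This follows exactly as for \eqref{eq:p eq}: Jensen/H\"older as in \eqref{eq:the horrible one} gives $(1-\rho)\nabla\P\in L^{\nicefrac32}_\loc L^{\nicefrac32}$, and $\P\otimes\nabla\rho\in L^2_\loc L^2$ by Definition \ref{def:function space}.

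First, I mollify \eqref{eq:P eqn with Pi} in $x$ only. With $\P^\varepsilon=\eta^\varepsilon*\P$ and $D^\varepsilon=\eta^\varepsilon*((1-\rho)\Pi)$, this gives
\begin{equation*}
\partial_t\P^\varepsilon+\dv D^\varepsilon=\dv\big((1-\rho)\nabla\P^\varepsilon+\P^\varepsilon\otimes\nabla\rho\big)+\eta^\varepsilon*\tilde\Pi+\dv(E^\varepsilon+F^\varepsilon),
\end{equation*}
with commutators $E^\varepsilon=\eta^\varepsilon*((1-\rho)\nabla\P)-(1-\rho)\nabla\P^\varepsilon$ and $F^\varepsilon=\eta^\varepsilon*(\P\otimes\nabla\rho)-\P^\varepsilon\otimes\nabla\rho$. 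I then estimate the error terms exactly as in Step 2 of Lemma \ref{lem:H1 p}:
\begin{itemize}
\item $\|D^\varepsilon\|_{L^\infty}$ and $\|\eta^\varepsilon*\tilde\Pi\|_{L^\infty}$ are bounded using $\Pi,\tilde\Pi\in L^\infty$.
\item $\|F^\varepsilon\|_{L^2}\le C\|\P\|_{L^\infty}\|\nabla\rho\|_{L^2}$.
\item For $E^\varepsilon$, I integrate by parts to get $E^\varepsilon_1+E^\varepsilon_2$. The mean-value bound $|\rho(x)-\rho(y)|\le\varepsilon\int_0^1|\nabla\rho|$ together with Fubini gives $\|E^\varepsilon\|_{L^2((0,T)\times\T^2)}\le C\|\nabla\rho\|_{L^2}\le C_T$, uniformly in $\varepsilon$.
\end{itemize}
These arguments use only the boundedness of $\P$ in place of that of $\p$.

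Second, I test the mollified equation with $\P^\varepsilon$. This is admissible, and Lions--Magenes applies since $\partial_t\P^\varepsilon\in L^2(t,T;H^{-1})$. I then use the non-degeneracy $1-\rho\ge c_{t,T}>0$ on $(t,T)\times\T^2$ from Proposition \ref{prop:lower bound rho improved}; this is where the result depends on the preceding section. Young's inequality absorbs every gradient term into $c_{t,T}\|\nabla\P^\varepsilon\|_{L^2}^2$. The extra zeroth-order term $\int\eta^\varepsilon*\tilde\Pi:\P^\varepsilon$ is simply bounded by a constant. Integrating in time gives
\begin{equation*}
\|\P^\varepsilon\|^2_{L^\infty(t,T;L^2)}+\|\nabla\P^\varepsilon\|^2_{L^2((t,T)\times\T^2)}\le C'_{t,T},
\end{equation*}
uniformly in $\varepsilon$. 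Lower semicontinuity as $\varepsilon\to0$ then yields $\nabla\P\in L^2((t,T)\times\T^2)$.

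The only potentially delicate point is the same as in Lemma \ref{lem:H1 p}: controlling the commutator $E^\varepsilon$, because $\nabla\P$ is not yet known to be square integrable. This is handled by the integration-by-parts decomposition, which moves the derivative onto the mollifier and onto $\rho$, where $\nabla\rho\in L^2$ is available. Everything else is routine.
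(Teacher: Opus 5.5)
Your proposal is correct and takes essentially the same approach as the paper, which proves this lemma simply by rerunning the mollification, commutator and energy argument of Lemma \ref{lem:H1 p} on \eqref{eq:P eqn with Pi}, using $\Pi,\tilde\Pi\in L^\infty$ and the lower bound $1-\rho\geq c_{t,T}$. Your handling of $\tilde\Pi$ as a bounded source in place of the damping term $-\p$, and of $E^\varepsilon$ via the integration-by-parts decomposition, is exactly what that adaptation requires.
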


Having established Lemma \ref{lem:H1 p}, it follows that (for positive times) the equation \eqref{eq:p eq} for $\p$ satisfies the assumptions of the Gehring-type lemma (Lemma \ref{lem:Gehring}) when rewritten as 
\begin{equation*}
    \partial_t \p - \dv((1-\rho)\nabla \p) = - \p + \dv(\p \otimes \nabla \rho - (1-\rho)\P) ; 
\end{equation*}
indeed, we set $A=(1-\rho)\mathrm{Id} \in L^\infty$, $c=c_{t,T}^{-1}$ provided by \eqref{eq:ctT def} for the ellipticity constant, $g=-\p \in L^\infty$, and $G= \p \otimes \nabla \rho - (1-\rho)\P \in L^4((t,T)\!\times\!\T^2)$ so that $p=4$ in Lemma \ref{lem:Gehring}. We emphasise that this result only requires the tensor $A$ to have \emph{measurable} coefficients---no continuity is needed. The \emph{Gehring-type} Lemma \ref{lem:Gehring} therefore yields: for all $0<t<T$ 
\begin{equation}\label{eq:grad p upgrade by gehring}
    \nabla \p \in L^{2+\delta}((t,T)\!\times\!\T^2) , 
\end{equation}
for some $\delta>0$ which depends only on $t,T,c_{t,T}$. In turn, for all $0<t<T<\infty$ there exists some $\delta>0$ depending only on $t,T,c_{t,T}$ such that $\dv((1-\rho)\p) \in L^{2+\delta}((t,T)\!\times\!\T^2)$---note that we have already shown in Lemma \ref{lem:H1 p} that $\nabla \rho \in L^4_{t,\loc}L^4_x$. Consequently, Lemma \ref{lem:Lp schauder} (or, analogously, the Calder\'on--Zygmund Theorem) and \eqref{eq:rho eq} yield: for all $0<t<T$ 
\begin{equation}\label{eq:dt hess rho in L2+delta}
    \partial_t \rho, \nabla^2 \rho \in L^{2+\delta}((t,T)\!\times\!\T^2) , 
\end{equation}
for some $\delta>0$ depending only on $t,T,c_{t,T}$; note that this $\delta$ is \emph{a priori} different to the one in \eqref{eq:grad p upgrade by gehring} for the same choice of $t,T$, since the result of Lemma \ref{lem:Lp schauder} is \emph{local}. Noting that $\T^2$ is of dimension $2<2+\delta$ (no matter the value of $\delta>0$ depending on the precise parabolic cylinder), Morrey's embedding yields: for all $0<t<T$ we have 
\begin{equation*}
    \Vert \nabla \rho(s) \Vert_{L^\infty(\T^2)} \leq C_S  \Vert \nabla \rho(s) \Vert_{W^{1,2+\delta}(\T^2)} \qquad \forall s \in (t,T), 
\end{equation*}
where the Sobolev constant $C_S$ depends only on $\T^2$ and $\delta$ (which depends only on $t,T,c_{t,T}$); thus, $C_S$ depends only on $t,T,c_{t,T}$. Raising to the power $2+\delta$ and integrating in $s$ yields 
\begin{equation*}
  \Vert  \nabla \rho \Vert_{L^{2+\delta}((t,T);L^\infty(\T^2))} \leq C_S  \Vert \nabla \rho \Vert_{L^{2+\delta}((t,T);W^{1,2+\delta}(\T^2))} . 
\end{equation*}
Thus, by \eqref{eq:dt hess rho in L2+delta}: for all $0<t<T$, there exists $\delta>0$ depending only on $t,T,c_{t,T}$ such that 
\begin{equation}\label{eq:grad rho in L2+delta Linfty}
    \nabla \rho \in L^{2+\delta}((t,T);L^\infty(\T^2)).  
\end{equation}
Putting together \eqref{eq:grad p upgrade by gehring}--\eqref{eq:grad rho in L2+delta Linfty}, we have proved the following corollary of Lemma \ref{lem:H1 p}. 

\begin{cor}\label{cor:consequence of gehring}
    For all $0\!<\!t\!<\!T\!<\!\infty$, $\exists \delta>0$ depending only on $t,T,c_{t,T}$, such that 
    \begin{equation*}
        \nabla \p \in L^{2+\delta}((t,T)\!\times\!\T^2), \quad \partial_t \rho, \nabla^2 \rho \in L^{2+\delta}((t,T)\!\times\!\T^2), \quad  \nabla \rho \in L^{2+\delta}((t,T);L^\infty(\T^2)). 
    \end{equation*}
\end{cor}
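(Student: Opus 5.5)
The statement is a summary of the three displays just derived, so the proof is mainly a matter of chaining them for a fixed pair $0<t<T<\infty$. To handle the locality of Lemma \ref{lem:Gehring} and Lemma \ref{lem:Lp schauder}, I will use the nested intervals $(t/3,T+2)\supset(t/2,T+1)\supset(t,T)$. On each interval the lower bound \eqref{eq:ctT def} gives uniform ellipticity: on $(t/3,T+2)$ we have $1-\rho\geq c_{t/3,T+2}$, which depends only on $t,T$ and the lower-bound data.

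\emph{Step 1 (Gehring).} On $I=[t/3,T+2]$, rewrite \eqref{eq:p eq} as $\partial_t\p-\dv((1-\rho)\nabla\p)=-\p+\dv(\p\otimes\nabla\rho-(1-\rho)\P)$. Lemma \ref{lem:H1 p} gives:
\begin{itemize}
\item $\p\in L^2(I;H^1)$ with $\partial_t\p\in L^2(I;H^{-1})$;
\item $G:=\p\otimes\nabla\rho-(1-\rho)\P\in L^4(I\times\T^2)$, using $|\p|\leq1$, $\P\in L^\infty$ and $\nabla\rho\in L^4$;
\item $g:=-\p\in L^\infty$.
\end{itemize}
Hence Lemma \ref{lem:Gehring} applies with $N=d=2$, $p=4$ and $A=(1-\rho)\mathrm{Id}$. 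Ellipticity holds because $c_{t/3,T+2}\leq1-\rho\leq1$. This yields $\nabla\p\in L^{2+\delta_1}$ on $I'=[t/2,T+1]$.

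\emph{Step 2 (Calder\'on--Zygmund).} Expand $\dv((1-\rho)\p)=-\nabla\rho\cdot\p+(1-\rho)\dv\p$. This lies in $L^{2+\delta_1}(I'\times\T^2)$ once $\delta_1\leq2$ (which we may assume, since $\nabla\rho\in L^4$). Apply Lemma \ref{lem:Lp schauder} to \eqref{eq:rho eq} with $A=\mathrm{Id}$. The requirement $\rho\in L^2(H^2)$, $\partial_t\rho\in L^2$ holds by \eqref{eq:dt rho and laplace rho in L2loc}. This gives $\partial_t\rho,\nabla^2\rho\in L^{2+\delta}((t,T)\times\T^2)$ with $\delta:=\delta_1$.

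\emph{Step 3 (Morrey).} For a.e. $s$, Morrey's embedding $W^{1,2+\delta}(\T^2)\hookrightarrow L^\infty$ gives $\|\nabla\rho(s)\|_{L^\infty}\leq C_S\|\nabla\rho(s)\|_{W^{1,2+\delta}}$. Note that $\nabla\rho\in L^{2+\delta}$ itself follows from $L^4$. Raising to the power $2+\delta$ and integrating in time gives the third claim.

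The only delicate point is bookkeeping. Each local result shrinks the interval, so I work on the enlarged intervals above. I also take $\delta$ to be the minimum of the exponents obtained, so that one $\delta$ serves all three claims. Otherwise there is no real obstacle: the dependence of $\delta$ only on $t,T$ and the ellipticity constant comes directly from Lemma \ref{lem:Gehring}.
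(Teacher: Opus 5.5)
Your proposal is correct and follows the paper's argument. As in the paper, you apply the Gehring-type lemma to the $\p$-equation with $A=(1-\rho)\mathrm{Id}$, $g=-\p$ and $G=\p\otimes\nabla\rho-(1-\rho)\P\in L^4$, then the $L^p$ Calder\'on--Zygmund/Schauder estimate for \eqref{eq:rho eq}, then Morrey's embedding in $x$ integrated in time. Your nested intervals $(t/3,T+2)\supset(t/2,T+1)\supset(t,T)$ (with $c_{t/3,T+2}$ still a function of $t,T,c_{t,T}$) only make explicit the ``resizing'' the paper leaves implicit, and they show that a single $\delta$ serves all three conclusions.
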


\subsubsection{$H^2$-type estimate for $\p$}\label{sec:H2 for p}

The next step in our analysis involves a $H^2$-type estimate on $\p$. The underlying idea is that, \emph{provided one can justify changing from the divergence form to non-divergence form in \eqref{eq:p eq}}, a crucial cancellation of mixed derivative terms occurs: 
\begin{equation*}
    \partial_t \p + \dv((1-\rho)\P) = (1-\rho) \Delta \p + \p \Delta \rho - \p . 
\end{equation*}
By testing the above with $\Delta \p$, which \emph{a priori} is not justified since we \emph{do not} know $\Delta \p \in L^2$, we formally get from Young's inequality 
\begin{equation}\label{eq:formal computation before galerkin}
   \frac{\der}{\der t}\Vert \nabla \p \Vert^2_{L^2} \!+\! \frac{c_{t,T}}{2}\Vert \Delta \p \Vert_{L^2}^2 \!+\! \Vert \nabla \p \Vert^2_{L^2} \leq c_{t,T}^{-1}\Big(\Vert \dv((1\!-\!\rho)\P) \Vert^2_{L^2} \! + \! \Vert \Delta \rho \Vert^2_{L^2} \Big), 
\end{equation}
where we used $\nabla \P \in L^2((t,T)\!\times\!\T^2)$ (Lemma \ref{lem:H1 for P}) so that the penultimate term is bounded. Integrating in time gives $\nabla \p \in L^\infty((t,T);L^2(\T^2)) \cap L^2((t,T);H^1(\T^2))$, as desired. 

In order to justify the aforementioned passage from divergence form to non-divergence form, we approximate $\p$ using a Galerkin scheme. That is, we will construct $x$-smooth approximations $\p^n$ (which are linear combinations of eigenfunctions of $\Delta_x$ with periodic boundary conditions in $\T^2$) which converge to $\p$ as $n\to\infty$. The previous formal computation is then justified for each $\p^n$, whence we will obtain an estimate akin to \eqref{eq:formal computation before galerkin}---since this estimate will turn out to be independent of $n$, it also holds for $\p$ itself. We stress that this Galerkin scheme is used only as a \emph{tool for regularity}---not existence (already known).

At this stage of the analysis, we \emph{fix an arbitrary pair of times} $0<t<T<\infty$ such that $t$ is a Lebesgue point of $t\mapsto (\p(t,\cdot),f(t,\cdot))$, and we denote by $\delta$ the \emph{specific} $\delta>0$ depending on $t,T,c_{t,T}$ such that Corollary \ref{cor:consequence of gehring} holds, and denote by $q_\delta$ the H\"older exponent:
\begin{equation}\label{def: q delta}
    \frac{1}{2} = \frac{1}{q_\delta} + \frac{1}{2+\delta}; 
\end{equation}
where $q_\delta \in (1,\infty)$ depends only on $t,T,c_{t,T}$. In the lines that follow we introduce the required notions to write the aforementioned Galerkin scheme. Define the \emph{ansatz}
\begin{equation}\label{eq:ansatz p n}
    \p^n(s,x) := \sum_{j=1}^n \boldsymbol{\alpha}^n_{j}(s) \varphi_{j}(x) \qquad \forall (s,x) \in [t,T]\times\T^2, 
\end{equation}
where $\{\varphi_{j}\}_{j}$ is the smooth orthonormal basis of $L^2(\T^2)$ provided by the eigenfunctions of the Laplacian on $\T^2$ (with periodic boundary conditions), \textit{i.e.}, with $0 < \lambda_1 < \lambda_2 < \dots$, 
\begin{equation}\label{eq:relation for eig}
    -\Delta \varphi_j = \lambda_j^2 \varphi_j \qquad j \in \mathbb{N}, 
\end{equation}
and the coefficient vectors $\{\boldsymbol{\alpha}^n_{j}(s)\}_{j=1}^{n} \subset \mathbb{R}^2$ are to be determined, subject to initial data
\begin{equation}\label{eq:initial data galerkin}
    \boldsymbol{\alpha}^n_{j}(t) = \langle \p(t), \varphi_{j} \rangle \qquad \forall j=1,\dots,n, \quad \forall n\in\mathbb{N}, 
\end{equation}
at the \emph{initial time $s=t$}, where $\langle \cdot , \cdot \rangle$ denotes the $L^2(\T^2)$ inner product. By construction, 
\begin{equation}\label{eq:conv of initial data for galerkin ii}
    \lim_{n\to\infty} \Vert \p(t) - \p^n(t) \Vert_{L^2(\T^2)} = 0. 
\end{equation}
Furthermore, since $\p(t) \in H^1(\T^2)$ by Lemma \ref{lem:H1 p}, integration by parts and \eqref{eq:relation for eig} imply 
\begin{equation*}
   \begin{aligned}
       \langle \p(t) ,\! \varphi_j \rangle \! =\! -\lambda_j^{-2}\!\!\int_{\T^2}\!\! \p(t,x) \Delta \varphi_j(x) \! \d x \!=\! \lambda_j^{-2}\!\!\int_{\T^2}\!\! \nabla \p(t,x) \! \cdot \! \nabla \varphi_j(x) \! \d x \!=\! \lambda_j^{-1}\!\langle \p (t) ,\! \lambda_j^{-1}\!\varphi_j \rangle_{H^1}, 
   \end{aligned} 
\end{equation*}
where $\langle \cdot, \cdot \rangle_{H^1}$ denotes the inner product on $H^1(\T^2)$. The standard spectral theory of self-adjoint operators in Hilbert spaces implies $\{ \lambda_j^{-1}\varphi_j \}_j$ is an orthonormal basis of $H^1(\T^2)$: $\p^n(t) = \sum_{j=1}^n \langle \p(t) , \lambda_j^{-1} \varphi_j \rangle_{H^1} \lambda_j^{-1}\varphi_j \to \p(t)$ strongly in $H^1(\T^2)$, \textit{i.e.}, with \emph{$t$ the initial time}
\begin{equation}\label{eq:conv of initial data for galerkin}
    \lim_{n\to\infty} \Vert \p(t) - \p^n(t) \Vert_{H^1(\T^2)} = 0. 
\end{equation}
Moreover, $\p^n(s) \in C^\infty(\T^2) $ for all $n,s$, being a linear combination of eigenfunctions of $\Delta_x$.

The next result shows that there exists $\p^n$ of the form \eqref{eq:ansatz p n} satisfying the required Galerkin approximation: for simplicity, since existence of $f,\rho,\p$ is already known, we construct $\p^n$ to be the solution of a \emph{linear} equation. 
\begin{prop}[Galerkin scheme $\p$]\label{prop:galerkin p}
    For all $n\in\mathbb{N}$, $\exists \p^n \in AC([t,T];C^\infty(\T^2))$ satisfying 
    \begin{equation}\label{eq:galerkin approx 1}
    \left\lbrace \begin{aligned} 
        &\partial_t \p^n - \dv((1-\rho)\nabla \p^n + \p^n \otimes \nabla \rho) + \p^n = - \dv((1-\rho)\P), \\ 
        &\p^n(t) = \sum_{j=1}^n \langle \p(t),\varphi_j \rangle \varphi_j, 
    \end{aligned}\right. 
\end{equation}
in the weak sense dual to $\mathrm{span}\{\varphi_j\}_{j=1}^n$ for all $s \in [t,T]$, with the initial data achieved in the strong $L^2$-sense. Furthermore, $\p^n \rightharpoonup \p $ weakly in $L^2((t,T);H^2(\T^2))$, 
and 
\begin{equation}\label{eq:n indep bounds galerkin p}
\sup_n \Vert \p^n \Vert_{L^\infty((t,T);L^2(\T^2))} + \sup_n \Vert \nabla \p^n \Vert_{L^2((t,T)\!\times\!\T^2)} + \sup_n \Vert \nabla^2 \p^n \Vert_{L^2((t,T)\!\times\!\T^2)} < \infty. 
\end{equation}
\end{prop}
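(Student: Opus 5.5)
The plan is to build the Galerkin approximation as the solution of a linear ODE system, derive the $n$-independent bounds \eqref{eq:n indep bounds galerkin p} from the non-divergence cancellation (which is rigorous at the Galerkin level), and then identify the weak limit with $\p$ by uniqueness for the linear equation.

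\textbf{Step 1: existence.} Insert the ansatz \eqref{eq:ansatz p n} into \eqref{eq:galerkin approx 1} and test against $\varphi_k$, $k=1,\dots,n$. This gives a linear system
\[
\dot{\boldsymbol{\alpha}}^n_k+\sum_j M_{kj}(s)\boldsymbol{\alpha}^n_j=\mathbf{b}_k(s),
\]
where
\[
M_{kj}=\int_{\T^2}\Big((1-\rho)\nabla\varphi_j\cdot\nabla\varphi_k+\varphi_j\nabla\rho\cdot\nabla\varphi_k+\varphi_j\varphi_k\Big)\d x,
\qquad
\mathbf{b}_k=\int_{\T^2}(1-\rho)\P\nabla\varphi_k \d x .
\]
Since $\rho\in L^\infty$ and $\nabla\rho\in L^2((t,T)\times\T^2)$, the coefficients are in $L^1(t,T)$ (indeed in $L^2$). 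Carathéodory theory for linear ODEs then gives a unique absolutely continuous solution on $[t,T]$ with the initial data \eqref{eq:initial data galerkin}. Hence $\p^n\in AC([t,T];C^\infty(\T^2))$.

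\textbf{Step 2: uniform bounds.} First test with $\p^n$ itself. This is allowed because $\p^n$ lies in the span. Using $1-\rho\ge c_{t,T}$ from \eqref{eq:ctT def}, Young's inequality, and $\Vert\nabla\rho\Vert_{L^{2+\delta}_tL^\infty_x}<\infty$ from Corollary \ref{cor:consequence of gehring}, Grönwall gives uniform bounds on $\Vert\p^n\Vert_{L^\infty L^2}$ and $\Vert\nabla\p^n\Vert_{L^2L^2}$. The term $\p^n\otimes\nabla\rho$ is handled via $\Vert\p^n\nabla\rho\Vert_{L^2}\le\Vert\nabla\rho\Vert_{L^\infty}\Vert\p^n\Vert_{L^2}$, and $\Vert\nabla\rho\Vert_{L^\infty}^2$ is integrable in time.

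Next test with $-\Delta\p^n$, which is also in the span because the $\varphi_j$ are eigenfunctions. Since $\p^n$ is smooth in $x$, the divergence term expands exactly as
\[
(1-\rho)\Delta\p^n-\nabla\rho\cdot\nabla\p^n+\nabla\p^n\,\nabla\rho+\p^n\Delta\rho ,
\]
and the two mixed terms cancel, so we are in non-divergence form. This yields
\[
\tfrac12\tfrac{\der}{\der s}\Vert\nabla\p^n\Vert^2_{L^2}+c_{t,T}\Vert\Delta\p^n\Vert_{L^2}^2
\le \Vert\dv((1-\rho)\P)\Vert_{L^2}\Vert\Delta\p^n\Vert_{L^2}+\int_{\T^2}|\p^n||\Delta\rho||\Delta\p^n|\d x .
\]
The first term on the right is controlled by Lemma \ref{lem:H1 for P} and $\nabla\rho\in L^4$.

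\textbf{Main obstacle: the term $\p^n\Delta\rho$.} Here $\p^n$ is not known to be bounded, unlike $\p$. I will use Hölder with the exponent $q_\delta$ from \eqref{def: q delta}:
\[
\Vert\p^n\Delta\rho\Vert_{L^2}\le\Vert\p^n\Vert_{L^{q_\delta}}\Vert\Delta\rho\Vert_{L^{2+\delta}} .
\]
In two dimensions, Gagliardo--Nirenberg gives
\[
\Vert\p^n\Vert_{L^{q_\delta}}\le C\Vert\p^n\Vert_{L^2}^{1-\vartheta}\Vert\p^n\Vert_{H^1}^{\vartheta},\qquad \vartheta=1-2/q_\delta<1 .
\]
Young's inequality then absorbs $\Vert\Delta\p^n\Vert_{L^2}$ on the left and leaves a term of the form $\Vert\Delta\rho\Vert_{L^{2+\delta}}^2(1+\Vert\nabla\p^n\Vert_{L^2}^2)$, where I use the uniform $L^\infty L^2$ bound on $\p^n$. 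Since $\Vert\Delta\rho\Vert^2_{L^{2+\delta}}\in L^{(2+\delta)/2}\subset L^1(t,T)$ by Corollary \ref{cor:consequence of gehring}, Grönwall closes the estimate. The initial term $\Vert\nabla\p^n(t)\Vert_{L^2}$ is bounded uniformly by \eqref{eq:conv of initial data for galerkin}. Elliptic regularity on $\T^2$, $\Vert\nabla^2\p^n\Vert_{L^2}=\Vert\Delta\p^n\Vert_{L^2}$, then gives \eqref{eq:n indep bounds galerkin p}.

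\textbf{Step 3: identification of the limit.} The bounds give a subsequence $\p^n\rightharpoonup\tilde\p$ weakly in $L^2((t,T);H^2(\T^2))$. The equation also bounds $\partial_t\p^n$ in $L^2H^{-1}$ when tested against the span, so $\tilde\p$ solves the linear problem \eqref{eq:galerkin approx 1} weakly in $L^2H^1$, with initial datum $\p(t)$ by \eqref{eq:conv of initial data for galerkin ii}. The limit passage in $\p^n\otimes\nabla\rho$ is done by testing against fixed $\varphi_k$ times time functions.

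The true $\p$ solves the same linear equation, because \eqref{eq:p eq} can be rewritten this way. It has the regularity $\p\in L^2H^1$ with $\partial_t\p\in L^2H^{-1}$ from Lemma \ref{lem:H1 p}, and it attains $\p(t)$ since $t$ is a Lebesgue point. The difference $\mathbf{d}=\tilde\p-\p$ satisfies the homogeneous equation with zero data. Testing with $\mathbf{d}$ via Lions--Magenes and using the same Grönwall argument as in Step 2 gives $\mathbf{d}=0$. By uniqueness the whole sequence converges, so $\p^n\rightharpoonup\p$.
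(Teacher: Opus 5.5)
Your proposal is correct and follows essentially the same route as the paper. The common steps are: Carath\'eodory existence for the linear ODE system; an $H^1$ estimate closed by Gr\"onwall using $\Vert\nabla\rho\Vert_{L^\infty(\T^2)}^2\in L^1(t,T)$ from Corollary~\ref{cor:consequence of gehring}; an $H^2$ estimate obtained by testing with $\Delta\p^n$ in non-divergence form, with the term $\p^n\Delta\rho$ handled by H\"older with the exponent $q_\delta$; and identification of the weak limit by testing the equation for the difference against itself and applying Gr\"onwall. The differences are cosmetic: you use a Gagliardo--Nirenberg interpolation where the paper uses the plain Sobolev embedding $H^1(\T^2)\hookrightarrow L^{q_\delta}(\T^2)$, which yields the same Gr\"onwall inequality, and you state explicitly that uniqueness of the limit gives convergence of the whole sequence rather than just a subsequence.
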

\begin{proof}
We divide the proof into several steps. 

\smallskip 

 \noindent 1. \textit{Existence of $\p^n$}. Exploiting the orthonormality of the eigenfunctions and testing \eqref{eq:galerkin approx 1} with $\varphi_j$, we see that \eqref{eq:galerkin approx 1} is satisfied by the ansatz \eqref{eq:ansatz p n} in duality with $\mathrm{span}\{\varphi_j\}_{j=1}^n$ if 
   \begin{equation*}
    \begin{aligned}
        \frac{\der \boldsymbol{\alpha}^n_{j}}{\der t}  \!+\! \sum_{k=1}^n  \boldsymbol{\alpha}^n_{k} \bigg( \int_{\T^2} \Big( (1\!-\!\rho) \nabla \varphi_k + \varphi_k \nabla \rho \Big) \!\cdot\! \nabla\varphi_{j} \d x \bigg) \!+\! \boldsymbol{\alpha}^n_j = \int_{\T^2}\!\!   (1-\rho)\P   \nabla \varphi_j  \d x. 
    \end{aligned}
\end{equation*}
The above reads as a linear system of ODEs, of the form $\boldsymbol{\alpha}'(s) + A(s)\boldsymbol{\alpha}(s) = B(s)$, where the tensors $A,B$ are sufficiently regular that the standard well-posedness theory applies---no assumptions on the invertibility/definiteness of $A$ are needed. Specifically, $A(s),B(s)$ are \emph{locally integrable} in the time variable $s$, thus an application of Carath\'eodory's Theorem implies that there exists $\p^n \in AC([t,T];C^\infty(\T^2))$ solving \eqref{eq:galerkin approx 1} in duality with $\mathrm{span}\{\varphi_j\}_{j=1}^n$ (of the form \eqref{eq:ansatz p n}), with the initial data achieved in $L^2(\T^2)$ at $s=t$.    

\smallskip

\noindent 2. \textit{Estimates independent of $n$}. By testing \eqref{eq:galerkin approx 1} with $\p^n$ (this is allowed since $\p^n(s) \in \mathrm{span}\{\varphi_j\}_{j=1}^n$), we get the usual $H^1$-type estimate $\frac{1}{2}\frac{\der}{\der t}\Vert \p^n \Vert^2_{L^2(\T^2)} + \frac{c_{t,T}}{2}\Vert \nabla \p^n \Vert^2_{L^2(\T^2)} + \Vert \p^n \Vert^2_{L^2(\T^2)} \leq \frac{1}{c_{t,T}}(\Vert (1-\rho)\P \Vert^2_{L^2(\T^2)} + \Vert \p^n |\nabla \rho| \Vert^2_{L^2(\T^2)})$, where we used the lower bound on $1-\rho$ provided by $c_{t,T}$ and the Lions--Magenes Lemma (directly from the equation $\partial_t \p^n \in L^2((t,T);\mathrm{span}\{\varphi_j\}_{j=1}^n)$ since $\mathrm{span}\{\varphi_j\}_{j=1}^n$ is a finite-dimensional Hilbert space). Integrating in time yields (for all $t < s \leq T$) 
\begin{equation*}
   \begin{aligned} \Vert \p^n(s) \Vert^2_{L^2(\T^2)} + c_{t,T} \Vert \nabla \p^n \Vert^2_{L^2((t,s)\times\T^2)}  \leq & \,  \Vert \p^n(t) \Vert^2_{L^2(\T^2)} + c_{t,T}^{-1}\Vert (1-\rho)\P \Vert^2_{L^2((t,T)\times\T^2)} \\ 
   &+ c_{t,T}^{-1} \int_t^s \Vert \nabla \rho(s') \Vert_{L^\infty(\T^2)}^2 \Vert \p^n(s') \Vert^2_{L^2(\T^2)} \d s', 
   \end{aligned}
\end{equation*}
where we dropped the positive term $\Vert \p^n \Vert^2_{L^2((t,T)\times\T^2)}$ on the left-hand side. Due to the strong convergence of initial data \eqref{eq:conv of initial data for galerkin ii}, we see that the  first two terms on the right-hand side are bounded independently of $n$ by some $C_{t,T}$ depending only on $t,T,c_{t,T}$. We get 
\begin{equation*}
    \Vert \p^n(s) \Vert^2_{L^2(\T^2)} + \Vert \nabla \p^n \Vert^2_{L^2((t,s)\times\T^2)}  \leq  C_{t,T} \bigg( 1+ \int_t^s \Vert \nabla \rho(s') \Vert_{L^\infty(\T^2)}^2 \Vert \p^n(s') \Vert^2_{L^2(\T^2)} \d s' \bigg), 
\end{equation*}
and an application Gr\"onwall's Lemma implies (for a different constant $C_{t,T}$) 
\begin{equation*}
    \Vert \p^n \Vert^2_{L^\infty((t,T);L^2(\T^2))} + c_{t,T} \Vert \nabla \p^n \Vert^2_{L^2((t,T)\times\T^2)}  \leq C_{t,T} \exp \bigg( C_{t,T} \int_t^T \Vert \nabla \rho(s') \Vert_{L^\infty(\T^2)}^2  \d s' \bigg), 
\end{equation*}
where the integral is bounded since $\nabla \rho \in L^{2+\delta}((t,T);L^\infty(\T^2))$ for some $\delta>0$ depending on $t,T,c_{t,T}$ by Corollary \ref{cor:consequence of gehring}, and the boundedness of the first two terms in \eqref{eq:n indep bounds galerkin p} follows. 

Next, we perform the $H^2$-type estimate by testing \eqref{eq:galerkin approx 1} with $\Delta \p^n$ (this is allowed since $\Delta \p^n \in \mathrm{span}\{\varphi_j\}_{j=1}^n$ due to \eqref{eq:relation for eig}). Since $\p^n(s) \in C^\infty(\T^2)$, we rigorously pass to the non-divergence form of \eqref{eq:galerkin approx 1}. Similarly to the computation \eqref{eq:formal computation before galerkin}, we get 
\begin{equation*}
\begin{aligned}
\frac{\der}{\der t}\Vert \nabla \p \Vert^2_{L^2(\T^2)} \!+\! \frac{c_{t,T}}{2}\Vert \Delta & \p \Vert_{L^2(\T^2)}^2 \leq c_{t,T}^{-1}\Big( \Vert \dv((1\!-\!\rho)\P) \Vert^2_{L^2(\T^2)} \!+\! 2\int_{\T^2} |\p^n|^2 |\Delta \rho|^2 \d x \Big) \\ 
&\leq c_{t,T}^{-1}\Big( \Vert \dv((1\!-\!\rho)\P) \Vert^2_{L^2(\T^2)} \!+\! 2  \Vert \p^n(s) \Vert^2_{L^{q_\delta}(\T^2)} \Vert \Delta \rho(s) \Vert_{L^{2+\delta}(\T^2)}^2\Big), 
\end{aligned}
\end{equation*}
using the Lions--Magenes Lemma in the finite-dimensional Hilbert space $\{\varphi_j\}_{j=1}^n$, where the final term is present because \emph{a priori} we do not know $\p^n \in L^\infty$, unlike $\p$; we recall that $q_\delta$ defined in \eqref{def: q delta} depends on $t,T,c_{t,T}$. Next, the Sobolev inequality implies $\Vert \p^n(s) \Vert_{L^{q_\delta}(\T^2)} \leq C_S \Vert \p^n(s) \Vert_{H^1(\T^2)}$ for some $C_S$ independent of $n$ but which depends on $t,T,c_{t,T}$ (since $q_{\delta}$ does). Integrating in $s$ and using $\nabla \P \in L^2_\loc$ from Lemma \ref{lem:H1 for P}, we get 
\begin{equation*}
   \begin{aligned}
       \Vert \nabla\p^n(s) \Vert^2_{L^2(\T^2)} \!+\! c_{t,T} \Vert \Delta \p^n \Vert^2_{L^2((t,s)\times\T^2)}  \leq \Vert \nabla\p^n(t) \Vert^2_{L^2(\T^2)} \!+\! c_{t,T}^{-1}\Vert \dv((1\!-\!\rho)\P) \Vert^2_{L^2} & \\ 
       + 4 C_S^2 c_{t,T}^{-1} \Vert \p^n \Vert^2_{L^\infty((t,T);L^2(\T^2))} \Vert \Delta \rho \Vert^2_{L^2((t,T);L^{2+\delta}(\T^2))} & \\ 
       + 4 C_S^2 c_{t,T}^{-1} \int_t^s \Vert \Delta \rho(s') \Vert^2_{L^{2+\delta}(\T^2)} \Vert \nabla \p^n (s') \Vert^2_{L^2(\T^2)} \d s' &  
   \end{aligned} 
\end{equation*}
for all $t < s \leq T$. The strong convergence to the initial data \eqref{eq:conv of initial data for galerkin}, the uniform bound on $\Vert \p^n \Vert_{L^\infty((t,T);L^2(\T^2))}$, and Corollary \ref{cor:consequence of gehring} implies that there exists $C_{t,T}$ independent of $n$ such that the first three terms on the previous right-hand side are bounded by $C_{t,T}$. We deduce 
\begin{equation*}
    \Vert \nabla\p^n(s) \Vert^2_{L^2(\T^2)} \!+ c_{t,T} \Vert \Delta \p^n \Vert^2_{L^2((t,s)\times\T^2)} \leq C_{t,T} \Big(1 \!+\! \int_t^s \!\! \Vert \Delta \rho(s') \Vert^2_{L^{2+\delta}(\T^2)} \Vert \nabla \p^n (s') \Vert^2_{L^2(\T^2)} \d s' \Big), 
\end{equation*}
for some $C_{t,T}$ independent of $n$, whence an application of Gr\"onwall's Lemma implies 
\begin{equation*}
    \Vert \nabla\p^n \Vert^2_{L^\infty((t,T);L^2(\T^2))} \!+\! c_{t,T} \Vert \Delta \p^n \Vert^2_{L^2((t,T)\times\T^2)} \leq C_{t,T} \exp \Big( C_{t,T} \int_t^T \Vert \Delta \rho(s') \Vert^2_{L^{2+\delta}(\T^2)} \d s' \Big), 
\end{equation*}
for a different $C_{t,T}$; the final integral is bounded due to Corollary \ref{cor:consequence of gehring}. Thus \eqref{eq:n indep bounds galerkin p} follows.

\smallskip 

\noindent 3. \textit{Convergence to $\p$}. The uniform bounds \eqref{eq:n indep bounds galerkin p} imply that $\{\p^n\}_n$ is a uniformly bounded sequence in $L^2((t,T);H^2(\T^2))$. As such, by Alaoglu's Theorem, it admits a weakly convergent limit $\p^\infty \in L^2((t,T);H^2(\T^2))$ for a subsequence which we do not relabel. By passing to the limit in the \emph{linear} equation \eqref{eq:galerkin approx 1}, we see that $\p^\infty$ satisfies 
    \begin{equation}\label{eq:galerkin approx 1 limit}
    \left\lbrace \begin{aligned} 
        &\partial_t \p^\infty - \dv((1-\rho)\nabla \p^\infty + \p^\infty \otimes \nabla \rho) + \p^\infty = - \dv((1-\rho)\P), \\ 
        &\p^\infty(t) = \p(t), 
    \end{aligned}\right. 
\end{equation}
where we used that $\p = \sum_{j=1}^\infty \langle \p(t),\varphi_j \rangle \varphi_j$ in $L^2(\T^2)$ from \eqref{eq:conv of initial data for galerkin}. The equation \eqref{eq:galerkin approx 1 limit} holds in duality with $L^2((t,T);\mathrm{span}\{\varphi_j\}_{j=1}^\infty)$, and hence, by density, in duality with $L^2((t,T);H^1(\T^2))$; we also deduce directly from \eqref{eq:galerkin approx 1 limit} that $\partial_t \p^\infty \in L^2((t,T);H^{-1}(\T^2))$. Next, we define the difference $\mathbf{w} := \p^\infty - \p$. On account of $\p,\p^\infty \in L^2((t,T);H^1(\T^2))$, from Lemma \ref{lem:H1 p}, we have that $\mathbf{w}$ satisfies 
\begin{equation*}
    \partial_t \mathbf{w} - \dv((1-\rho)\nabla \mathbf{w} + \mathbf{w} \otimes \nabla \rho) + \mathbf{w} = 0, 
\end{equation*}
with zero initial condition, and $\mathbf{w}$ is itself an admissible test function for this equation with $\partial_t \mathbf{w}\in L^2((t,T);H^{-1}(\T^2))$. The Lions--Magenes Lemma applies, and testing with $\mathbf{w}$ gives 
\begin{equation*}
    \begin{aligned}
        \frac{1}{2}\frac{\der}{\der t}\Vert \mathbf{w} \Vert^2_{L^2(\T^2)} \!+\! \frac{c_{t,T}}{2}\Vert \nabla \mathbf{w} \Vert^2_{L^2(\T^2)} \!+\! \Vert \mathbf{w} \Vert^2_{L^2(\T^2)} \leq \frac{1}{2c_{t,T}} \Vert \mathbf{w} \!\otimes\! \nabla \rho \Vert^2_{L^2}\leq \frac{1}{2c_{t,T}} \Vert \nabla \rho \Vert_{L^\infty}^2 \Vert \mathbf{w} \Vert^2_{L^2}. 
    \end{aligned}
\end{equation*}
Writing $Y(s) := \Vert \mathbf{w}(s)\Vert^2_{L^2(\T^2)}$, this implies $Y'(s) \leq c_{t,T}^{-1} \Vert \nabla \rho(s) \Vert^2_{L^\infty(\T^2)} Y(s)$, and hence
\begin{equation*}
    Y(s) \leq {Y(t)}\exp\bigg( c_{t,T}^{-1} \int_t^T \Vert \nabla \rho(s') \Vert^2_{L^\infty(\T^2)} \d s' \bigg) = 0 \qquad \forall s \in [t,T], 
\end{equation*}
since $Y(t)=0$, where the integral is finite since $\nabla \rho \in L^{2+\delta}((t,T);L^\infty(\T^2))$ for some $\delta>0$ depending on $t,T,c_{t,T}$ (Corollary \ref{cor:consequence of gehring}). Hence $\p = \p^\infty \in L^2((t,T);H^2(\T^2))$. 
\end{proof}

Step 2 of the proof of Proposition \ref{prop:galerkin p} also gives $\nabla \p \in L^\infty((t,T);L^2(\T^2))$. Using the interpolation Lemma \ref{lem:dibenedetto classic}, and recalling from the lines above \eqref{def: q delta} that $t,T$ were fixed arbitrarily, we have proved the following. 

\begin{cor}\label{cor:nabla p in L4}
    For all $0<t<T<\infty$, it holds $\p \in L^2((t,T);H^2(\T^2))$ and $\nabla \p \in L^\infty((t,T);L^2(\T^2)) \cap L^4((t,T)\!\times\!\T^2)$. 
    \end{cor}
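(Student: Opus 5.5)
The plan is to read the corollary off Proposition~\ref{prop:galerkin p}, apply the interpolation Lemma~\ref{lem:dibenedetto classic}, and then remove the restriction on the lower endpoint $t$. Throughout I use that the pair $0<t<T$ fixed before \eqref{def: q delta} was arbitrary, apart from requiring $t$ to be a Lebesgue point of $s\mapsto(\p(s,\cdot),f(s,\cdot))$.

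First, Proposition~\ref{prop:galerkin p} gives $\p^n \rightharpoonup \p$ weakly in $L^2((t,T);H^2(\T^2))$, so $\p \in L^2((t,T);H^2(\T^2))$. For the $L^\infty_tL^2_x$ bound on $\nabla\p$, I use the $n$-independent estimate from Step~2 of that proof, $\sup_n \Vert \nabla \p^n\Vert_{L^\infty((t,T);L^2(\T^2))}<\infty$. Then $\{\nabla\p^n\}$ is bounded in $L^\infty((t,T);L^2(\T^2)) = (L^1((t,T);L^2(\T^2)))'$. Extracting a weak-$*$ convergent subsequence, whose limit must coincide with the weak $L^2$ limit $\nabla\p$, and using weak-$*$ lower semicontinuity of the norm gives $\nabla\p\in L^\infty((t,T);L^2(\T^2))$.

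Second, I apply Lemma~\ref{lem:dibenedetto classic} with $d=2$ and $p=m=2$ to each component $v=\partial_i \p_k$. Here $v\in L^\infty((t,T);L^2(\T^2))$ by the previous step, and $v\in L^2((t,T);H^1(\T^2))$ because $\p\in L^2((t,T);H^2(\T^2))$. The embedding $L^\infty(I;L^2(\T^2))\cap L^2(I;H^1(\T^2))\hookrightarrow L^4(I\times\T^2)$ then yields $\nabla\p\in L^4((t,T)\times\T^2)$.

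Finally, I remove the Lebesgue-point restriction. Given arbitrary $0<t<T$, I choose a Lebesgue point $t'\in(0,t)$; such points are of full measure. Applying the above on $(t',T)\supset(t,T)$ and restricting to $(t,T)$ gives the claim. I expect no real obstacle. The only point needing care is identifying the weak-$*$ limit with $\nabla\p$ so that the uniform $L^\infty_tL^2_x$ bound transfers, and this follows from uniqueness of distributional limits.
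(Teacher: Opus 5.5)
Your proposal is correct and follows the paper's argument. You take $L^2_tH^2_x$ from the weak limit in Proposition~\ref{prop:galerkin p}, transfer the uniform $L^\infty_tL^2_x$ bound on $\nabla\p^n$ from Step~2 of its proof to $\nabla\p$, apply Lemma~\ref{lem:dibenedetto classic} to get $L^4$, and note that $t,T$ were arbitrary. Your explicit weak-$*$ identification and the choice of a Lebesgue point $t'<t$ only spell out what the paper leaves implicit.
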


In turn, it follows from Lemma \ref{lem:H1 p} and Corollary \ref{cor:nabla p in L4} that $\dv((1-\rho)\p) \in L^4((t,T)\!\times\!\T^2)$, whence the Calder\'on--Zygmund Theorem and \eqref{eq:rho eq} imply $\partial_t \rho , \nabla^2 \rho \in L^4((t,T)\!\times\!\T^2)$. We deduce that $\rho \in W^{1,4}((t,T)\!\times\!\T^2)$, whence Morrey's Embedding implies $\rho \in C^{\nicefrac{1}{4}}([t,T]\!\times\T^2)$. Also, an application of the Gagliardo--Nirenberg inequality gives 
\begin{equation}\label{eq:GN grad rho L8}
    \Vert \nabla\rho(s) \Vert_{L^8(\T^2)} \leq C \Vert   \nabla^2\rho(s) \Vert_{L^4(\T^2)}^{\nicefrac{1}{2}} \Vert \rho(s) \Vert_{L^\infty(\T^2)}^{\nicefrac{1}{2}} + C \Vert \rho(s) \Vert_{L^1(\T^2)}, 
\end{equation}
for some constant $C$ depending only on $\T^2$. Integrating in time, we get $\Vert \nabla\rho \Vert_{L^8((t,T)\times\T^2)}^8 \leq C \Vert   \nabla^2\rho \Vert_{L^4((t,T)\times\T^2)}^{4} \Vert \rho \Vert_{L^\infty}^{4} + C(T-t). $ We have shown the following. 

\begin{cor}\label{cor:continuity rho and hess in L4 nabla rho L8}
    For all $0<t<T<\infty$, it holds 
    \begin{equation*}
        \partial_t \rho,\nabla^2 \rho \in L^4((t,T)\!\times\!\T^2), \quad \nabla \rho \in L^8((t,T)\!\times\!\T^2), \quad \rho \in C^{\nicefrac{1}{4}}([t,T]\!\times\!\T^2). 
    \end{equation*}
\end{cor}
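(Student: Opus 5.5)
The plan is to deduce all three assertions from Corollary~\ref{cor:nabla p in L4}, Lemma~\ref{lem:H1 p} and the heat equation \eqref{eq:rho eq} for $\rho$. No new test-function computation should be needed; it is a direct bootstrap.

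\emph{Step 1: the forcing in \eqref{eq:rho eq} lies in $L^4$.} Fix $0<t<T<\infty$ and choose $0<t'<t$ and $T'>T$. I will write $-\dv((1-\rho)\p) = \nabla\rho\cdot\p - (1-\rho)\dv\p$. Since $|\p|\le 1$ by \eqref{eq:polarisation bound}, Lemma~\ref{lem:H1 p} gives $\nabla\rho\cdot\p \in L^4((t',T')\times\T^2)$. Since $0\le 1-\rho\le 1$, Corollary~\ref{cor:nabla p in L4} gives $(1-\rho)\dv\p\in L^4((t',T')\times\T^2)$. Hence $\partial_t\rho-\Delta\rho\in L^4((t',T')\times\T^2)$.

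\emph{Step 2: maximal regularity.} I will apply the Calder\'on--Zygmund theorem for the heat equation, or equivalently Lemma~\ref{lem:Lp schauder} with $A=\mathrm{Id}$ and $p=4$, on $I=[t',T']$ with $I'=[t,T]$. The hypotheses of that lemma, namely $\rho\in L^2(I;H^2)$ and $\partial_t\rho\in L^2$, are already available from \eqref{eq:dt rho and laplace rho in L2loc}. The lemma then yields $\partial_t\rho,\nabla^2\rho\in L^4((t,T)\times\T^2)$.

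\emph{Step 3: the $L^8$ bound on $\nabla\rho$.} I will use the two-dimensional Gagliardo--Nirenberg inequality \eqref{eq:GN grad rho L8} at a.e.\ fixed time, namely
$\|\nabla\rho\|_{L^8}\lesssim\|\nabla^2\rho\|_{L^4}^{1/2}\|\rho\|_{L^\infty}^{1/2}+\|\rho\|_{L^1}$.
I then raise it to the eighth power and integrate in time, using $0\le\rho\le1$. This gives $\|\nabla\rho\|_{L^8((t,T)\times\T^2)}^8\lesssim \|\nabla^2\rho\|_{L^4((t,T)\times\T^2)}^4+(T-t)$, which is finite by Step~2.

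\emph{Step 4: H\"older continuity.} Steps~2--3 together with $\rho\in L^\infty$ show that $\rho\in W^{1,4}((t,T)\times\T^2)$, viewed as a function of three variables. Since $4>3$, Morrey's embedding in three dimensions gives $\rho\in C^{0,1-3/4}=C^{1/4}$ on $[t,T]\times\T^2$, after choosing the continuous representative. The domain is a product of an interval with a torus, hence Lipschitz, so the embedding applies up to the boundary.

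The only genuinely delicate step is Step~1, and specifically the ingredient $\dv\p\in L^4$. This comes from the Galerkin $H^2$ estimate of Proposition~\ref{prop:galerkin p}, combined with the interpolation in Lemma~\ref{lem:dibenedetto classic}, and is recorded in Corollary~\ref{cor:nabla p in L4}. Everything after that is routine.
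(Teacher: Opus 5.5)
Your proposal is correct and follows the paper's argument essentially verbatim. You place the forcing $-\dv((1-\rho)\p)$ in $L^4$ via Lemma~\ref{lem:H1 p} and Corollary~\ref{cor:nabla p in L4}, apply Calder\'on--Zygmund to get $\partial_t\rho,\nabla^2\rho\in L^4$, use Morrey's embedding on the three-dimensional space-time domain for $C^{1/4}$, and integrate the Gagliardo--Nirenberg inequality \eqref{eq:GN grad rho L8} in time for $\nabla\rho\in L^8$. The only difference is that you explicitly enlarge to $(t',T')$ to handle the local nature of the maximal-regularity estimate, which the paper leaves implicit by resizing intervals.
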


\subsubsection{Boundedness of $f$ by Moser iterations}

The goal of this subsection is to prove Proposition \ref{prop:f bounded moser}, which we shall show by means of Moser iterations akin to those of \S \ref{sec:moser}. In order to use this strategy, we must first verify that the equation \eqref{eq:main eqn} can be written as a problem in the standard parabolic space $L^2_{t,\loc} H^1_\xi$ with $\partial_t f \in L^2_{t,\loc} H^{-1}_\xi$. 

\begin{lemma}[$f$ belongs to the parabolic space]\label{lem:f parabolic space}
For all $0<t<T<\infty$, it holds $f \in L^\infty((t,T);L^2(\T^3)) \cap L^2((t,T);H^1(\T^3)) \cap L^{\nicefrac{10}{3}}((t,T)\!\times\!\T^3)$ and $\partial_t f \in L^2((t,T);H^{-1}(\T^3))$. 
\end{lemma}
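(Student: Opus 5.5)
We plan to obtain the $L^2_tH^1_\xi$ bound by an energy estimate on \eqref{eq:main eqn} tested against $f$, justified by regularisation, and then read off the remaining properties. Formally, testing with $f$ gives
\begin{equation*}
\frac12\frac{\der}{\der t}\Vert f\Vert_{L^2(\T^3)}^2 + \int_{\T^3}(1-\rho)|\nabla f|^2 \d\xi + \Vert \partial_\theta f\Vert_{L^2(\T^3)}^2 = -\int_{\T^3} f\nabla\rho\cdot\nabla f \d\xi + \int_{\T^3}(1-\rho)f\e(\theta)\cdot\nabla f \d\xi .
\end{equation*}
On $(t,T)$ we have $1-\rho\geq c_{t,T}$ by \eqref{eq:ctT def}, so the left-hand side controls $\frac{c_{t,T}}{2}\Vert\nabla_\xi f\Vert_{L^2}^2$. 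We absorb the drift term by Young's inequality. For the cross-diffusion term we would rather not bound it through $\Vert\nabla\rho\Vert_{L^\infty}$, since only $\nabla \rho\in L^{2+\delta}_tL^\infty_x$ is available from Corollary \ref{cor:consequence of gehring}. Instead we write $-\int f\nabla\rho\cdot\nabla f = \frac12\int f^2\Delta\rho$. This is bounded by $\Vert\Delta\rho(s)\Vert_{L^4(\T^2)}\Vert f(s)\Vert_{L^{8/3}(\T^3)}^2$, using that $\Delta \rho$ depends only on $x$ and Hölder's inequality in $x$ after integrating in $\theta$ (or alternatively Hölder on $\T^3$). Gagliardo--Nirenberg in $\T^3$ then gives $\Vert f\Vert_{L^{8/3}}^2\leq C\Vert f\Vert_{L^2}^{3/2}\Vert f\Vert_{H^1}^{1/2}$. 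Young's inequality yields a term $\varepsilon\Vert\nabla f\Vert^2$ plus $C\Vert\Delta\rho\Vert_{L^4}^{4/3}\Vert f\Vert_{L^2}^2$, and since $\Delta\rho\in L^4((t,T)\times\T^2)$ by Corollary \ref{cor:continuity rho and hess in L4 nabla rho L8}, Grönwall closes.

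For the initial value we need $f(t)\in L^2(\T^3)$ at some starting time. We will choose $t$ (a Lebesgue point as above) such that $f(t)\in L^3\subset L^2$, which holds a.e.\ since $f\in L^3_\loc L^3$. This gives $f\in L^\infty((t,T);L^2)\cap L^2((t,T);H^1)$ for a.e.\ $t$, hence for all $t$ by shrinking the interval. Lemma \ref{lem:dibenedetto classic} then yields the $L^{10/3}$ bound.

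The main obstacle is justification, because $f$ is not an admissible test function a priori. We intend to mollify \eqref{eq:main eqn} in $\xi$ as in Step 1 of the proof of Lemma \ref{lem:H1 p}, and test with $f^\varepsilon=\eta^\varepsilon*f$. The commutators $\eta^\varepsilon*((1-\rho)\nabla f)-(1-\rho)\nabla f^\varepsilon$ and $\eta^\varepsilon*(f\nabla\rho)-f^\varepsilon\nabla\rho$ must be controlled in $L^2$ uniformly in $\varepsilon$, in terms of $\Vert f\Vert_{L^2}$ and $\Vert \nabla\rho\Vert$. We would handle the first by the integration-by-parts trick used for $E^\varepsilon_1,E^\varepsilon_2$ there, exploiting $\nabla\rho\in L^8$ and $\rho\in C^{1/4}$. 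This trick converts it into terms like $\int f\,\nabla\eta^\varepsilon(\cdot)[\rho(x)-\rho(y)]$, bounded by $M(|\nabla\rho|)\,|f|$-type quantities. We may estimate these using $\nabla\rho\in L^8$ together with $f^\varepsilon$ in $L^{8/3}$, so that Gagliardo--Nirenberg and Grönwall still close. An alternative we keep in reserve is a Galerkin scheme in $\xi$ as in Proposition \ref{prop:galerkin p}: solve the linear equation with coefficients $\rho$, identify the limit with $f$ by uniqueness via the same $L^2$ estimate, and obtain the bounds uniformly. Finally, $\partial_t f\in L^2((t,T);H^{-1})$ follows by duality from the equation. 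The flux terms $(1-\rho)\nabla f$ and $(1-\rho)f\e$ are in $L^2$, and $f\nabla\rho\in L^2$ because $f\in L^{10/3}$ and, e.g., $\nabla\rho\in L^8$, with an interpolation check of exponents ($3/10+1/8<1/2$).
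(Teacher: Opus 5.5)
Your outline is correct and has the same skeleton as the paper's proof. Both mollify \eqref{eq:main eqn} in $\xi$ and bound the commutators $E^\varepsilon,F^\varepsilon$ uniformly in $L^2$ using the integration-by-parts splitting from Lemma \ref{lem:H1 p}. Both then test with $f^\varepsilon$, pass to the limit by lower semicontinuity, get $L^{10/3}$ from Lemma \ref{lem:dibenedetto classic}, and obtain $\partial_t f$ by duality.

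The one real difference is the cross-diffusion term $\int f^\varepsilon\nabla\rho\cdot\nabla f^\varepsilon$. You integrate by parts onto $\Delta\rho\in L^4$ and close with Gagliardo--Nirenberg plus Gr\"onwall. The paper instead bounds it directly by $\Vert f^\varepsilon\Vert_{L^3}\Vert\nabla\rho\Vert_{L^6}\Vert\nabla_\xi f^\varepsilon\Vert_{L^2}$. It uses $\Vert f^\varepsilon\Vert_{L^3}\le\Vert f\Vert_{L^3}$, the a priori integrability $f\in L^3_\loc([0,\infty);L^3(\T^3))$ built into Definition \ref{def:function space}, and $\nabla\rho\in L^8$ from Corollary \ref{cor:continuity rho and hess in L4 nabla rho L8}. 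After Young's inequality a single time integration closes the estimate, with no Gr\"onwall step. Your version needs less integrability of $f$ for that one term, but this gains nothing overall, for the following reason.

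The commutators contain translates of $f$ itself, not $f^\varepsilon$. For example,
$|E^\varepsilon_1(s,\xi)|\le\int_{B(0,1)}|\nabla\eta(w)|\,f(s,\xi-\varepsilon w)\int_0^1|\nabla\rho(s,\xi-(1-\sigma)\varepsilon w)|\d\sigma\d w$.
So your plan to estimate them ``with $f^\varepsilon$ in $L^{8/3}$'' and absorb via Gagliardo--Nirenberg and Gr\"onwall does not work as written. Applying Gagliardo--Nirenberg to $f$ would presuppose $\nabla f\in L^2$, and $\Vert f\Vert_{L^2}$ alone is not enough against $\nabla\rho\in L^8$. The commutators must instead be bounded outright, uniformly in $\varepsilon$, by $\Vert f\Vert_{L^3}\Vert\nabla\rho\Vert_{L^6}$ via Jensen, Fubini, translation invariance and H\"older, exactly as in the paper. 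This again rests on the a priori $L^3$ integrability, which you already invoke for the initial time. With that correction your argument goes through.

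Two smaller points.
\begin{enumerate}
\item In $\T^3$, Gagliardo--Nirenberg gives $\Vert f\Vert^2_{L^{8/3}}\le C\Vert f\Vert^{5/4}_{L^2}\Vert f\Vert^{3/4}_{H^1}$. Your exponents $(3/2,1/2)$ are the two-dimensional ones, valid only if you apply the inequality in $x$ at fixed $\theta$ and then use H\"older in $\theta$. Either way Young's inequality closes.
\item Your Galerkin fallback is circular. Identifying the Galerkin limit with $f$ by testing $f^\infty-f$ against itself requires $f\in L^2_tH^1_\xi$ with $\partial_t f\in L^2_tH^{-1}_\xi$, which is the statement being proved. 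In Proposition \ref{prop:galerkin p} that step was legitimate only because Lemma \ref{lem:H1 p} had already been established.
\end{enumerate}
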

\begin{proof}
    Our objective is, as usual, to test the equation \eqref{eq:main eqn} against $f$, which gives the result formally. However, \emph{a priori} $f$ is not regular enough to be a test function. To this end, similarly to Lemma \ref{lem:H1 p}, we introduce the $\xi$-mollifier sequence $\{\eta^\varepsilon\}_\varepsilon$ in $\mathbb{R}^3$ constructed from the Friedrichs bump function. Define $D=(1-\rho) f \e(\theta)$ and $D^\varepsilon = \eta^\varepsilon*D$. Inserting $\eta^\varepsilon(s,\xi-y)$ into the weak formulation of \eqref{eq:main eqn} and integrating in $y$, we get 
    
    \begin{equation}\label{eq:mollified f eqn}
        \partial_t f^\varepsilon + \dv D^\varepsilon = \dv((1-\rho)\nabla f^\varepsilon + f^\varepsilon \nabla \rho) +\partial_\theta^2 f^\varepsilon + \dv (E^\varepsilon + F^\varepsilon), 
    \end{equation}
    where $E^\varepsilon = \eta^\varepsilon*((1-\rho)\nabla f) - (1-\rho)\eta^\varepsilon*\nabla f$ and $F^\varepsilon = \eta^\varepsilon*(f \nabla \rho) - (\eta^\varepsilon* f ) \nabla \rho. $

    \smallskip 

    \noindent 1. \textit{Control of error terms}. Using that $f \in L^3_{t,\xi}$ from Definition \ref{def:function space}, standard properties of the mollifier imply that $\Vert D^\varepsilon \Vert_{L^3([t,T]\times\T^3)} \leq C \Vert D \Vert_{L^3([t,T]\times\T^3)} \leq C\Vert f \Vert_{L^3([t,T]\times\T^3)}$, where the constant $C$ is independent of $t,T,\varepsilon$. Similarly, for the term $F^\varepsilon$, we have $\Vert \eta^\varepsilon*(f\nabla \rho)(s) \Vert_{L^2(\T^3)} \leq C \Vert f\nabla \rho(s) \Vert_{L^2(\T^3)} \leq C\Vert f (s)\Vert_{L^3(\T^2)} \Vert \nabla \rho(s) \Vert_{L^6(\T^2)}$, whence squaring and using Young's inequality with the exponent $\nicefrac{3}{2}$ yields $\Vert \eta^\varepsilon*(f\nabla \rho) \Vert_{L^2([t,T]\times\T^3)}^2 \leq C(\Vert f \Vert_{L^3([t,T]\times\T^2)}^3 + \Vert \nabla \rho \Vert^6_{L^6([t,T]\times\T^2)})$, where the constant $C$ is independent of $t,T,\varepsilon$; the final term is bounded in view of Corollary \ref{cor:continuity rho and hess in L4 nabla rho L8}. The second term in $F^\varepsilon$ is controlled analgously, whence we deduce $ \Vert D^\varepsilon \Vert_{L^2([t,T]\times\T^2)} + \Vert F^\varepsilon \Vert_{L^2([t,T]\times\T^2)} \leq C_{t,T}$ for some $C_{t,T}$ depending only on $t,T,c_{t,T}$. It remains to estimate $E^\varepsilon$, which, like in Lemma \ref{lem:H1 p}, we split into two parts. Standard computations (\textit{cf.~}Step 2 of the proof of Lemma \ref{lem:H1 p}) give 
    \begin{equation*}
   \begin{aligned}
       E^\varepsilon(s,\xi) =\!\!  \underbrace{\int_{\mathbb{R}^3} \!\!f(s,y) \nabla \eta^\varepsilon(\xi\!-\!y) \big[ \rho(s,\xi)\!-\!\rho(s,y) \big] \d y}_{=: E^\varepsilon_1(s,\xi)} + \!\underbrace{\int_{\mathbb{R}^3} \!\!\eta^\varepsilon(\xi\!-\!y) f(s,y) \nabla \rho(s,y) \d y}_{=: E^\varepsilon_2(s,\xi)} . 
   \end{aligned} 
\end{equation*}
We have $\Vert E_2^\varepsilon(s) \Vert_{L^2(\T^3)} \leq C \Vert f \nabla \rho(s) \Vert_{L^2(\T^2)}$ using standard properties of the mollifier (with $C$ independent of $t,T,\varepsilon$), which yields $\Vert E_2^\varepsilon \Vert_{L^2([t,T]\times\T^3)} \leq C_{t,T}$ using the same approach as for $F^\varepsilon$. For $E^\varepsilon_1$, an identical strategy to that in Step 2 of the proof of Lemma \ref{lem:H1 p} yields 
\begin{equation*}
    \begin{aligned}
       | E^\varepsilon_1(s,\xi) | &\leq \int_{B(0,1)} f(s,\xi-\varepsilon w) |\nabla \eta (w)| \bigg(  \int_0^1 |\nabla \rho \big(s, \xi - (1-\sigma)\varepsilon w \big)| \d \sigma\bigg) \d w. 
    \end{aligned}
\end{equation*}
By squaring, Jensen's inequality, integrating in $\xi$ and using Fubini's Theorem, we get 
\begin{equation*}
    \begin{aligned}
       \Vert E^\varepsilon_1(s) \Vert^2_{L^2(\T^3)} &\leq C \int_{B(0,1)} \int_0^1 \bigg( \int_{\T^3} |f(s,\xi-\varepsilon w)|^2  |\nabla \rho \big(s, \xi - (1-\sigma)\varepsilon w \big)|^2 \d \xi  \bigg) \d \sigma  \d w , 
    \end{aligned}
\end{equation*}
where the constant $C$ depends only on $\Vert \nabla \eta \Vert_{L^\infty}$. Using H\"older's inequality (with exponent $\nicefrac{3}{2}$) and the translation invariance of Lebesgue measure, we deduce 
\begin{equation*}
    \begin{aligned}
       \Vert E^\varepsilon_1(s) \Vert^2_{L^2(\T^3)} &\leq C \Vert f(s)^2 \Vert_{L^{\nicefrac{3}{2}}(\T^3)} \Vert |\nabla \rho(s) |^2\Vert_{L^3(\T^2)} = C\Vert f(s) \Vert_{L^3(\T^3)}^2 \Vert \nabla \rho(s) \Vert_{L^6(\T^2)}^2 . 
    \end{aligned}
\end{equation*}
Young's inequality yields $\Vert E^\varepsilon_1 \Vert_{L^2([t,T]\times\T^3)}^2 \leq C(\Vert f \Vert_{L^3}^3 + \Vert \nabla \rho \Vert^6_{L^6})$. We have shown 
\begin{equation}\label{eq:unif eps bound f in parabolic space}
\Vert D^\varepsilon \Vert_{L^2([t,T]\times\T^2)}^2 + \Vert E^\varepsilon \Vert_{L^2([t,T]\times\T^2)}^2 + \Vert F^\varepsilon \Vert_{L^2([t,T]\times\T^2)}^2 \leq C_{t,T}
\end{equation}
for some $C_{t,T}$ depending only on $t,T,c_{t,T}$, and independent of $\varepsilon$. 

    \smallskip 

    \noindent 2. \textit{$H^1$ bound}. Note that it is clear from the previous bounds and the equation \eqref{eq:mollified f eqn} that $\partial_t f^\varepsilon \in L^2((t,T);H^{-1}(\T^3))$, and hence the use of the Lions--Magenes Lemma is justified. Testing \eqref{eq:mollified f eqn} with $f^\varepsilon$, we get, using \eqref{eq:unif eps bound f in parabolic space}, the lower bound on $1\!-\!\rho$, and Young's inequality, 
    \begin{equation*}
        \frac{1}{2}\frac{\der}{\der t}\Vert f^\varepsilon \Vert^2_{L^2(\T^3)} + \frac{c_{t,T}}{4} \Vert \nabla_\xi f^\varepsilon\Vert^2_{L^2(\T^3)} \leq \frac{C_{t,T}}{2c_{t,T}} + c_{t,T}^{-1} \Vert f^\varepsilon\Vert^2_{L^3(\T^3)} \Vert \nabla \rho \Vert_{L^6(\T^2)}^2. 
    \end{equation*}
    By integrating in time, using that $\Vert f^\varepsilon(t) \Vert_{L^2(\T^3)} \leq C \Vert f(t) \Vert_{L^2(\T^3)} \leq C' \Vert f(t) \Vert_{L^3(\T^3)}$, which is finite by Definition \ref{def:function space}, we find that $\Vert f^\varepsilon \Vert^2_{L^\infty((t,T);L^2(\T^3))} + \Vert \nabla_\xi f^\varepsilon\Vert^2_{L^2((t,T)\times\T^3)}$ is bounded independently of $\varepsilon$. The result then follows from the lower semicontinuity of the norms, by letting $\varepsilon \to 0^+$; the equation \eqref{eq:main eqn} then yields $\partial_t f \in L^2((t,T);H^{-1}(\T^3))$. 
\end{proof}

We are ready to give the proof of Proposition \ref{prop:f bounded moser}; the argument is different from the one presented in \cite[Proposition 6.7]{reg2}, as we use Moser's iterations---not those of De Giorgi. Note that the integrability of the derivatives of $f$ is not sufficient (at this stage) to obtain $f$ bounded directly by Morrey's embedding, which highlights the necessity of this argument.

\begin{proof}[Proof of Proposition \ref{prop:f bounded moser}]
 Let $R \leq 1,s \in [t,T]$ be such that $Q_R(s)\subset [t,T]\!\times\!\T^3$. We show there exist $C_{t,T}$ (depending only on $t,T,c_{t,T}$) and $\beta>0$ (independent $t,T,s,R$), such that 
  \begin{equation}\label{eq:moser upper for f}
      \Vert f \Vert_{L^\infty(Q_r(s))} \leq \frac{C_{t,T}}{(R-r)^\beta} \qquad \forall 0<r<R, 
  \end{equation}
  from which the conclusion of the proposition follows immediately by a covering argument; recall that we are only interested in the \emph{qualitative} statement $f \in L^\infty_{t,\loc}L^\infty_\xi$. Without loss of generality (arguing by translation), we may assume $s=0$ for our cylinders: herein, 
  \begin{equation}\label{eq:the translated cylinders stuff}
      \essinf_{Q_R}(1\!-\!\rho) \geq c_{t,T} , \qquad \Vert \Delta \rho \Vert_{L^4(Q_R)} \leq \Vert \Delta \rho \Vert_{L^4((t,T)\times\T^2)}, 
  \end{equation}
  and the term involving $\Delta \rho$ is finite by Corollary \ref{cor:continuity rho and hess in L4 nabla rho L8}. 

  \smallskip 

  \noindent 1. \textit{Caccioppoli inequality}. Let $M>0,q \geq 2, 0<r<R\leq 1$. In this step, we prove 
\begin{equation}\label{eq:first checkpoint moser bis pour f}
   \begin{aligned}
      \Vert (f \wedge M)^{\frac{q}{2}} \Vert_{L^\infty((-r,r_;L^2(\T^3))}^2 + \Vert \nabla_\xi (f \wedge M)^{\frac{q}{2}} \Vert_{L^2(Q_r)}^2   \leq \frac{\kappa q^2}{R\!-\!r} \Vert f^{\frac{q}{2}} \Vert_{L^{\nicefrac{8}{3}}(Q_R)}^2 , 
   \end{aligned} 
\end{equation}
where $\kappa := C c_{t,T}^{-2} (1\!+\!\Vert \Delta \rho \Vert^2_{L^4((t,T)\times\T^3)} ) $ and the constant $C$ is independent of $q,M,c_{t,T},R,r$ and all other norms of $f$ and its averaged quantities (\textit{i.e.~}$\rho,\p$, \emph{etc.}). The final term on the right-hand side of \eqref{eq:first checkpoint moser bis pour f} is \emph{a priori} only finite for $2\leq q \leq \nicefrac{5}{2}$ by Lemma \ref{lem:f parabolic space}; nevertheless, the inequality remains true for all $q \geq 2$, with the possibility of this term being infinite.

Let $\psi \in C^\infty_c((-R,R))$ be a localising function (depending only on time) such that 
\begin{equation*}
    0 \leq \psi \leq 1 , \quad \psi|_{[-r,r]}=1, \quad \psi|_{[-1,1]\setminus[-R,R]}=0, \quad \Vert \partial_t \psi \Vert_{L^\infty((-1,1))} \leq \frac{C}{R-r},
    \end{equation*}
    for some constant $C$ independent of $r,R$. We wish to test \eqref{eq:main eqn} with $q (f\wedge M)^{q-1} \psi^2 \in L^2_{t,\loc}H^1_\xi$ by Lemma \ref{lem:f parabolic space}, since we \emph{formally expect} $\frac{\der}{\der t} \Vert (f \wedge M)^{\frac{q}{2}} \Vert^2_{L^2_\xi}=\langle \partial_t f , q (f \wedge M)^{q-1} \rangle$. However, since we only know $\partial_t f \in L^2_{t,\loc}H^{-1}_\xi$, there is no obvious non-linear chain rule for the pairing $\langle \partial_t f , q (f\wedge M)^{q-1}  \rangle$; the Lions--Magenes Lemma only applies when $q=2$. 
    
    A common strategy is to use \emph{Steklov averages} to bypass this: for all $h \in (0,1)$, define 
   \begin{equation}\label{eq:steklov def}
       f^h(s,\xi) := \frac{1}{h}\int_s^{s+h} f(s',\xi) \d s' \implies \partial_t f^h(s,\xi) = \frac{f(s+h,\xi) - f(s,\xi)}{h} \qquad \text{a.e.} 
   \end{equation}
  We then obtain the evolution on $f^h$ from \eqref{eq:main eqn} by applying $\frac{1}{h}\int_s^{s+h}$ to the equation, \textit{i.e.}, 
   \begin{equation}\label{eq:steklov averaged eq}
       \partial_t f^h + \dv( \big[(1-\rho) f \big]^h \e(\theta)) = \dv( \big[ (1-\rho) \nabla f + f \nabla \rho \big]^h ) + \partial^2_\theta f^h. 
   \end{equation}
   Now we test \eqref{eq:steklov averaged eq} with $q (f^h \wedge M)^{q-1} \psi^2  $. By \eqref{eq:steklov def}, $\partial_t f^h \in L^3_\loc([0,\infty);L^3(\T^3))$, whence the usual chain rule for Sobolev functions applies (\textit{cf.}~\cite[\S 5.10]{Evans}), and we get 
   \begin{equation*}
       \langle \partial_t f^h , q (f^h \wedge M)^{q-1} \psi^2  \rangle = \frac{\der}{\der t} \int_{\T^3} \Phi_{M,q}(f^h) \psi^2 \d \xi  - 2 \int_{\T^3} \psi \partial_t \psi \Phi_{M,q}(f^h) \d \xi , 
   \end{equation*}
   where $\Phi_{M,q}(s) = \int_0^s q(r \!\wedge\! M)^{q-1} \d r$. Observe that (for $q \geq 2$) this function satisfies $\frac{\der}{\der s}(s \wedge M)^q = q (s \wedge M)^{q-1} \mathds{1}_{\{0 \leq s \leq M\}} \leq q(s \wedge M)^{q-1} = \Phi'_{M,q}(s) \leq q s^{q-1} = \frac{\der}{\der s} s^q$, whence 
    \begin{equation}\label{eq:Phi for steklov}
      (s \wedge M)^q \leq \Phi_{M,q}(s) \leq s^q \qquad \forall s \geq 0. 
   \end{equation}
 Integrating in time over $[-R,s]$ for $s \in (-r,r)$, for which $\psi(s)=1$, $\psi(-R)=0$, we get 
   \begin{equation}\label{eq:steklov long}
       \begin{aligned}
           \int_{\T^3}\!&  \Phi_{M,q}(f^h(s)) \!+\! q(q\!-\!1) \!  \int_{-R}^s \!\int_{\T^3} \!\!\psi^2 (f^h\!\!\wedge\!M)^{q-2} \Big( \big[ (1\!-\!\rho) \nabla f \!+\! f \nabla \rho \big]^h \!\cdot\! \nabla f^h \!\!+\!    |\partial_\theta f^h|^2 \Big) \mathds{1}_{\{f^h < M\}} \\ 
           &\!= \!2\! \int_{-R}^s \! \int_{\T^3}\!\!  \psi \partial_t \psi \Phi_{M,q}(f^h) \!+\!  q(q\!-\!1) \! \int_{-R}^s \! \int_{\T^3} \!\! \psi^2 \big[(1-\rho) f \big]^h \e(\theta) \!\cdot\! (f^h \wedge M)^{q-2} \nabla f^h \mathds{1}_{\{f^h < M\}}. 
       \end{aligned}
   \end{equation}
   Standard results on Steklov averages (\textit{e.g.~}\cite[\S 1.3 Lemma 3.2]{DiBenedetto}) imply $\nabla_\xi f^h \to \nabla_\xi f$ and $[(1-\rho)\nabla f + f \nabla \rho]^h \to (1-\rho)\nabla f + f \nabla \rho$ strongly $L^2$ (by Lemma \ref{lem:f parabolic space}, Corollary \ref{cor:continuity rho and hess in L4 nabla rho L8}), hence 
   \begin{equation}\label{eq:steklov strong L1}
       \Big( [ (1\!-\!\rho) \nabla f \!+\! f \nabla \rho ]^h \!\cdot\! \nabla f^h \!+    |\partial_\theta f^h|^2 \Big) \to \Big( [ (1\!-\!\rho) \nabla f \!+\! f \nabla \rho ] \cdot\! \nabla f \!+    |\partial_\theta f|^2 \Big) \quad \text{strongly in } L^1. 
   \end{equation}
   Also, $f^h \to f$ strongly in $L^3$ and a.e.~(for a subsequence). Hence, $(f^h \!\wedge\! M)^{q-2} \mathds{1}_{\{f^h < M\}} \to (f\! \wedge\! M)^{q-2} \mathds{1}_{\{f < M\}} $ a.e.~and, since the term is bounded by $M^{q-2}$, the Dominated Convergence Theorem implies $\iint \psi^2 (f^h \wedge M)^{q-2} \mathds{1}_{\{f^h < M\}} \varphi \d \xi \d t \to \iint \psi^2 (f \wedge M)^{q-2} \mathds{1}_{\{f < M\}} \varphi \d \xi \d t$ for all $\varphi \in L^1$, \textit{i.e.}, 
   \begin{equation}\label{eq:steklov weak star Linfty}
      \psi^2 (f^h \wedge M)^{q-2} \mathds{1}_{\{f^h < M\}} \overset{*}{\rightharpoonup} \psi^2 (f \wedge M)^{q-2} \mathds{1}_{\{f < M\}} \quad \text{weakly-* in } L^\infty. 
   \end{equation}
   Combining \eqref{eq:steklov strong L1} and \eqref{eq:steklov weak star Linfty} as a weak-strong product implies 
   \begin{equation*}
       \begin{aligned}
          \lim_{h \to 0^+} \int_{-R}^s \!\int_{\T^3} \!\!\psi^2 (f^h\wedge & M)^{q-2} \Big( \big[ (1\!-\!\rho) \nabla f \!+\! f \nabla \rho \big]^h \!\cdot\! \nabla f^h \!\!+\!    |\partial_\theta f^h|^2 \Big) \mathds{1}_{\{f^h < M\}} \\ 
          &= \int_{-R}^s \!\int_{\T^3} \!\!\psi^2 (f\!\!\wedge\!M)^{q-2} \Big( \big[ (1\!-\!\rho) \nabla f \!+\! f \nabla \rho \big] \!\cdot\! \nabla f \!\!+\!    |\partial_\theta f|^2 \Big) \mathds{1}_{\{f < M\}} , 
       \end{aligned}
   \end{equation*}
   and an analogous argument yields the expected convergence of the final term in \eqref{eq:steklov long}. Meanwhile, directly from the definition of $\Phi_{M,q}(s) = \int_0^s q(r \wedge M)^{q-1} \d r$, it holds 
   \begin{equation*}
      \begin{aligned}
          \Phi_{M,q}(f^h) &= (f^h)^q \mathds{1}_{\{f^h  < M\}} + \big[ M^q + qM^{q-1}(f^h -M) \big] \mathds{1}_{\{f^h  \geq M\}} \\ 
          &= \Big[ (f^h)^q \!-\! \big[ M^q \!+\! qM^{q-1}(f^h\!-\!M) \big]   \Big]\mathds{1}_{\{f^h < M\}} \!+\! \big[ M^q \!+\! qM^{q-1}(f^h -M) \big] , 
      \end{aligned}
   \end{equation*}
and we have that the first term on the right-hand side is uniformly bounded in $h$ by $2(1+q)M^q $ and converges a.e.~to $[f^q \!-\! [ M^q \!+\! qM^{q-1}(f\!-\!M) ] ]\mathds{1}_{\{f < M\}}$ (for a subsequence) while the second term converges strongly to $M^q + qM^{q-1}(f-M)$ in $L^3(\T^3)$ by standard results on Steklov averages. Using also the Dominated Convergence Theorem, we deduce 
\begin{equation*}
    \lim_{h\to 0^+} 2 \int_{-R}^s  \int_{\T^3}   \psi \partial_t \psi \Phi_{M,q}(f^h)  = 2 \int_{-R}^s  \int_{\T^3}  \psi \partial_t \psi \Phi_{M,q}(f). 
\end{equation*}
Meanwhile, Fatou's Lemma implies $\int_{\T^3} \Phi_{M,q}(f(s)) \leq \liminf_{h\to 0^+} \int_{\T^3} \Phi_{M,q}(f^h(s))$. Thus, by letting $h \to 0^+$,  using also the lower bound \eqref{eq:the translated cylinders stuff}, we deduce from \eqref{eq:steklov long} that (a.e.~$s$) 
      \begin{equation*}
       \begin{aligned}
           \int_{\T^3}   \Phi_{M,q}(f(s)) + & q(q\!-\!1)c_{t,T} \int_{-R}^s \!\int_{\T^3} \psi^2 |(f\!\wedge\!M)^{\frac{q}{2}-1} \nabla_\xi f|^2 \mathds{1}_{\{f < M\}} \\ 
           \leq & \,  q(q\!-\!1)\bigg|\int_{-R}^s \!\int_{\T^3}\!\! \psi^2 (f\!\wedge\! M)^{q-1}  \nabla f \mathds{1}_{\{f<M\}} \!\cdot\! \! \nabla \rho \bigg| \\ 
           &+  q(q\!-\!1) \int_{-R}^s \! \int_{\T^3} \!\! \psi^2 \Big| (f \!\wedge\! M)^{q-1} \nabla f \mathds{1}_{\{f < M\}} \Big| + \frac{C}{R-r} \int_{-R}^s \! \int_{\T^3} \! \Phi_{M,q}(f). 
       \end{aligned}
   \end{equation*}
Using the lower bound \eqref{eq:Phi for steklov}, $0 \leq \psi \leq 1$, and integration by parts to get $\Delta \rho$, we deduce 
  \begin{equation*}
       \begin{aligned}
           &\int_{\T^3}  |(f(s)\wedge M)^{\frac{q}{2}}|^2 +  4(1\!-\!q^{-1})c_{t,T} \int_{-R}^s \!\int_{\T^3}   |\nabla_\xi [\psi (f\!\wedge\!M)^{\frac{q}{2}}] |^2  \\ 
           &\leq  (q\!-\!1)\!\!\! \iint_{Q(R)}\!\! \!\! \!\! | (f\!\wedge\! M)^{\frac{q}{2}} |^2 |\Delta \rho| \!+\!  2(q\!-\!1) \!\!\!\int_{-R}^s \! \int_{\T^3} \!\! (f \!\wedge\! M)^{\frac{q}{2}} | \nabla [\psi (f \!\wedge\! M)^{\frac{q}{2}}] | \!+\! \frac{C}{R-r}\!\! \iint_{Q(R)}\!\! \!\!\!\! \Phi_{M,q}(f) , 
       \end{aligned}
   \end{equation*}
for a.e.~$s \in (-r,r)$. Using \eqref{eq:the translated cylinders stuff}, Young's inequality, and the upper bound \eqref{eq:Phi for steklov}, it holds 
  \begin{equation*}
       \begin{aligned}
           &\Vert (f\wedge  M)^{\frac{q}{2}}\Vert_{L^\infty((-r,r);L^2(\T^3))}^2 +  3(1\!-\!q^{-1})c_{t,T} \Vert \nabla_\xi [(f\!\wedge\!M)^{\frac{q}{2}}] \Vert_{L^2(Q_r)}^2  \\ 
           &\leq \! (q\!-\!1) \Vert | (f\!\wedge\! M)^{\frac{q}{2}} |^2 \Vert_{L^{\frac{4}{3}}(Q_R)}  \Vert \Delta \rho\Vert_{L^4(Q_R)}  \!+\!  \frac{q(q\!-\!1)}{c_{t,T}} \Vert (f \!\wedge\! M)^{\frac{q}{2}} \Vert_{L^2(Q_R)}^2 \!+\!\frac{C}{R\!-\!r} \Vert f^{\frac{q}{2}} \Vert^2_{L^2(Q_R)}; 
       \end{aligned}
   \end{equation*}
note the final term is \emph{a priori} only finite for $2\leq q \leq \nicefrac{5}{2}$ by Lemma \ref{lem:f parabolic space}; nevertheless, the inequality remains true for all $q \geq 2$, with the possibility of this term being infinite. Using $1-q^{-1} \geq \nicefrac{1}{2}$ and H\"older's inequality to control the $L^2$ norm by $L^{\nicefrac{8}{3}}$ we deduce \eqref{eq:first checkpoint moser bis pour f}.

\smallskip 

\noindent 2. \textit{Moser iteration}. Lemma \ref{lem:dibenedetto classic} and \eqref{eq:first checkpoint moser bis pour f} give $ \Vert  (f \wedge M)^{\frac{q}{2}} \Vert_{L^{\nicefrac{10}{3}}(Q_r)}^2 \!\leq\! \frac{C\kappa q^2}{R\!-\!r}  \Vert f^{\frac{q}{2}} \Vert_{L^{\nicefrac{8}{3}}(Q_R)}^2 $, and thus, by letting $M\to \infty$ and the Monotone Convergence Theorem, it holds 
\begin{equation}\label{eq:first checkpoint moser bis pour f recursive}
   \begin{aligned}
      \Vert  f \Vert_{L^{\nicefrac{5q}{3}}(Q_r)} \leq \Big( \frac{C\kappa q^2}{R\!-\!r}  \Big)^{\nicefrac{1}{q}} \Vert f \Vert_{L^{\nicefrac{4q}{3}}(Q_R)}, 
   \end{aligned} 
\end{equation}
where we recall $C,\kappa$ are independent of $q,r,R$; the above holds for all choices of $0<r<R\leq 1$ and all $q \geq 2$. Next, define the powers $q_n := 2 (\frac{5}{4})^n$ for $n \geq 0$ and the radii 
\begin{equation*}
   r_0 = R, \qquad  r_n = R-(R-r) \gamma^{-1} \sum_{j=1}^n (\frac{5}{4})^{-j}
\end{equation*}
where $\gamma = \sum_{n\geq 1}(5/4)^{-n}$, so that $R=r_0 > r_1 > \dots > r_n \searrow r$. Note that $r_n - r_{n+1} = (R-r)\gamma^{-1} (5/4)^{-n-1}$. Iterating \eqref{eq:first checkpoint moser bis pour f recursive} along this sequence (setting $q=q_n$, $R=r_n$, $r=r_{n+1}$), it holds---by setting $L := 5C\kappa \gamma$ and $\alpha := 5/4$---the estimate 
\begin{equation}\label{eq:f moser before n to infty}
    \Vert  f\Vert_{L^{\frac{8 }{3}\alpha^{n+1}}(Q_{r_{n+1}})} \leq \Big( \frac{L  \alpha^{3n}}{R\!-\!r}  \Big)^{\frac{1}{2\alpha^n}} \Vert f \Vert_{L^{\frac{8}{3}\alpha^n}(Q_{r_n})} \leq  \bigg( \prod_{j=0}^{n}\Big( \frac{L  \alpha^{3j}}{R\!-\!r}  \Big)^{\frac{1}{2\alpha^j}} \bigg) \Vert f \Vert_{L^{\frac{8}{3}}(Q_{R})}. 
\end{equation}
We study the convergence of the product. We have 
\begin{equation*}
    \begin{aligned}
        \log\bigg( \prod_{j=0}^{n}\Big( \frac{L  \alpha^{3j}}{R\!-\!r}  \Big)^{\frac{1}{2\alpha^j}} \bigg) =& \,  \frac{\alpha}{2}\log\Big(\frac{L}{R\!-\!r}\Big) \sum_{j=1}^{n+1} \alpha^{-j} \!+\! \frac{3}{2} \log \alpha \sum_{j=0}^n j \alpha^{-j} \nearrow  \frac{\alpha \gamma}{2} \log\Big(\frac{L}{R\!-\!r}\Big) \!+\! \bar{C}
    \end{aligned}
\end{equation*}
where $\bar{C}$ depends only on $\alpha$. Returning to \eqref{eq:f moser before n to infty} and letting $n\to\infty$, it holds 
\begin{equation*}
     \Vert  f\Vert_{L^{\infty}(Q_{r})} \leq C\bigg(\frac{L}{R-r}\bigg)^{\frac{5\gamma}{8}} \Vert f \Vert_{L^{\nicefrac{8}{3}}(Q_R)} , 
\end{equation*}
where $C$ depends only on $\alpha$. Using $\Vert f \Vert_{L^{8/3}(Q_R)} \leq C_{t,T} \Vert f \Vert_{L^{10/3}((t,T)\times\T^3)}$ and Lemma \ref{lem:f parabolic space}, and recalling the definitions of $L,\kappa$ from \eqref{eq:first checkpoint moser bis pour f}, we get \eqref{eq:moser upper for f}. 
\end{proof}

\subsection{Control of $L^p$ norms of $\partial_t f, \nabla^2 f$}

Having shown that $f \in L^\infty_{t,\loc}L^\infty_\xi$ in Proposition \ref{prop:f bounded moser}, our next aim is to show the following. 

\begin{prop}[Higher integrability $\partial_t f, \nabla^2_\xi f$]\label{prop:higher integ dt f and hess f}
    Let $0<t<T<\infty$. Then, $\partial_t f , \nabla^2_\xi f \in L^p((t,T)\!\times\!\T^3)$ for all $1\leq p < \infty$. 
\end{prop}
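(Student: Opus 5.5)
The plan is to pass to the non-divergence form of \eqref{eq:main eqn} and then run a Calder\'on--Zygmund/Schauder bootstrap based on Lemma \ref{lem:Lp schauder}. Formally, the cancellation $\dv((1-\rho)\nabla f + f\nabla\rho) = (1-\rho)\Delta f + f\Delta\rho$ lets us rewrite \eqref{eq:main eqn} as
\begin{equation*}
    \partial_t f - A:\nabla^2_\xi f = F, \qquad A := \mathrm{diag}(1-\rho,1-\rho,1), \qquad F := f\Delta\rho + f\,\e(\theta)\cdot\nabla\rho - (1-\rho)\,\e(\theta)\cdot\nabla f .
\end{equation*}
On any $(t,T)$ the matrix $A$ is symmetric and uniformly elliptic, with constant $c = c_{t,T}^{-1}$ given by \eqref{eq:ctT def}. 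It is also H\"older continuous, with exponent $\nicefrac14$, by Corollary \ref{cor:continuity rho and hess in L4 nabla rho L8}. So $A$ meets the hypotheses of Lemma \ref{lem:Lp schauder}.

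\textbf{Step 1: $H^2$ regularity.} Before Lemma \ref{lem:Lp schauder} can be applied, we need $f \in L^2((t,T);H^2(\T^3))$ with $\partial_t f \in L^2$. I would obtain this with a Galerkin scheme mirroring Proposition \ref{prop:galerkin p}, now on $\T^3$ with eigenfunctions of $\Delta_\xi$; this is the result referred to as Lemma \ref{lem:H2 f}. The steps are as follows.
\begin{itemize}
    \item[(a)] Solve the linear problem for $f^n$ with coefficients $\rho,\nabla\rho$ frozen.
    \item[(b)] Test with $f^n$ to get the $H^1$ bound. Then test with $\Delta_\xi f^n$, where the non-divergence rewriting is legitimate because $f^n$ is smooth.
    \item[(c)] Control $\int |f^n|^2|\Delta\rho|^2$ using $\Delta\rho \in L^4$ from Corollary \ref{cor:continuity rho and hess in L4 nabla rho L8}, Sobolev embedding of $f^n$, and Gr\"onwall.
    \item[(d)] Control the drift term using $\nabla\rho \in L^8$ and the bound $\essinf(1-\rho)\ge c_{t,T}$.
    \item[(e)] Identify the weak limit with $f$ by a uniqueness argument for the linear equation. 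This needs Lemma \ref{lem:f parabolic space} and $\nabla\rho\in L^{2+\delta}_tL^\infty_x$ from Corollary \ref{cor:consequence of gehring}.
\end{itemize}
I expect this step to be the main obstacle. It is where $f \in L^\infty$ from Proposition \ref{prop:f bounded moser} really enters, because it makes $f\Delta\rho$ square integrable. It also requires care with the initial time of the Galerkin scheme, which should be a Lebesgue point.

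\textbf{Step 2: bootstrap loop.} Suppose $\nabla^2_\xi f \in L^q((t,T)\times\T^3)$ for some $q\ge2$. The loop has four stages.
\begin{itemize}
    \item[(i)] From $\nabla^2_\xi f$ to $\nabla_\xi f$. The Gagliardo--Nirenberg inequality at a.e.\ fixed time, $\Vert\nabla_\xi f(s)\Vert_{L^{2q}} \le C\Vert\nabla^2_\xi f(s)\Vert_{L^q}^{\nicefrac12}\Vert f(s)\Vert_{L^\infty}^{\nicefrac12} + C\Vert f(s)\Vert_{L^\infty}$, integrated in time and combined with Proposition \ref{prop:f bounded moser}, gives $\nabla_\xi f \in L^{2q}$.
    \item[(ii)] From $f$ to the moments. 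Since $\nabla\p = \int_\T \nabla f\otimes\e(\theta)\d\theta$, Jensen's inequality gives $\nabla\p \in L^{2q}$.
    \item[(iii)] From $\p$ to $\rho$. Then $\dv((1-\rho)\p) \in L^{2q}$, using also that $\nabla\rho$ is controlled by the previous step. The Calder\'on--Zygmund theorem applied to \eqref{eq:rho eq} yields $\partial_t\rho,\nabla^2\rho \in L^{2q}$. Gagliardo--Nirenberg together with $0\le\rho\le1$ then gives $\nabla\rho \in L^{4q}$.
    \item[(iv)] Back to $f$. Now $F \in L^{2q}$, and Lemma \ref{lem:Lp schauder} on a slightly smaller interval yields $\partial_t f,\nabla^2_\xi f \in L^{2q}$.
\end{itemize}

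\textbf{Step 3: conclusion.} Step 1 supplies the starting exponent $q=2$. Each pass of the loop in Step 2 doubles the exponent and shrinks the time interval slightly. Since $(t,T)$ is arbitrary, we can start from a larger interval $(t',T')\supset(t,T)$ and perform finitely many iterations so that all the shrunken intervals still contain $(t,T)$. This reaches any finite $p$ on $(t,T)$, and lower $p$ then follows from H\"older's inequality on the bounded domain.
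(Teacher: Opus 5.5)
Your proposal is correct and follows essentially the same route as the paper. The paper first proves the Galerkin $H^2$ estimate of Lemma \ref{lem:H2 f}, which justifies the non-divergence form and supplies the $L^2(H^2)$ hypothesis of Lemma \ref{lem:Lp schauder}, and then runs the same doubling induction you describe: Gagliardo--Nirenberg with $f\in L^\infty$, Calder\'on--Zygmund for \eqref{eq:rho eq}, and the $L^p$ Schauder estimate with the $\nicefrac14$-H\"older matrix $A$, shrinking the time interval each step. The only cosmetic difference is your Galerkin basis on all of $\T^3$ rather than the paper's $x$-eigenfunctions with $\theta$-dependent coefficients; this works equally well if you test separately with $\Delta_x f^n$ and $\partial_\theta^2 f^n$, as the paper does.
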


In order to prove this proposition, we first need to perform a $H^2$-type estimate on $f$---in the same spirit as Proposition \ref{prop:galerkin p} for $\p$. This is the content of the next result. 

\begin{lemma}[$H^2$ estimate for $f$]\label{lem:H2 f}
    Let $0<t<T<\infty$. Then, $\partial_t f, \nabla^2_\xi f \in L^2((t,T)\!\times\!\T^3)$ and $\nabla_\xi f \in L^\infty((t,T);L^2(\T^3)) \cap L^4((t,T)\!\times\!\T^3)$. 
\end{lemma}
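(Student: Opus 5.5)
We would mirror the Galerkin argument of Proposition \ref{prop:galerkin p}, now posed on $\T^3$ in the variable $\xi=(x,\theta)$. Fix $0<t<T$ with $t$ a Lebesgue point of $s\mapsto f(s,\cdot)$ such that $f(t)\in H^1(\T^3)$; this holds for a.e.\ $t$ by Lemma \ref{lem:f parabolic space}, and the statement for arbitrary $t$ follows by shrinking the interval. Let $\{\phi_j\}_j$ be the eigenfunctions of $-\Delta_\xi$ on $\T^3$. For each $n$, we seek $f^n=\sum_{j\le n}\alpha^n_j(s)\phi_j$ solving the \emph{linear} problem
\begin{equation*}
\partial_t f^n - \dv\big((1-\rho)\nabla f^n + f^n\nabla\rho\big) - \partial_\theta^2 f^n = -\dv\big((1-\rho)f\e(\theta)\big)
\end{equation*}
in duality with $\mathrm{span}\{\phi_j\}_{j\le n}$, with $f^n(t)$ the spectral projection of $f(t)$. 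Here the right-hand side uses the already known $f$, which lies in $L^2_{t}H^1_\xi$ and is bounded by Proposition \ref{prop:f bounded moser}. Existence follows from Carath\'eodory's theorem as before, and $f^n(t)\to f(t)$ in $H^1(\T^3)$.

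The first estimate tests with $f^n$. Using the lower bound $1-\rho\ge c_{t,T}$ from \eqref{eq:ctT def}, the bound $\nabla\rho\in L^{2+\delta}_tL^\infty_x$ from Corollary \ref{cor:consequence of gehring}, and Gr\"onwall, we get uniform $L^\infty L^2\cap L^2H^1$ bounds.

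For the $H^2$ estimate we test with $\Delta_\xi f^n=\Delta f^n+\partial_\theta^2 f^n\in\mathrm{span}\{\phi_j\}$. Since $f^n$ is smooth in $\xi$, we may pass to non-divergence form. The mixed terms cancel, so the equation reads
\begin{equation*}
\partial_t f^n-(1-\rho)\Delta f^n-\partial_\theta^2 f^n=f^n\Delta\rho-\dv\big((1-\rho)f\e\big).
\end{equation*}
Testing this gives
\begin{equation*}
\tfrac12\tfrac{\der}{\der t}\Vert\nabla_\xi f^n\Vert_{L^2}^2+\textstyle\int\big((1-\rho)\Delta f^n+\partial_\theta^2f^n\big)\Delta_\xi f^n=\dots
\end{equation*}
The cross term $\int(1-\rho)\Delta f^n\,\partial_\theta^2 f^n$ is the delicate piece. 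We integrate by parts in $\theta$ and then in $x$; since $\rho$ does not depend on $\theta$, this produces $\int(1-\rho)|\nabla\partial_\theta f^n|^2-\int\nabla\rho\cdot\partial_\theta f^n\,\nabla\partial_\theta f^n$. The second integral is absorbed by Young's inequality, at the cost of $c_{t,T}^{-1}\Vert\nabla\rho\Vert_{L^\infty}^2\Vert\nabla_\xi f^n\Vert^2_{L^2}$. This yields coercivity $c_{t,T}\Vert\nabla^2_\xi f^n\Vert_{L^2}^2$ up to Gr\"onwall-type terms. The source terms are handled as follows.
\begin{itemize}
\item $f^n\Delta\rho$: bounded by $\Vert f^n\Vert_{L^4}\Vert\Delta\rho\Vert_{L^4}$, using Corollary \ref{cor:continuity rho and hess in L4 nabla rho L8} and Sobolev, giving a Gr\"onwall term with integrable coefficient $\Vert\Delta\rho\Vert_{L^4}^2$.
\item $\dv((1-\rho)f\e)=-\nabla\rho\cdot\e f+(1-\rho)\e\cdot\nabla f$: lies in $L^2((t,T)\times\T^3)$, since $f\in L^\infty$ and $\nabla\rho\in L^8$ for the first part, and $\nabla f\in L^2$ by Lemma \ref{lem:f parabolic space} for the second.
\end{itemize}
Gr\"onwall then gives bounds on $\nabla_\xi f^n$ in $L^\infty L^2$ and on $\nabla^2_\xi f^n$ in $L^2L^2$, uniformly in $n$.

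Next we pass to a weak limit $f^\infty$ of $f^n$. The difference $w=f^\infty-f$ solves the homogeneous linear equation with zero data. Testing with $w$, which is admissible because $\partial_t w\in L^2H^{-1}$ and Lions--Magenes applies, and applying Gr\"onwall with coefficient $\Vert\nabla\rho\Vert^2_{L^\infty}$ gives $w\equiv0$. Hence $f\in L^2H^2\cap L^\infty H^1$. Then $\partial_t f\in L^2$ is read off from the non-divergence equation, all of whose terms are now in $L^2$. Finally, Lemma \ref{lem:dibenedetto classic} applied to $\nabla_\xi f$ in $d=3$ gives $L^{10/3}$, not $L^4$. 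For $L^4$ we would instead use Gagliardo--Nirenberg with $f\in L^\infty$: $\Vert\nabla f\Vert_{L^4}^2\lesssim\Vert f\Vert_{L^\infty}\Vert\nabla^2f\Vert_{L^2}$ plus lower-order terms, and integrate in time.

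The main obstacle we anticipate is making the $H^2$ test rigorous. Two points need care: the $\theta$–$x$ cross term, and confirming that the $\Delta\rho$ and $\nabla\rho$ coefficients have enough time-integrability for Gr\"onwall. These are exactly where Corollaries \ref{cor:consequence of gehring} and \ref{cor:continuity rho and hess in L4 nabla rho L8} enter.
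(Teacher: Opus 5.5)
Your proof is correct and has the same architecture as the paper's. Both use Galerkin approximation purely as a regularity tool. Both close an $H^1$ bound and an $H^2$ bound by Gr\"onwall, with the integrable coefficients $\Vert\nabla\rho\Vert_{L^\infty}^2$ and $\Vert\Delta\rho\Vert_{L^4}^2$. Both identify the weak limit with $f$ by uniqueness for the linear problem, and both get the $L^4$ bound from Gagliardo--Nirenberg with $f\in L^\infty$.

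The genuine difference is the Galerkin basis. The paper expands only in $x$-eigenfunctions on $\T^2$, so its coefficients $\alpha^n_j(s,\theta)$ solve a parabolic system in $(s,\theta)$. It then tests in two stages:
\begin{itemize}
\item first with $\Delta f^n$, where the mixed term $\int\partial_\theta^2 f^n\,\Delta f^n=\Vert\nabla\partial_\theta f^n\Vert_{L^2}^2$ has a sign and carries no variable coefficient;
\item afterwards with $\partial_\theta^2 f^n$, where the cross term is handled by Young using the bound already obtained on $\Delta f^n$.
\end{itemize}

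You expand instead in eigenfunctions of $\Delta_\xi$ on $\T^3$. This buys you three things. Existence reduces to a linear ODE with $L^1$-in-time coefficients. Each $f^n$ is smooth in $\xi$. Since $\Delta_\xi$ preserves the Galerkin space, the single test with $\Delta_\xi f^n$ and all the $\theta$-integrations by parts are justified without further comment. By contrast, the paper's coefficients are only known to lie in $L^2_tH^1_\theta$, so its test with $\partial_\theta^2 f^n$ tacitly needs extra $\theta$-regularity of the coefficient system.

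The price of your combined test is the variable-coefficient cross term $\int(1-\rho)\Delta f^n\,\partial_\theta^2 f^n$. You handle it correctly: since $\rho$ is independent of $\theta$, integration by parts turns it into $\int(1-\rho)|\nabla\partial_\theta f^n|^2$ plus an error. That error is absorbed using $\Vert\nabla\rho\Vert_{L^\infty}$, which is already available from Corollary \ref{cor:consequence of gehring}.

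You also state explicitly that $\partial_t f\in L^2$ follows from the non-divergence form once $\nabla^2_\xi f\in L^2$, which the paper leaves implicit.
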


\begin{proof}
    As in \S \ref{sec:H2 for p}, the underlying notion is that, provided the passage from divergence form \eqref{eq:main eqn} to non-divergence form is justified, then \eqref{eq:main eqn} reads 
    \begin{equation}\label{eq:non div form f}
        \partial_t f + \dv((1-\rho)f\e(\theta)) = (1-\rho)\Delta f + f \Delta \rho + \partial^2_\theta f, 
    \end{equation}
    whence testing with $\Delta f$ (and $\partial^2_\theta f$) gives the conclusion. To do this rigorously, we employ Galerkin's method. Without loss of generality $t$ is a Lebesgue point of $t \mapsto (f(t),\nabla_\xi f(t))$. 

    \smallskip 

    \noindent 1. \textit{Galerkin approximations}. We recall the notations of \S \ref{sec:H2 for p}, with $\{\varphi_j\}_j$ the orthonormal basis of $L^2(\T^2)$ of eigenfunctions of the Laplacian, and define the \emph{ansatz}
\begin{equation*}
    f^n(s,x,\theta) := \sum^n_{j=1} \alpha^n_j(s,\theta) \varphi_j(x), 
\end{equation*}
where the scalar coefficients $\{\alpha^n_j\}_{j=1}^{n}$ are determined from the linear system of PDEs 
\begin{equation}\label{eq:coeff f galerkin}
    \partial_t \alpha^n_j - \partial^2_\theta \alpha^n_j + \sum_{k=1}^n \alpha^n_k\bigg( \int_{\T^2} \Big( (1\!-\!\rho) \nabla \varphi_k  + \varphi_k \nabla \rho \Big)\!\cdot \!\nabla \varphi_j \d x \bigg) \!=\! \int_{\T^2}  (1-\rho) f \e(\theta) \!\cdot \!\nabla \varphi_j \d x, 
\end{equation}
with initial data $\alpha^n_j(t,\theta) = \langle f(t,\cdot,\theta) , \varphi_j \rangle, $ chosen such that $f^n$ solves the \emph{linear} equation
\begin{equation}\label{eq:galerkin f}
    \left\lbrace\begin{aligned}
        &\partial_t f^n + \dv((1-\rho)f \e(\theta)) = \dv((1-\rho)\nabla f^n + f^n \nabla \rho ) + \partial^2_\theta f^n, \\ 
        &f^n(t,\cdot,\theta) = \sum_{j=1}^n \langle f(t,\cdot,\theta) , \varphi_j \rangle \varphi_j 
    \end{aligned}\right.
\end{equation}
in the weak sense dual to $L^2((t,T);H^1(\T);\mathrm{span}\{\varphi_j\}_{j=1}^n)$. The standard theory for strongly parabolic linear systems (recall \eqref{eq:ctT def}) implies the existence $\alpha^n_j \in L^2((t,T);H^1(\T))$ with $\partial_t \alpha^n_j \in L^2((t,T);H^{-1}(\T))$ solving \eqref{eq:coeff f galerkin}, and thus of $f^n$ solving \eqref{eq:galerkin f} in the aforementioned weak sense. Additionally, we have (as $n\to\infty$)
\begin{equation*}
    \Vert f^n(t,\cdot,\theta) \Vert^2_{L^2(\T^2)} = \sum_{j=1}^n \langle f(t,\cdot,\theta) , \varphi_j \rangle^2 \longrightarrow \Vert f(t,\cdot,\theta) \Vert^2_{L^2(\T^2)} \quad \text{a.e.~} \theta. 
\end{equation*}
Integrating in $\theta$ gives $\Vert f^n(t) \Vert^2_{L^2(\T^3)} = \int_\T \sum_{j=1}^n \langle f(t,\cdot,\theta) , \varphi_j \rangle^2 \d \theta,$ so  the Monotone Convergence Theorem (the integrands form an increasing non-negative sequence) implies 
\begin{equation*}
   \lim_{n\to\infty}\Vert f^n(t) \Vert^2_{L^2(\T^3)} = \int_\T \lim_{n\to\infty} \sum_{j=1}^n \langle f(t,\cdot,\theta) , \varphi_j \rangle^2 \d \theta = \int_\T \Vert f(t,\cdot,\theta) \Vert^2_{L^2(\T^2)} \d \theta. 
\end{equation*}
Analogously, by similar manipulations as \eqref{eq:conv of initial data for galerkin}, this convergence holds for $\Vert \nabla_\xi f(t)\Vert_{L^2}$:
\begin{equation}\label{eq:conv initial data galerkin f}
    \lim_{n\to\infty}\Vert f^n(t) \Vert^2_{L^2(\T^3)} = \Vert f(t) \Vert^2_{L^2(\T^3)}, \quad \lim_{n\to\infty}\Vert \nabla_{\xi} f^n(t) \Vert^2_{L^2(\T^3)} = \Vert \nabla_{\xi} f(t,\cdot) \Vert^2_{L^2(\T^3)}. 
\end{equation}

\smallskip 

\noindent 2. \textit{$H^1$ estimate}. We begin with the standard $H^1$ estimate, by testing \eqref{eq:galerkin f} with $f^n$ itself; this is allowed since $f^n $ is written as a linear combination of the $\{\varphi_j\}$. Using the Lions--Magenes Lemma over the finite-dimensional Hilbert space $\mathrm{span}\{\varphi_j\}_{j=1}^n$, and \eqref{eq:ctT def}, 
\begin{equation*}
    \begin{aligned}
        \frac{1}{2}\frac{\der}{\der t}\Vert f^n \Vert_{L^2(\T^3)}^2 + \frac{c_{t,T}}{2} \Vert & \nabla_{\xi} f^n \Vert^2_{L^2(\T^3)} \leq \frac{1}{2c_{t,T}}\Vert (1\!-\!\rho) f \e(\theta) \Vert^2_{L^2(\T^3)} - \int_{\T^3} f^n \nabla f^n \!\cdot\! \nabla \rho \d \xi \\ 
        &\leq \frac{1}{2c_{t,T}}\Vert f \Vert^2_{L^2(\T^3)} + \frac{1}{c_{t,T}} \int_{\T^3} |f^n|^2 | \nabla \rho |^2 \d \xi + \frac{c_{t,T}}{4} \Vert \nabla_\xi f^n \Vert^2_{L^2(\T^3)} , 
    \end{aligned}
\end{equation*}
whence $ \frac{1}{2}\frac{\der}{\der t}\Vert f^n \Vert_{L^2(\T^3)}^2 + \frac{c_{t,T}}{4} \Vert\nabla_{\xi} f^n \Vert^2_{L^2(\T^3)} \leq \frac{1}{2c_{t,T}}\Vert f \Vert^2_{L^2(\T^3)} + \frac{1}{c_{t,T}}\Vert f^n\Vert_{L^2(\T^3)}^2 \Vert \nabla \rho \Vert_{L^\infty(\T^3)}^2$, and integrating in time yields (for all $t<s\leq T$) 
\begin{equation*}
    \begin{aligned}
     \Vert f^n (s) \Vert^2_{L^2(\T^3)} + {c_{t,T}}\Vert \nabla_\xi f^n \Vert^2_{L^2((t,s)\times\T^3)} \leq & \, \Vert f^n(t) \Vert^2_{L^2(\T^3)} \!+\! c_{t,T}^{-1} \Vert f \Vert^2_{L^2((t,T)\times\T^3)}   \\ 
     &+ 2c_{t,T}^{-1} \int_t^s \! \Vert \nabla \rho (s')\Vert_{L^\infty(\T^3)}^2  \Vert f^n(s') \Vert_{L^{2}(\T^3)}^2 \d s' \\ 
     \leq & \, C_{t,T} \bigg( 1 + \int_t^s \! \Vert \nabla \rho (s')\Vert_{L^\infty(\T^3)}^2  \Vert f^n(s') \Vert_{L^{2}(\T^3)}^2 \d s' \bigg) , 
    \end{aligned}
\end{equation*}
where we used the convergence of initial data \eqref{eq:conv initial data galerkin f} to establish the boundedness-in-$n$ of the right-hand side, and $C_{t,T}$ depends only on $t,T,c_{t,T}$. By Gr\"onwall's Lemma, we deduce 
\begin{equation*}
    \begin{aligned}
        \Vert f^n \Vert^2_{L^\infty((t,T);L^2(\T^3))} \leq C_{t,T} \exp \bigg( C_{t,T} \int_t^T \Vert \nabla \rho(s) \Vert^2_{L^\infty(\T^3)} \d s \bigg) , 
    \end{aligned}
\end{equation*}
where we notice that the final term is bounded by Corollary \ref{cor:consequence of gehring}. Returning to the previous estimate, 
we deduce that there exists $C_{t,T}$ depending only on $t,T,c_{t,T}$ (not $n$) such that 
\begin{equation}\label{eq:H1 bound galerkin f}
  \Vert f^n \Vert_{L^\infty((t,T);L^2(\T^3))}^2 + \Vert \nabla_\xi f^n \Vert^2_{L^2((t,T)\times\T^3)} \leq C_{t,T}. 
\end{equation}

\smallskip 

\noindent 3. \textit{$H^2$ estimate}. We pass from the divergence form to non-divergence form of \eqref{eq:galerkin f}; this is allowed since each $f^n$ is smooth in $x$ (being a linear combination of eigenfunctions of the Laplacian). Testing with $\Delta f^n$, which is allowed since $\Delta f^n(t,\cdot,\theta) \in \mathrm{span}\{\varphi_j\}_{j=1}^n$ a.e.~$t,\theta$, 
\begin{equation*}
    \begin{aligned}
        \frac{1}{2}\frac{\der}{\der t}\Vert \nabla f^n \Vert^2_{L^2(\T^3)} \!+\! \frac{c_{t,T}}{2} \Vert \Delta f^n \Vert^2_{L^2(\T^3)} \!+ & \Vert \nabla \partial_\theta f^n \Vert^2_{L^2(\T^3)} \\ 
        &\leq \frac{1}{2c_{t,T}} \Vert \dv((1\!-\!\rho) f \e(\theta) ) \Vert^2_{L^2(\T^3)} - \int_{\T^3}f^n \Delta f^n \Delta \rho \d \xi , 
    \end{aligned}
\end{equation*}
so that, by Young's inequality and integrating in time, for all $t<s\leq T$, 
\begin{equation*}
    \begin{aligned}
         \Vert \nabla & f^n(s) \Vert^2_{L^2(\T^3)}  \!+\! \frac{c_{t,T}}{2} \Vert \Delta f^n \Vert^2_{L^2((t,s)\times\T^3)} \!+ \! \Vert \nabla \partial_\theta f^n \Vert^2_{L^2((t,s)\times\T^3)} \\ 
        &\leq \Vert \nabla f^n(t) \Vert^2_{L^2(\T^3)} + c_{t,T}^{-1} \Vert \dv((1\!-\!\rho) f \e(\theta)) \Vert^2_{L^2((t,T)\times\T^3)} + 2 c_{t,T}^{-1}\int_t^s \Vert f^n \Delta \rho \Vert_{L^2(\T^3)}^2 \d s' . 
    \end{aligned}
\end{equation*}
From the convergence of the initial data  \eqref{eq:conv initial data galerkin f}, Lemma \ref{lem:f parabolic space}, and Proposition \ref{prop:f bounded moser}, we deduce that there exists $C_{t,T}$ depending only on $t,T$ such that 
\begin{equation}\label{eq:long part of H2 for f}
    \begin{aligned}
         \Vert \nabla  f^n(s) \Vert^2_{L^2(\T^3)}  \!+\! \frac{c_{t,T}}{2} \Vert \Delta f^n & \Vert^2_{L^2((t,s)\times\T^3)} + \! \Vert \nabla \partial_\theta f^n \Vert^2_{L^2((t,T)\times\T^3)} \\ 
        &\leq C_{t,T} \bigg( 1 \!+\! \int_t^s\!\! \int_\T \!\! \Vert (f^n \Delta \rho) (s',\theta') \Vert_{L^2(\T^2)}^2 \d \theta' \d s' \bigg) , 
    \end{aligned}
\end{equation}
note that we have written the $\theta$ integral explicitly in the final term. Using H\"older's inequality, this final term may be estimated as (since $\rho$ is independent of $\theta$)
\begin{equation*}
    \begin{aligned}
         \int_t^s\!\! \int_\T \!\! \Vert (f^n \Delta \rho) (s',\theta') \Vert_{L^2(\T^2)}^2 \d \theta' \d s'  &\leq \int_t^s\!\! \int_\T \!\! \Vert f^n (s',\theta') \Vert^2_{L^4(\T^2)} \Vert \Delta \rho (s',\theta') \Vert_{L^4(\T^2)}^2 \d \theta' \d s' \\ 
         &\leq C\int_t^s\!\! \Vert \Delta \rho (s') \Vert_{L^4(\T^2)}^2 \bigg( \int_\T \!\! \Vert f^n (s',\theta') \Vert^2_{H^1(\T^2)}  \d \theta' \bigg) \d s'
    \end{aligned}
\end{equation*}
where we used the Sobolev inequality in $\T^2$ (\textit{i.e.~}with respect to only $x$), and the constant $C$ depends only on $\T^2$ and the exponent $4$. Returning to \eqref{eq:long part of H2 for f}, we get 
\begin{equation*}
    \begin{aligned}
         \Vert &\nabla  f^n(s) \Vert^2_{L^2(\T^3)}  +\! \frac{c_{t,T}}{2}  \Vert \Delta f^n \Vert^2_{L^2((t,s)\times\T^3)} + \! \Vert \nabla \partial_\theta f^n \Vert^2_{L^2((t,s)\times\T^3)} \\ 
        &\leq C_{t,T} \bigg( 1 \!+\! \int_t^s\!\! \Vert \Delta \rho (s') \Vert_{L^4(\T^2)}^2 \Big( \Vert f^n (s') \Vert^2_{L^2(\T^3)} + \Vert \nabla f^n (s') \Vert^2_{L^2(\T^3)} \Big) \d s' \bigg)\\ 
        &\leq C_{t,T} \bigg( 1 \!+\! \Vert f^n \Vert^2_{L^\infty((t,T);L^2(\T^3))} \Vert \Delta \rho \Vert_{L^4((t,T)\times\T^3)}^2 \!+\! \int_t^s\!\! \Vert \Delta \rho (s') \Vert_{L^4(\T^2)}^2 \Vert \nabla f^n (s') \Vert^2_{L^2(\T^3)}  \d s' \bigg), 
    \end{aligned}
\end{equation*}
\textit{i.e.}, $\Vert \nabla  f^n(s) \Vert^2_{L^2} \leq C_{t,T}(1\!+\!\int_t^s \Vert \Delta \rho (s') \Vert_{L^4}^2 \Vert \nabla f^n (s') \Vert^2_{L^2)}  \d s' )$, and Gr\"onwall's Lemma implies 
\begin{equation*}
    \Vert \nabla f^n \Vert^2_{L^\infty((t,T);L^2(\T^3))} \leq C_{t,T} \exp\bigg( \int_t^T \Vert \Delta \rho(s) \Vert^2_{L^4(\T^2)} \d s \bigg), 
\end{equation*}
and the final integral is bounded on account of $\Delta \rho \in L^4((t,T)\!\times\!\T^3)$ by Corollary \ref{cor:continuity rho and hess in L4 nabla rho L8}. Returning to the previous estimate, we deduce that, for some $C_{t,T}$ independent of $n$, 
\begin{equation}\label{eq:H2 glaerkin f n}
    \begin{aligned}
         \Vert \nabla  f^n \Vert^2_{L^\infty((t,T);L^2(\T^3))}  +\! \Vert \Delta f^n \Vert^2_{L^2((t,T)\times\T^3)} + \! \Vert \nabla \partial_\theta f^n \Vert^2_{L^2((t,T)\times\T^3)} \leq C_{t,T}. 
    \end{aligned}
\end{equation}

\smallskip 

\noindent 4. \textit{Convergence and conclusion}. An analogous strategy to Step 3, but testing this time with $\partial^2_\theta f^n$ (again this belongs to the span of the eigenfunctions for a.e.~$t,\theta$) shows the uniform-in-$n$ boundedness of $\Vert \partial^2_\theta f^n \Vert_{L^2}$. Putting together with the bounds \eqref{eq:H1 bound galerkin f} and \eqref{eq:H2 glaerkin f n}, we get that $f^n \rightharpoonup {f}^\infty$ weakly in $L^2((t,T);H^2(\T^3))$, and an argument analogous to Step 3 of the proof of Proposition \ref{prop:galerkin p} shows that ${f}^\infty=f$. Hence estimate \eqref{eq:H2 glaerkin f n} also holds for $f$.

Finally, using Proposition \ref{prop:f bounded moser} and Gagliardo--Nirenberg inequality (for $C$ depending only on $\T^3$), $\Vert \nabla_{\bxi} f(s) \Vert_{L^4(\T^3)} \leq C ( \Vert \nabla_{\bxi}^2 f(s) \Vert_{L^2(\T^3)}^{\frac{1}{2}} \Vert f(s) \Vert_{L^\infty(\T^3)}^{\frac{1}{2}} + \Vert f(s) \Vert_{L^1(\T^3)})$, and thus $\Vert \nabla_{\bxi} f \Vert_{L^4((t,T)\times\T^3)}^4 \leq C ( \Vert \nabla_{\bxi}^2 f \Vert_{L^2((t,T)\times\T^3)}^{2} \Vert f \Vert_{L^\infty((t,T)\times\T^3)}^{2} + (T-t) )$. 
\end{proof}

We prove Proposition \ref{prop:higher integ dt f and hess f}, by an inductive argument similar to \cite[Proposition 7.1]{reg2}. 

\begin{proof}[Proof of Proposition \ref{prop:higher integ dt f and hess f}]
  The proof is divided into two steps. 
  
  \smallskip 
  
  \noindent 1. \textit{Initial upgrade of integrability}. By Lemma \ref{lem:H2 f}, the $\xi$-derivatives of $f$ are endowed with sufficient integrability to rigorously pass to the non-divergence form of the equation \eqref{eq:non div form f}, which holds weakly in duality with $L^2((t,T)\times\T^3)$. By defining the matrix 
  \begin{equation}\label{eq:A def for non div f}
      A := \left( \begin{matrix}
          (1-\rho)\mathrm{Id}_2 & 0 \\ 
          0 & 1
      \end{matrix} \right) , 
  \end{equation}
  which is symmetric, $\nicefrac{1}{4}$-H\"older continuous by Corollary \ref{cor:continuity rho and hess in L4 nabla rho L8}, and strongly elliptic by the lower bound on $1-\rho$ provided by \eqref{eq:ctT def}, we may recast \eqref{eq:non div form f} in the form 
  \begin{equation}\label{eq:recast dt f fess f in all Lp}
      \partial_t f - A:\nabla^2_\xi f = f\Delta \rho + f\e(\theta)\cdot\nabla \rho - (1-\rho)\nabla f \cdot \e(\theta), 
  \end{equation}
and thus $\partial_t f - A:\nabla^2_\xi f \in L^4((t,T)\!\times\!\T^3)$  by Proposition \ref{prop:f bounded moser}, Corollary \ref{cor:continuity rho and hess in L4 nabla rho L8}, and Lemma \ref{lem:H2 f}. In turn, the Schauder result of Lemma \ref{lem:Lp schauder} applies, and we deduce $\partial_t f , \nabla^2_\xi f \in L^4((t,T)\!\times\!\T^3)$---technically, the estimate holds for any time interval $(t',T')\subset\subset (t,T)$, but by resizing the original $(t,T)$, we obtain the result on any such interval. By arguing as per Step 4 of the proof of Lemma \ref{lem:H2 f}, the Gagliardo--Nirenberg inequality implies $\Vert \nabla_{\bxi} f(s) \Vert_{L^8(\T^3)}^8 \leq C ( \Vert \nabla_{\bxi}^2 f(s) \Vert_{L^4(\T^3)}^{4} \Vert f(s) \Vert_{L^\infty(\T^3)}^{4} + {\Vert f(s) \Vert^4_{L^1(\T^3)}} )$, hence $\nabla_\xi f \in L^8((t,T)\!\times\!\T^3)$. 

  \smallskip 

  \noindent 2. \textit{Inductive argument}. We will show that, for all $m \geq 1$ and all $(t,T)\subset \subset (0,\infty)$, it holds $\partial_t f , \nabla^2_\xi f \in L^{2^m}((t,T)\!\times\T^3)$ and $\nabla_\xi f \in L^{2^{m+1}}((t,T)\!\times\!\T^3)$. We have already shown the result for $m=1,2$. Suppose the result is true for $m\in\mathbb{N}$: we study the case $m+1$. Then, by \eqref{eq:rho eq}, it holds $\partial_t \rho - \Delta \rho = \rho \dv \p - \nabla \rho \cdot \p \in L^{2^{m+1}}((t,T)\!\times\!\T^2)$ by the induction hypothesis. Thus, the Calder\'on--Zygmund Theorem implies that (up to resizing the original time interval), it holds $\partial_t \rho , \nabla^2 \rho \in L^{2^{m+1}}((t,T)\!\times\!\T^2)$. Whence, returning to \eqref{eq:recast dt f fess f in all Lp} and using also the boundedness of $f$ from Proposition \ref{prop:f bounded moser}, it holds $\partial_t f - A:\nabla^2_\xi f \in L^{2^{m+1}}((t,T)\!\times\T^3)$. The Schauder Lemma \ref{lem:Lp schauder} implies (up to resizing the original time interval) that $\partial_t f , \nabla^2_\xi f \in L^{2^{m+1}}((t,T)\!\times\T^3)$, and arguing by the Gagliardo--Nirenberg inequality as per the end of Step 1 (\textit{cf.}~Step 4 of Lemma \ref{lem:H2 f}) shows $ \Vert \nabla_{\bxi} f \Vert^{2^{m+2}}_{L^{2^{m+2}}((t,T)\times\T^3)} \leq C ( \Vert \nabla_{\bxi}^2 f \Vert_{L^{2^{m+1}}((t,T)\times\T^3)}^{2^{m+1}} \Vert f \Vert_{L^\infty((t,T)\times\T^3)}^{2^{m+1}} + (T-t)). $ By induction, the result holds for all $m$, and thus the conclusion of the lemma follows by interpolation in Lebesgue spaces. 
\end{proof}

\subsection{Higher regularity bootstrap}\label{sec:bootstrap}

We argue differently to the strategy of \S 7 of \cite{reg2}---here, we shall rely on Schauder estimates in H\"older continuous spaces instead of repeated uses of the Schauder type result in $L^p$ (Lemma \ref{lem:Lp schauder}). Ultimately, the goal will be to argue using H\"older continuity of the right-hand side of \eqref{eq:recast dt f fess f in all Lp} and a bootstrap argument. For this, we need to show that $\Delta \rho$ and $\nabla f$ are continuous, and we will do this by obtaining sufficient integrability of their derivatives and Morrey's embedding.

In what follows, we use the shorthand notation $\dot{f}=\partial_t f, \dot{\rho}=\partial_t \rho, \dot{\p}=\partial_t \p$. Furthermore, we extend $f(t)$ smoothly in $(-\infty,0]$ so that it is defined for a.e.~$t \in \mathbb{R}$. For $h>0$, we define the \emph{difference quotient} $D_h f(t,\xi) := \frac{f(t+h,\xi)-f(t,\xi)}{h}$ for a.e.~$t,\xi$, and similarly $D_h \rho(t,x) := \frac{\rho(t+h,x)-\rho(t,x)}{h}$ and $D_h \p(t,x) := \frac{\p(t+h,x)-\p(t,x)}{h}$; observe that $D_h \rho = \int_\T D_h f \d \theta$ and $D_h \p = \int_\T D_h f \e(\theta) \d \theta$ by linearity. We write $\tau_h \rho(t,x) = \rho(t+h,x)$, and observe that for all $h \in (0,1)$ and $0<t<T<\infty$, it holds $1 \geq 1-\tau_h \rho \geq c_{t,T+1}$ by \eqref{eq:ctT def}.

\begin{lemma}[$\dot{\rho}$ equation]\label{lem:rho dot H1}
    For all $0<t<T<\infty$, it holds $\dot{\rho}\in L^\infty((t,T);L^2(\T^2))$, $\nabla \dot{\rho} \in L^2((t,T)\!\times\!\T^2)$, $\partial_t \dot{\rho} \in L^2((t,T);H^{-1}(\T^2))$, and 
    \begin{equation}\label{eq:dot rho eq}
        \partial_t \dot{\rho} + \dv((1-\rho)\dot{\p} - \p \dot\rho ) = \Delta \dot{\rho} \qquad \text{weakly in } L^2((t,T);H^1(\T^2)). 
    \end{equation}
\end{lemma}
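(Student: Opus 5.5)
Since $\rho$ satisfies the linear-in-$\rho$ equation \eqref{eq:rho eq}, I will work at the level of time difference quotients and only pass to $h\to0^+$ at the very end. Subtracting \eqref{eq:rho eq} at time $s$ from the same equation at time $s+h$ and dividing by $h$ yields, weakly in $L^2((t,T);H^1(\T^2))$,
\begin{equation*}
  \partial_t D_h\rho + \dv\big((1-\tau_h\rho)D_h\p - \p\, D_h\rho\big) = \Delta D_h\rho,
\end{equation*}
where I used the discrete product rule $D_h((1-\rho)\p) = (1-\tau_h\rho)D_h\p - \p D_h\rho$. By Corollary~\ref{cor:continuity rho and hess in L4 nabla rho L8}, $\partial_t\rho\in L^4$, so $D_h\rho\to\dot\rho$ strongly in $L^2((t,T)\times\T^2)$. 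From Corollary~\ref{cor:nabla p in L4}, $\p\in L^2H^2$ with $\partial_t\p\in L^2H^{-1}$, but I need more: a uniform bound on $D_h\p$ in $L^2((t,T)\times\T^2)$, that is $\dot\p\in L^2$. Here I plan to use $\dot\p=\int_\T\dot f\,\e(\theta)\d\theta$ together with $\dot f\in L^p$ for every $p$ (Proposition~\ref{prop:higher integ dt f and hess f}). Jensen's inequality then gives $\Vert D_h\p\Vert_{L^2((t,T)\times\T^2)}\le C\Vert\dot f\Vert_{L^2((t,T+1)\times\T^3)}$, uniformly in $h\in(0,1)$.

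Next comes the energy estimate. Since $D_h\rho\in L^2H^1$ and $\partial_tD_h\rho\in L^2H^{-1}$ (read off from the equation), the Lions--Magenes Lemma lets me test with $D_h\rho\,\psi$, where $\psi$ is a time cutoff as in the Moser sections. This gives
\begin{equation*}
\tfrac12\tfrac{\der}{\der t}\Vert D_h\rho\Vert_{L^2}^2+\Vert\nabla D_h\rho\Vert_{L^2}^2\le \big(\Vert D_h\p\Vert_{L^2}+\Vert D_h\rho\Vert_{L^2}\big)\Vert\nabla D_h\rho\Vert_{L^2},
\end{equation*}
using $|1-\tau_h\rho|\le1$ and $|\p|\le1$. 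After Young's inequality and integration against the cutoff (whose derivative contributes $\Vert D_h\rho\Vert^2_{L^2}/(R-r)$, which is bounded), I obtain uniform-in-$h$ bounds on $D_h\rho$ in $L^\infty((t,T);L^2)\cap L^2((t,T);H^1)$, after shrinking the interval slightly. This shrinking is harmless because $t$ and $T$ are arbitrary.

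Finally I pass to the limit. Weak-$*$ compactness gives $D_h\rho\rightharpoonup\dot\rho$ in $L^2H^1$ and weak-$*$ in $L^\infty L^2$, with the limit identified through the strong $L^2$ convergence. Also $D_h\p\rightharpoonup\dot\p$ weakly in $L^2$, and $\tau_h\rho\to\rho$ uniformly since $\rho\in C^{1/4}$. Each term of the equation therefore converges in the distributional sense dual to $L^2H^1$, which yields \eqref{eq:dot rho eq}. The bound $\partial_t\dot\rho\in L^2H^{-1}$ then follows from the equation, because $(1-\rho)\dot\p-\p\dot\rho\in L^2$.

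The main obstacle is the uniform $L^2$ control of $D_h\p$, equivalently the identification $\dot\p=\int_\T\dot f\,\e\d\theta\in L^2$. If Proposition~\ref{prop:higher integ dt f and hess f} is judged insufficient, the fallback is to obtain it from the $\p$-equation \eqref{eq:p eq} written in non-divergence form. Using $\p\in L^2H^2$, $\nabla\rho\in L^8$, $\Delta\rho\in L^4$ and $\nabla\P\in L^2$ (Lemma~\ref{lem:H1 for P}), every term on the right-hand side of that equation lies in $L^2$, and this already gives $\dot\p\in L^2((t,T)\times\T^2)$.
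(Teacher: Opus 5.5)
Your proposal is correct and follows essentially the same route as the paper. Both arguments take time difference quotients $D_h$ of \eqref{eq:rho eq} with the discrete product rule, run the $L^2$ energy estimate via Lions--Magenes using $|1-\tau_h\rho|\le 1$, $|\p|\le 1$ and the uniform bound on $D_h\p$ coming from $\dot f\in L^p$ (Proposition~\ref{prop:higher integ dt f and hess f}), and then let $h\to0^+$. The only difference is that you use a time cutoff where the paper keeps the initial-time term $\Vert D_h\rho(t)\Vert_{L^2(\T^2)}$. Your version avoids having to control the difference quotient uniformly at a single time, at the cost of a harmless shrinking of the interval.
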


\begin{proof}
    Firstly, observe that $\dot{\rho},\dot{\p} \in L^p((t,T)\!\times\!\T^2)$ for all $p \in [1,\infty)$ by Proposition \ref{prop:higher integ dt f and hess f}, so that \eqref{eq:dot rho eq} has meaning in the sense of distributions---we need to show that it follows from \eqref{eq:rho eq}, and we do so by means of a difference quotient argument. From \eqref{eq:rho eq} and linearity:
    \begin{equation*}
        \partial_t D_h \rho + \dv( (1-\tau_h\rho)D_h \p - \p D_h \rho ) = \Delta D_h \rho \qquad \text{weakly in }L^2(0,\infty;H^1(\T^2));
    \end{equation*}
    recall $\tau_h \rho(t) = \rho(t\!+\!h)$. We test this with $D_h \rho$, whence the Lions--Magenes Lemma gives 
    \begin{equation*}
       \begin{aligned} \frac{1}{2}\frac{\der}{\der t} \Vert D_h \rho \Vert^2_{L^2(\T^2)} + \Vert \nabla D_h \rho \Vert^2_{L^2(\T^2)} = \int_{\T^2} (1-\tau_h \rho) \nabla D_h \rho \!\cdot \!D_h \p \d x - \int_{\T^2} D_h \rho \p \!\cdot\! \nabla D_h \rho \d x .
       \end{aligned}
    \end{equation*}
    Young's inequality, the boundedness of $\p$, and integrating in time imply  $\Vert D_h \rho(s) \Vert^2_{L^2(\T^2)} + \Vert \nabla D_h \rho \Vert^2_{L^2((t,s)\times\T^2)} \leq  \Vert D_h \rho(t) \Vert^2_{L^2(\T^2)}  + 2\Vert D_h \p \Vert^2_{L^2((t,T)\times\T^2)} + 2\Vert D_h \rho \Vert^2_{L^2((t,T)\times\T^2)}$ for a.e.~$t<s\leq T$. Standard properties of difference quotients (\textit{cf.~e.g.~}\cite[\S 5.8.2]{Evans}) yield 
    \begin{equation*}
       \begin{aligned} \Vert D_h \rho \Vert^2_{L^\infty((t,T);L^2(\T^2))} \!+\! \Vert \nabla D_h \rho & \Vert^2_{L^2((t,T)\times\T^2)} \\ 
       &\leq C \Big( \Vert \dot\rho(t) \Vert^2_{L^2(\T^2)}  \!+ \!\Vert \dot \p \Vert^2_{L^2((t,T)\times\T^2)}\! +\! \Vert \dot \rho \Vert^2_{L^2((t,T)\times\T^2)} \Big) , 
       \end{aligned}
    \end{equation*}
    where the constant $C$ is independent of $h$. Standard results on difference quotients imply that (by letting $h \to 0^+$) it holds $\dot{\rho} \in L^\infty((t,T)\!\times\!\T^2), \nabla \dot{\rho} \in L^2((t,T)\!\times\!\T^2)$ as claimed. In turn, having established this regularity, we may rigorously pass a full time derivative $\partial_t $ onto \eqref{eq:rho eq}, and we deduce \eqref{eq:dot rho eq}. The boundedness $\partial_t \dot{\rho} \in L^2((t,T);H^{-1}(\T^2))$ follows from this equation and the boundedness of the other terms. 
\end{proof}

We need to upgrade the regularity of $\dot{\rho}$ further, as doing so will enable us to deduce continuity of $\Delta \rho$ via \eqref{eq:rho eq}, and thus show that the right-hand side of \eqref{eq:recast dt f fess f in all Lp} is H\"older continuous, whence we can initiate the bootstrap argument in H\"older spaces. In order to do this, we argue using the equation for $\dot{f}$, which we derive in the next lemma. 

\begin{lemma}[$\dot{f}$ equation]\label{lem:H1 dot f}
    For all $0<t<T<\infty$, it holds $\dot{f} \in L^\infty((t,T);L^2(\T^3)), \nabla_\xi \dot{f} \in L^2((t,T)\!\times\!\T^3)$, $\partial_t \dot{f} \in L^2((t,T);H^{-1}(\T^3))$, and moreover, weakly in $L^2((t,T);H^1(\T^3))$: 
    \begin{equation}\label{eq:dot f eq}
        \partial_t \dot{f} + \dv(((1-\rho)\dot{f} - f \dot{\rho}  )\e(\theta)) = \dv((1-\rho)\nabla\dot{f} + \dot{f} \nabla \rho - \dot{\rho} \nabla f + f\nabla \dot{\rho}) + \partial^2_\theta \dot{f} . 
    \end{equation}
\end{lemma}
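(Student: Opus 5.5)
We will follow the proof of Lemma \ref{lem:rho dot H1}, now at the level of $f$: derive a difference-quotient equation, obtain $h$-uniform energy bounds, and let $h\to0^+$. Applying $D_h$ to the weak formulation \eqref{eq:weak sol}, linearity gives, weakly in $L^2((t,T);H^1(\T^3))$,
\begin{equation*}
\partial_t D_h f + \dv\big(((1-\tau_h\rho)D_h f - f D_h\rho)\e(\theta)\big) = \dv\big((1-\tau_h\rho)\nabla D_h f + D_h f\,\nabla\tau_h\rho - D_h\rho\,\nabla f + f\,\nabla D_h\rho\big) + \partial_\theta^2 D_h f.
\end{equation*}
This comes from $D_h(ab) = (\tau_h a)D_h b + b D_h a$, with the $\tau_h$ placed so that the leading coefficient is $1-\tau_h\rho \ge c_{t,T+1}$. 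By Lemma \ref{lem:f parabolic space}, $D_h f\in L^2((t,T);H^1(\T^3))$ and $\partial_t D_h f\in L^2((t,T);H^{-1}(\T^3))$. Hence $D_h f$ is an admissible test function and the Lions--Magenes Lemma gives $\langle\partial_t D_hf,D_hf\rangle=\frac12\frac{\der}{\der t}\Vert D_hf\Vert^2_{L^2}$.

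Testing with $D_h f$ and using $1-\tau_h\rho\ge c_{t,T+1}$, we bound the right-hand side terms by Young's inequality, absorbing $\frac{c_{t,T+1}}{2}\Vert\nabla_\xi D_hf\Vert_{L^2}^2$, as follows:
\begin{itemize}
\item[(a)] The drift $(1-\tau_h\rho)D_hf\,\e$ and the term $D_hf\nabla\tau_h\rho$ give $C(1+\Vert\nabla\rho(s+h)\Vert^2_{L^\infty(\T^2)})\Vert D_hf\Vert^2_{L^2}$. This coefficient is integrable in time by Corollary \ref{cor:consequence of gehring}; in fact $\nabla\rho\in L^p_tL^\infty_x$ for all $p$, by Proposition \ref{prop:higher integ dt f and hess f} and Morrey.
\item[(b)] The terms $fD_h\rho\,\e$ and $D_h\rho\nabla f$ are bounded in $L^2$ uniformly in $h$. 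Indeed, $f\in L^\infty$ (Proposition \ref{prop:f bounded moser}), $\nabla f\in L^p$ for all $p$, and $\Vert D_h\rho\Vert_{L^p}\le\Vert\dot\rho\Vert_{L^p}$, since $\dot\rho=\int_\T\dot f\d\theta\in L^p$ by Proposition \ref{prop:higher integ dt f and hess f}.
\item[(c)] The term $f\nabla D_h\rho$ is the only one containing a derivative of a difference quotient of the non-local quantity. Here we use $\Vert f\Vert_{L^\infty}$ and the $h$-uniform bound $\Vert\nabla D_h\rho\Vert_{L^2((t,T)\times\T^2)}\le C$ established in the proof of Lemma \ref{lem:rho dot H1}.
\end{itemize}

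Integrating over $(t,s)$ and applying Grönwall, we choose $t$ a Lebesgue point so that $\Vert D_hf(t)\Vert_{L^2}\le C\Vert\dot f(t)\Vert_{L^2}$, using $\dot f\in L^p$. This yields
\begin{equation*}
\Vert D_hf\Vert_{L^\infty((t,T);L^2(\T^3))}+\Vert\nabla_\xi D_hf\Vert_{L^2((t,T)\times\T^3)}\le C_{t,T}
\end{equation*}
uniformly in $h$. Letting $h\to0^+$, the standard difference-quotient results \cite[\S 5.8.2]{Evans} and weak compactness give $\dot f\in L^\infty((t,T);L^2(\T^3))$ and $\nabla_\xi\dot f\in L^2((t,T)\times\T^3)$. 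We then pass to the limit in the difference-quotient equation to obtain \eqref{eq:dot f eq}: the products converge by weak--strong pairing, using $\tau_h\rho\to\rho$ uniformly (Corollary \ref{cor:continuity rho and hess in L4 nabla rho L8}) and $\nabla\tau_h\rho\to\nabla\rho$ strongly in $L^p$. Equivalently, one may differentiate \eqref{eq:main eqn} in time directly, now that all terms are in $L^2$. Finally, $\partial_t\dot f\in L^2((t,T);H^{-1}(\T^3))$ follows by reading off \eqref{eq:dot f eq}. Every flux there is in $L^2$: $\nabla\dot\rho\in L^2$ from Lemma \ref{lem:rho dot H1}, and $\dot f\nabla\rho\in L^2$ since $\dot f\in L^\infty_tL^2$ and $\nabla\rho\in L^2_tL^\infty_x$.

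The main obstacle is step (c), the non-local term $f\nabla D_h\rho$. It must be controlled through the separately derived $\rho$-level estimate of Lemma \ref{lem:rho dot H1}, not by absorbing it into $\nabla_\xi D_hf$; this is why Lemma \ref{lem:rho dot H1} comes first. A secondary point is bookkeeping the time shifts $\tau_h$ so that the coercive coefficient is $1-\tau_h\rho$, which requires the lower bound on $(t,T+1)$.
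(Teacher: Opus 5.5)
Your proposal is correct and matches the paper's proof step for step: you take time difference quotients, test with $D_h f$ via Lions--Magenes using the coercivity $1-\tau_h\rho\ge c_{t,T+1}$, control the non-local term $f\nabla D_h\rho$ through the $\dot\rho$-estimate of Lemma~\ref{lem:rho dot H1}, and then pass the time derivative through \eqref{eq:main eqn}. The only differences are cosmetic: the paper splits the product as $\tau_h f\,\nabla D_h\rho + D_h f\,\nabla\rho$, and it bounds $D_h f\,\nabla\rho$ directly using $\dot f\in L^4$ from Proposition~\ref{prop:higher integ dt f and hess f}, whereas you use Gr\"onwall with $\nabla\rho\in L^2_tL^\infty_x$; both routes work.
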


\begin{proof}
    As per the proof of Lemma \ref{lem:rho dot H1}, we obtain directly from \eqref{eq:main eqn} and linearity: 
    \begin{equation*}
 \begin{aligned} \partial_t D_h f +   \dv\big[ & \big(D_h f (1-\tau_h \rho) - f D_h \rho \big)\e(\theta) \big] \\ 
 &= \dv\big[ (1-\tau_h \rho) \nabla D_h f - D_h \rho \nabla f + \tau_h f \nabla D_h \rho + D_h f \nabla \rho \big] + \partial^2_\theta D_h f;
 \end{aligned}
\end{equation*}
recall $\tau_h f = f(t\!+\!h)$. Testing with $D_h f$, we get (using the Lions--Magenes Lemma) 
\begin{equation*}
   \begin{aligned} \frac{1}{2}&\frac{\der}{\der t} \Vert D_h f \Vert^2_{L^2(\T^3)} + c_{t,T+1} \Vert \nabla_\xi D_h f \Vert^2_{L^2(\T^3)} \\ 
   &\leq \!\Big( \!\Vert D_h f \Vert_{L^2} \! + \! \Vert f D_h \rho \Vert_{L^2} \! + \! \Vert D_h \rho \nabla f \Vert_{L^2} \!+\! \Vert \tau_h f   \nabla D_h \rho \Vert_{L^2} \!+\! \Vert D_h f \nabla \rho \Vert_{L^2}\!\Big)\! \Vert \nabla D_h f \Vert_{L^2}, 
   \end{aligned}
\end{equation*}
whence Young's inequality and standard properties of difference quotients yield 
\begin{equation*}
   \begin{aligned} \frac{1}{2}&\frac{\der}{\der t} \Vert D_h f \Vert^2_{L^2(\T^3)} + \frac{c_{t,T+1}}{2} \Vert \nabla_\xi D_h f \Vert^2_{L^2(\T^3)} \\ 
   &\leq \! C c_{t,T+1}^{-1} \Big( \!\Vert \dot{f} \Vert_{L^2}^2 \! + \! \Vert f \Vert^2_{L^\infty} \Vert \dot{\rho} \Vert_{L^2}^2 \! + \! \Vert \dot \rho \Vert_{L^4}^2 \Vert \nabla f \Vert_{L^4}^2 \!+\! \Vert \tau_h f \Vert^2_{L^\infty} \Vert  \nabla \dot{\rho} \Vert_{L^2}^2 \!+\! \Vert \dot f \Vert_{L^4} \Vert \nabla \rho \Vert_{L^4}^2 \!\Big)
   \end{aligned}
\end{equation*}
where $C$ is independent of $s,h$. Using the boundedness of the terms on the right-hand side (Propositions \ref{prop:f bounded moser}, \ref{prop:higher integ dt f and hess f}, Lemma \ref{lem:rho dot H1}) $\Vert D_h f \Vert^2_{L^\infty((t,T);L^2(\T^3))} +\Vert \nabla_\xi D_h f \Vert^2_{L^2((t,T)\times\T^3)} \leq C_{t,T}$ for some constant depending only on $t,T,c_{t,T+1}$. As such, $\dot{f} \in L^\infty((t,T);L^2(\T^3)) \cap L^2((t,T);H^1(\T^3))$. This regularity is sufficient to pass a time derivative (in the sense of distributions) in \eqref{eq:main eqn}, and we deduce \eqref{eq:dot f eq}
\end{proof}

Lemma \ref{lem:H1 dot f} implies that $\dot{\p}$ is endowed with the same regularity as $\dot{f}$, and 
\begin{equation}\label{eq:dot p eq}
    \partial_t \dot\p +\dv((1-\rho)\dot\P - \P \dot{\rho}) = \dv((1-\rho)\nabla\dot\p + \dot\p\otimes\nabla\rho -\dot{\rho}\nabla\p +\p\otimes\nabla\dot{\rho}) - \dot\p. 
\end{equation}
Returning to \eqref{eq:dot rho eq}, we get $\partial_t \dot{\rho} - \Delta \dot{\rho} = -\dv((1-\rho)\dot{\p} - \p \dot{\rho}) \in L^2_{t,\loc}L^2_x$, and the Calder\'on--Zygmund Theorem implies $\partial_t \dot{\rho},\nabla^2\dot{\rho} \in L^2((t,T)\!\times\!\T^2)$ for all $0<t<T<\infty$, from which the usual $H^2$ estimate gives $\nabla \dot{\rho} \in L^\infty((t,T);L^2(\T^2))$ and by interpolation (Lemma \ref{lem:dibenedetto classic}) implies $\nabla \dot{\rho} \in L^4((t,T)\!\times\!\T^2)$. We have shown the following. 

\begin{cor}\label{cor:dot rho hess}
    For all $0\!<\!t\!<T\!<\!\infty$, $\partial_t \dot{\rho},\nabla^2\dot{\rho} \in L^2((t,T)\!\times\!\T^2)$, $ \nabla \dot{\rho} \in L^4((t,T)\!\times\!\T^2)$. 
\end{cor}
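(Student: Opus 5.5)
The plan is to follow the template used for $\rho$ itself in Lemma \ref{lem:H1 p} and Corollary \ref{cor:continuity rho and hess in L4 nabla rho L8}, now applied to the differentiated equation \eqref{eq:dot rho eq}. First I will check that its right-hand side is square integrable. Rewrite \eqref{eq:dot rho eq} as
\begin{equation*}
\partial_t \dot\rho - \Delta\dot\rho = -\dv\big((1-\rho)\dot\p - \p\dot\rho\big) = \nabla\rho\cdot\dot\p - (1-\rho)\dv\dot\p + \dot\rho\,\dv\p + \p\cdot\nabla\dot\rho .
\end{equation*}
Each term is handled separately. The term $\nabla\rho\cdot\dot\p$ lies in $L^2_{t,\loc}L^2_x$, since $\nabla\rho\in L^8$ by Corollary \ref{cor:continuity rho and hess in L4 nabla rho L8} and $\dot\p\in L^p$ for every $p<\infty$ by Proposition \ref{prop:higher integ dt f and hess f}. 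The term $(1-\rho)\dv\dot\p$ is in $L^2$ because $\nabla\dot\p=\int_\T\nabla\dot f\,\e(\theta)\d\theta\in L^2$ by Lemma \ref{lem:H1 dot f} and Jensen, and $0\le 1-\rho\le 1$. The term $\dot\rho\,\dv\p$ is in $L^2$ by H\"older, since $\dot\rho\in L^p$ for all $p<\infty$ and $\nabla\p\in L^p$ for all $p<\infty$ (the latter from $\nabla^2_\xi f$, $\nabla_\xi f\in L^p$, Proposition \ref{prop:higher integ dt f and hess f}). Finally, $\p\cdot\nabla\dot\rho\in L^2$ by \eqref{eq:polarisation bound} and Lemma \ref{lem:rho dot H1}.

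Next I will apply the Calder\'on--Zygmund theorem for the heat equation on $\T^2$, localised in time by a cutoff $\psi(t)$ and resizing $(t,T)$ as in the earlier arguments. This yields $\partial_t\dot\rho,\nabla^2\dot\rho\in L^2((t,T)\times\T^2)$. The localisation produces an extra term $\dot\rho\,\partial_t\psi\in L^2$, which is harmless.

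With $\partial_t\dot\rho,\Delta\dot\rho\in L^2$, testing \eqref{eq:dot rho eq} against $-\Delta\dot\rho\,\psi^2$ is rigorous. This is the same computation as in Step 4 of Lemma \ref{lem:H1 p}; the time derivative pairing is justified by the Lions--Magenes Lemma applied to $\nabla\dot\rho\in L^2H^1$ with $\partial_t\nabla\dot\rho\in L^2H^{-1}$. It gives
\begin{equation*}
\frac{\der}{\der t}\Vert\psi\nabla\dot\rho\Vert_{L^2}^2 + \Vert\psi\Delta\dot\rho\Vert_{L^2}^2 \le \Vert \dv((1-\rho)\dot\p-\p\dot\rho)\Vert_{L^2}^2 + C|\psi\partial_t\psi|\,\Vert\nabla\dot\rho\Vert_{L^2}^2,
\end{equation*}
whence $\nabla\dot\rho\in L^\infty((t,T);L^2(\T^2))$. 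Together with $\nabla\dot\rho\in L^2((t,T);H^1(\T^2))$, the interpolation Lemma \ref{lem:dibenedetto classic} in $\T^2$ then gives $\nabla\dot\rho\in L^4((t,T)\times\T^2)$.

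The only delicate point is the right-hand side bound, specifically the term $(1-\rho)\dv\dot\p$. It requires $\nabla\dot f\in L^2$, so Lemma \ref{lem:H1 dot f} must be invoked on a slightly larger interval than $(t,T)$. Everything else is routine because of the $L^p$ bounds for all $p<\infty$.
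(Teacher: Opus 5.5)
Your proposal is correct and follows the paper's route. You show that the right-hand side of \eqref{eq:dot rho eq} lies in $L^2_{t,\loc}L^2_x$, using Lemma \ref{lem:H1 dot f} to control $\nabla\dot\p$, then apply Calder\'on--Zygmund, the $H^2$ estimate and the interpolation Lemma \ref{lem:dibenedetto classic}; your term-by-term check of the right-hand side and the time cutoff $\psi$ simply spell out what the paper states in one line.
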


We need to upgrade the integrability of $\partial_t \dot \rho$ a little more: indeed, in order to use Morrey's embedding and deduce that $\dot\rho$ is H\"older continuous, we need $\dot\rho \in W^{1,p}_{t,\loc}W^{1,p}_x$ for $p>3$, with $3$ being the dimension of $(t,x)$. To this end, we perform the $H^2$ estimate on $\dot\p$ from \eqref{eq:dot p eq}.

\begin{lemma}[$H^2$ estimate for $\dot\p$]\label{lem:H2 dot p}
    For all $0<t<T<\infty$, it holds $$\nabla\dot\p \in L^\infty((t,T);L^2(\T^2)) \cap L^2((t,T);H^1(\T^2)), \qquad \nabla\dot\p \in L^4((t,T)\!\times\!\T^2).$$ 
\end{lemma}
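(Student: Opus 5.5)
The plan is to mirror the $H^2$ estimate for $\p$ in \S\ref{sec:H2 for p} (Proposition \ref{prop:galerkin p}), now applied to \eqref{eq:dot p eq}. The point is the same: once we pass from divergence to non-divergence form, the mixed terms $\nabla\rho\cdot\nabla\dot\p$ cancel. Formally, \eqref{eq:dot p eq} reads
\begin{equation*}
\partial_t\dot\p + \dv((1-\rho)\dot\P - \P\dot\rho) = (1-\rho)\Delta\dot\p + \dot\p\,\Delta\rho - \dv(\dot\rho\nabla\p) + \dv(\p\otimes\nabla\dot\rho) - \dot\p .
\end{equation*}
Testing with $\Delta\dot\p$ and using the lower bound $c_{t,T}$ from \eqref{eq:ctT def} together with Young's inequality gives
\begin{equation*}
\frac{\der}{\der t}\Vert\nabla\dot\p\Vert^2_{L^2} + \frac{c_{t,T}}{2}\Vert\Delta\dot\p\Vert^2_{L^2} \leq C c_{t,T}^{-1}\Big(\Vert\dv((1-\rho)\dot\P-\P\dot\rho)\Vert^2_{L^2} + \Vert\dot\p\,\Delta\rho\Vert^2_{L^2} + \Vert\dv(\dot\rho\nabla\p)\Vert^2_{L^2} + \Vert\dv(\p\otimes\nabla\dot\rho)\Vert^2_{L^2}\Big).
\end{equation*}

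We then check that each source term lies in $L^2_{t,\loc}L^2_x$.
\begin{itemize}
\item $\nabla\dot\P\in L^2$ follows from Lemma \ref{lem:H1 dot f}, since $\dot\P$ inherits the regularity of $\dot f$. The coefficients $\nabla\rho$, $\dot\rho$, $\nabla\P$ lie in every $L^p$ by Proposition \ref{prop:higher integ dt f and hess f}, and $\P$, $\p$ are bounded.
\item $\nabla\dot\rho\in L^4$ by Corollary \ref{cor:dot rho hess}.
\item $\dot\p\,\Delta\rho$: here $\dot\p\in L^\infty_tL^2_x\cap L^2_tH^1_x$, and $\Delta\rho\in L^p$ for all $p<\infty$. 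We treat it exactly as the term $\p^n\Delta\rho$ in Proposition \ref{prop:galerkin p}, using H\"older with $L^{q_\delta}$, Sobolev in $\T^2$, and Gr\"onwall.
\item $\dv(\dot\rho\nabla\p) = \nabla\dot\rho\cdot\nabla\p + \dot\rho\,\Delta\p$. For the first piece, $\nabla\dot\rho\in L^4$ and $\nabla\p\in L^p$ for all $p$. For the second, $\Delta\p\in L^p$ and $\dot\rho\in L^p$, the latter from $\dot f\in L^p$.
\item $\dv(\p\otimes\nabla\dot\rho)$ involves $\nabla\p\otimes\nabla\dot\rho\in L^2$ (by $L^4\cdot L^4$) and $\p\,\Delta\dot\rho\in L^2$ (by Corollary \ref{cor:dot rho hess}).
\end{itemize}
Integrating in time then yields $\nabla\dot\p\in L^\infty L^2\cap L^2H^1$, provided the initial value $\nabla\dot\p(t)\in L^2$ is under control. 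We secure this by choosing $t$ as a Lebesgue point of $s\mapsto\nabla\dot\p(s)$, which is possible because $\nabla\dot\p\in L^2_{t,\loc}L^2_x$.

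Rigour comes from the Galerkin scheme of Proposition \ref{prop:galerkin p}. We define $\dot\p^n=\sum_{j\le n}\boldsymbol\alpha^n_j(s)\varphi_j$ solving the linear problem
\begin{equation*}
\partial_t\dot\p^n - \dv((1-\rho)\nabla\dot\p^n + \dot\p^n\otimes\nabla\rho) + \dot\p^n = G,
\end{equation*}
where $G$ collects the remaining (known) terms of \eqref{eq:dot p eq}, with initial data $\sum_j\langle\dot\p(t),\varphi_j\rangle\varphi_j$. The steps are:
\begin{enumerate}
\item Existence follows from Carath\'eodory's Theorem, as before.
\item The $H^1$ bound, uniform in $n$, uses Gr\"onwall with $\nabla\rho\in L^{2+\delta}_tL^\infty_x$ (Corollary \ref{cor:consequence of gehring}).
\item The $H^2$ bound comes from testing with $\Delta\dot\p^n$. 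This is legitimate because $\dot\p^n$ is smooth in $x$, which is exactly what allows the cancellation in non-divergence form. The term $\dot\p^n\Delta\rho$ is handled by H\"older/Sobolev plus Gr\"onwall with $\Delta\rho\in L^4$.
\item For identification, we extract a weak limit in $L^2H^2$ and show it equals $\dot\p$ by the uniqueness energy argument (the difference $\mathbf w$ computation). This works because $\dot\p\in L^2H^1$ with $\partial_t\dot\p\in L^2H^{-1}$ (Lemma \ref{lem:H1 dot f}).
\end{enumerate}
Finally, applying Lemma \ref{lem:dibenedetto classic} to $\nabla\dot\p\in L^\infty L^2\cap L^2H^1$ gives $\nabla\dot\p\in L^4$.

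The main obstacle is the term $\dv(\p\otimes\nabla\dot\rho)$, which requires second derivatives of $\dot\rho$ in $L^2$. If Corollary \ref{cor:dot rho hess} were insufficient at this point, we would instead leave that term in divergence form and integrate by parts against $\Delta\dot\p^n$, moving one derivative onto $\nabla\dot\p^n$. This produces $\Vert\p\otimes\nabla\dot\rho\Vert_{H^1}$-type terms, which are again controlled by Corollary \ref{cor:dot rho hess}.
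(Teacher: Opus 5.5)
Your proposal is correct and follows the paper's proof essentially step for step: the same Galerkin scheme for $\dot\p$ with source $\dv(\P\dot\rho-(1-\rho)\dot\P+\p\otimes\nabla\dot\rho-\dot\rho\nabla\p)$, the uniform $H^1$ bound via Gr\"onwall with $\nabla\rho\in L^2_tL^\infty_x$, the $H^2$ bound from testing with $\Delta\dot\p^n$ after the mixed-term cancellation in non-divergence form, identification of the weak limit by the energy argument, and interpolation via Lemma \ref{lem:dibenedetto classic}. Two small remarks: your explicit choice of $t$ with $\nabla\dot\p(t)\in L^2$ fills in a detail the paper leaves implicit, and the fallback in your last paragraph is unnecessary since Corollary \ref{cor:dot rho hess} already gives $\nabla^2\dot\rho\in L^2$ (that fallback would also still need $\nabla(\p\otimes\nabla\dot\rho)\in L^2$, i.e.\ the same regularity).
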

\begin{proof}
    We argue analogously to Proposition \ref{prop:galerkin p}. Following this strategy to the letter, we obtain the existence of $ \dot{\p}^n \in AC([t,T];\mathrm{span}\{\varphi_j\}_{j=1}^n)$ ($\varphi_j$ eigenfunctions of $\Delta_x$) solving 
    \begin{equation}\label{eq:galerkin approx dot p}
    \left\lbrace \begin{aligned} 
        &\partial_t \dot{\p}^n \!-\! \dv((1\!-\!\rho)\nabla \dot{\p}^n \!+\! \dot\p^n \otimes \nabla \rho) \!+\! \dot\p^n =  \dv( \P \dot\rho \!-\!(1\!-\!\rho)\dot{\P} \!+\! \p \otimes \nabla \dot\rho  \!-\!   \dot\rho \nabla \p ), \\ 
        &\dot\p^n(t) = \sum_{j=1}^n \langle \dot\p(t),\varphi_j \rangle \varphi_j. 
    \end{aligned}\right. 
\end{equation}
Testing the above with $\dot{\p}^n$ and Young's inequality we get (a.e.~$t < s \leq T$)  
$$\begin{aligned}\Vert \dot\p^n(s) \Vert_{L^2 } + \Vert \nabla \dot\p^n \Vert_{L^2((t,s)\times\T^2)} \leq C_{t,T}\Big( & \Vert \dot\p (t) \Vert_{L^2}^2 +  \Vert \dot\p^n |\nabla \rho| \Vert^2_{L^2} + \Vert \dot{\rho} \Vert^2_{L^2}  \\ 
&+ \Vert \dot\P \Vert^2_{L^2} + \Vert \p |\nabla \dot\rho | \Vert^2_{L^2} + \Vert \dot\rho |\nabla \p | \Vert^2_{L^2} \Big) \end{aligned}$$
for $C_{t,T}$ depending only on $t,T,c_{t,T}$; we use the lower bound on $1\!-\!\rho$ from \eqref{eq:ctT def}. Hence, 
\begin{equation*}
    \begin{aligned}\Vert \dot\p^n(s) \Vert_{L^2(\T^2)} + \Vert \nabla \dot\p^n \Vert_{L^2((t,s)\times\T^2)} \leq C_{t,T}\Big(1 + \int_t^s \Vert \nabla \rho(s') \Vert_{L^\infty(\T^2)}^2 \Vert \dot\p^n(s') \Vert^2_{L^2(\T^2)} \d s' \Big),  \end{aligned}
\end{equation*}
and we conclude that $\Vert \dot\p^n \Vert_{L^\infty((t,T);L^2(\T^2))} + \Vert \nabla \dot\p^n \Vert_{L^2((t,T)\times\T^2)} \leq C_{t,T}$ using Gr\"onwall's Lemma as per the proof of Proposition \ref{prop:galerkin p}. Furthermore, given that $\dot\p^n(t) \in C^\infty(\T^2)$ for each $n$ (being a linear combination of eigenfunctions of $\Delta_x$), we rigorously pass to the non-divergence form of \eqref{eq:galerkin approx dot p}. Testing with $\Delta \dot\p^n$, we obtain the uniform bound $\sup_n (\Vert \nabla \dot\p^n \Vert_{L^\infty((t,T);L^2(\T^2))} + \Vert \Delta \dot\p^n \Vert_{L^2((t,T)\times\T^2)} ) \leq C_{t,T}$; we skip the details since they are analogous (and simpler) to those in Proposition \ref{prop:galerkin p}. An argument akin to Step 3 in Proposition \ref{prop:galerkin p} shows $\dot\p^n \rightharpoonup \dot\p$ weakly in $L^2((t,T);H^2(\T^2))$, and the result follows. 
\end{proof}

Using the result of Lemma \ref{lem:H2 dot p} and \eqref{eq:dot rho eq}, we get $\partial_t\dot\rho - \Delta\dot\rho \in L^4((t,T)\!\times\!\T^2)$, whence the Calder\'on--Zygmund Theorem implies $\partial_t \dot\rho , \nabla^2 \dot\rho \in L^4_{t,\loc}L^4_x$ and the following corollary. 

\begin{cor}\label{cor:continuity of laplace rho}
    For all $0<t<T$, $\partial_t \dot\rho , \nabla^2 \dot\rho \in L^4((t,T)\!\times\!\T^2)$, and $\dot{\rho} , \Delta \rho \in C^{\nicefrac{1}{4}}([t,T]\times\T^2)$ as well as $\nabla \dot{\rho} \in L^8((t,T)\!\times\!\T^2)$. 
\end{cor}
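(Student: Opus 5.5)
The plan is to read off the three claims of Corollary \ref{cor:continuity of laplace rho} from the regularity already established for $\dot\rho$, in three steps.

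\emph{Step 1: $L^4$ bounds.} Rewrite \eqref{eq:dot rho eq} in non-divergence form:
\[
\partial_t\dot\rho-\Delta\dot\rho=-\dv\big((1-\rho)\dot\p-\p\dot\rho\big)=\nabla\rho\cdot\dot\p-(1-\rho)\dv\dot\p+\dot\rho\,\dv\p+\p\cdot\nabla\dot\rho .
\]
Each term on the right is in $L^4((t,T)\times\T^2)$:
\begin{itemize}
\item $\nabla\rho\cdot\dot\p$: $\nabla\rho\in L^8$ by Corollary \ref{cor:continuity rho and hess in L4 nabla rho L8}, and $\dot\p\in L^p$ for every $p<\infty$ by Proposition \ref{prop:higher integ dt f and hess f}, so Hölder gives $L^4$.
\item $(1-\rho)\dv\dot\p$: this is in $L^4$ by Lemma \ref{lem:H2 dot p}.
\item $\dot\rho\,\dv\p$: $\dot\rho\in L^p$ for all $p<\infty$, and $\nabla\p\in L^p$ for all $p<\infty$ because $\nabla_\xi f\in L^p$ by Proposition \ref{prop:higher integ dt f and hess f}.
\item $\p\cdot\nabla\dot\rho$: $|\p|\le1$ by \eqref{eq:polarisation bound}, and $\nabla\dot\rho\in L^4$ by Corollary \ref{cor:dot rho hess}.
\end{itemize}
The Calder\'on--Zygmund Theorem for the heat equation, applied on a slightly larger time interval and then restricted (i.e.\ resizing $(t,T)$ as in earlier proofs), then gives $\partial_t\dot\rho,\nabla^2\dot\rho\in L^4((t,T)\times\T^2)$.

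\emph{Step 2: $\nabla\dot\rho\in L^8$.} Apply the Gagliardo--Nirenberg inequality \eqref{eq:GN grad rho L8} with $\dot\rho$ in place of $\rho$. This needs $\dot\rho\in L^\infty$, which Step 3 supplies. If I would rather not depend on Step 3, I would instead use the form with $\|\dot\rho(s)\|_{L^{q}}$ for large $q$ on the right and then Hölder in time. Either way, integrating in time gives $\|\nabla\dot\rho\|_{L^8}^8\lesssim\|\nabla^2\dot\rho\|_{L^4}^4\|\dot\rho\|_{L^\infty}^4+C$, which is finite.

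\emph{Step 3: Hölder continuity.} I expect this to be the only step needing care, namely getting enough integrability to apply Morrey in the three variables $(t,x)$, which requires exponent $>3$.
\begin{itemize}
\item For $\dot\rho$: Step 1 and Corollary \ref{cor:dot rho hess} give $\partial_t\dot\rho\in L^4$ and $\nabla\dot\rho\in L^4$ (with $\dot\rho\in L^4$). Hence $\dot\rho\in W^{1,4}((t,T)\times\T^2)$, and Morrey's embedding (exponent $4>3$) yields $\dot\rho\in C^{1/4}$, exactly as in Corollary \ref{cor:continuity rho and hess in L4 nabla rho L8}.
\item For $\Delta\rho$: by \eqref{eq:rho eq}, $\Delta\rho=\dot\rho+\dv((1-\rho)\p)$, so $\Delta\rho\in W^{1,4}$ reduces to $\dv((1-\rho)\p)\in W^{1,4}$. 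Its spatial derivatives involve $\nabla^2\rho\,\p$, $\nabla\rho\otimes\nabla\p$ and $(1-\rho)\nabla^2\p$. Its time derivative involves $\dot\rho\nabla\p$, $\nabla\dot\rho\,\p$, $\nabla\rho\,\dot\p$ and $(1-\rho)\nabla\dot\p$. All of these lie in $L^4$ by Proposition \ref{prop:higher integ dt f and hess f} (which gives $\nabla^2\p\in L^p$ from $\nabla^2_\xi f$), Lemma \ref{lem:H2 dot p}, Corollary \ref{cor:dot rho hess}, and $\nabla\rho\in L^8$, after a Hölder split wherever two factors multiply.
\item Alternatively, the spatial regularity of $\Delta\rho$ follows from $\nabla^3\rho\in L^4$, which comes from Calder\'on--Zygmund on the equation for $\nabla\rho$.
\end{itemize}
Morrey then gives $\Delta\rho\in C^{1/4}([t,T]\times\T^2)$.
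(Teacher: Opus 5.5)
Your proposal is correct and follows essentially the paper's route. The paper likewise applies Calder\'on--Zygmund to \eqref{eq:dot rho eq} with an $L^4$ right-hand side, then uses Morrey's embedding for $\dot\rho\in W^{1,4}$ and for $\dv((1-\rho)\p)\in W^{1,4}$ via $\Delta\rho=\dot\rho+\dv((1-\rho)\p)$, and finally applies Gagliardo--Nirenberg with the $L^\infty$ bound furnished by the H\"older continuity of $\dot\rho$; this ordering is not circular, since your Step 3 never uses Step 2.

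The one thing to drop is your fallback in Step 2. If you use $\Vert\dot\rho(s)\Vert_{L^q}$ with $q<\infty$ in place of $\Vert\dot\rho(s)\Vert_{L^\infty}$, the Gagliardo--Nirenberg exponent on $\Vert\nabla^2\dot\rho(s)\Vert_{L^4}$ becomes strictly larger than $\nicefrac{1}{2}$. Integrating the eighth power in time would then require more than $L^4$-in-time control of $\nabla^2\dot\rho$, so the estimate does not close at exponent $8$.
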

\begin{proof}
    From $\partial_t \dot\rho , \nabla^2 \dot\rho \in L^4_{t,\loc}L^4_x$, we get $\dot\rho \in W^{1,4}([t,T]\!\times\!\T^2)$ for all $0<t<T<\infty$, whence Morrey's embedding implies $\dot\rho \in C^{\nicefrac{1}{4}}([t,T]\times\T^3)$. Expanding $\dv((1-\rho)\p) = -\p \cdot \nabla \rho + (1-\rho) \dv \p$, we get $\dv((1-\rho)\p) \in W^{1,4}_{t,\loc}W^{1,4}_x \hookrightarrow C^{\nicefrac{1}{4}}_{t,\loc,x}$ using Proposition \ref{prop:higher integ dt f and hess f}, Corollary \ref{cor:dot rho hess}, and Lemma \ref{lem:H2 dot p}. By \eqref{eq:rho eq}, we deduce  $\Delta \rho = \partial_t \rho + \dv((1-\rho)\p) \in C^{\nicefrac{1}{4}}([t,T]\!\times\!\T^2)$. Finally, since $C^{\nicefrac{1}{4}}([t,T]\!\times\! \T^2) \hookrightarrow L^\infty([t,T]\!\times\! \T^2)$, Gagliardo--Nirenberg's inequality implies 
    \begin{equation*}
        \Vert \nabla\dot{\rho}(s) \Vert_{L^8(\T^2)} \leq C \Big(   \Vert \nabla^2 \dot{\rho} (s) \Vert^{\nicefrac{1}{2}}_{L^4(\T^2)} \Vert \dot{\rho}(s) \Vert^{\nicefrac{1}{2}}_{L^\infty(\T^2)} + \Vert \dot{\rho}(s) \Vert_{L^1(\T^2)} \Big) , 
    \end{equation*}
    where $C$ only depends on $\T^2$, and we then deduce $\nabla \dot{\rho} \in L^8_{t,\loc}L^8_x$.
\end{proof}

In order to initiate the bootstrap in H\"older spaces, it remains to show that $\nabla f$ is H\"older continuous; once this is shown, the entire right-hand side of \eqref{eq:recast dt f fess f in all Lp} is H\"older continuous. 

\begin{lemma}[H\"older continuity of $\nabla f$]\label{lem:nabla f continuous}
    For all $0<t<T<\infty$, $\nabla_\xi f \in C^{\nicefrac{1}{5}}([t,T]\!\times\!\T^3)$. 
\end{lemma}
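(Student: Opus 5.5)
The target exponent $\nicefrac{1}{5}$ matches Morrey's embedding $W^{1,5}\hookrightarrow C^{1-\nicefrac{4}{5}}$ in the four-dimensional space--time $(t,\xi)\in(t,T)\times\T^3$. So the plan is to show $\nabla_\xi f \in W^{1,5}((t,T)\times\T^3)$ for all $0<t<T<\infty$. That amounts to two integrability statements: $\nabla^2_\xi f\in L^5$ and $\partial_t\nabla_\xi f=\nabla_\xi\dot f\in L^5$ on $(t,T)\times\T^3$. The first holds already by Proposition \ref{prop:higher integ dt f and hess f}, and so does $\nabla_\xi f\in L^5$. The whole task is therefore to upgrade $\nabla_\xi \dot f$ from $L^2$ (Lemma \ref{lem:H1 dot f}) to $L^5$.

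For this I would use the equation for $\dot f$ in the same way the equation for $f$ was used. Corollaries \ref{cor:continuity rho and hess in L4 nabla rho L8}, \ref{cor:dot rho hess} and \ref{cor:continuity of laplace rho} give $\nabla\dot\rho\in L^8$, $\nabla^2\dot\rho\in L^4$, $\dot\rho\in C^{\nicefrac14}$ and $\nabla\rho\in L^8$, and $f$ is bounded. With these, I would pass \eqref{eq:dot f eq} to non-divergence form, exactly as in Lemma \ref{lem:H2 f}. The structure is identical, so the same Galerkin/difference-quotient justification applies, and I would first establish $\dot f\in L^2_tH^2_\xi$ with $\nabla_\xi\dot f\in L^\infty_tL^2_\xi$. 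The resulting equation is
\begin{equation*}
\partial_t\dot f - A:\nabla^2_\xi \dot f = \dot f\Delta\rho + f\Delta\dot\rho - \dot\rho\Delta f + \nabla f\cdot\nabla\dot\rho - \dot\rho\nabla\rho\cdot\nabla f \cdot 0 + (\text{drift terms}),
\end{equation*}
where the drift terms collect $-\dv(((1-\rho)\dot f - f\dot\rho)\e(\theta))$ and the mixed term $\nabla\dot f\cdot\nabla\rho$. I have written the mixed terms loosely; the point is that the cancellation seen in \eqref{eq:non div form f} removes the $\nabla\rho\cdot\nabla\dot f$ pieces coming from the diffusion, apart from those already present.

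Next I bound the right-hand side term by term. $\dot f\Delta\rho$ is in every $L^p$ by Corollary \ref{cor:continuity of laplace rho} and Proposition \ref{prop:higher integ dt f and hess f}. $f\Delta\dot\rho$ is in $L^4$. $\dot\rho\Delta f$ is in every $L^p$. $\nabla f\cdot\nabla\dot\rho$ is in $L^{8-}$. The drift terms involve $\nabla\dot f$, $\dot f\nabla\rho$, $\nabla f\dot\rho$ and $f\nabla\dot\rho$. So the right-hand side lies in $L^4$ as soon as $\nabla\dot f\in L^4$. Since $\nabla\dot f\in L^\infty_tL^2\cap L^2_tH^1$ on $\T^3$, Lemma \ref{lem:dibenedetto classic} gives $\nabla\dot f\in L^{10/3}$. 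Then Lemma \ref{lem:Lp schauder}, applied with the $\nicefrac14$-Hölder matrix $A$ of \eqref{eq:A def for non div f}, gives $\nabla^2_\xi\dot f\in L^{10/3}$ after shrinking the time interval. Gagliardo--Nirenberg, using $\dot f\in L^\infty_tL^2$ or simply $\dot f\in L^p$ for every finite $p$, then gives $\nabla\dot f\in L^{p}$ for $p$ near $2\cdot\tfrac{10}{3}$, in particular $\nabla\dot f\in L^5$.

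The main obstacle is rigorously justifying the non-divergence form and the $H^2$ estimate for $\dot f$. The coefficient $f\nabla\dot\rho$ is only in $L^8$, and $\Delta\dot\rho$ only in $L^4$, which is why the Galerkin scheme of Lemma \ref{lem:H2 f} has to be rerun. The Gronwall step would be controlled by $\int\|\Delta\rho\|_{L^4}^2$ and by $\|f\Delta\dot\rho\|_{L^2}\in L^2_t$, both of which are available. A fallback, if Schauder in $L^p$ is awkward, is to iterate Gagliardo--Nirenberg once more from $L^4$ to $L^8$, using the same recasting as in Proposition \ref{prop:higher integ dt f and hess f}. Once $\nabla_\xi f\in W^{1,5}$ is established, Morrey's embedding in dimension $4$ concludes the proof.
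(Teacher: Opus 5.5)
Your proposal is correct and follows essentially the paper's route: a Galerkin $H^2$ estimate for the $\dot f$ equation, interpolation to $\nabla_\xi \dot f\in L^{10/3}$, the $L^p$-Schauder estimate giving $\nabla^2_\xi\dot f\in L^{10/3}$, Gagliardo--Nirenberg to push $\nabla_\xi\dot f$ above $L^5$, and Morrey in the four-dimensional $(t,\xi)$-space. Two small corrections, neither affecting the argument: in the non-divergence form all cross terms $\nabla\rho\cdot\nabla\dot f$ and $\nabla f\cdot\nabla\dot\rho$ cancel exactly, and in the Gagliardo--Nirenberg step interpolating against $\dot f\in L^\infty_tL^2$ only reaches $L^{14/3}$, so you need your second option ($\dot f\in L^p$ for all finite $p$, which gets arbitrarily close to $L^{20/3}$) or, as the paper does, interpolation against $\nabla_\xi\dot f\in L^\infty_tL^2$, which gives $L^{50/9}$.
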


\begin{proof}
    The results of Lemma \ref{lem:H1 dot f} and Corollary \ref{cor:continuity of laplace rho} imply, by returning to \eqref{eq:dot f eq}, that 
    \begin{equation}\label{eq:dot f eqn rewritten}
        \partial_t \dot{f} - \dv((1-\rho)\nabla \dot{f} + \dot{f} \nabla \rho) - \partial^2_\theta f = F, 
    \end{equation}
    with $F \in L^2_{t,\loc}L^2_\xi$. By replicating the Galerkin approach of Lemma \ref{lem:H2 f}, we perform the $H^2$ estimate for $\dot{f}$; we omit the details for concision, which follow the formal computation associated with testing \eqref{eq:dot f eqn rewritten} with $\Delta \dot{f}$ after passing to non-divergence form. We get $\partial_t \dot{f} , \nabla^2_\xi \dot{f} \in L^2_{t,\loc}L^2_\xi$, as well as $\nabla_\xi \dot{f} \in L^\infty_{t,\loc}L^2_\xi \cap L^2_{t,\loc}H^1_\xi$. By Lemma \ref{lem:dibenedetto classic}, $\nabla_\xi \dot{f} \in L^{\nicefrac{10}{3}}_{t,\loc}L^{\nicefrac{10}{3}}_\xi$. We now rigorously pass to the non-divergence form of \eqref{eq:dot f eq} and observe that $\partial_t \dot{f} - A:\nabla_\xi^2 \dot{f} \in L^{\nicefrac{10}{3}}_{t,\loc}L^{\nicefrac{10}{3}}_\xi$ where $A$ is as per \eqref{eq:A def for non div f}. Applying the Schauder result in Sobolev spaces (Lemma \ref{lem:Lp schauder}) implies $\partial_t \dot{f},\nabla^2_\xi \dot{f} \in L^{\nicefrac{10}{3}}_{t,\loc}L^{\nicefrac{10}{3}}_\xi$. By Gagliardo--Nirenberg's inequality, we get 
    \begin{equation*}
        \begin{aligned}
            \Vert \nabla_\xi \dot{f}(s) \Vert_{L^{\nicefrac{50}{9}}(\T^3)} \leq C \Big( \Vert \nabla^2_\xi \dot{f}(s) \Vert^{\nicefrac{3}{5}}_{L^{\nicefrac{10}{3}}(\T^3)} \Vert \nabla_\xi \dot{f}(s) \Vert^{\nicefrac{2}{5}}_{L^2(\T^3)} + \Vert \nabla_\xi \dot{f}(s) \Vert_{L^2(\T^3)} \Big), 
        \end{aligned}
    \end{equation*}
    with $C$ depending only on $\T^3$. Raising to the power $50/9$ and integrating in time, we get $\nabla_\xi \dot{f} \in L^{\nicefrac{50}{9}}((t,T)\times\T^3)$ for all $0<t<T$. Since $50/9>5$, we deduce also by Proposition \ref{prop:higher integ dt f and hess f} that $\nabla_\xi f \in W^{1,5}_{t,\loc}W^{1,5}_\xi$, whence by Morrey's embedding $\nabla_\xi f \in C^{\nicefrac{1}{5}}([t,T]\!\times\!\T^3)$. 
\end{proof}

We are ready to give the proof of Theorem \ref{thm:reg}, which concludes this work. 

\begin{proof}[Proof of Theorem \ref{thm:reg}]
    The lower bound \eqref{eq:infinite speed of propagation in thm statement} was proved in Proposition \ref{prop:lower bound rho improved}: it suffices to show the interior smoothness of weak solutions. The proof is divided into several steps: our underlying strategy is to repeatedly apply additional $\xi$-derivatives to \eqref{eq:main eqn} and $x$-derivatives to \eqref{eq:rho eq} and deduce smoothness using the Schauder result in H\"older spaces (Lemma \ref{lem:Schauder Holder}).

\smallskip 

\noindent 1. \textit{Improved regularity of $\rho$}. Our aim in this step is to prove the following: for all $0<t<T$, there exists $\beta>0$ depending on $t,T,c_{t,T}$ such that 
   \begin{equation}\label{eq:Drho is Holder}
   D \rho \in C^{1+\nicefrac{\beta}{2},2+\beta}([t,T]\times\T^2), \quad \text{\textit{i.e.}~}\quad\partial_t D\rho, D^3 \rho \in C^{\nicefrac{\beta}{2},\beta}([t,T]\times\T^2). 
   \end{equation}

Recall from Corollary \ref{cor:continuity of laplace rho} and Lemma \ref{lem:nabla f continuous} that $\Delta \rho , \nabla f \in C^{\nicefrac{1}{5}}([t,T]\!\times\!\T^3)$ for all $0<t<T$. As such, the entire right-hand side of \eqref{eq:recast dt f fess f in all Lp} is H\"older continuous, as is the matrix $A$ from \eqref{eq:A def for non div f}; in fact for all $0<t<T$ they belong to $C^{\nicefrac{\alpha}{2},\alpha}([t,T]\!\times\!\T^3)$ for some $\alpha \in (0,1)$. As such (by modifying the size of the time interval---we omit the details for concision), the Schauder result of Lemma \ref{lem:Schauder Holder} implies 
\begin{equation}\label{eq:after one schauder f}
f \in C^{1+\nicefrac{\alpha}{2},2+\alpha}([t,T]\times\T^3) \qquad \forall 0<t<T;
\end{equation}
note that this level of regularity is already enough to deduce that $f$ is a classical solution of \eqref{eq:main eqn}. We return to the equation \eqref{eq:rho eq} for $\rho$. We mollify the entire equation (in $x$ only) and get $\partial_t \rho^\varepsilon - \Delta \rho^\varepsilon = F^\varepsilon := -\dv((1-\rho)\p)*\eta_\varepsilon$,  where $\rho^\varepsilon = \rho*\eta_\varepsilon$ for $\eta_\varepsilon = \varepsilon^{-2}\eta(\cdot/\varepsilon)$ with $\eta$ the usual Friedrichs bump function in $\mathbb{R}^2$. By applying an $x_j$ derivative, we get 
    \begin{equation*}
        \partial_t \rho^\varepsilon_{x_j} - \Delta \rho^\varepsilon_{x_j} = F^\varepsilon_{x_j} , 
    \end{equation*}
   and note that $F^\varepsilon_{x_j} = F_{x_j}*\eta_\varepsilon = (\p_{x_j} \!\cdot\! \nabla \rho + \p \!\cdot\! \nabla \rho_{x_j} + \rho_{x_j} \nabla \p - (1-\rho) \dv \p_{x_j})*\eta_\varepsilon$. The regularity of $f$ from \eqref{eq:after one schauder f} implies that, for all $0<t<T$, there exists $\beta \in (0,1)$ depending on $t,T,c_{t,T}$ such that $F_{x_j} \in C^{\nicefrac{\beta}{2},\beta}([t,T]\times\T^2)$. Standard properties of the mollifier sequence then imply that $\Vert F^\varepsilon_{x_j} \Vert_{C^{\nicefrac{\beta}{2},\beta}([t,T]\times\T^2)} \leq C \Vert F_{x_j} \Vert_{C^{\nicefrac{\beta}{2},\beta}([t,T]\times\T^2)} $ where the constant $C$ is independent of $t,T,\varepsilon$. Hence, another application of Lemma \ref{lem:Schauder Holder} implies $\Vert \rho^\varepsilon_{x_j} \Vert_{C^{1+\nicefrac{\beta}{2},2+\beta}([t',T']\times\T^2)} \leq C(\Vert F_{x_j} \Vert_{C^{\nicefrac{\beta}{2},\beta}([t,T]\times\T^2)} + \Vert \nabla \rho \Vert_{L^2([t,T]\times\T^2)})$ for $[t',T']\subset\subset [t,T]$ with $C$ depending on $t,t',T,T'$. In turn, an application of the Ascoli--Arzel\`a Theorem implies the strong convergence of $\rho^\varepsilon_{x_j} \to \rho_{x_j}$ in the norm of $C^{1+\nicefrac{\tilde\beta}{2},2+\tilde\beta}([t',T']\times\T^2)$ for all $0<\tilde\beta < \beta$, and \eqref{eq:Drho is Holder} follows (for a different $\beta$ depending on $t,T,c_{t,T}$).

\smallskip 

\noindent 2. \textit{Improved regularity for $f$}. The regularity \eqref{eq:after one schauder f} implies that we may rigorously pass a $\xi_j$ derivative \emph{in the divergence form of \eqref{eq:main eqn}} to deduce that 
\begin{equation}\label{eq:one space deriv f xi j}
   \begin{aligned}
       \partial_t f_{\xi_j} + \dv((1-\rho)f_{\xi_j}\e(\theta) &- \rho_{\xi_j} f \e(\theta) + (1-\rho)f \e_{\xi_j}(\theta)) \\ 
       &= \dv((1-\rho)\nabla f_{\xi_j} + f_{\xi_j} \nabla \rho - \rho_{\xi_j} \nabla f  + f \nabla \rho_{\xi_j}) + \partial^2_\theta f_{\xi_j}
   \end{aligned}
\end{equation}
holds (in divergence form) in the sense dual to $L^1((t,T);W^{1,1}(\T^3))$. Our objective is then to show that $\nabla^2_\xi f_{\xi_j} \in L^2_{t,\loc}L^2_\xi$ so that we may pass to the non-divergence form of this equation. To do this rigorously, we perform the same Galerkin method as in Lemma \ref{lem:H2 f} to deduce $f_{\xi_j} \in L^2((t,T);H^2(\T^3))$; we do not write the details in full for clarity of presentation. The sought boundedness can be seen from the formal computation (testing \eqref{eq:one space deriv f xi j} with $\Delta f_{\xi_j}$ after having passed to non-divergence form) 
\begin{equation*}
    \begin{aligned}
        \frac{1}{2}\frac{\der}{\der t}\Vert \nabla f_{\xi_j} \Vert^2_{L^2} + c_{t,T}\Vert \Delta f_{\xi_j} \Vert^2_{L^2} & \leq   \Vert \Delta f_{\xi_j}\Vert_{L^2} \Big( \! \Vert f_{\xi_j} \Delta \rho \Vert_{L^2} \!+\! \Vert \rho_{\xi_j} \Delta f \Vert_{L^2} \!+\! \Vert f \Delta \rho_{\xi_j} \Vert_{L^2} \!+\! \Vert \nabla f_{\xi_j} \Vert_{L^2} \\ 
        &+\! \Vert f_{\xi_j} \nabla \rho \Vert_{L^2} \!+\! \Vert f \nabla \rho_{\xi_j} \Vert \!+\! \Vert \rho_{\xi_j} \nabla f \Vert_{L^2} \!+\! \Vert \nabla f \Vert_{L^2} \!+\! \Vert f \nabla \rho \Vert_{L^2}   \! \Big)    , 
    \end{aligned}
\end{equation*}
where we note from \eqref{eq:after one schauder f}--\eqref{eq:Drho is Holder} that all terms on the right-hand side are bounded in $L^\infty$. Applying Young's inequality yields $\Delta f_{\xi_j} \in L^2((t,T);L^2(\T^3))$, and repeating the same argument by testing with $\partial^2_\theta f_{\xi_j}$ yields $\nabla^2_\xi f_{\xi_j} \in L^2((t,T);L^2(\T^3))$.

   Next, we show that: for all $0<t<T$ there exists $\beta>0$ depending on $t,T$ such that 
   \begin{equation}\label{eq:schauder level 2 f}
      D_\xi f \in C^{1+\nicefrac{\beta}{2},2+\beta}([t,T]\times\T^3). 
   \end{equation}
   Having shown $\nabla^2_\xi f_{\xi_j} \in L^2_{t,\loc}L^2_\xi$, we return to \eqref{eq:one space deriv f xi j} and rigorously pass to the non-divergence form of the equation to get 
   \begin{equation*}
       \partial_t f_{\xi_j} - A:\nabla^2_\xi f_{\xi_j} = A_{\xi_j}:\nabla_\xi^2 f + G_{\xi_j}, 
   \end{equation*}
   where $G=f\Delta \rho + f\e(\theta)\cdot\nabla \rho - (1-\rho)\nabla f \cdot \e(\theta)$ is such that $D_\xi G \in C^{\beta}([t,T]\times\T^3)$ for some $\beta>0$ (depending on $t,T,c_{t,T}$) by \eqref{eq:Drho is Holder}--\eqref{eq:after one schauder f}; note that the equation holds in the sense of $L^2$ functions. Furthermore, the results \eqref{eq:after one schauder f} and \eqref{eq:Drho is Holder} imply that the entire right-hand side belongs to $C^{\beta}([t,T]\times\T^3)$ for some $\beta>0$ depending on $t,T,c_{t,T}$. In turn, we directly apply the Schauder result of Lemma \ref{lem:Schauder Holder} and deduce that $f_{\xi_j} \in C^{1+\nicefrac{\beta}{2},2+\beta}([t',T']\times\T^3)$ for the same $\beta$ on any $[t',T'] \subset\subset [t,T]$ compactly contained therein, and \eqref{eq:schauder level 2 f} follows.

    \smallskip 

    \noindent 3. \textit{Conclusion}. We iterate the previous two steps: having shown that $D^m_\xi f$ is H\"older continuous, we apply a further $\partial_{x_j}$ to \eqref{eq:rho eq} to deduce H\"older continuity of $D^{m+1} \rho$, and then returning to \eqref{eq:main eqn} and applying an additional $\partial_{\xi_j}$ we deduce $D^{m+1}_\xi f$ is H\"older continuous. Using the equation to exchange $t$-derivatives for $\xi$-derivatives, we also obtain the H\"older continuity of all time derivatives, and mixed $t,\xi$ derivatives. The proof is complete. 
\end{proof}

\appendix
	
	\section{Proofs of Technical Lemmas}\label{sec:appendix}

    \subsection{Proof of convergence to initial data for $\rho$}\label{sec:appendix rho initial data}

We prove \eqref{eq:conv rho to initial data} directly from the definition of weak solution and the regularity of $\rho$ from Definition \ref{def:function space}.

\begin{proof}[Proof of \eqref{eq:conv rho to initial data}]
   Inserting test functions $\varphi \in W^{1,6}(\T^2)$ independent of $t,\theta$ in \eqref{eq:weak sol}, we obtain 
\begin{equation}\label{eq:for rho initial data}
    \begin{aligned}
        \int_{\T^2} \big( \rho(t,x) - \rho_0(x) \big) \varphi(x) \d x = \int_0^t \int_{\T^2} (1-\rho) \p \cdot \nabla \varphi \d x \d s - \int_0^t \int_{\T^2} \nabla \rho \cdot \nabla \varphi \d x \d s ; 
    \end{aligned}
\end{equation}
and, by a standard density argument, the above holds for all $\varphi \in H^1(\T^2)$. In turn, by bounding the relevant terms and arguing as per \eqref{eq:as per remark attainment old} in Remark \ref{rem:attainment old}, we deduce 
\begin{equation}\label{eq:rho attainment old}
    \lim_{t \to 0^+} \Vert \rho(t) - \rho_0 \Vert_{H^{-1}(\T^2)} = 0. 
\end{equation}
Selecting $t$ a Lebesgue point and inserting $\varphi=\rho(t) \in H^1(\T^2)$ in \eqref{eq:for rho initial data}, we get (using H\"older's inequality) $\limsup_{t\to0 } \Vert \rho(t) \Vert^2_{L^2(\T^2)} \leq \Vert \rho_0 \Vert_{L^2(\T^2)} \limsup_{t \to0} \Vert \rho(t) \Vert_{L^2(\T^2)}$; this $\limsup$ is finite since $0\leq \rho \leq 1$ and $\T^2$ is bounded. Thus, 
\begin{equation}\label{eq:comparing norms rho t attainment}
   \limsup_{t \to 0^+} \Vert \rho(t) \Vert_{L^2(\T^2)} \leq \Vert \rho_0 \Vert_{L^2(\T^2)}. 
\end{equation}
Meanwhile, using $0\leq \rho \leq 1$ we deduce that $\rho(t) \rightharpoonup \rho_0$ weakly in $L^2(\T^2)$; here we also use \eqref{eq:rho attainment old} to identify the limit as $\rho_0$. The weak lower semicontinuity of the norm then implies 
\begin{equation*}
    \Vert \rho_0 \Vert_{L^2(\T^2)} \leq \liminf_{t \to 0^+} \Vert \rho(t) \Vert_{L^2(\T^2)} \leq \limsup_{t \to 0^+} \Vert \rho(t) \Vert_{L^2(\T^2)}  \leq  \Vert \rho_0 \Vert_{L^2(\T^2)}
\end{equation*}
by \eqref{eq:comparing norms rho t attainment}. Thus $\Vert \rho(t) \Vert_{L^2} \to \Vert \rho_0 \Vert_{L^2}$ and $\rho(t) \rightharpoonup \rho_0$ in $L^2$, whence \eqref{eq:conv rho to initial data} follows. 
\end{proof}

\subsection{Proof of Lemma \ref{lem:tech iteration lemma}}

Our proof is similar to that of \cite[Lemma 3.2]{KZ}, but simpler, as our setting is less general. 

    \begin{proof}[Proof of Lemma \ref{lem:tech iteration lemma}]
       Multiply \eqref{eq:for tech iter lemma} by the integrating factor $e^t$, and get (for all $0\leq s \leq t$)
        \begin{equation*}
            \begin{aligned}
                A_k(s) \leq e^{-s} A_k(0) + e^{-s} C_*^k \int_0^s e^\tau A_{k-1}(\tau)^2 \d \tau . 
            \end{aligned}
        \end{equation*}
       By the assumptions on the initial values, we deduce (by bluntly bounding several terms), $A_k(s) \leq M_1^{n_k} + C_*^k \Vert A_{k-1} \Vert_{L^\infty([0,t])}^2 (1-e^{-s} )$, and thus 
        \begin{equation}\label{eq:for the recurrence proof}
            \begin{aligned}
               \Vert A_k \Vert_{L^\infty([0,t])} \leq M_1^{n_k} + C_*^k \Vert A_{k-1} \Vert_{L^\infty([0,t])}^2. 
            \end{aligned}
        \end{equation}

        \smallskip 

        \noindent 1. \textit{Inductive estimate}. Our objective in this step is to prove by induction that  
\begin{equation}\label{eq:induction hypothesis for realzzz}
            \begin{aligned}
               \Vert A_k \Vert_{L^\infty([0,t])} \leq C_*^{\alpha_k} (M_1+M_2)^{n_k} 
            \end{aligned}
        \end{equation}
   for all integer $k\geq 0$,     where $\alpha_k$ satisfies the recurrence relation 
        \begin{equation}\label{eq:alphak recurrence rel}
            \alpha_k = 2\alpha_{k-1}+k+1 \qquad (\alpha_0 := 1). 
        \end{equation}
Indeed, for $k=0$, we simply have $\Vert A_0 \Vert_{L^\infty([0,t])} \leq M_2$ whence \eqref{eq:induction hypothesis for realzzz} is trivially satisfied. Then, suppose \eqref{eq:induction hypothesis for realzzz} holds for the case $k-1$: we show that it holds for $k$. Indeed, by \eqref{eq:for the recurrence proof} and using $2n_{k-1}=n_k$, it holds $\Vert A_k \Vert_{L^\infty([0,t])} \leq M_1^{n_k} + C_*^{k+2\alpha_{k-1} }(M_1+M_2)^{n_{k}} \leq (1+C_*^{k+2\alpha_{k-1} })(M_1+M_2)^{n_{k}}$, and thus 
\begin{equation*}
            \begin{aligned}
               \Vert A_k \Vert_{L^\infty([0,t])} \leq 2 C_*^{k+2\alpha_{k-1} }(M_1+M_2)^{n_{k}} \leq {C_*^{1+k+2\alpha_{k-1} }}(M_1+M_2)^{n_{k}} , 
            \end{aligned}
        \end{equation*}
using $2\leq C_*$, and \eqref{eq:induction hypothesis for realzzz} follows directly from the definition \eqref{eq:alphak recurrence rel} of $\alpha_k$. 

\smallskip 

\noindent 2. \textit{Uniform boundedness}. We have $ \Vert A_k \Vert_{L^\infty([0,t])}^{1/n_k} \leq C_*^{\alpha_{k}/n_k}(M_1+M_2)$ and thus it suffices to show that $\alpha_k/n_k$ is bounded. Dividing \eqref{eq:alphak recurrence rel} by $n_k=2^k$ and rewriting $n_k=2n_{k-1}$, 
\begin{equation*}
   \frac{\alpha_k}{n_k} = \frac{\alpha_{k-1}}{n_{k-1}} + \frac{k+1}{2^k} 
\end{equation*}
for all integer $k \geq 1$, from which we deduce (using $\alpha_0/n_0=1$) $\frac{\alpha_k}{n_k} = 1 + \sum_{j=1}^k (j+1)2^{-j}$. As a result, $ \sup_k \frac{\alpha_k}{n_k} = 1 + \sum_{j=1}^\infty (j+1)2^{-j}$, where the final sum is manifestly convergent by the d'Alembert ratio test, and the aforementioned quotient is bounded, as required. We record, by setting $\beta := 1 + \sum_{j=1}^\infty (j+1)2^{-j}$, the estimate $\sup_k \Vert A_k \Vert_{L^\infty([0,t])}^{1/n_k} \leq C_*^{\sup_k \alpha_{k}/n_k}(M_1+M_2) = C_*^\beta (M_1+M_2), $ where we used again that $C_* \geq 2$, and the proof is complete. 
    \end{proof}

\begin{small}


\noindent\textbf{Acknowledgements.} The author thanks Luca Alasio and Nikita Simonov for useful discussions, the hospitality of CRM De Giorgi (SNS Pisa) as part of the Research in Pairs Programme, and the financial support from PEPS JC JC 2026 (Insmi CNRS). 


\end{small}

\end{document}